\begin{document}
\centerline{\bf EQUIVARIANT COX RING}

\bigskip

\centerline{\small ANTOINE VEZIER}

\bigskip

\begin{abstract}
We define the equivariant Cox ring of a normal variety with algebraic group action. We study algebraic and geometric aspects of this object and show how it is related to the ordinary Cox ring. Then, we specialize to the case of normal rational varieties of complexity one under the action of a connected reductive group $G$. We show that the $G$-equivariant Cox ring is then a finitely generated integral normal $G$-algebra. Under a mild additional condition, we give a presentation by generators and relations of its subalgebra of $U$-invariants, where $U$ is the unipotent part of a Borel subgroup of $G$. The ordinary Cox ring is also finitely generated and canonically isomorphic to the $U$-equivariant Cox ring, so that it inherits a canonical structure of $U$-algebra. Relying on a work of Hausen and Herppich, we prove that the subalgebra of $U$-invariants of the Cox ring is a finitely generated Cox ring of a variety of complexity one under the action of a torus. This yields in particular that this latter algebra is a complete intersection.

Characterizing the log terminality of singularities in a finitely generated Cox ring is an interesting question, particularly since the work of Gongyo, Okawa, Sannai and Takagi characterizing varieties of Fano type via singularities of Cox rings (\cite{Gongyo}). We provide a criterion of combinatorial nature for the Cox ring of an almost homogeneous $G$-variety of complexity one to have log terminal singularities.

Iteration of Cox rings has been introduced by Arzhantsev, Braun, Hausen and Wrobel in \cite{HausenIteration}. For log terminal quasicones with a torus action of complexity one, they proved that the iteration sequence is finite with a finitely generated factorial master Cox ring. We prove that the iteration sequence is finite for equivariant and ordinary Cox rings of normal rational $G$-varieties of complexity one satisfying a mild additional condition (e.g. complete varieties or almost homogeneous varieties). In the almost homogeneous case, we prove that the equivariant and ordinary master Cox rings are finitely generated and factorial.
\end{abstract}

\section{Introduction}

The \textit{Cox ring} of a normal algebraic variety $X$, denoted $\cox(X)$, is an important invariant that encodes a lot of geometric information, see \cite{coxrings} for a comprehensive reference on this rich subject. It is the ring of global sections of the \textit{Cox sheaf}, which is, roughly speaking, the direct sum 
\begin{center}
$\Rr_X:=\bigoplus_{[\Ff]\in\clg(X)}\Ff$
\end{center}
indexed by elements of the \textit{class group} of $X$, i.e. the group of isomorphism classes of divisorial sheaves on $X$. Under some additional assumptions, $\Rr_X$ can be endowed with a structure of quasi-coherent $\clg(X)$-graded $\Oo_X$-algebra, whence a structure of $\clg(X)$-graded ring on $\cox(X)$. When $\Rr_X$ is of finite type as an $\Oo_X$-algebra, its relative spectrum $\hat{X}$ is a normal algebraic variety over $X$ called the \textit{characteristic space} of $X$. The characteristic space is endowed with an action of the diagonalizable algebraic group $\Gamma_{\clg(X)}$ whose group of characters is $\clg(X)$, and the structural morphism $\hat{X}\rightarrow X$ is a good quotient for this action which induces a $\Gamma_{\clg(X)}$-torsor on an open subvariety whose complement is of codimension $\geq 2$.
Also, when the Cox ring is a finitely generated $k$-algebra, its spectrum $\tilde{X}$ is called the \textit{total coordinate space} of $X$, and the affinization morphism $\hat{X}\rightarrow \tilde{X}$ is a $\Gamma_{\clg(X)}$-equivariant open immersion whose image has a complement of codimension $\geq 2$ in $\tilde{X}$. When $X$ is smooth, the characteristic space is a $\Gamma_{\pic(X)}$-torsor over $X$ called the \textit{universal torsor}.

Given a variety with well defined Cox ring, it is a very challenging (and important) problem to find an explicit presentation of this ring by generators and relations. When the variety has a lot of symmetries, one can hope to answer this problem by taking advantage of this additional information. This can already be seen in the case of \textit{toric varieties} which is at the origin of the theory (\cite[2.2]{coxrings}). In this case, the Cox ring is a polynomial $k$-algebra in the canonical sections associated to the prime divisors lying in the complement of the torus. Hence, a toric variety admits a description as a quotient by a diagonalizable group of an open subvariety in an ambient affine space. This provides a vast generalization of the classical construction of the projective space. Toric varieties belong to the wider class of \textit{spherical varieties}, i.e. normal varieties endowed with an action of a connected reductive group such that a Borel subgroup acts with a dense orbit. These are exactly the varieties of \textit{complexity} zero, that is, the  minimal codimension of an orbit by a Borel subgroup is zero. The Cox ring of a spherical variety is also finitely generated with a known explicit description. However, this description is already much more complicated than in the case of toric varieties (\cite{Brion2007}).

More generally, when $X$ is endowed with an action of an algebraic group $G$, the general idea is to try to keep track of the $G$-action in $\cox(X)$ in order to obtain a graded $G$-algebra and use methods from representation theory to approach its structure. Geometrically, this means lifting equivariantly the $G$-action to the characteristic space. Unfortunately, such a lift does not exist in general, and even when it exists, it is in general not unique and depends on a choice (\cite[4.2.3]{coxrings}). An alternative approach which has proven useful (\cite[4.2]{Brion2007}) is to define an equivariant analogue of the Cox ring and to relate it to the usual Cox ring. Building on Brion's ideas in loc. cit., we give an intrinsic construction of the \textit{$G$-equivariant Cox sheaf}. Roughly, this is a direct sum 
\begin{center}
$\Rr^G_X:=\bigoplus_{[\Ff]\in\clg^G(X)}\Ff$
\end{center}
indexed by elements of the \textit{equivariant class group} $\clg^G(X)$, i.e. the group of isomorphism classes of $G$-linearized divisorial sheaves on $X$. The main difficulty is then to endow this sheaf with a structure of quasi-coherent $\clg^G(X)$-graded $\Oo_X$-algebra. This requires in particular to revisit the construction of the usual Cox sheaf itself (Section \ref{SecCoxRing}), and to compare it with the classical construction \cite[1.4]{coxrings}. The \textit{$G$-equivariant Cox ring}, denoted $\cox^G(X)$, is the ring of global sections of $\Rr^G_X$. When $\Rr^G_X$ is of finite type as an $\Oo_X$-algebra, its relative spectrum $\hat{X}^G$ is a normal $G\times\Gamma_{\clg^G(X)}$-variety called the \textit{$G$-equivariant characteristic space} of $X$. When moreover $\cox^G(X)$ is a finitely generated $k$-algebra, its spectrum $\tilde{X}^G$ is a normal $G\times\Gamma_{\clg^G(X)}$-variety called the \textit{$G$-equivariant total coordinate space}. These varieties sit in the diagram
\begin{center}
\begin{tikzcd}
		\hat{X}^G \arrow[d,"q", swap] \arrow[r,hook,""] &\tilde{X}^{G}\\
		X &\,\,\,,
\end{tikzcd}
\end{center}
where the affinization morphism $\hat{X}^G\xhookrightarrow{}\tilde{X}^G$ is a $G\times\Gamma_{\clg^G(X)}$-equivariant open immersion whose image has a complement of codimension $\geq 2$ in $\tilde{X}^G$. Moreover, $q$ is a $G$-equivariant good quotient of $\hat{X}^G$ by $\Gamma_{\clg^G(X)}$ and induces a $\Gamma_{\clg^G(X)}$-torsor in codimension $\geq 2$. More generally, $G$-equivariant good quotients by diagonalizable groups satisfying this last property are called \textit{$G$-equivariant diagonalizable almost principal bundles}. In Sections \ref{Sec_EqDiagTorsors} to \ref{Sec_RecognizingAnEquivariantCharacteristicSpace}, we study these bundles and derive properties of $\hat{X}^G$ and $\tilde{X}^G$ all along. 

Assuming that $G$ is connected, we establish a localization exact sequence involving the natural morphism $\clg^G(X)\rightarrow\clg^G(X_0)$ associated to the restriction to a $G$-stable open subvariety $X_0\subset X$. This exact sequence (obtained in Section \ref{SecEqClassGroup}) is the basic tool to relate $\cox^G(X)$ and $\cox^G(X_0)$, a link which is established in Section \ref{Sec_EqCoxRing_GStableOpenSubvariety}. As an application, we consider the case where $X$ is \textit{almost homogeneous}, that is, admits an open $G$-orbit $X_0\simeq G/H$. The equivariant Cox ring of $G/H$ always exists, and is the coordinate algebra of a homogeneous space $G/K$ which can be described explicitly (\ref{Ex_EqCox_HomSpace}). Still assuming that $X$ is almost homogeneous, there is a good quotient $\tilde{X}^G\rightarrow\AAA^n_k$ by $G$, where $n$ is the number of $G$-stable prime divisors in $X$ (\ref{PropCoxGFreeModuleCoxG}), and the general fibers of this morphism are isomorphic to the equivariant total coordinate space of the open orbit $X_0$ (\ref{RelationCoxEqToGInvSubset}). Furthermore, the open $G\times\Gamma_{\clg^G(X)}$-orbit $q^{-1}(X_0)$ is isomorphic to the fiber bundle
\begin{center}
$G/K\times^{\Gamma_{\hat{H}}}\Gamma_{\clg^G(X)}\rightarrow \GG_m^n$
\end{center}
associated with the $\Gamma_{\hat{H}}$-torsor $\Gamma_{\clg^G(X)}\rightarrow\GG_m^n$ (\ref{Cor_StructOpenOrbit}). This description is useful for doing computations in $\cox^G(X)$, because it can be embedded as a graded subalgebra of a coordinate algebra admitting an explicit description.

Still assuming $G$ connected, we uncover the link between $\cox^G(X)$ and $\cox(X)$ in Section \ref{Sec_RelationCoxG_To_Cox}. It turns out that we have a commutative diagram
\begin{center}
\begin{tikzcd}
		\hat{X}^G  \arrow[r,hook,""] & \tilde{X}^{G}\\
		\hat{X} \arrow[u,"",] \arrow[r,hook,""]& \tilde{X} \arrow[u,"",] \,\,,
\end{tikzcd}
\end{center}
where horizontal arrows are affinization morphisms, and vertical arrows are finite morphisms. Moreover, these finite morphisms are equivariant with respect to the morphism $\Gamma_{\clg(X)}\rightarrow \Gamma_{\clg^G(X)}$ dually defined by the forgetful morphism $\clg^G(X)\rightarrow\clg(X)$. When this last morphism is surjective (which can always be achieved up to replacing $G$ with a finite cover), the vertical arrows are closed immersions. In this case, we determine explicitly the ideal of $\cox^G(X)$ defining the subvariety $\tilde{X}$ of $\tilde{X}^G$. 

Starting from Section \ref{Sec_GeneralitiesComplexityOne}, we specialize to the case of a normal variety $X$ of complexity one under the action of a connected reductive group $G$. We show that finite generation of $\cox^G(X)$ (resp. $\cox(X)$) is equivalent to the rationality of $X$. Hence, assuming that $X$ is a rational variety, $\cox^G(X)$ is a finitely generated normal integral $\clg^G(X)$-graded $k$-algebra admitting $G$ as a group of graded automorphisms. 

A classical approach in invariant theory is to study the algebra of $U$-invariants $\cox^G(X)^U$ where $U$ is the unipotent part of a Borel subgroup of $G$. Indeed, the algebras $\cox^G(X)$ and $\cox^G(X)^U$ share many properties, and the structure of this last algebra is more accessible. Under a mild additional condition, we give in Section \ref{SecCoxU} a presentation by generators and relations of $\cox^G(X)^U$, which generalizes a result of Ponomareva (\cite[Thm 4]{Ponomareva}). When $G$ is a torus $\TT$, we use the description of $\cox(X)$ as a quotient of $\cox^\TT(X)$ in order to obtain a presentation by generators and relations of $\cox(X)$. This generalizes a result of Hausen and Süss (\cite[Thm 4.4.1.6]{coxrings}), and already demonstrates the power of equivariant Cox rings as a tool to study usual Cox rings. 

There is a canonical isomorphism between the $U$-equivariant Cox ring and the ordinary Cox ring, whence a canonical structure of $U$-algebra on $\cox(X)$. When $X$ is spherical, it follows from \cite[4.2.3]{Brion2007} and \ref{CorCoxGToCoxIII} that $\cox(X)^U$ is a finitely generated polynomial $k$-algebra. When $X$ is rational of complexity one, we show in Section \ref{Sex_CoxXU_II} that the $k$-algebra $\cox(X)^U$ is a finitely generated Cox ring of a normal variety of complexity one under the action of a torus. To prove this property, we rely on a work of Hausen and Herppich (\cite[3.4.2 and 3.4.3]{coxrings}). This yields in particular that $\cox(X)^U$ is a complete intersection.

In Section \ref{Sec_LogTermCoxRing}, we turn to the singularities of the total coordinate space of an almost homogeneous $G$-variety of complexity $\leq 1$. It is well known that Cox rings are normal integral domains. However, they can be quite singular in general. They are not necessarily Cohen-Macaulay although it has been recently shown by Braun in \cite{LBraun} that they are \textit{$1$-Gorenstein} (i.e. the canonical sheaf over the total coordinate space is invertible). Our goal is to provide a condition of combinatorial nature for the total coordinate space to have log terminal singularities. This is an interesting question, for example a $\QQ$-factorial normal projective variety $X$ is of \textit{Fano type} (i.e. there exists an effective $\QQ$-divisor $\Delta$ such that $-(K_X+\Delta)$ is ample and the pair $(X,\Delta)$ is Kawamata log terminal) if and only if its Cox ring is finitely generated with log terminal singularities (\cite[Thm 1.1]{Gongyo}). It turns out that the total coordinate space of a spherical variety is always log terminal (\ref{Prop_SphericalCoxLogTerm}). In complexity one, we obtain the following criterion (\ref{Thm_MainTheorem})
\begin{thm*}
Let $G$ be a connected reductive group, and $X$ be an almost homogeneous $G$-variety of complexity one with only constant invertible regular functions. Then,
\begin{align*}
  \tilde{X} \textrm{ has log terminal singularities} & \iff \cox(X)^U \textrm{ is a Platonic ring.}
\end{align*}
\end{thm*}
\noindent The above condition on $\cox(X)^U$ is of combinatorial nature, and has been introduced in \cite{HausenIteration}. It translates into a condition on a geometric data attached to $X$ (see \ref{Rem_Inputdata}, \ref{Rem_Platonic}).

In Section \ref{Sec_IterationCoxRings}, we study the iteration of Cox rings for normal rational varieties of complexity one. A general remark is that there is no reason a priori for a total coordinate space $\tilde{X}$ to have a trivial class group. Iteration of Cox rings roughly consists in studying the sequence of total coordinate spaces
\begin{center}
$...\rightarrow\tilde{X}^{(n)}\rightarrow\tilde{X}^{(n-1)}\rightarrow...\rightarrow\tilde{X}^{(2)}\rightarrow\tilde{X}$,
\end{center}
where $\tilde{X}^{(n)}$ denotes the total coordinate space of $\tilde{X}^{(n-1)}$. A basic question is whether this sequence is finite, that is, there exists an integer $n$ for which $\tilde{X}^{(n)}$ has either trivial class group, or a non-finitely generated Cox ring, or a Cox ring which is not well defined (i.e. $\Oo(\tilde{X}^{(n)})^*\not\simeq k^*$ and $\clg(\tilde{X}^{(n)})$ has a non-trivial torsion subgroup). If the sequence is finite, $X$ is said to have \textit{finite iteration of Cox rings}, and the last obtained Cox ring is called \textit{the master Cox ring}. This question has been studied in \cite{HausenIteration} and \cite{Wrobel1} for varieties of complexity one under a torus action. Finite iteration of Cox rings is proved in this setting, though with possibly a non-finitely generated master Cox ring. Further advances on this problem are works of Gagliardi in the spherical case (\cite{Gagliardi}), and of Braun for Fano type varieties (\cite{LBraun}). In this direction we obtain (\ref{Thm_FiniteIterationCoxRings}):
\begin{thm*}
Let $G$ be a connected reductive group, and $X$ be a normal rational $G$-variety of complexity one with only constant invertible regular functions. Suppose moreover that $X$ is complete, or that $X$ is almost homogeneous. Then, $X$ admits finite iteration of Cox rings with well defined iterated Cox rings. In the almost homogeneous case, the master Cox ring is factorial and finitely generated.
\end{thm*}

To finish, we mention that many results in this text find applications in a further article (\cite{avezier_SL2}) dedicated to almost homogeneous $\SL_2$-threefolds, for which ordinary and equivariant Cox rings are canonically isomorphic. 

\begin{ack*}
I am very grateful to Michel Brion for his help, and a very careful reading of the present work. I also thank Jürgen Hausen for interesting discussions during my stay in Tübingen, and Lukas Braun for enlightening email exchanges on a preliminary version of this paper. 
\end{ack*}

\begin{conv*}
Let $k$ denote an algebraically closed base field of characteristic zero. This last assumption is not necessary for the construction of the equivariant Cox ring, but it will simplify the exposition, and it will be needed starting from Section \ref{Sec_GeneralitiesComplexityOne}. 

In this text, we work in the category of \textit{algebraic schemes}, that is, separated $k$-schemes of finite type. Hence, a morphism $(f,f^\sharp):X \rightarrow Y$ of algebraic schemes is meant to be a $k$-morphism.  A \textit{variety} is an integral algebraic scheme. Without further precision, a \textit{point} of a variety is meant to be a closed point. The sheaf of \textit{regular functions} (or structure sheaf) on an algebraic scheme $X$ is denoted $\Oo_X$. A \textit{subvariety} of a variety $X$ is a locally closed subset equipped with its reduced scheme structure. The sheaf of units associated to $\Oo_X$ is denoted $\Oo_X^*$. The dual $\Hh om(\Ff,\Oo_X)$ of a quasi-coherent $\Oo_X$-module $\Ff$ is denoted $\Ff^\vee$. The isomorphism class of an object $A$ in a category is denoted $[A]$.
 
In this text, an \textit{algebraic group} $G$ is an affine algebraic group scheme. The multiplication morphism is denoted $m_G:G\times G\rightarrow G$, and the neutral element $e_G:\spec k\rightarrow G$.  

The \textit{character group} of an algebraic group $G$ is denoted $\hat{G}$. Given a finitely generated abelian group $M$, we let $\Gamma_M$ denote the \textit{diagonalizable (algebraic) group} $\spec(k[M])$ with character group $M$. Let $\varphi:\Gamma_1\rightarrow\Gamma_2$ be a morphism of diagonalizable groups, we let $\varphi^\sharp$ denote the dual morphism between character groups. Without further precision, the letter $\TT$ denotes an arbitrary torus.

A \textit{$G$-algebra} is a $k$-algebra with a structure of rational $G$-module such that $G$ acts by automorphisms of algebra.

An \textit{almost homogeneous variety} is a normal variety on which a connected algebraic group acts with a dense orbit. 

A \textit{quasicone} is an affine $\GG_m$-variety such that the orbit closures meet in a common point.

Let $G$ be an algebraic group, $X$ an algebraic $G$-scheme, and $q:X\rightarrow  Y$ a $G$-invariant morphism. We say that $q$ is a $G$-\textit{torsor} (or \textit{principal $G$-bundle}) over $Y$ if $q$ is faithfully flat, and the natural morphism $G\times X\rightarrow X\times_Y X$ is an isomorphism. As algebraic groups are smooth in characteristic zero, $q$ is faithfully flat if and only if it is smooth and surjective.

Let $G$ be an algebraic group and $X$ an affine algebraic $G$-scheme. If $\Oo(X)^G$ is a finitely generated $k$-algebra, then the affine algebraic scheme $X//G:=\spec(\Oo(X)^G)$ is called the \textit{categorical quotient} of $X$ by $G$. It is the universal object in the category of $G$-invariant morphisms from $X$ to affine algebraic schemes.

Our convention is that a \textit{reductive group} is a \textit{linearly reductive group}, that is, every finite dimensional representation of such group is semisimple. In particular, a reductive group is not necessarily connected. Let $G$ be a reductive group and $X$ an algebraic $G$-scheme. A \textit{good quotient} of $X$ by $G$ is an affine $G$-invariant morphism $q:X\rightarrow Y$ such that $q^\sharp$ induces an isomorphism $\Oo_Y\rightarrow (q_*\Oo_X)^G$. It is a universal object in the category of $G$-invariant morphisms from $X$ to algebraic schemes.

For a normal variety $X$, the \textit{group of Weil divisors} is denoted $\wdiv(X)$. Every Weil divisor defines a coherent sheaf $\Oo_X(D)$ whose non-zero sections over an open subset $U$ are rational functions $f\in k(X)^*$ such that $\divi(f)+D$ defines an effective divisor on $U$.

The \textit{canonical sheaf} $\omega_X$ on a normal variety $X$ is the pushforward on $X$ of the sheaf of differental forms of maximal degree on the smooth locus $X_{sm}$. It is a divisorial sheaf, and any Weil divisor $K_X$ such that $\Oo_X(K_X)\simeq\omega_X$ is a \textit{canonical divisor}. A \textit{$\QQ$-Gorenstein variety} is a normal variety such that some non-zero power of the canonical sheaf is invertible. 
If the canonical sheaf is invertible, the variety is said to be \textit{$\QQ$-Gorenstein of index one}, or simply \textit{$1$-Gorenstein}.
A \textit{Gorenstein variety} is a normal Cohen-Macaulay variety whose canonical sheaf is invertible. 

Let $X$ be a normal variety. We say that $X$ has \textit{rational singularities} if there exists a proper birational morphism
\begin{center}
$\varphi:Z\rightarrow X$,
\end{center}
where $Z$ is a smooth variety (a resolution of singularities), and such that $R^i\varphi_*\Oo_Z=0$, $\forall i>0$. This last property doesn't depend on the choice of a resolution. We say that $X$ has \textit{log terminal singularities} if the following conditions are satisfied:
\begin{itemize}[label=$\bullet$]
\item $X$ is $\QQ$-Gorenstein.
\item There is a resolution of singularities $\varphi:Z\rightarrow X$ such that
\begin{center}
$K_{Z}=\varphi^*K_X+\sum\alpha_i E_i$, $\alpha_i>-1$,
\end{center}
where the sum runs over the exceptional divisors $E_i$ of $\varphi$.
\end{itemize}
\end{conv*}

\section{Equivariant Cox ring}

\subsection{Cox ring of a pointed variety}
\label{SecCoxRing}
Considering spaces of sections of line bundles over a $G$-variety $X$, one doesn't necessarily want to see these as spaces of rational functions satisfying vanishing conditions with respect to some Cartier divisor on $X$. A prominent example of this situation is given by spaces of sections of line bundles over a flag variety, which naturally occur as simple $G$-submodules of the left regular representation. This motivates a definition of the Cox ring which is quite different from the usual one in the litterature. In \cite[1.4.2.3]{coxrings}, a canonical construction for the Cox ring of a pointed normal variety is presented. We push this idea one step further by considering rigidified sheaves at the base point (\cite[9.2.8]{Kleinman}). This leads to an intrinsic construction of the Cox ring which will be well suited for the construction of the equivariant analogue.

\begin{defn}
A \textit{pointed normal variety} is a pair $(X,x)$ consisting of a normal variety $X$, and a smooth point $x\in X$.
\end{defn}
Let $(X,x\xhookrightarrow{i}X)$ be a pointed normal variety such that $\Oo(X)^*\simeq k^*$. It will be clear from the construction below that this last hypothesis is required in order to equip the Cox ring of $(X,x)$ with a canonical structure of algebra. Typical examples of varieties satisfying this condition are normal complete varieties and almost homogeneous varieties under a semisimple algebraic group. A \textit{divisorial sheaf} on $X$ is a coherent reflexive $\Oo_X$-module of rank one. On a smooth variety, a divisorial sheaf is simply an invertible sheaf. Moreover, every divisorial sheaf on $X$ is isomorphic to the pushforward on $X$ of an invertible sheaf on the smooth locus $X_{\rm sm}$ (\cite[1.12]{HarsthorneGDivisors}). Given two divisorial sheaves $\Ff_1,\Ff_2$, the double dual of their tensor product 
\begin{center}
$\Ff_1\star\Ff_2:=(\Ff_1\otimes_{\Oo_X}\Ff_2)^{\vee\vee}$
\end{center}
is a divisorial sheaf. The operation $\star$ is compatible with taking isomorphism classes and endows the set $\clg(X)$ of isomorphism classes of divisorial sheaves with a structure of abelian group called the \textit{class group}, the inverse map being $[\Ff]\mapsto[\Ff^\vee]$. The class group identifies with the group of Weil divisors modulo linear equivalence through the natural isomorphism
\begin{center}
$[D]\mapsto [\Oo_X(D)]$  \quad(\cite[2.8]{HarsthorneGDivisors}).
\end{center}
By \cite[1.12]{HarsthorneGDivisors} again, the pullback functor associated to the open immersion of the smoooth locus $X_{\rm sm}\xhookrightarrow{} X$ induces an isomorphism of $\clg(X)$ with the \textit{Picard group} of $X_{sm}$, denoted $\pic(X_{\rm sm})$. In particular, the smoothness of $x$ yields an isomorphism of one dimensional $k$-vector spaces $i^*\Ff\rightarrow k$.

\begin{defn}
Let $(X,x\xhookrightarrow{i}X)$ be a pointed normal variety. A \textit{rigidified divisorial} sheaf is a pair $(\Ff,f)$ where $\Ff$ is a divisorial sheaf on $X$, and $f:i^*\Ff\rightarrow k$ is an isomorphism of $k$-vector spaces. The isomorphism $f$ is a \textit{rigidification} of $\Ff$. A morphism $v:(\Ff,f)\rightarrow(\Gg,g)$ of rigidified divisorial sheaves is an $\Oo_X$-module morphism preserving the rigidification. Precisely, this means that the diagram
\begin{center}
\begin{tikzcd}
		i^*\Ff \arrow[d,"f"] \arrow[r,"i^*v"] & i^*\Gg \arrow[ld,"g"]\\
		k &
\end{tikzcd} commutes.
\end{center}
\end{defn}
The set $E$ of isomorphism classes of rigidified divisorial sheaves carries a natural structure of abelian group. Indeed, the double dual of the tensor product of two rigidified divisorial sheaves is naturally equipped with a rigidification, and this operation is compatible with taking isomorphism classes. In fact, the next proposition says that we can identify the groups $E$ and $\clg(X)$.  
\begin{prop}\cite[9.2.9]{Kleinman}\label{PropIsoClgGRigididfied}
The group morphism $\rho:E\rightarrow \clg(X), [(\Ff,f)]\mapsto [\Ff]$ is an isomorphism.
\end{prop}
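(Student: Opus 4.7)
The plan is to verify surjectivity and injectivity of $\rho$ separately; the fact that $\rho$ respects the group operations is immediate from the definitions, since the double dual of the tensor product of rigidified sheaves is a rigidified lift of the $\star$ product of the underlying divisorial sheaves.

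For surjectivity, given a divisorial sheaf $\Ff$, the smoothness of $x$ ensures that the fiber $i^*\Ff$ is a one-dimensional $k$-vector space, so it admits some $k$-linear isomorphism $f: i^*\Ff \to k$, and $[(\Ff, f)] \in E$ then maps to $[\Ff]$.

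For injectivity, suppose $[(\Ff, f)] \in \ker \rho$, so there is an $\Oo_X$-module isomorphism $\phi: \Ff \to \Oo_X$. The pullback $i^*\phi: i^*\Ff \to i^*\Oo_X = k$ and the rigidification $f$ are both $k$-linear isomorphisms between one-dimensional vector spaces, hence differ by a unique scalar $\lambda \in k^*$. Viewing $\lambda$ as a constant in $k \subset \Oo(X)$, multiplication by $\lambda$ is an $\Oo_X$-module automorphism of $\Oo_X$, and $\phi' := \lambda \cdot \phi$ is an isomorphism $\Ff \to \Oo_X$ satisfying $i^*\phi' = f$. This is precisely an isomorphism of rigidified divisorial sheaves $(\Ff, f) \to (\Oo_X, \mathrm{id}_k)$, so $[(\Ff, f)]$ is the identity of $E$.

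I do not anticipate a serious obstacle. Both directions rely only on the one-dimensionality of fibers of divisorial sheaves at smooth points and on the freedom to rescale an $\Oo_X$-module isomorphism by a global scalar; the real content is that the possible rigidifications of a fixed divisorial sheaf form a $k^*$-torsor which is quotiented out when passing to isomorphism classes.
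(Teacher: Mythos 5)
Your proof is correct and follows essentially the same route as the paper's (the paper cites Kleiman for this statement, but its proof of the equivariant analogue \ref{PropIsoClgGRigididfiedEq} spells out exactly your argument: surjectivity from the one-dimensionality of the fiber at the smooth point, and injectivity by rescaling the trivializing isomorphism by the scalar comparing $i^*\phi$ with $f$). Nothing further is needed.
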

Also, an important feature of these rigidified divisorial sheaves is that they have no non-trivial automorphisms. Indeed, similarly as for an invertible sheaf, an automorphism of a divisorial sheaf $\Ff$ is defined by a invertible regular function on $X$. Thus, $\Aut(\Ff)\simeq k^*$ by hypothesis, and it follows that the identity is the only automorphism preserving the additional data of a rigidification. Hence, if an isomorphism exists between two rigidified divisorial sheaves, it is unique. In particular, this allows us to canonically identify sections over an open set $U\subset X$ of rigidified divisorial sheaves in a given class $[(\Ff,f)]\in E$. Formally, we have for each open subset $U\subset X$ an equivalence relation
\begin{center}
$s_1\sim_U s_2\iff s_2=v(U)(s_1)$, where $s_1\in \Ff_1(U), s_2\in \Ff_2(U)$, and $v:(\Ff_1,f_1)\xrightarrow{\simeq} (\Ff_2,f_2)$.
\end{center}
This relation is compatible with the $\Oo(U)$-module structure, and with the restrictions to open subsets. Performing the quotients with respect to the various relations $\sim_U$, we obtain a well-defined divisorial sheaf $\Ff^x$ lying in the class $[\Ff]\in\clg(X)$. Indeed, let's denote $\theta_U$ the canonical projection associated to the equivalence relation $\sim_U$. Then the isomorphisms
\begin{center}
$\Ff(U)\rightarrow \Ff^x(U), s\mapsto \theta_U(s)$
\end{center}
for the various open subsets induce an isomorphism of sheaves $\Ff\xrightarrow{\simeq}\Ff^x$. In other words, for any element $[\Ff]\in\clg(X)$ we have constructed a canonical representative $\Ff^x$, which we call the \textit{rigidified sheaf associated to} $[\Ff]$.

\begin{prop}\label{PropDefCoxSheaf}
Let $(X,x)$ a pointed normal variety such that $\Oo(X)^*\simeq k^*$. Then, the sheaf
\begin{center}
$\Rr_X^x:=\bigoplus_{[\Ff]\in \clg(X)} \Ff^x$
\end{center}
has a natural structure of quasi-coherent $\clg(X)$-graded $\Oo_X$-algebra.
\end{prop}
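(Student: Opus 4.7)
The plan is to construct a graded multiplication directly on the direct sum of canonical rigidified representatives, exploiting the key rigidity fact already established in the excerpt: a rigidified divisorial sheaf admits only the identity as an automorphism, so any two isomorphic rigidified divisorial sheaves are linked by a \emph{unique} isomorphism. Concretely, for each ordered pair of classes $[\Ff_1],[\Ff_2]\in\clg(X)$, I would define the multiplication map
\begin{center}
$m_{[\Ff_1],[\Ff_2]}:\Ff_1^x\otimes_{\Oo_X}\Ff_2^x\longrightarrow (\Ff_1\star\Ff_2)^x$
\end{center}
as the composition of the canonical morphism $\Ff_1^x\otimes\Ff_2^x\to(\Ff_1^x\otimes\Ff_2^x)^{\vee\vee}=\Ff_1^x\star\Ff_2^x$ with the unique isomorphism of rigidified divisorial sheaves $\Ff_1^x\star\Ff_2^x\xrightarrow{\simeq}(\Ff_1\star\Ff_2)^x$ (noting that the double dual of a tensor product of rigidified sheaves inherits a natural rigidification via the identification $i^*(\Ff_1\star\Ff_2)\simeq i^*\Ff_1\otimes i^*\Ff_2$ valid at the smooth point $x$). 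Extending by bilinearity over pairs of classes yields a global multiplication on $\Rr_X^x$ compatible with the $\clg(X)$-grading by construction.

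Next I would verify the algebra axioms. Quasi-coherence is immediate because each summand $\Ff^x$ is coherent on $X$. For the unit, the structure sheaf $\Oo_X$ equipped with its tautological rigidification $i^*\Oo_X=k\xrightarrow{\mathrm{id}}k$ is the canonical representative $(\Oo_X)^x$ of the trivial class, and the natural maps $\Oo_X\otimes\Ff^x\to\Ff^x$ are clearly morphisms of rigidified sheaves, hence coincide with $m_{[\Oo_X],[\Ff]}$ by uniqueness. For associativity, both triple products
\begin{center}
$\Ff_1^x\otimes\Ff_2^x\otimes\Ff_3^x\longrightarrow (\Ff_1\star\Ff_2\star\Ff_3)^x$
\end{center}
obtained by bracketing in either order are morphisms of $\Oo_X$-modules whose domain carries, at $x$, a canonical rigidification-compatible structure inherited from the three factors. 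The composed morphisms to the canonical rigidified target must agree as morphisms of rigidified sheaves, and the rigidity principle forces them to be equal. Commutativity follows by the same argument applied to the symmetry isomorphism of the tensor product.

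The main obstacle is psychological rather than technical: ensuring that the construction is really canonical and not merely well-defined up to a choice. This is exactly where the standing hypothesis $\Oo(X)^*\simeq k^*$ intervenes, for it is precisely this assumption which reduces $\Aut(\Ff)$ to $k^*$ and thereby kills any ambiguity once a rigidification is fixed. Every coherence diagram one needs to check thus reduces to the tautology that two morphisms between rigidified divisorial sheaves which exist at all must coincide. Once associativity, unit, and commutativity are in place, the bigraded direct sum $\Rr_X^x$ is visibly a $\clg(X)$-graded quasi-coherent $\Oo_X$-algebra, completing the proof.
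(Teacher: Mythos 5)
Your proposal is correct and follows essentially the same route as the paper: the multiplication is the natural map $\Ff_1^x\otimes\Ff_2^x\to(\Ff_1\star\Ff_2)^x$ obtained by passing to the canonical rigidified representative, with the hypothesis $\Oo(X)^*\simeq k^*$ guaranteeing uniqueness of isomorphisms between rigidified sheaves and hence canonicity of all the structure maps. Your write-up is in fact more explicit than the paper's about verifying associativity, unit, and commutativity via the rigidity principle.
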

\begin{proof}
By construction, $\Rr^{x}_X$ is a $\clg(X)$-graded quasi-coherent $\Oo_X$-module. The multiplication between homogeneous sections is defined by the natural morphism
\begin{center}
$\Ff^x(U)\otimes_{\Oo(U)}\Gg^x(U)\rightarrow (\Ff^x\star\Gg^x)^x(U)$
\end{center}
for every open subset $U\subset X$. This multiplication endows $\Rr_X^x$ with a structure of quasi-coherent $\clg(X)$-graded $\Oo_X$-algebra.
\end{proof}

\begin{defn}\cite[1.6.1]{coxrings}\label{DefCoxSheaf}
With the assumptions of the last proposition, $\Rr_X^x$ is the \textit{Cox sheaf} of the pointed variety $(X,x)$. The \textit{Cox ring} $\cox^x(X)$ is the ring of global sections of $\Rr_X^x$. When $\Rr_X^x$ is of finite type as an $\Oo_X$-algebra, the relative spectrum $\hat{X}^x$ of the Cox sheaf is the \textit{characteristic space} of $(X,x)$. When $\cox^x(X)$ is a finitely generated $k$-algebra, its spectrum $\tilde{X}^x$ is the \textit{total coordinate space} of $(X,x)$. When no ambiguity occurs, we often drop the reference to the base point in the notation.
\end{defn}

\begin{rem}\label{RemFinitenessClg}
It is customary to suppose that $\clg(X)$ is finitely generated in the above definition. This is justified by the hope to obtain a Cox sheaf of finite type. A sufficient condition is that the Cox ring be a finitely generated $k$-algebra (\cite[1.6.1.1]{coxrings}). Normal varieties with finitely generated Cox ring are called \textit{Mori Dream Spaces (MDS)}, due to their good behaviour with respect to Mori's minimal model program (\cite{HuKeel}).
\end{rem}

Now we compare the construction above with the construction of the Cox sheaf of a pointed normal variety as defined in \cite[1.4.2.3]{coxrings}. In particular, we produce an isomorphism of graded $\Oo_X$-algebras between both constructions. We begin by recalling this last construction.
Regard $\clg(X)$ as the group of Weil divisors on $X$ modulo linear equivalence and suppose it is finitely generated. Consider the subgroup $K^x\subset\wdiv(X)$ whose elements are Weil divisors on $X$ whose support doesn't contain $x$. The restriction $K^x\rightarrow\clg(X)$ of the natural projection is still surjective. Choose a finitely generated subgroup $K\subset K^x$ which again projects surjectively on $\clg(X)$, and denote $\pi:K\rightarrow \clg(X)$ this projection. For each $E\in\ker \pi$, there exists by assumption a unique $f_E\in k(X)$ defined on a neighborhood of $x$ such that $E=\divi(f_E)$ and $f_E(x)=1$. This allows to define a group morphism $\chi: \ker\pi \rightarrow k(X)^*$ such that $\divi(\chi(E))=E$, for all $E\in \ker\pi$. The Cox sheaf of $(X,x)$ is then defined as the quotient of the $\Oo_X$-algebra $\Ss:=\bigoplus_{D\in K} \Oo_X(D)$ by the sheaf of ideals $\Ii$ generated by the elements $(1-\chi(E))_{E\in\ker \pi}$, where $1$ and $\chi(E)$ are respectively global sections of $\Oo_X$ and $\Oo_X(-E)$.

Now choose a $\ZZ$-basis $(D_1,...,D_r)$ of $K$ and rational functions $f_1,...f_r$ such that $f_i(x)=1$ and $D_{i|U}=\divi(f_i)_{|U}$ on a neighborhood $U$ of $x$. These functions exist by hypothesis on $K$, and this choice amounts to choosing rigidifications for the divisorial sheaves $\Oo_X(D_i)$, $i=1,...,r$. This induces rigidifications of the divisorial sheaves $\Oo_X(\sum_i n_i D_i)$, $(n_i)\in\ZZ^r$ in the same way. We have a canonical surjective morphism of graded $\Oo_X$-algebras
\begin{center}
$\varphi:\Ss=\bigoplus_{(n_i)\in\ZZ^r}\Oo_X(\sum_i n_i D_i)\rightarrow\Rr_X^x$, $s\in\Gamma(U,\Oo_X(\sum_i n_i D_i))\mapsto \theta_U(s)$
\end{center}
such that the induced morphism between grading groups is $\pi$. Indeed,  the equivalence relations $\sim_U$ between sections of rigidified sheaves are compatible with the algebra structure on $\Ss$. Notice that two homogeneous sections $s_1, s_2$ over $U$ have the same projection $\theta_U(s_1)=\theta_U(s_2)$ if and only if there exists $E\in\ker\pi$ such that $s_2=\chi(E)s_1$. This implies that $\ker\varphi=\Ii$, whence the sought isomorphism.

\subsection{Equivariant class group}
\label{SecEqClassGroup}

Let $G$ an algebraic group, $\sigma: G\times X\rightarrow X$ an action of $G$ on a normal variety $X$, and $p_X: G\times X\rightarrow X$ the second projection. Following \cite[Exp. I, 6.5]{SGAIII}, a $G$-linearization of a quasi-coherent sheaf $\Ff$ on $X$ is an isomorphism $\Phi:\sigma^*\Ff\xrightarrow{\simeq}p_X^*\Ff$ verifying a certain cocycle condition. The geometric incarnation of $\Ff$ is the vector fibration $\VVV(\Ff):=\spec_X(\sym_{\Oo_X}(\Ff))$ over $X$ (\cite[II.1.7.8]{EGA}). A $G$-linearization of $\Ff$ is then equivalent to an action of $G$ on $\VVV(\Ff)$ that lifts the $G$-action on $X$ and commutes with the natural $\GG_m$-action on $\VVV(\Ff)$. Because $\sigma$ and $p_X$ are flat morphisms, the associated pullbacks of coherent sheaves commute with the operation of taking duals (\cite[III.12.3.5]{EGA}). It follows that given two $G$-linearized divisorial sheaves $\Ff_1$ and $\Ff_2$ on $X$, there is a canonical $G$-linearization induced on $\Ff_1\star\Ff_2$ and $\Ff_1^{\vee}$. Also, there is an obvious notion of isomorphism of $G$-linearized sheaves, and the operation $\star$ is compatible with this notion. The induced operation on the set of isomorphism classes of $G$-linearized divisorial sheaves on $X$ yields an abelian group structure. This defines the \textit{equivariant class group}, denoted $\clg^G(X)$. This group contains the \textit{equivariant Picard group} $\pic^G(X)$, that is, the group of isomorphism classes of $G$-linearized invertible sheaves on $X$. 

\begin{prop}\label{EqIsomorphismsClg_Codim2}
Let $j:X_0\rightarrow X$ a $G$-equivariant open immersion whose image has a complement of codimension $\geq 2$ in $X$. Then, the pullback functor $j^*$ on coherent sheaves induces isomorphisms
\begin{center}
$\clg(X)\simeq \clg(X_0)$ and $\clg^G(X)\simeq\clg^G(X_0)$.
\end{center}
In particular, we have isomorphisms
\begin{center}
$\clg(X)\simeq \pic(X_{\rm sm})$ and $\clg^G(X)\simeq\pic^G(X_{\rm sm})$.
\end{center}
\end{prop}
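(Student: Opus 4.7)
The plan is to show that the pullback $j^*$ admits the pushforward $j_*$ as a two-sided inverse, both on divisorial sheaves and on $G$-linearized divisorial sheaves, and to check compatibility with the group operations $\star$ and $(-)^\vee$.

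First I would establish the non-equivariant statement. For a divisorial sheaf $\Ff$ on $X$, reflexivity together with the hypothesis $\mathrm{codim}(X\setminus X_0)\geq 2$ yield $\Ff\xrightarrow{\simeq} j_*j^*\Ff$. Conversely, if $\Gg$ is divisorial on $X_0$, then $j_*\Gg$ is coherent and reflexive of rank one on $X$, and satisfies $j^*j_*\Gg\simeq\Gg$. Equivalently, in the Weil-divisor picture, every prime divisor on $X_0$ extends uniquely to a prime divisor on $X$ and linear equivalence is preserved, so one recovers $\clg(X_0)\simeq\clg(X)$. Compatibility of $j^*$ and $j_*$ with $\star$ and $(-)^\vee$ is immediate from their definitions, giving an isomorphism of abelian groups.

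For the equivariant statement, I would exploit the $G$-equivariant open immersion $j':G\times X_0\hookrightarrow G\times X$, whose complement is still of codimension $\geq 2$ since $G$ is smooth. Pullback along $j$ of a $G$-linearized divisorial sheaf $(\Ff,\Phi)$ gives a $G$-linearized divisorial sheaf $(j^*\Ff,j'^{*}\Phi)$ on $X_0$. In the reverse direction, starting from $(\Gg,\Psi)$ on $X_0$ I would set $\Ff:=j_*\Gg$; the sheaves $\sigma^*\Ff$ and $p_X^*\Ff$ are reflexive on $G\times X$ thanks to the flatness of $\sigma$ and $p_X$ and the commutation of duals with flat pullback recalled in the introduction. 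Their restrictions to $G\times X_0$ are $\sigma_0^*\Gg$ and $p_{X_0}^*\Gg$, so the isomorphism $\Psi$ between them extends uniquely to an isomorphism $\Phi:\sigma^*\Ff\xrightarrow{\simeq} p_X^*\Ff$, since morphisms of reflexive sheaves on a normal scheme are determined by their restriction to a big open subset. The same density argument applied on $G\times G\times X$, whose big open subset $G\times G\times X_0$ again has complement of codimension $\geq 2$, propagates the cocycle condition from $\Psi$ to $\Phi$. The constructions $(\Ff,\Phi)\mapsto(j^*\Ff,j'^{*}\Phi)$ and $(\Gg,\Psi)\mapsto(j_*\Gg,\Phi)$ are visibly inverse on isomorphism classes and respect $\star$, yielding $\clg^G(X_0)\simeq\clg^G(X)$.

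The main technical point is the unique extension of the linearization $\Psi$ and of its cocycle identity to all of $X$; however, this reduces entirely to the general principle that reflexive sheaves and morphisms between them on a normal scheme are determined by their restriction to a big open subset, so no substantial obstacle is expected. Finally, for the \emph{in particular} clause I would take $X_0=X_{\rm sm}$, which is a $G$-stable open subvariety (the $G$-action preserves smoothness) whose complement has codimension $\geq 2$ by normality of $X$; since divisorial sheaves on a smooth variety are invertible, the main statement specializes to $\clg(X)\simeq\pic(X_{\rm sm})$ and $\clg^G(X)\simeq\pic^G(X_{\rm sm})$.
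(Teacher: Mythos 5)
Your proposal is correct and follows essentially the same route as the paper: both arguments rest on the fact that reflexive sheaves, and morphisms between them, on a normal variety are determined by their restriction to an open subset with complement of codimension $\geq 2$ (the paper cites \cite[1.12]{HarsthorneGDivisors} for this), applied first on $X$ itself and then on $G\times X$ (and $G\times G\times X$ for the cocycle) to transport linearizations. You simply spell out the details that the paper delegates to that citation.
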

\begin{proof}
The isomorphism $\clg(X)\simeq \clg(X_0)$ is a direct consequence of \cite[1.12]{HarsthorneGDivisors}. Furthermore, the pullback functor $j^*$ on coherent sheaves induces a well defined morphism $\clg^G(X)\rightarrow\clg^G(X_0)$ because $j$ is an equivariant flat morphism between normal $G$-varieties. Also, we have an open immersion $\Id_G\times j:G\times X_0\rightarrow G\times X$ of normal varieties whose image has a complement of codimension $\geq 2$ in $G\times X$. By \cite[1.12]{HarsthorneGDivisors} again, the pullback functor $(\Id_G\times j)^*$ induces for each divisorial sheaf $\Ff$ on $X$ a bijection between the set of $G$-linearizations of $\Ff$ and the set of $G$-linearizations of $j^*\Ff$. Thus, the morphism $\clg^G(X)\rightarrow\clg^G(X_0)$ is bijective.
\end{proof}

Assuming from now that $G$ is connected, we have the important exact sequence
\begin{equation}\label{EqSEClGroupEquivariantOubli}
0\rightarrow \Oo(X)^{*G}\rightarrow \Oo(X)^*\xrightarrow{\chi}\hat{G}\xrightarrow{\gamma} \clg^G(X)\xrightarrow{\phi} \clg(X)
\end{equation}
relating the equivariant class group to the usual one through the forgetful morphism $\phi$ that "forgets" the linearizations. This sequence is obtained from the analogous exact sequence for $\pic(X_{\rm sm})$ (\cite[4.2.2]{LinearizationGBrion}), by means of the above isomorphisms and $\Oo(X)^*\simeq \Oo(X_{\rm sm})^*$. The morphism $\gamma$ maps a character $\lambda\in\hat{G}$ to the associated $G$-linearization $\Oo_X(\lambda)$ of the structure sheaf (\cite[4.1.7]{LinearizationGBrion}), and $\chi$ maps an invertible regular function $f$ to its weight $\chi_f$ relative to the $G$-action (\cite[4.1.6]{LinearizationGBrion}).

\begin{rem}\label{RemFiniteIndexForgetGLin}
By \cite[5.2.1]{LinearizationGBrion} and \ref{EqIsomorphismsClg_Codim2}, the cokernel of $\phi$ has finite exponent bounded by the order of the Picard group of $G$ (which is finite, see \cite[5.1.3]{LinearizationGBrion}). It follows that finite generation of $\clg^G(X)$ implies finite generation of $\clg(X)$. By the above exact sequence, the converse is true. 

On several occasions, we will have to suppose that the forgetful morphism is surjective. It is apparent from the above that this condition is satisfied if we suppose that $\pic(G)=0$. This latter assumption is not a serious restriction as there exists a central isogeny $\tilde{G}\rightarrow G$ of connected algebraic groups such that $\pic(\tilde{G})=0$ (\cite[18.22]{Milne}).
\end{rem}

Consider a non-empty open subset $X_0\xhookrightarrow{j} X$, and denote $(D_i)_{i=1,..,r}$ the possibly empty family of prime divisors lying in the complement. A very useful tool for the study of class groups of normal varieties is the localization exact sequence 
\begin{equation}\label{EqSEClGroup}
0\rightarrow \Oo(X)^*\rightarrow \Oo(X_0)^*\xrightarrow{\divi} \bigoplus_{i=1}^r \ZZ D_i \xrightarrow{D\mapsto [D]}\clg(X)\xrightarrow{j^*}\clg(X_0)\rightarrow 0.
\end{equation}
Suppose that $X_0$ is $G$-stable. Then, it is natural to look for an analogous exact sequence involving the restriction morphism $\clg^G(X)\xrightarrow{j^*}\clg^G(X_0)$. Again, we can suppose that $X$ is smooth and consider (equivariant) Picard groups instead of (equivariant) class groups. The main idea is then to use a construction from Edidin and Graham in order to realize $\pic^G(X)$ as the Picard group of a variety built from $X$ in a natural way. Following \cite[Lemma 9]{EdidinGraham}, there exists a finite dimensional $G$-module $E$ and a non-empty $G$-stable open subset $E_0\subset E$ whose complement in $E$ is of codimension $\geq 2$, and such that we have a $G$-torsor $E_0\rightarrow E_0/G$.
\begin{prop}
The associated fiber bundle $f:E_0\times X\rightarrow E_0\times^G X$ exists in the category of varieties and we have an isomorphism
\begin{center}
$\pic(E_0\times^G X)\xrightarrow{\simeq}\pic^G(X)$
\end{center}
obtained from the isomorphisms $\pic^G(X)\xrightarrow{p_X^*}\pic^G(E_0\times X)$ and $\pic(E_0\times^G X)\xrightarrow{f^*}\pic^G(E_0\times X)$.
\end{prop}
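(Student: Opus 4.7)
The plan is to separate the proposition into three independent assertions and establish them in turn: (i) the quotient $E_0\times^G X$ exists as a variety and $f$ is a $G$-torsor, (ii) the pullback $f^*$ is an isomorphism, and (iii) the pullback $p_X^*$ is an isomorphism. The desired isomorphism $\pic(E_0\times^G X)\xrightarrow{\simeq}\pic^G(X)$ then follows by composing $(p_X^*)^{-1}\circ f^*$.

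For (i), since $E_0\to E_0/G$ is a $G$-torsor, the diagonal action of $G$ on $E_0\times X$ is free. I would construct $E_0\times^G X$ by the standard associated bundle construction: \'etale-locally on $E_0/G$, the torsor $E_0$ trivializes as $G\times U$ (some \'etale open $U$), so $E_0\times X$ trivializes as $G\times U\times X$; using the twisting isomorphism $(g,u,x)\mapsto(g,u,g^{-1}\cdot x)$ the diagonal action is transformed into left translation on the $G$-factor, and the quotient becomes $U\times X$. These patches glue to a variety $E_0\times^G X$ together with a $G$-torsor $f:E_0\times X\to E_0\times^G X$.

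For (ii), the map $f^*$ is an isomorphism by faithfully flat descent along the $G$-torsor $f$: a line bundle on the base pulls back to a canonically $G$-linearized line bundle on the total space, and conversely every $G$-linearized line bundle upstairs descends by the standard descent criterion. This part is classical and requires no new input.

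For (iii), I would factor $p_X$ as the $G$-equivariant open immersion $j:E_0\times X\hookrightarrow E\times X$ followed by the projection $q:E\times X\to X$. Applying Proposition \ref{EqIsomorphismsClg_Codim2} to $j$ yields $\pic^G(E\times X)\simeq \pic^G(E_0\times X)$, since the complement $(E\setminus E_0)\times X$ has codimension $\geq 2$ in $E\times X$ by hypothesis on $E_0$ (and we may restrict to the smooth locus so that $\clg^G=\pic^G$). It remains to check that $q^*:\pic^G(X)\to \pic^G(E\times X)$ is an isomorphism. Injectivity is immediate: the $G$-equivariant zero section $\sigma:X\to E\times X$, $x\mapsto(0,x)$ provides a retraction with $\sigma^*q^*=\mathrm{id}$. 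For surjectivity, every $G$-linearized line bundle $L$ on $E\times X$ must be shown equivariantly isomorphic to $q^*\sigma^*L$; this is the $G$-equivariant homotopy invariance for the trivial equivariant vector bundle $E\times X\to X$, which I would deduce from the commuting $\GG_m$-scaling on $E$: this scaling is $G$-equivariant (as the $G$-action on $E$ is linear) and realizes a $G$-equivariant deformation retraction of $E\times X$ onto the zero section $\sigma(X)$, which yields the desired equivariant isomorphism $L\simeq q^*\sigma^*L$.

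The main obstacle is this last step (iii): the ordinary fact $\pic(\AA^n\times X)=\pic(X)$ must be upgraded to the equivariant setting, and care is needed to ensure that the retraction witnessing homotopy invariance is a morphism of $G$-varieties, which is exactly what the commutation of the linear $G$-action with the $\GG_m$-scaling provides.
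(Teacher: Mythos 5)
Your step (i) contains a genuine gap. Gluing the patches $U\times X$ along \'etale opens $U$ of $E_0/G$ only produces an \emph{algebraic space}: \'etale descent of schemes is not effective in general, and quotients of varieties by free actions of algebraic groups can indeed fail to be schemes. This is precisely the point where the paper invokes a theorem of Sumihiro (\cite[5.3.4]{LinearizationGBrion}): since $X$ is normal and $G$ is connected, $X$ is covered by $G$-stable quasi-projective open subsets carrying $G$-linearized ample bundles, and for such fibers the associated bundle $E_0\times^G X$ does exist as a variety (descent becomes effective Zariski-locally on $E_0/G$). Without some input of this kind your construction does not land in the category of varieties, which is what the statement asserts. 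Step (ii) is fine and is exactly the paper's argument (descent along the torsor $f$, \cite[3.3.1]{LinearizationGBrion}).

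For step (iii) you take a different route from the paper, which simply cites \cite[Lemma 2]{EdidinGraham}. Your reduction to $E\times X$ via \ref{EqIsomorphismsClg_Codim2} and the retraction $\sigma^*q^*=\Id$ are correct, but the surjectivity argument via a ``$G$-equivariant deformation retraction'' is not yet a proof: contracting $E$ by the $\GG_m$-scaling is a topological heuristic, and what you must actually produce is an isomorphism of $G$-\emph{linearized} sheaves $L\simeq q^*\sigma^*L$, not merely of the underlying invertible sheaves. A clean way to close this is to compare the exact sequences (\ref{EqSEClGroupEquivariantOubli}) for $X$ and for $E\times X$: since $\Oo(E\times X)^*\simeq\Oo(X)^*$ (as $E$ is an affine space) and $\pic(E\times X)\simeq\pic(X)$ by ordinary homotopy invariance for normal varieties, the five lemma gives that $q^*:\pic^G(X)\to\pic^G(E\times X)$ is bijective. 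Equivalently: $L$ and $q^*\sigma^*L$ (the latter with the pulled-back linearization) are isomorphic as invertible sheaves, hence differ by $\Oo_{E\times X}(\lambda)$ for some $\lambda\in\hat{G}$ by the exact sequence, and restricting along the $G$-equivariant zero section forces $\lambda$ to be absorbed into the linearization of $\sigma^*L$. With these two repairs your argument becomes a valid, more self-contained alternative to the paper's chain of citations.
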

\begin{proof}
That the associated fiber bundle $f$ exists in the category of varieties is a consequence of a theorem of Sumihiro (\cite[5.3.4]{LinearizationGBrion}). The isomorphism $\pic^G(X)\xrightarrow{p_X^*}\pic^G(E_0\times X)$ follows from \cite[Lemma 2]{EdidinGraham}, and the isomorphism $\pic(E_0\times^G X)\xrightarrow{f^*}\pic^G(E_0\times X)$ is a consequence of a descent result from Grothendieck (\cite[3.3.1]{LinearizationGBrion}). Considering  $(p_X^*)^{-1}\circ f^*$ yields the desired isomorphism.
\end{proof}

\begin{cor}
Let $G$ a connected algebraic group, $X$ a normal $G$-variety, $X_0\xhookrightarrow{j} X$ a non-empty $G$-stable open subvariety, and $(D_i)_{i=1,..,r}$ the family of prime divisors lying in the complement of $X_0$. Then the sequence
\begin{equation}\label{EqSEClGroupEquivariant}
0\rightarrow \Oo(X)^{*G}\rightarrow \Oo(X_0)^{*G}\xrightarrow{\divi} \bigoplus_{i=1}^r \ZZ D_i \xrightarrow{D\mapsto [D]}\clg^G(X)\xrightarrow{j^*}\clg^G(X_0)\rightarrow 0
\end{equation}
is exact.
\end{cor}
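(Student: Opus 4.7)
\medskip

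The plan is to reduce the equivariant statement to the non-equivariant localization sequence \eqref{EqSEClGroup} applied to the mixed space $Y:=E_0\times^G X$ associated to the $G$-torsor $E_0\rightarrow E_0/G$ provided by the Edidin--Graham construction, and then transport everything back to $X$ via the isomorphisms furnished by the preceding proposition together with \ref{EqIsomorphismsClg_Codim2}.

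Concretely, set $Y:=E_0\times^G X$ and $Y_0:=E_0\times^G X_0$. Since $G$ is connected and the $D_i$ are $G$-permuted prime divisors, each $D_i$ is actually $G$-stable, and $\tilde D_i:=E_0\times^G D_i$ is a prime divisor of $Y$; moreover $D_i\mapsto \tilde D_i$ is a bijection between the prime components of $X\setminus X_0$ and those of $Y\setminus Y_0$. Since $Y$ is normal (being the free quotient of the normal variety $E_0\times X$), I may apply the ordinary localization sequence \eqref{EqSEClGroup} to the pair $(Y,Y_0)$:
\begin{equation*}
0\rightarrow \Oo(Y)^*\rightarrow \Oo(Y_0)^*\xrightarrow{\divi}\bigoplus_{i=1}^r\ZZ\tilde D_i\rightarrow \clg(Y)\rightarrow \clg(Y_0)\rightarrow 0.
\end{equation*}

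The substance of the proof is then to identify this sequence with the one claimed. For the unit groups, the $G$-torsor $E_0\times X\to Y$ gives $\Oo(Y)^*=\Oo(E_0\times X)^{*G}$, and Rosenlicht's lemma combined with $\Oo(E_0)^*=k^*$ (which holds because $E_0$ has complement of codimension $\geq 2$ in the affine space $E$) yields $\Oo(E_0\times X)^*=\Oo(X)^*$ and hence $\Oo(Y)^*=\Oo(X)^{*G}$; the same argument applies to $Y_0$. For the class groups, the smooth locus of $Y$ is $E_0\times^G X_{\rm sm}$, so
\begin{equation*}
\clg(Y)\simeq \pic(Y_{\rm sm})\simeq \pic^G(X_{\rm sm})\simeq \clg^G(X),
\end{equation*}
where the middle isomorphism is the one of the preceding proposition and the outer two come from \ref{EqIsomorphismsClg_Codim2}; the same chain works for $Y_0$. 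The divisor groups match via $D_i\leftrightarrow \tilde D_i$.

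The last step, which I expect to be the most delicate point, is to check that the connecting map $\tilde D_i\mapsto [\Oo_Y(\tilde D_i)]$ in $\clg(Y)$ corresponds under the chosen isomorphisms to $D_i\mapsto [\Oo_X(D_i)]$ in $\clg^G(X)$, where $\Oo_X(D_i)$ carries its canonical $G$-linearization coming from the $G$-stability of $D_i$. This is essentially a compatibility between the two descriptions of $\clg^G$: as isomorphism classes of $G$-linearized divisorial sheaves on $X$, and as line bundles on the mixing $Y$. It amounts to verifying that the pullback of $\Oo_Y(\tilde D_i)$ to $E_0\times X$ coincides (as a $G$-linearized sheaf) with $p_X^*\Oo_X(D_i)$, which can be checked on the smooth locus where both divisors have equations that pull back identically. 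Once this compatibility is in place, the exactness of \eqref{EqSEClGroupEquivariant} follows term-by-term from the exactness of the non-equivariant localization sequence on $Y$.
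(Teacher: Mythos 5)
Your proposal is correct and follows essentially the same route as the paper: apply the ordinary localization sequence to the Edidin--Graham mixed space $E_0\times^G X_0\hookrightarrow E_0\times^G X$ and transport it back via the isomorphisms on units (using that $E_0$ has complement of codimension $\geq 2$ in the affine space $E$), on divisor groups ($D_i\leftrightarrow E_0\times^G D_i$), and on (equivariant) Picard/class groups from the preceding proposition. The compatibility of the connecting maps that you flag as the delicate point is exactly the commutativity of the paper's diagram, which it asserts holds "by construction"; your sketch of how to check it on the smooth locus is adequate.
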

\begin{proof}
Notice already that the arrow $\bigoplus_{i=1}^r \ZZ D_i\rightarrow\clg^G(X)$ is well-defined because each prime divisor $D_i$ is $G$-invariant, hence the sheaves $\Oo_X(D_i)$ are canonically linearized. Again, we can replace $X$ by its smooth locus and consider (equivariant) Picard groups. Consider the diagram
\begin{center}
\begin{tikzcd}[column sep=small]
		0 \arrow[r, ""]  & \Oo(E_0\times^G X)^* \arrow[d,""] \arrow[r,""] & \Oo(E_0\times^G X_0)^* \arrow[d,""] \arrow[r,""] & \bigoplus_{i=1}^r\ZZ (E_0\times^G D_i) \arrow[d,"E_0\times^G D_i\mapsto D_i"] \arrow[r, ""] & \textrm{Pic}(E_0\times^G X) \arrow[d,""] \arrow[r, ""] & \textrm{Pic}(E_0\times^G  X_0) \arrow[r, ""] \arrow[d, ""] & 0 \\
		0 \arrow[r, ""]  & \Oo(X)^{*G} \arrow[r,""]  & \Oo(X_0)^{*G} \arrow[r,""]                       & \bigoplus_{i=1}^r \ZZ D_i \arrow[r,""] & \textrm{Pic}^G(X) \arrow[r,"j^*"]          & \textrm{Pic}^G(X_0) \arrow[r, ""]& 0
\end{tikzcd}
\end{center}
where the upper line is the exact sequence (\ref{EqSEClGroup}) applied to $E_0\times^G X_0\xhookrightarrow{} E_0\times^G X$, and the bottom line is the sequence (\ref{EqSEClGroupEquivariant}). The fourth and fifth vertical arrows are isomorphisms as described in the last proposition, and the first and second vertical arrows are defined as follow: notice that because $f$ is a torsor, there is an isomorphism $f^\sharp:\Oo_{E_0\times^G X}\rightarrow (f_*\Oo_{E_0\times X})^G$, hence the ring morphism $f^\sharp(E_0\times^G X)$ induces an isomorphism $\Oo(E_0\times^G X)^*\simeq \Oo(E_0\times X)^{*G}$. Also, because $E_0$ has a complement in $E$ of codimension $\geq 2$ and $E$ is an affine space, the projection $p_X:E_0\times X\rightarrow X$ induces an isomorphism
\begin{center}
$p^\sharp_X(X):\Oo(X)^{*G}\rightarrow \Oo(E_0\times X)^{*G}$\end{center}
between groups of $G$-invariant units. The first vertical arrow is then the isomorphism
\begin{center}
$(p_X^\sharp(X))^{-1}\circ f^\sharp(E_0\times^G X)$.
\end{center}
The second vertical arrow is the isomorphism obtained the same way but replacing $X$ by $X_0$. The third arrow is obviously an isomorphism between these two free abelian groups of rank $r$. The diagram is commutative by construction, and the second arrow is exact because all vertical arrows are isomorphisms.
\end{proof}

\subsection{Construction of the equivariant Cox ring}
\label{SecCoxRingEq}

Let $G$ an algebraic group, $\sigma: G\times X\rightarrow X$ an action of $G$ on a pointed normal variety $(X,x\xhookrightarrow{i}X)$ such that $\Oo(X)^{*G}\simeq k^*$, and $p_X: G\times X\rightarrow X$ the second projection. Natural examples of such varieties are almost homogeneous varieties, and normal complete $G$-varieties. Below, we follow the lines of the construction presented in Section \ref{SecCoxRing}, but considering $G$-linearized divisorial sheaves on $X$. In particular, this will motivate the assumption $\Oo(X)^{*G}\simeq k^*$.

Similarly as before, consider the category of rigidified $G$-linearized divisorial sheaves on $X$ whose morphisms preserve linearizations and rigidifications. Let $E^G$ be the set of isomorphism classes associated to this category, which again carries a natural structure of abelian group. 
\begin{prop}\label{PropIsoClgGRigididfiedEq}
The group morphism $\rho:E^G\rightarrow \clg^G(X), [(\Ff,f)]\mapsto [\Ff]$ is an isomorphism.
\end{prop}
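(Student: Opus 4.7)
The plan is to mimic the non-equivariant case (Proposition \ref{PropIsoClgGRigididfied}) essentially verbatim, replacing ``divisorial sheaf'' by ``$G$-linearized divisorial sheaf'' throughout, and checking at each step that the extra datum of a linearization does not cause complications. That $\rho$ is a well-defined group morphism is immediate from the constructions of the operations on $E^G$ and $\clg^G(X)$: the forgetful map on rigidified sheaves commutes with the reflexive tensor product $\star$ and sends isomorphisms of rigidified $G$-linearized divisorial sheaves to isomorphisms of $G$-linearized divisorial sheaves.

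Surjectivity of $\rho$ is straightforward: given a class $[\Ff] \in \clg^G(X)$, the smoothness of $x$ makes the fiber $i^*\Ff$ a one-dimensional $k$-vector space, so one may pick any $k$-linear isomorphism $f : i^*\Ff \to k$ to obtain a rigidified $G$-linearized representative $(\Ff, f)$ with $\rho([(\Ff, f)]) = [\Ff]$. For injectivity, suppose $[(\Ff_1, f_1)]$ and $[(\Ff_2, f_2)]$ both map to the same class in $\clg^G(X)$, so that there exists a $G$-equivariant isomorphism $v : \Ff_1 \to \Ff_2$ of $G$-linearized divisorial sheaves. The maps $f_1$ and $f_2 \circ i^*v$ are then two $k$-linear isomorphisms $i^*\Ff_1 \to k$, hence differ by a unique scalar $\lambda \in k^*$, say $f_2 \circ i^*v = \lambda f_1$. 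I replace $v$ by $v' := \lambda^{-1} v$, which remains a $G$-equivariant isomorphism because multiplication by the constant $\lambda^{-1} \in k^*$ is automatically a $G$-equivariant automorphism of $\Ff_2$ (scalars commute with any linearization). By construction $f_2 \circ i^*v' = f_1$, so $v'$ is an isomorphism of rigidified $G$-linearized divisorial sheaves, forcing $[(\Ff_1, f_1)] = [(\Ff_2, f_2)]$ in $E^G$.

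No serious obstacle arises; the whole argument is a routine equivariant upgrade of the proof of Proposition \ref{PropIsoClgGRigididfied}. The only point worth flagging is that the rescaling scalar $\lambda$ automatically lies in $k^*$ rather than in a larger group of $G$-invariant units, because it is computed on the one-dimensional $k$-vector space $i^*\Ff_1$. The standing hypothesis $\Oo(X)^{*G} \simeq k^*$ thus plays no essential role in the bijectivity of $\rho$ itself; it is the natural assumption ensuring that the rigidified $G$-linearized category has trivial automorphism groups, which is what will make the subsequent intrinsic construction of the equivariant Cox sheaf go through.
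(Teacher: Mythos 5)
Your proof is correct and follows essentially the same route as the paper: both arguments reduce injectivity to the observation that the discrepancy between an isomorphism of $G$-linearized sheaves and the rigidifications is a scalar in $k^*$, and that multiplication by such a scalar is automatically an automorphism of $G$-linearized sheaves. The paper phrases this as showing $\ker\rho$ is trivial via a comparison with $(\Oo_X,\Id_k)$, while you rescale the isomorphism directly; these are the same idea, and your closing remark about the role of $\Oo(X)^{*G}\simeq k^*$ matches the paper's subsequent discussion.
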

\begin{proof}
We adapt the proof \cite[9.2.9]{Kleinman} of Proposition \ref{PropIsoClgGRigididfied}. The map $\rho$ obviously defines a surjective group morphism. To check injectivity, consider an element $[(\Ff,f)]\in \ker \rho$. Then, there is an isomorphism of $G$-linearized divisorial sheaves $v:\Ff\rightarrow\Oo_X$, where $\Oo_X$ is endowed with the trivial linearization. Consider the automorphism $w:=i^*v\circ f^{-1}$ of $k$ viewed as a $k$-vector space. Let $\varphi\rightarrow \spec k$ the structural morphism of $X$, the pullback $\varphi^*w:\Oo_X\rightarrow\Oo_X$ of $w$ is the automorphism of $\Oo_X$ (as a $G$-linearized sheaf) defined by the multiplication by the constant function $w(1_k)$. By construction, we have $(\Ff, f)\simeq (\Oo_X, w^{-1})$ through $v$, and $(\Oo_X, \Id_k)\simeq (\Oo_X, w^{-1})$ through $\varphi^*w$. This proves $[(\Ff,f)]=[(\Oo_X,\Id_k)]$.
\end{proof}
Given a $G$-linearized divisorial sheaf $\Ff$ on $X$, we have $\Aut(\Ff)\simeq \Oo(X)^{*G}$. Because $\Oo(X)^{*G}\simeq k^*$ by assumption, we can  identify sections of rigidified $G$-linearized divisorial sheaves lying in a given class $[(\Ff,f)]\in E^G$. Similarly as in \ref{SecCoxRing}, this identification is done using the equivalence relations $\sim^G_U$ indexed by open subsets $U\subset X$. This builds up a canonical representative $\Ff^x$ of the corresponding class $[\Ff]\in\clg^G(X)$ which we call the rigidified $G$-linearized sheaf associated to $[\Ff]$.

\begin{prop}\label{PropDefCoxSheafGeq}
The sheaf
\begin{center}
$\Rr_X^{G,x}:=\bigoplus_{[\Ff]\in \clg^G(X)} \Ff^x$
\end{center}
has a natural structure of quasi-coherent $\clg^G(X)$-graded $\Oo_X$-algebra. If this $\Oo_X$-algebra is of finite type, its relative spectrum $\hat{X}^{G,x}:=\spec_X(\Rr_X^{G,x})$ is an algebraic scheme endowed with a $G$-action that commutes with the natural $\Gamma_{\clg^G(X)}$-action, and that lifts the $G$-action on $X$.
\end{prop}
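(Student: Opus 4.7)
The plan is to mirror the argument of Proposition \ref{PropDefCoxSheaf} but with every ingredient upgraded to its $G$-equivariant analogue, and then to convert the resulting $G$-linearization of the graded $\Oo_X$-algebra into a $G$-action on the relative spectrum.

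For the algebra structure, I would define, on each open $U\subset X$ and for every pair of classes $[\Ff],[\Gg]\in\clg^G(X)$, a multiplication $\Ff^x(U)\otimes_{\Oo(U)}\Gg^x(U)\rightarrow(\Ff\star\Gg)^x(U)$ via the natural morphism $\Ff\otimes\Gg\rightarrow(\Ff\otimes\Gg)^{\vee\vee}=\Ff\star\Gg$ post-composed with the canonical identification with the rigidified $G$-linearized representative. The key verification is that this passes to the quotient by the relations $\sim^G_U$, which reduces to the observation that given morphisms $v:\Ff_1\rightarrow\Ff_2$ and $w:\Gg_1\rightarrow\Gg_2$ of rigidified $G$-linearized divisorial sheaves, the morphism $(v\otimes w)^{\vee\vee}:\Ff_1\star\Gg_1\rightarrow\Ff_2\star\Gg_2$ again preserves both rigidifications and linearizations. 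The linearization compatibility uses the naturality of the canonical linearizations on $\star$ and on double-duals recalled in Section \ref{SecEqClassGroup}; the rigidification compatibility is automatic at the base point. Associativity and commutativity then follow from the corresponding properties of $\otimes$ together with the uniqueness of morphisms between rigidified $G$-linearized divisorial sheaves (Proposition \ref{PropIsoClgGRigididfiedEq}), so that the $\clg^G(X)$-graded quasi-coherent $\Oo_X$-algebra structure on $\Rr_X^{G,x}$ is forced.

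For the geometric statement, assuming $\Rr_X^{G,x}$ is of finite type, its relative spectrum $\hat{X}^{G,x}\to X$ is affine, hence an algebraic scheme. To lift the $G$-action on $X$, I would exhibit a $G$-linearization of $\Rr_X^{G,x}$ in the sense of \cite[Exp. I, 6.5]{SGAIII}. Each $\Ff^x$ is, by construction, a rigidified $G$-linearized divisorial sheaf, hence carries a canonical $G$-linearization $\Phi_{\Ff^x}:\sigma^*\Ff^x\xrightarrow{\simeq}p_X^*\Ff^x$ satisfying the cocycle condition; Proposition \ref{PropIsoClgGRigididfiedEq} guarantees this is independent of the chosen isomorphism with a representative $\Ff$. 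Summing over $\clg^G(X)$ produces an isomorphism $\Phi:\sigma^*\Rr_X^{G,x}\xrightarrow{\simeq}p_X^*\Rr_X^{G,x}$. Compatibility of $\Phi$ with the algebra structure reduces to the flatness of $\sigma$ and $p_X$ (so that their pullbacks commute with $\star$ and with double duals) and to the naturality of the multiplication constructed above. Taking relative spectra then converts $\Phi$ into a morphism $G\times\hat{X}^{G,x}\rightarrow\hat{X}^{G,x}$ lifting $\sigma$, and the cocycle condition for $\Phi$ gives the group action axioms.

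Finally, the natural $\Gamma_{\clg^G(X)}$-action on $\hat{X}^{G,x}$ is the one encoding the $\clg^G(X)$-grading of $\Rr_X^{G,x}$; since the $G$-action was defined summand-wise along the grading, it preserves each homogeneous piece, so the two actions commute. I expect the main obstacle to be the second paragraph, namely checking carefully that the sum $\Phi$ of the component linearizations really is a linearization of the algebra, i.e.\ respects multiplication and unit; this is a routine but diagram-heavy verification relying on the pullback-compatibility of $\star$ under flat morphisms, which was already the technical heart of making $\Rr_X^{G,x}$ an algebra and which must be re-checked after applying $\sigma^*$ and $p_X^*$.
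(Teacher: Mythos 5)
Your proposal is correct and follows essentially the same route as the paper: the multiplication is defined homogeneous-component-wise via the natural map into the rigidified representative of the $\star$-product, and the $G$-action on the relative spectrum is obtained by summing the component linearizations into a graded algebra isomorphism $\Phi:\sigma^*\Rr_X^{G,x}\rightarrow p_X^*\Rr_X^{G,x}$, identifying relative spectra of pullbacks with base changes, and deducing commutativity with the $\Gamma_{\clg^G(X)}$-action from the fact that $\Phi$ preserves the grading.
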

\begin{proof}
By construction, $\Rr^{G,x}_X$ is a $\clg^G(X)$-graded quasi-coherent $\Oo_X$-module. The multiplication between homogeneous sections is defined by the natural morphism
\begin{center}
$\Ff^x(U)\otimes_{\Oo(U)}\Gg^x(U)\rightarrow (\Ff^x\star\Gg^x)^x(U)$
\end{center}
for every open subset $U\subset X$. This multiplication equips $\Rr^{G,x}_X$ with a structure of quasi-coherent $\clg^G(X)$-graded $\Oo_X$-algebra. 

Suppose that $\Rr_X^{G,x}$ is of finite type. Then, the relative spectrum $\hat{X}^{G,x}$ is by definition an algebraic scheme which comes with a morphism $q:\hat{X}^{G,x}\rightarrow X$. Moreover, the grading group $\clg^G(X)$ has to be finitely generated, thus $\Gamma_{\clg^G(X)}$ is a diagonalizable group  having a natural action on $\hat{X}^{G,x}$. Now, consider the direct sum 
\begin{center}
$\Phi:=\bigoplus_{[\Ff]\in \clg^G(X)}\Phi_{\Ff^x}$
\end{center}
of the isomorphisms $\Phi_{\Ff^x}:\sigma^*\Ff^x\rightarrow p_X^*\Ff^x$ given by $G$-linearizations of the divisorial sheaves $\Ff^x$. By construction, $\Phi$ defines an isomorphism of  $\clg^G(X)$-graded $\Oo_{G\times X}$-algebra $\sigma^*\Rr^{G,x}\rightarrow p_X^*\Rr^{G,x}$. By virtue of \cite[II.1.5.5]{EGA}, we can identify the morphism $\spec_{G\times X}(\sigma^*{\Rr^{G,x}})\rightarrow G\times X$ with the base change of $q$ along the morphism $G\times X\xrightarrow{\sigma} X$. In the same way, we identify $\spec_{G\times X}(p_X^*{\Rr^{G,x}})$ with $G\times \hat{X}^{G,x}$ as schemes over $G\times X$. Hence, we obtain a cartesian square
\begin{center}
\begin{tikzcd}
		G\times\hat{X}^{G,x} \arrow[d,"", swap] \arrow[r,""] \arrow[dr, phantom, "\square"] &\hat{X}^{G,x} \arrow[d,"q"]\\
		G\times X \arrow[r,"\sigma", swap] & X\,.
\end{tikzcd}
\end{center}
The $G$-linearizations of the homogeneous components yield a cocycle condition for the isomorphism $\Phi$, hence a $G$-linearization of $\Rr^{G,x}$. It follows that the morphism $G\times\hat{X}^{G,x}\rightarrow \hat{X}^{G,x}$ satisfies the associativity axiom of a group action. By the diagram, this action lifts the $G$-action on the base $X$. Finally, this action commutes with the $\Gamma_{\clg^G(X)}$-action because the automorphism of $\clg^G(X)$ induced by $\Phi$ through its action on the grading group is the identity.
\end{proof}

\begin{defn}\label{DefCoxSheafEq}
The \textit{equivariant Cox sheaf} of $(X,x)$ is the $\clg^G(X)$-graded $\Oo_X$-algebra $\Rr_X^{G,x}$. The \textit{equivariant Cox ring} $\cox^{G,x}(X)$ is the ring of global sections of $\Rr_X^{G,x}$. When $\Rr_X^{G,x}$ is of finite type as an $\Oo_X$-algebra, the relative spectrum $\hat{X}^{G,x}$ over $X$ of the Cox sheaf is the \textit{equivariant characteristic space} of $(X,x)$. When $\cox^{G,x}(X)$ is a finitely generated $k$-algebra, its spectrum $\tilde{X}^{G,x}$ is the \textit{equivariant total coordinate space} of $(X,x)$. When no ambiguity occurs, we often drop the reference to the base point in the notation.
\end{defn}

\begin{rem}\label{Rem_CoxEqIsomCox_SemisimpleSimplyConnected}
By the exact sequence (\ref{EqSEClGroupEquivariantOubli}) and \ref{RemFiniteIndexForgetGLin}, the equivariant Cox ring and the usual Cox ring are canonically isomorphic if $G$ is a semisimple and simply connected algebraic group.
\end{rem}

\subsection{Equivariant diagonalizable torsors}
\label{Sec_EqDiagTorsors}

Let $G$ be an algebraic group, $\Gamma$ a diagonalizable group, and $(X,x)$ a pointed normal $G$-variety. 

\begin{defn}
A \textit{diagonalizable torsor} is a torsor (or principal bundle) under a diagonalizable group. Let $Y$ be a $G\times\Gamma$-variety, and $q:Y\rightarrow X$ a diagonalizable $\Gamma$-torsor. Then $q$ is a \textit{$G$-equivariant diagonalizable $\Gamma$-torsor} over $X$ if the morphism $q$ is $G$-equivariant.
\end{defn}

The $G$-equivariant diagonalizable $\Gamma$-torsors over $X$ naturally form a category, taking for arrows the $G\times\Gamma$-equivariant morphisms over $X$. In this Section, we classify $G$-equivariant diagonalizable torsors over $X$ up to isomorphism, assuming $X$ admits only constant invariant invertible regular functions. The next proposition gives the general form of such a torsor. Recall that a torsor by an (affine) algebraic group is necessarily an affine invariant morphism. 

\begin{prop}\label{Prop_GenForm_DiagTorsor}
Let $Y$ be an algebraic $\Gamma$-scheme, and $q:Y\rightarrow X$ be a $\Gamma$-invariant affine morphism. Then, $Y$ is the relative spectrum over $X$ of a quasi-coherent $\hat{\Gamma}$-graded $\Oo_X$-algebra $\Aa^G$ of finite type. Moreover, $q$ defines a $G$-equivariant $\Gamma$-torsor over $X$ if and only if
\begin{enumerate}
\item $\forall \chi\in \hat{\Gamma}$, $\Aa^G_\chi$ is a $G$-linearized invertible sheaf.
\item $\forall \chi,\lambda\in\hat{\Gamma}$, the natural morphism $\Aa^G_\chi\otimes_{\Oo_X}\Aa^G_\lambda \rightarrow \Aa^G_{\chi+\lambda}$ is an isomorphism of $G$-linearized invertible sheaves.
\end{enumerate}
\end{prop}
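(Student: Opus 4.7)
The plan is to use the standard dictionary between affine $\Gamma$-schemes over $X$ and quasi-coherent $\hat{\Gamma}$-graded $\Oo_X$-algebras, and then reduce the torsor condition to the statement that the graded algebra is Zariski-locally the group algebra of $\hat{\Gamma}$.

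\textbf{Setup.} Since $q:Y\to X$ is affine and a morphism of algebraic schemes, $\Aa^G := q_*\Oo_Y$ is a quasi-coherent $\Oo_X$-algebra of finite type and $Y = \spec_X(\Aa^G)$. The hypothesis that $q$ is $\Gamma$-invariant means that $\Gamma$ acts on $Y$ over the trivial action on $X$, which corresponds, via the standard equivalence between $\Gamma$-actions on affine schemes and $\hat{\Gamma}$-gradings on their coordinate algebras, to a $\hat{\Gamma}$-grading $\Aa^G = \bigoplus_{\chi\in\hat{\Gamma}} \Aa^G_\chi$ of quasi-coherent $\Oo_X$-modules. This establishes the first assertion.

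\textbf{Equivariance part.} A $G$-action on $Y$ lifting the $G$-action on $X$ corresponds to a $G$-linearization of $\Aa^G$ as an $\Oo_X$-algebra; commuting with $\Gamma$ is equivalent to this linearization being homogeneous, i.e.\ restricting to $G$-linearizations of each graded piece $\Aa^G_\chi$ for which the multiplication maps $\Aa^G_\chi\otimes_{\Oo_X}\Aa^G_\lambda\to\Aa^G_{\chi+\lambda}$ become $G$-equivariant. Hence in the equivalence one may ignore $G$ and prove that $q$ underlies a $\Gamma$-torsor iff each $\Aa^G_\chi$ is an invertible $\Oo_X$-module and every multiplication map is an isomorphism.

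\textbf{Torsor criterion.} For the "if" direction, observe that $\hat{\Gamma}$ is a finitely generated abelian group (as $\Gamma$ is an affine algebraic group), so fix generators $\chi_1,\dots,\chi_n$. Given $x\in X$, choose an affine open $U\ni x$ trivializing each of the line bundles $\Aa^G_{\chi_1},\dots,\Aa^G_{\chi_n}$; by condition (2) all $\Aa^G_\chi$ then become trivial on $U$, and compatible trivializations yield an isomorphism of $\hat{\Gamma}$-graded $\Oo_U$-algebras $\Aa^G|_U \simeq \Oo_U\otimes_k k[\hat{\Gamma}]$. This identifies $q^{-1}(U)\simeq U\times\Gamma$ as $\Gamma$-schemes over $U$, so $q$ is Zariski-locally a trivial $\Gamma$-torsor, hence a $\Gamma$-torsor. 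For the "only if" direction, a $\Gamma$-torsor is fppf-locally trivial; pulling back $\Aa^G$ along a trivializing cover yields the trivial group algebra $\bigoplus_\chi\Oo$, all of whose graded pieces are invertible and whose multiplication maps are isomorphisms. Invertibility of a coherent module and the property of a morphism of quasi-coherent sheaves to be an isomorphism are both fppf-local, so conditions (1) and (2) descend.

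\textbf{Main obstacle.} The only genuinely delicate point is the simultaneous trivialization in the "if" direction: one needs the multiplication isomorphisms of (2) to propagate trivializations from a finite generating set of $\hat{\Gamma}$ to all characters in a coherent way. This is precisely where finite generation of $\hat{\Gamma}$ is used, and it is what allows Zariski-local (rather than merely fppf-local) triviality to be read off from conditions (1) and (2).
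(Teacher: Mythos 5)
Your reduction to the graded-algebra picture and your treatment of the $G$-equivariance (linearizations of each graded piece compatible with multiplication) match the paper, which handles that part by the same cartesian-square argument as in the construction of the equivariant Cox sheaf. The problem is in your ``if'' direction of the torsor criterion. You claim that after trivializing $\Aa^G_{\chi_1},\dots,\Aa^G_{\chi_n}$ on a Zariski open $U$, condition (2) yields an isomorphism of graded $\Oo_U$-algebras $\Aa^G|_U\simeq\Oo_U\otimes_k k[\hat{\Gamma}]$, i.e.\ Zariski-local triviality of $q$. This is false whenever $\hat{\Gamma}$ has torsion: a relation $m\chi=0$ forces the chosen trivializing section $s$ of $\Aa^G_\chi$ to satisfy $s^{m}=u$ for some unit $u\in\Oo(U)^*$, and making the trivialization compatible with the algebra structure requires extracting an $m$-th root of $u$, which need not exist Zariski-locally. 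Concretely, take $X=\GG_m$, $\Gamma=\mu_2$, and $q:\GG_m\rightarrow\GG_m$, $t\mapsto t^2$; here $\Aa^G=\Oo_X\oplus\Oo_X\cdot s$ with $s^2=t$ satisfies (1) and (2), $q$ is a $\mu_2$-torsor, but $q^{-1}(U)$ is connected over every open $U$, so $\Aa^G|_U\not\simeq\Oo_U\otimes_k k[\mu_2^{\vee}]$. Your closing remark locates the difficulty in finite generation of $\hat{\Gamma}$, but the real obstruction is torsion, and it cannot be removed.

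The gap is repairable because the paper's notion of torsor is the fppf one (faithfully flat with $\Gamma\times Y\rightarrow Y\times_X Y$ an isomorphism), so Zariski-local triviality is more than you need and should not be aimed at. Either cite the criterion of SGA~3, Exp.~VIII, 4.1, as the paper does, or argue directly: condition (1) makes $\Aa^G$ a locally free $\Oo_X$-module (with $\Aa^G_0\simeq\Oo_X$), so $q$ is faithfully flat; and the morphism $\Gamma\times Y\rightarrow Y\times_X Y$ corresponds to the map of $\Oo_X$-algebras $\Aa^G\otimes_{\Oo_X}\Aa^G\rightarrow\Aa^G\otimes_k k[\hat{\Gamma}]$ sending $a\otimes b_\lambda$ to $ab_\lambda\otimes\lambda$, whose graded components are assembled from the multiplication maps $\Aa^G_\chi\otimes_{\Oo_X}\Aa^G_\lambda\rightarrow\Aa^G_{\chi+\lambda}$; it is an isomorphism precisely when all of these are, which is condition (2). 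Your ``only if'' direction by fppf descent of invertibility and of being an isomorphism is fine.
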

\begin{proof}
By \cite[Exp. 8, 4.1]{SGAIII}, the morphism $q$ is a $\Gamma$-torsor if and only if the homogeneous components of $\Aa^G$ are invertible sheaves and the natural morphisms $\Aa^G_\chi\otimes_{\Oo_X}\Aa^G_\lambda \rightarrow \Aa^G_{\chi+\lambda}$ are isomorphisms of invertible sheaves. Proceeding as in the proof of \ref{PropDefCoxSheafGeq}, we obtain that the conditions on $G$-linearizations are necessary and sufficient to obtain a $G$-action on $Y$ with the desired properties.
\end{proof}

Denote $H^1_G(X,\Gamma)$ the set of isomorphism classes of $G$-equivariant $\Gamma$-torsor over $X$. The assignment 
\begin{center}
$\Gamma\mapsto H^1_G(X,\Gamma)$
\end{center}
naturally defines a functor from the category of diagonalizable groups to the category of pointed sets, where origins are taken to be the classes of trivial $G$-equivariant diagonalizable torsors . Indeed, consider a morphism of diagonalizable groups $\varphi_1:\Gamma_1\rightarrow \Gamma_2$, and define $H^1_G(X,\varphi_1)$ as follows: Let $q:Y\rightarrow X$ a $G$-equivariant $\Gamma_1$-torsor over $X$, and $Y\times^{\Gamma_1}\Gamma_2$ the associated fiber bundle over $X$, where $\Gamma_1$ acts on $\Gamma_2$ via $\varphi_1$. This fiber bundle exists as a (normal) algebraic scheme (\cite[3.3.3]{LinearizationGBrion}), and sits in the cartesian square
\begin{center}
	\begin{tikzcd}
		Y\times \Gamma_2 \arrow[r,"p_Y"] \arrow[d,  "/\Gamma_1"] & Y \arrow[d, "q", swap]\\
		Y\times^{\Gamma_1}\Gamma_2 \arrow[r, "\psi", swap] & X\,,
	\end{tikzcd}
\end{center}
in which all arrows are $G$-equivariant. Moreover, $\psi$ defines a $\Gamma_2$-torsor as it pullbacks to the trivial torsor along the faithfully flat morphism $q$. This construction is compatible with taking isomorphism classes, thus it defines a map
\begin{center}
$H^1_G(X,\varphi_1):H^1_G(X,\Gamma_1)\rightarrow H^1_G(X,\Gamma_2)$.
\end{center}
Moreover, considering a second morphism $\varphi_2:\Gamma_2\rightarrow\Gamma_3$ of diagonalizable groups, we have
\begin{center}
$H^1_G(X,\varphi_2\circ\varphi_1)=H^1_G(X,\varphi_2)\circ H^1_G(X,\varphi_1)$.
\end{center}
The next proposition says that the functor $H^1_G(X,.)$ is a group functor.

\begin{prop}
Let $\Gamma,\Gamma_1,\Gamma_2$ be diagonalizable  groups. We have a canonical bijection
\begin{center}
$H^1_G(X,\Gamma_1)\times H^1_G(X,\Gamma_2)\rightarrow H^1_G(X,\Gamma_1\times\Gamma_2)$, $([Y_1],[Y_2])\mapsto [Y_1\times_X Y_2]$,
\end{center}
where $\Gamma_1\times\Gamma_2$ acts on $Y_1\times_X Y_2$ componentwise. Moreover the canonical map
\begin{center}
$H^1_G(X,\Gamma)\times H^1_G(X,\Gamma)\rightarrow H^1_G(X,\Gamma\times\Gamma)\xrightarrow{H^1_G(X,m_\Gamma)} H^1_G(X,\Gamma)$
\end{center}
endow $H^1_G(X,\Gamma)$ with the structure of an abelian group for which the neutral element is the class of the trivial $G$-equivariant $\Gamma$-torsor over $X$. Finally, considering a morphism of diagonalizable groups $\varphi:\Gamma_1\rightarrow \Gamma_2$, the map $H^1_G(X,\varphi)$ is a group morphism.
\end{prop}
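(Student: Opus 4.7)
The plan is to rephrase everything in terms of graded $\Oo_X$-algebras via Proposition \ref{Prop_GenForm_DiagTorsor}, and to reduce each claim to the fact that fiber products of schemes over $X$ correspond to tensor products of quasi-coherent $\Oo_X$-algebras.

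First I would establish the bijection. Given $G$-equivariant torsors $q_i:Y_i\to X$ under $\Gamma_i$ ($i=1,2$), let $\Aa_i$ denote their associated $\hat{\Gamma_i}$-graded $\Oo_X$-algebras. The fiber product $Y_1\times_X Y_2$ corresponds to $\Aa_1\otimes_{\Oo_X}\Aa_2$, which is naturally $\hat{\Gamma_1}\oplus\hat{\Gamma_2}=\widehat{\Gamma_1\times\Gamma_2}$-graded. Each homogeneous component $(\Aa_1)_\chi\otimes_{\Oo_X}(\Aa_2)_\lambda$ is a $G$-linearized invertible sheaf (as the tensor product of two such), and the multiplication isomorphisms of condition (2) of Proposition \ref{Prop_GenForm_DiagTorsor} are inherited componentwise. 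Hence $Y_1\times_X Y_2\to X$ is a $G$-equivariant $\Gamma_1\times\Gamma_2$-torsor and the assignment is well-defined. For the inverse: given a $G$-equivariant $(\Gamma_1\times\Gamma_2)$-torsor $Y\to X$ with $\widehat{\Gamma_1\times\Gamma_2}$-graded algebra $\Aa$, set $\Aa_1:=\bigoplus_{\chi\in\hat{\Gamma_1}}\Aa_{(\chi,0)}$ and $\Aa_2:=\bigoplus_{\lambda\in\hat{\Gamma_2}}\Aa_{(0,\lambda)}$; each $\Aa_i$ satisfies the hypotheses of Proposition \ref{Prop_GenForm_DiagTorsor} and thus defines a $G$-equivariant $\Gamma_i$-torsor $Y_i\to X$. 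The natural morphism $\Aa_1\otimes_{\Oo_X}\Aa_2\to \Aa$ is a graded isomorphism because on each bidegree it reduces to the multiplication isomorphism $\Aa_{(\chi,0)}\otimes_{\Oo_X}\Aa_{(0,\lambda)}\to \Aa_{(\chi,\lambda)}$ from condition (2). Taking relative spectra then yields $Y_1\times_X Y_2\cong Y$ as $G\times\Gamma_1\times\Gamma_2$-varieties over $X$.

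Next I would deduce the group structure by functoriality. The preceding bijection is natural in $\Gamma_1,\Gamma_2$ with respect to morphisms of diagonalizable groups, as one checks at the level of graded algebras: the associated fiber bundle construction underlying $H^1_G(X,\varphi)$ translates to base changing a graded algebra along a morphism of grading groups, and this operation commutes with the tensor product. Since $\Gamma$ is a commutative group scheme, the diagrams expressing associativity, commutativity, the unit axiom, and the existence of inverses for $(m_\Gamma, e_\Gamma, i_\Gamma)$ all consist of arrows between products of copies of $\Gamma$. Applying the functor $H^1_G(X,-)$ together with the bijection of the first step transports each such diagram into the corresponding diagram on $H^1_G(X,\Gamma)$, yielding the desired abelian group structure. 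The neutral element is the image under $H^1_G(X,e_\Gamma)$ of the unique class in $H^1_G(X,\spec k)$; unwinding the associated fiber bundle construction produces $X\times\Gamma\to X$, i.e.\ the trivial torsor.

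Finally, for any morphism $\varphi:\Gamma_1\to\Gamma_2$ of diagonalizable groups, the identity $\varphi\circ m_{\Gamma_1}=m_{\Gamma_2}\circ(\varphi\times\varphi)$ expressing that $\varphi$ is a group morphism, combined with functoriality of $H^1_G(X,-)$ and the naturality of the bijection, yields the commutativity of the diagram showing that $H^1_G(X,\varphi)$ respects the addition. The main obstacle is not conceptual but a matter of careful bookkeeping: one must verify that the bijection from the first step is natural in both arguments and compatible with the associated fiber bundle construction. Once this compatibility is checked on graded algebras, all the remaining statements reduce to routine diagram chasing from the commutative group scheme structure on $\Gamma$.
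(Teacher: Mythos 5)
Your proof is correct and follows essentially the same route as the paper: both arguments rest on showing that $H^1_G(X,-)$ preserves finite products and then invoking the fact that a product-preserving functor sends group objects to group objects. Your inverse, built by extracting the subalgebras graded by $\hat{\Gamma}_1\times\{0\}$ and $\{0\}\times\hat{\Gamma}_2$, is exactly the paper's inverse $[Y]\mapsto(H^1_G(X,p_1)([Y]),H^1_G(X,p_2)([Y]))$ written at the level of graded $\Oo_X$-algebras.
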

\begin{proof}
The first map is well defined and has an inverse. Indeed, denote $p_i$ the two projections $\Gamma_1\times\Gamma_2\rightarrow\Gamma_i$. Then, the map which associates to a class $[Y]$ of $G$-equivariant $\Gamma_1\times\Gamma_2$-torsor over $X$, the pair 
\begin{center}
$(H^1_G(X, p_1)([Y]),H^1_G(X, p_2)([Y]))$ 
\end{center}
is the sought inverse. Hence, $H^1_G(X, .)$ commutes with finite products. As a consequence, this functor sends groups to groups. But diagonalizable groups are group objects in their category, whence the claimed structure of abelian group on $H^1_G(X,\Gamma)$. The last assertion is directly checked.
\end{proof}

In order to determine the group $H^1_G(X,\Gamma)$, the last proposition allows to reduce to the situations where $\Gamma=\GG_m$ and $\Gamma=\mu_n$. As a consequence of \ref{Prop_GenForm_DiagTorsor}, we have an isomorphism
\begin{equation}\label{Eq_SE_TorsorEqClassificationI}
\pic^G(X)\xrightarrow{\simeq}H^1_G(X,\GG_m), [\Ll]\mapsto [\spec_X(\bigoplus_{k\in\ZZ}\Ll^{\otimes k})].
\end{equation}
On the other hand, the exact sequence
\begin{center}
$1\rightarrow \mu_n\xrightarrow{i} \GG_m\xrightarrow{t\mapsto t^n} \GG_m\rightarrow 1$
\end{center}
gives rise to an exact sequence of abstract groups
\begin{center}
$1\rightarrow \Oo(X)^*/(\Oo(X)^*)^n\rightarrow H^1(X,\mu_n)\rightarrow \pic(X)[n]\rightarrow 1$,
\end{center}
where $H^1(X,\mu_n)$ denotes the group of isomorphism classes of  $\mu_n$-torsors over $X$, and $\pic(X)[n]$ denotes the $n$-torsion subgroup of $\pic(X)$ (\cite[Exp. XI, 6.4]{SGAI}). It follows that a $\mu_n$-torsor $Y$ over $X$ is defined by choosing an $n$-torsion element $[\Ll]$ in $\pic(X)$, an isomorphism $f:\Ll^{\otimes n}\rightarrow \Oo_X$, and setting
\begin{center}
$Y:=\spec_X\bigoplus_{i=0}^{n-1}\Ll^{\otimes i}$,
\end{center}
where the $\Oo_X$-algebra structure is defined by the choice of $f$. From this fact and \ref{Prop_GenForm_DiagTorsor}, we deduce the exact sequence
\begin{center}
$1\rightarrow \Oo(X)^{*G}/(\Oo(X)^{*G})^n\rightarrow H_G^1(X,\mu_n)\rightarrow \pic^G(X)[n]\rightarrow 1$.
\end{center}
Assuming $\Oo(X)^{*G}\simeq k^*$, this exact sequence yields an isomorphism 
\begin{equation}\label{Eq_SE_TorsorEqClassificationII}
\pic^G(X)[n]\xrightarrow{\simeq}H^1_G(X,\mu_n), [\Ll]\mapsto [\spec_X(\bigoplus_{i=0}^{n-1} (\Ll^{\otimes i})^x],
\end{equation}
where the algebra structure is given by the multiplication between sections of rigidified $G$-linearized invertible sheaves. Using ($\ref{Eq_SE_TorsorEqClassificationI}$) and ($\ref{Eq_SE_TorsorEqClassificationII}$), we give a refined version of \ref{Prop_GenForm_DiagTorsor}.

\begin{prop}\label{Prop_GenForm_DiagTorsorII}
Suppose that $\Oo(X)^{*G}\simeq k^*$, and fix an isomorphism $\hat{\Gamma}\simeq \ZZ^r\times\ZZ/n_1\ZZ\times...\times\ZZ/n_s\ZZ$. Then a $G$-equivariant $\Gamma$-torsor $Y$ over $X$ is defined up to isomorphism by choosing 
\begin{itemize}
\item a $r$-tuple $([\Ll_1],...,[\Ll_r])\in\pic^G(X)\times...\times\pic^G(X)$,
\item a $s$-tuple $([\Ll_{r+1}],...,[\Ll_{r+s}])\in \pic^G(X)[n_1]\times...\times\pic^G(X)[n_s]$,
\end{itemize}
and setting  $Y:=\spec_X\Aa^{G,x}$ where $\Aa^{G,x}$ is the $\hat{\Gamma}$-graded $\Oo_X$-algebra
\begin{center}
$\bigoplus_{(k_i)\in\ZZ^r\times\ZZ/n_1\ZZ\times ... \times\ZZ/n_s\ZZ}\,\,\, (\Ll_1^{\otimes k_1})^x\otimes...\otimes(\Ll_{r+s}^{\otimes k_{r+s}})^x$,
\end{center}
the algebra structure being given by the multiplication between sections of rigidified $G$-linearized invertible sheaves.
\end{prop}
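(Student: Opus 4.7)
The plan is to combine the previous proposition (commutativity of $H^1_G(X,-)$ with finite products) with the explicit classifications already obtained for the factors $\GG_m$ and $\mu_n$. First, I would dualize the given isomorphism of character groups to obtain a decomposition $\Gamma \simeq \GG_m^r \times \mu_{n_1} \times \cdots \times \mu_{n_s}$. Applying the preceding proposition iteratively then yields a canonical group isomorphism
\[
H^1_G(X,\Gamma) \simeq H^1_G(X,\GG_m)^r \times H^1_G(X,\mu_{n_1}) \times \cdots \times H^1_G(X,\mu_{n_s}).
\]
Combining this with (\ref{Eq_SE_TorsorEqClassificationI}) and (\ref{Eq_SE_TorsorEqClassificationII})—the latter requiring the hypothesis $\Oo(X)^{*G}\simeq k^*$—identifies the right-hand side with $\pic^G(X)^r \times \pic^G(X)[n_1] \times \cdots \times \pic^G(X)[n_s]$. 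This classifies $G$-equivariant $\Gamma$-torsors up to isomorphism by the data asserted in the statement.

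To recover the explicit form of the algebra, I would compute the fiber product over $X$ of the individual torsors. Given rigidified $G$-linearized invertible sheaves $\Ll_i^x$ representing the chosen classes, (\ref{Eq_SE_TorsorEqClassificationI}) and (\ref{Eq_SE_TorsorEqClassificationII}) produce torsors $Y_i := \spec_X \Aa_i^{G,x}$ whose graded components are the tensor powers $(\Ll_i^{\otimes k_i})^x$, with algebra structure coming from multiplication of rigidified sections. Tracing through the isomorphism of the previous proposition, the class of the product $G$-equivariant $\Gamma$-torsor is represented by the fiber product $Y_1 \times_X \cdots \times_X Y_{r+s}$, which equals the relative spectrum of the graded tensor product $\Aa_1^{G,x}\otimes_{\Oo_X}\cdots\otimes_{\Oo_X}\Aa_{r+s}^{G,x}$. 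Unwinding the $\hat{\Gamma}$-grading on this tensor product yields exactly the graded algebra $\Aa^{G,x}$ displayed in the statement.

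The only genuinely delicate point is to confirm that the identification above is one of graded $\Oo_X$-algebras, not merely of graded $\Oo_X$-modules: one must check that the multiplication on the tensor product coincides with the multiplication between rigidified sections as described. This is however forced rather than computed, since each homogeneous component is an invertible rigidified $G$-linearized sheaf and therefore has trivial automorphism group under the assumption $\Oo(X)^{*G}\simeq k^*$ (as used in the proof of \ref{PropIsoClgGRigididfiedEq}); any two $\Oo_X$-algebra structures on the underlying graded module that are compatible with rigidifications must agree. The construction is thus canonical, and the parameter set enumerated in the proposition classifies all $G$-equivariant $\Gamma$-torsors over $X$ up to isomorphism.
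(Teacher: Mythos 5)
Your proof is correct and follows exactly the route the paper intends: the paper states this proposition without a separate proof, as an immediate consequence of the product decomposition of $H^1_G(X,-)$ together with the isomorphisms (\ref{Eq_SE_TorsorEqClassificationI}) and (\ref{Eq_SE_TorsorEqClassificationII}), which is precisely your argument. Your closing remark that the graded algebra structure is forced by the triviality of automorphisms of rigidified $G$-linearized invertible sheaves under $\Oo(X)^{*G}\simeq k^*$ is a welcome explicit justification of a point the paper leaves implicit.
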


\begin{cor}\label{Cor_PicG_RepresentH1}
Suppose that $\Oo(X)^{*G}\simeq k^*$. Then, there is a natural isomorphism of abelian groups
\begin{center}
$H^1_G(X,\Gamma)\simeq \Hom(\hat{\Gamma},\pic^G(X))$.
\end{center}
\end{cor}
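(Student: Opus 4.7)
The plan is to build an explicit isomorphism $\Theta: H^1_G(X,\Gamma) \to \Hom(\hat{\Gamma},\pic^G(X))$ by reading off the graded components of a torsor via Proposition \ref{Prop_GenForm_DiagTorsor}, then to verify that $\Theta$ is a group morphism and is natural in $\Gamma$. Given $[Y] = [\spec_X \Aa^G] \in H^1_G(X,\Gamma)$, Proposition \ref{Prop_GenForm_DiagTorsor} gives that each homogeneous component $\Aa^G_\chi$ is a $G$-linearized invertible sheaf and that the multiplication provides isomorphisms $\Aa^G_\chi \otimes_{\Oo_X} \Aa^G_\lambda \simeq \Aa^G_{\chi+\lambda}$ of $G$-linearized invertible sheaves. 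Thus $\Theta_Y: \chi \mapsto [\Aa^G_\chi]$ is a well-defined morphism $\hat{\Gamma} \to \pic^G(X)$, and $[Y] \mapsto \Theta_Y$ descends to isomorphism classes since any $G \times \Gamma$-equivariant isomorphism of torsors over $X$ restricts on each weight space to an isomorphism of $G$-linearized invertible sheaves.

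For the inverse, given $\psi: \hat{\Gamma} \to \pic^G(X)$ I set $Y_\psi := \spec_X\bigl(\bigoplus_{\chi\in\hat{\Gamma}} \psi(\chi)^x\bigr)$, where $\psi(\chi)^x$ denotes the canonical rigidified $G$-linearized representative of $\psi(\chi)$. Here the hypothesis $\Oo(X)^{*G}\simeq k^*$ is precisely what is needed for the construction of Section \ref{SecCoxRingEq} to produce canonical rigidified representatives, so the multiplication of sections of rigidified sheaves equips this sum with an unambiguous $\Oo_X$-algebra structure. The conditions of Proposition \ref{Prop_GenForm_DiagTorsor} are then satisfied by construction, so $Y_\psi$ is a $G$-equivariant $\Gamma$-torsor, and the two constructions are mutually inverse by the canonicity of rigidified representatives; this is essentially a restatement of Proposition \ref{Prop_GenForm_DiagTorsorII} with the choice of basis of $\hat{\Gamma}$ suppressed.

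It remains to verify the group law and naturality, which together come from a single functoriality computation. For any morphism $\varphi: \Gamma_1 \to \Gamma_2$ of diagonalizable groups, the associated fiber bundle $Y \times^{\Gamma_1} \Gamma_2$ for $Y = \spec_X \Aa^G$ corresponds, under the description of Proposition \ref{Prop_GenForm_DiagTorsor}, to the $\hat{\Gamma}_2$-graded algebra $\bigoplus_{\chi\in\hat{\Gamma}_2}\Aa^G_{\varphi^\sharp(\chi)}$; hence $\Theta_{Y \times^{\Gamma_1}\Gamma_2} = \Theta_Y \circ \varphi^\sharp$, which is exactly the functoriality of $\Hom(\widehat{(-)},\pic^G(X))$. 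Specializing to $\varphi = m_\Gamma$ and using that $m_\Gamma^\sharp$ is the diagonal embedding $\hat{\Gamma}\hookrightarrow\hat{\Gamma}\oplus\hat{\Gamma}$ identifies the product law on $H^1_G(X,\Gamma)$ with pointwise addition on $\Hom(\hat{\Gamma},\pic^G(X))$, showing that $\Theta$ is a group isomorphism. No serious obstacle is expected: the conceptual content is already packaged in Propositions \ref{Prop_GenForm_DiagTorsor} and \ref{Prop_GenForm_DiagTorsorII}, and the remaining work is bookkeeping about rigidifications and gradings.
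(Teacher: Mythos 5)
Your proof is correct and follows essentially the route the paper intends: the corollary is stated as an immediate consequence of Propositions \ref{Prop_GenForm_DiagTorsor} and \ref{Prop_GenForm_DiagTorsorII}, and your map $[Y]\mapsto(\chi\mapsto[\Aa^G_\chi])$ together with the rigidified-sheaf inverse is exactly the basis-free repackaging of that classification. Your explicit verification of functoriality via $\Theta_{Y\times^{\Gamma_1}\Gamma_2}=\Theta_Y\circ\varphi^\sharp$ (and hence of the group law through $m_\Gamma$) is a useful addition, since the paper leaves the word ``natural'' unjustified; the only point you gloss over is that the chosen rigidifications of the components $\Aa^G_\chi$ can be made multiplicative so that the componentwise isomorphisms assemble into an algebra isomorphism, but this is covered by your appeal to \ref{Prop_GenForm_DiagTorsorII}.
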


\subsection{Equivariant almost principal bundles}
\label{Sec_ClassificationAlmostPrincipalBundles}
Let $G$ an algebraic group, $\Gamma$ a diagonalizable group, and $(X,x)$ a pointed normal $G$-variety. In \cite[0.4]{Hashimoto}, Hashimoto introduced the notion of an \textit{almost principal bundle} under an algebraic group. We recall this notion and define a $G$-equivariant analogue.

\begin{defn}\label{DefAlmostTorsor}
Let $H$ an algebraic group, $Y$ an algebraic $H$-scheme, and $q:Y\rightarrow X$ an $H$-invariant morphism. We say that $q$ is an \textit{almost principal $H$-bundle} over $X$ if there exists ($H$-stable) open subschemes $Y_0\subset Y$, $X_0\subset X$ whose respective complements are of codimension $\geq 2$ and such that $q$ induces an $H$-torsor $Y_0\rightarrow X_0$. If moreover, $Y$ is a $G\times H$-scheme and $q$ is $G$-equivariant, we say that $q$ is a \textit{$G$-equivariant almost principal $H$-bundle over $X$}.
\end{defn}

\begin{prop}\label{Prop_ClgG_CommutativeSquare}
Let $H$ an algebraic group, and $q:Y\rightarrow X$ a $G$-equivariant almost principal $H$-bundle. Suppose that $G$ is connected, and $Y$ is a normal variety. Then there is a commutative diagram 
\begin{center}
	\begin{tikzcd}
		\textrm{ }\clg^{G\times H}(Y) \arrow[r,"\phi_1"]  & \textrm{ }\clg^{H}(Y)   \\
		\textrm{ }\clg^{G}(X) \arrow[u,  "q^*"] \arrow[r, "\phi_2"] &\textrm{ } \clg(X) \arrow[u, "q^*"] \,\,,
	\end{tikzcd}
\end{center}
where vertical arrows are induced by the pullback $q^*$ of (equivariant) divisorial sheaves, and horizontal arrows are given by forgetting linearizations. Moreover, vertical arrows are isomorphisms, with inverse morphisms $q_*^H$ explicitly given by 
\begin{center}
$[\Ff]\mapsto [(q_*\Ff)^H]$.
\end{center}
\end{prop}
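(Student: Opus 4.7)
The commutativity of the square is essentially tautological: if $\Phi:\sigma^*\mathcal{F}\xrightarrow{\simeq}p^*\mathcal{F}$ is a $G$-linearization of a divisorial sheaf $\mathcal{F}$ on $X$, then the $(G\times H)$-linearization of $q^*\mathcal{F}$ induced by $\Phi$ (with $H$ acting trivially on the base) restricts, after forgetting the $G$-part, to the canonical $H$-linearization of $q^*\mathcal{F}$ coming from $q$ being $H$-invariant. So I would dispatch commutativity first. The substance of the statement is that both vertical arrows $q^*$ are isomorphisms with the explicit inverse $[\mathcal{F}]\mapsto[(q_*\mathcal{F})^H]$.

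The key step is to reduce to the case of a genuine torsor. By \ref{DefAlmostTorsor}, there exist $G$-stable open subsets $Y_0\subset Y$ and $X_0\subset X$ with complements of codimension $\geq 2$ such that $q_0:=q|_{Y_0}:Y_0\to X_0$ is an honest $G$-equivariant $H$-torsor. Proposition \ref{EqIsomorphismsClg_Codim2} applied to the open immersions $Y_0\hookrightarrow Y$ and $X_0\hookrightarrow X$ gives isomorphisms on all four (equivariant) class groups, compatible with pullback along $q$ and $q_0$ by functoriality. Hence it is enough to prove that $q_0^*:\clg^G(X_0)\to\clg^{G\times H}(Y_0)$ and $q_0^*:\clg(X_0)\to\clg^H(Y_0)$ are isomorphisms with the claimed inverse.

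For a genuine torsor $q_0$, I would invoke faithfully flat descent for (equivariant) invertible sheaves \cite[3.3.1]{LinearizationGBrion}: pullback induces an equivalence between the category of ($G$-linearized) invertible sheaves on $X_0^{\mathrm{sm}}$ and the category of ($G\times H$)-linearized invertible sheaves on $q_0^{-1}(X_0^{\mathrm{sm}})$, with inverse $\mathcal{G}\mapsto (q_{0*}\mathcal{G})^H$. Combining with the identifications $\clg^G(X_0)\simeq\pic^G(X_0^{\mathrm{sm}})$ and $\clg^{G\times H}(Y_0)\simeq\pic^{G\times H}(q_0^{-1}(X_0^{\mathrm{sm}}))$ of \ref{EqIsomorphismsClg_Codim2} (where $q_0^{-1}(X_0^{\mathrm{sm}})\subseteq Y_{\mathrm{sm}}$ since $q_0$ is smooth), I obtain that $q_0^*$ is an isomorphism of equivariant class groups with the sought formula for its inverse. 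The non-equivariant statement is the special case $G=\{1\}$, and the square commutes by naturality of descent with respect to the morphism of group schemes $G\times H\to H$.

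The main obstacle to sort out carefully is the compatibility of the descent formula $(q_*\mathcal{F})^H$ taken on $Y$ with the one taken on $Y_0\cap q^{-1}(X_0^{\mathrm{sm}})$: the former is \emph{a priori} only a coherent $\mathcal{O}_X$-module, but since $\mathcal{F}$ is divisorial and $X$ is normal, $(q_*\mathcal{F})^H$ is $S_2$ and therefore agrees with the unique reflexive extension of its restriction to $X_0^{\mathrm{sm}}$, which by the descent argument above is a divisorial sheaf representing $q_*^H[\mathcal{F}]$. This last verification — that $H$-invariants, which are exact in characteristic zero, commute with the operation of reflexive hull along an almost principal $H$-bundle — is what makes the explicit formula for the inverse valid on the full $X$ rather than merely on $X_0^{\mathrm{sm}}$.
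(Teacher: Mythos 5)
Your proof is correct and follows the paper's overall strategy: both arguments reduce via \ref{EqIsomorphismsClg_Codim2} to the case where $q$ is an honest torsor over a smooth base, check commutativity directly, and obtain the right-hand vertical isomorphism from Grothendieck's descent result \cite[3.3.1]{LinearizationGBrion}. Where you genuinely diverge is on the left-hand vertical arrow. You treat descent along the torsor as an equivalence of categories and let the commuting $G$-linearization descend together with the sheaf; be aware that \cite[3.3.1]{LinearizationGBrion} as stated only yields $\pic(X)\simeq\pic^H(Y)$, so the equivariant enhancement you invoke deserves one sentence of justification (compatibility of the descent equivalence with the pullbacks $\sigma^*$ and $p_X^*$ defining a $G$-linearization, via the base-changed torsor $G\times Y\rightarrow G\times X$). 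The paper deliberately avoids equivariant descent: it deduces bijectivity of $q^*:\pic^G(X)\rightarrow\pic^{G\times H}(Y)$ from the already-established non-equivariant isomorphism by a diagram chase with the exact sequence (\ref{EqSEClGroupEquivariantOubli}), using that the only ambiguity in a linearization is a twist by a character $\lambda\in\hat{G}$ and that $q^*\Oo_X(\lambda)\simeq\Oo_Y(\lambda)$. Your route is more conceptual; the paper's is more economical given the tools already in place. Finally, you are more careful than the paper about why the inverse formula $[\Ff]\mapsto[(q_*\Ff)^H]$ is valid on all of $X$ rather than merely over the torsor locus (the paper dismisses this with local triviality); your reflexivity point is the right one, though it is cleaner to note that $\Ff=j_*(\Ff|_{Y_0})$ for $\Ff$ reflexive on normal $Y$, so that $(q_*\Ff)^H=i_*\bigl((q_{0*}(\Ff|_{Y_0}))^H\bigr)$ with $i$ the inclusion of an open subset with complement of codimension $\geq 2$, hence automatically the reflexive (divisorial) extension.
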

\begin{proof}
Using \ref{EqIsomorphismsClg_Codim2}, we can suppose that $q$ is a $H$-torsor, that $X,Y$ are smooth, and consider (equivariant) Picard groups instead of (equivariant) class groups. Notice that because $q$ is $H$-invariant and $G$-equivariant, the two vertical arrows are well defined. Also, the commutativity of the diagram is directly checked. The isomorphism $q^*:\pic(Y)\rightarrow\pic^H(X)$ is a consequence of a descent result from Grothendieck (\cite[3.3.1]{LinearizationGBrion}). Now, we show that the left vertical arrow is an isomorphism. Let $\Ll\in\ker q^*$, then we have that $\Ll\simeq\Oo_X$ as invertible sheaves. Because of the exact sequence (\ref{EqSEClGroupEquivariantOubli}), we have $\Ll\simeq\Oo_X(\lambda)$ as $G$-linearized invertible sheaves, for a certain character $\lambda\in\hat{G}$. But as $q^*\Oo_X(\lambda)\simeq\Oo_Y(\lambda)$, it follows that $\lambda=0$ which proves injectivity. For surjectivity it suffices, by the exact sequence (\ref{EqSEClGroupEquivariantOubli}) again, to check that all the elements $[\Oo_Y(\lambda)]\in\pic^{G\times H}(Y)$ with $\lambda\in \hat{G}$ are reached by $q^*$. This is obvious as $q^*\Oo_X(\lambda)\simeq\Oo_Y(\lambda)$. The last claim is a direct consequence of the fact that $q$ is a torsor and that invertible sheaves are locally trivial.
\end{proof}

In the framework of Cox rings, we consider almost principal bundles under diagonalizable groups. In fact, we need a stronger notion in order to obtain a generalization of \ref{Prop_GenForm_DiagTorsorII}. 

\begin{defn}\label{DefAlmostTorsorDiag}
A \textit{diagonalizable almost principal bundle over $X$} is a good quotient $q:Y\rightarrow X$ of a normal algebraic scheme $Y$ by a diagonalizable group such that $q$ is an almost principal bundle.
\end{defn}

\begin{prop}\label{Prop_Forme_Gen_AlmostBundle}
Consider a $G$-equivariant diagonalizable almost principal $\Gamma$-bundle $q:Y\rightarrow X$. Then, $Y$ is the relative spectrum over $X$ of a $\hat{\Gamma}$-graded quasi-coherent $\Oo_X$-algebra $\Aa^G$ satisfying
\begin{itemize}
\item $\forall \chi\in \hat{\Gamma}$, $\Aa^G_\chi$ is a $G$-linearized divisorial sheaf.
\item $\forall \chi,\lambda\in\hat{\Gamma}$, the natural morphism $\Aa^G_\chi\otimes_{\Oo_X}\Aa^G_\lambda \rightarrow \Aa^G_{\chi+\lambda}$ induces an isomorphism of $G$-linearized sheaves over the smooth locus of $X$.
\end{itemize}
Conversely, given a quasi-coherent $\hat{\Gamma}$-graded $\Oo_X$-algebra $\Aa^G$ of finite type satisfying the two conditions above, the relative spectrum over $X$ of $\Aa^G$ defines a $G$-equivariant diagonalizable almost principal $\Gamma$-bundle.
\end{prop}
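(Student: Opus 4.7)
The plan is to reduce to Proposition \ref{Prop_GenForm_DiagTorsor} on the open subset where the almost principal bundle restricts to an honest $\Gamma$-torsor, then extend the resulting structure to all of $X$ using that divisorial sheaves are determined by their restriction to any open subset with complement of codimension $\geq 2$ (as in \cite[1.12]{HarsthorneGDivisors} and Proposition \ref{EqIsomorphismsClg_Codim2}). Both directions rest on this localization-and-extension strategy.

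For the direct implication, I would first observe that, since $q$ is a good quotient by the affine diagonalizable group $\Gamma$, it is an affine morphism of finite type, so $Y=\spec_X\Aa^G$ where $\Aa^G:=q_*\Oo_Y$ is a quasi-coherent $\Oo_X$-algebra of finite type. The $\Gamma$-action on $Y$ gives the $\hat{\Gamma}$-grading, and the commuting $G$-action produces $G$-linearizations of each homogeneous component $\Aa^G_\chi$ by the same cocycle argument as in the proof of \ref{PropDefCoxSheafGeq}. By hypothesis there is a $G$-stable open $X_0\subset X$ with complement of codimension $\geq 2$ such that $q^{-1}(X_0)\to X_0$ is a $G$-equivariant $\Gamma$-torsor; restricting further to $X_0\cap X_{\rm sm}$ (still of codimension $\geq 2$ complement in $X$ by normality), Proposition \ref{Prop_GenForm_DiagTorsor} gives that each $\Aa^G_\chi|_{X_0\cap X_{\rm sm}}$ is a $G$-linearized invertible sheaf and that the multiplication maps are isomorphisms there. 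Normality of $Y$ makes $\Aa^G$ an $S_2$-sheaf on $X$ (sections extend across codimension $\geq 2$ closed subsets, as $q$ is affine and preserves codimensions thanks to equidimensionality of the torsor locus), hence each direct summand $\Aa^G_\chi$ is $S_2$ of generic rank one and therefore reflexive, i.e.\ a divisorial sheaf on $X$; the multiplication isomorphism then propagates from $X_0\cap X_{\rm sm}$ to all of $X_{\rm sm}$ by uniqueness of extensions.

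For the converse, given $\Aa^G$ of finite type satisfying the two conditions, set $Y:=\spec_X\Aa^G$. Restricting over $X_{\rm sm}$ makes $\Aa^G|_{X_{\rm sm}}$ satisfy the hypotheses of Proposition \ref{Prop_GenForm_DiagTorsor}, so $q^{-1}(X_{\rm sm})\to X_{\rm sm}$ is a $G$-equivariant $\Gamma$-torsor; in particular $q^{-1}(X_{\rm sm})$ is normal. Since the complement of $X_{\rm sm}$ in $X$ has codimension $\geq 2$ and each $\Aa^G_\chi$ is reflexive, the algebra $\Aa^G$ is $S_2$ as an $\Oo_X$-module, and normality propagates from $q^{-1}(X_{\rm sm})$ to $Y$. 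The construction of the $G$-action on $Y$ from the $G$-linearizations of the components proceeds exactly as in the proof of \ref{PropDefCoxSheafGeq} and commutes with the $\Gamma$-action coming from the grading. The unit of $\Aa^G$ lies in $\Aa^G_0$ and, together with the multiplication isomorphism $\Aa^G_0\otimes\Aa^G_0\xrightarrow{\simeq}\Aa^G_0$ over the smooth locus, trivializes $\Aa^G_0$ there; by reflexivity this yields $\Oo_X\xrightarrow{\simeq}(q_*\Oo_Y)^\Gamma=\Aa^G_0$ globally, so $q$ is a $G$-equivariant good quotient restricting to a $\Gamma$-torsor on the big open $X_{\rm sm}$.

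The main obstacle is the propagation of the torsor structure from the big open locus to all of $X$. Specifically, one must carefully control the $S_2$-property of $\Aa^G$ (inherited from normality of $Y$ in the direct direction, and guaranteed by reflexivity of the homogeneous components in the converse) in order to identify $\Aa^G_\chi$ with the unique divisorial extension of an invertible sheaf from the torsor locus, and the extendibility of $G$-linearizations across codimension $\geq 2$ subsets has to be invoked as in the proof of Proposition \ref{EqIsomorphismsClg_Codim2}.
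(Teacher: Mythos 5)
Your overall strategy --- restrict to the torsor locus, apply Proposition \ref{Prop_GenForm_DiagTorsor} there, and extend back across codimension $\geq 2$ using reflexivity --- is exactly the paper's, and your forward direction is essentially complete: reflexivity of each $\Aa^G_\chi$ follows from $i_*i^*\Aa^G_\chi\simeq\Aa^G_\chi$ (normality of $Y$ together with the codimension condition on the torsor locus inside $Y$), and the multiplication maps then extend to isomorphisms of invertible sheaves over all of $X_{\rm sm}$.

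There is, however, a genuine gap in the converse. By Definition \ref{DefAlmostTorsor}, being an almost principal bundle requires not only that $q$ restrict to a torsor over an open $X_0\subset X$ whose complement has codimension $\geq 2$ in $X$, but also that $q^{-1}(X_0)$ have complement of codimension $\geq 2$ \emph{in $Y$}. Your argument ends with ``$q$ is a good quotient restricting to a $\Gamma$-torsor on the big open $X_{\rm sm}$'' and never addresses the codimension of $Y\setminus q^{-1}(X_{\rm sm})$ in $Y$; a priori the fibre of $q$ over the singular locus of $X$ could contain a divisor of $Y$. The paper closes this by noting that reflexivity of the homogeneous components gives $\Oo_Y(Y)\simeq\Oo_Y(q^{-1}(X_{\rm sm}))$ and then invoking Lemma \ref{Lem_LocalCohomology}: a closed subset of a normal affine variety off which all regular functions extend has depth $\geq 2$, hence codimension $\geq 2$. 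You already have the needed restriction isomorphism (your $S_2$ observation), but the deduction is missing. Relatedly, ``normality propagates from $q^{-1}(X_{\rm sm})$ to $Y$'' should be justified as in the paper, via $\Aa^G(U)\simeq\Oo_Y(q^{-1}(U\cap X_{\rm sm}))$ being the ring of global functions on a regular (hence normal) scheme; if you instead intend Serre's criterion for $Y$, you again need the not-yet-established codimension bound in $Y$ to control the locus where $R_1$ is unknown.
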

\begin{proof}
Because $q$ is a good quotient by $\Gamma$, we have an isomorphism $Y\simeq \spec_X\Aa^G$, where $\Aa^G:=q_*\Oo_Y$ is a $\hat{\Gamma}$-graded quasi-coherent $\Oo_X$-algebra. By \ref{Prop_GenForm_DiagTorsor}, the homogeneous components of $\Aa^G$ define invertible sheaves over the locus $X_0\xhookrightarrow{i} X$ where $q$ is a torsor, and the natural morphisms $\Aa^G_\chi\otimes_{\Oo_X}\Aa^G_\lambda \rightarrow \Aa^G_{\chi+\lambda}$ induce isomorphisms of invertible sheaves over $X_0$. Also, because $Y$ is a normal algebraic scheme, we have isomorphisms $i_*i^*\Aa^G_\lambda\simeq\Aa^G_\lambda$ for each homogeneous component. It follows that
\begin{center}
$(\Aa^G_\lambda)^{\vee\vee}\simeq i_*i^*(\Aa^G_\lambda)^{\vee\vee}\simeq i_*((i^*\Aa^G_\lambda)^{\vee\vee})\simeq  i_*i^*\Aa^G_\lambda\simeq\Aa^G_\lambda$,
\end{center}
where the first isomorphism comes from the fact that $(\Aa^G_\lambda)^{\vee\vee}$ is reflexive (\cite[1.11]{HarsthorneGDivisors}), the second holds because the pullback by a flat morphism commutes with taking duals, and the third holds because $i^*\Aa^G_\lambda$ is invertible, hence reflexive. Thus, the homogeneous components of $\Aa^G$ are divisorial sheaves on $X$. Also, we see that $X_0$ can be taken to be the smooth locus of $X$. Indeed, the divisorial sheaves $\Aa^G_\lambda$ are invertible over $X_{\rm sm}$ and the natural morphisms $\Aa^G_\chi\otimes_{\Oo_X}\Aa^G_\lambda \rightarrow \Aa^G_{\chi+\lambda}$ induce isomorphisms of $G$-linearized invertible sheaves over $X_{\rm sm}$. Finally, the conditions on $G$-linearizations are necessary and sufficient to have a $G$-action on $Y$ with the desired properties.

Conversely, consider a quasi-coherent $\hat{\Gamma}$-graded $\Oo_X$-algebra $\Aa^G$ of finite type satisfying the two conditions of the statement, and denote $q:Y\rightarrow X$ the affine morphism defined by its relative spectrum over $X$. By \ref{Prop_GenForm_DiagTorsor}, the restriction of $q$ to $q^{-1}(X_{\rm sm})$ defines a $G$-equivariant $\Gamma$-torsor over $X_{\rm sm}$. This yields an isomorphism $\Oo_X\simeq\Aa_0$, thus $q$ is a good quotient. To prove that we have a $G\times\Gamma$-action on $Y$ such that $q$ is $G$-equivariant, we proceed exactly as in the proof of \ref{PropDefCoxSheafGeq}. Now, we show that $Y$ is a normal algebraic scheme. For this, we verify that $\Aa^G$ is a sheaf of normal algebras, thus we can suppose that $X$ is smooth. Then, $q$ is smooth as well because it is a torsor under a smooth algebraic group. As a consequence, the stalks of $\Aa^G$ are regular local rings, hence are integrally closed. It remains to prove that the complement of $q^{-1}(X_{\rm sm})$ in $Y$ is of complement $\geq 2$. Because $q$ is affine, we can suppose that $X$ and $Y$ are affine. Also, as $Y$ is normal, it is a disjoint union $Y=\cup_{i=1}^r Y_i$ of normal affine varieties, and we have canonical isomorphisms
\begin{align*}
  \Oo_Y(q^{-1}(X_{\rm sm})) & \simeq \prod_{i=1}^r\Oo_{Y_i}(q^{-1}(X_{\rm sm})\cap Y_i)\\
  		& \simeq \Aa^G(X_{\rm sm})\\
  		& \simeq \Aa^G(X)\\
  		& \simeq \Oo_Y(Y)\\
    	& \simeq \prod_{i=1}^r\Oo_{Y_i}(Y_i).
\end{align*}
It follows from these isomorphisms and the next lemma that $q^{-1}(X_{\rm sm})\cap Y_i$ has a complement of codimension $\geq 2$ in $Y_i$, $i=1,...,r$. Hence, $q^{-1}(X_{\rm sm})$ has a complement of codimension $\geq 2$ in $Y$, and $q$ is a $G$-equivariant almost principal $\Gamma$-bundle over $X$.
\end{proof}

\begin{lem}\label{Lem_LocalCohomology}
Let $X$ be an affine variety and $Z\subset X$ a closed subvariety such that the restriction morphism $\Oo(X)\rightarrow\Oo(X\setminus Z)$ is an isomorphism. Then, $Z$ is of codimension $\geq 2$ in $X$.
\end{lem}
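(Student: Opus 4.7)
Write $X = \spec A$ with $A$ a finitely generated (hence Noetherian) integral $k$-algebra, and let $I \subset A$ denote the ideal of $Z$. Since $\Oo(X \setminus Z) \simeq A$ is non-zero, $X \setminus Z$ is non-empty and $I$ is a proper ideal; if $Z$ is empty the assertion is vacuous, so I may assume $I \neq 0$, and what has to be shown is $\mathrm{ht}(I) \geq 2$. The strategy is to recast the isomorphism hypothesis as a vanishing statement in local cohomology, extract a lower bound on the grade of $I$, and invoke the elementary inequality between grade and height.

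First, from the long exact sequence of local cohomology with supports in $Z$,
\begin{equation*}
0 \to H^0_Z(X, \Oo_X) \to H^0(X, \Oo_X) \to H^0(X \setminus Z, \Oo_X) \to H^1_Z(X, \Oo_X) \to H^1(X, \Oo_X),
\end{equation*}
combined with Serre's vanishing $H^1(X, \Oo_X) = 0$ on the affine scheme $X$ and the hypothesis that the middle arrow is an isomorphism, I deduce $H^0_Z(X, \Oo_X) = H^1_Z(X, \Oo_X) = 0$. Translating to ideal-theoretic local cohomology on the affine scheme $X$, this reads $H^0_I(A) = H^1_I(A) = 0$.

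Next, I invoke the standard identification $\mathrm{grade}(I, A) = \inf\{\, i \geq 0 : H^i_I(A) \neq 0 \,\}$ to conclude $\mathrm{grade}(I, A) \geq 2$, so $I$ contains an $A$-regular sequence $(x_1, x_2)$ of length two. By Krull's height theorem applied to $x_1$ and then to the image of $x_2$ in $A/(x_1)$, the ideal $(x_1, x_2) \subset I$ has height exactly two, whence $\mathrm{ht}(I) \geq 2$, that is, $\mathrm{codim}_X(Z) \geq 2$. No serious obstacle arises: the argument is a direct application of the Grothendieck-type equivalence ``$\Oo(X) \xrightarrow{\sim} \Oo(X \setminus Z)$ iff $\mathrm{depth}_Z \Oo_X \geq 2$'' for coherent sheaves on a Noetherian affine scheme, together with the basic inequality $\mathrm{grade}(I, A) \leq \mathrm{ht}(I)$.
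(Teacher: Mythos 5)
Your proof is correct and follows essentially the same route as the paper: the long exact sequence of local cohomology with supports in $Z$ yields $H^0_Z(X,\Oo_X)=H^1_Z(X,\Oo_X)=0$, the cohomological characterization of depth/grade gives $\mathrm{grade}(I,A)\geq 2$, and the inequality between grade and height (equivalently, $\codime_X(Z)\geq\depth_I(\Oo(X))$) concludes. The only cosmetic difference is that you make explicit the use of $H^1(X,\Oo_X)=0$ on the affine $X$ and spell out the grade-to-height step via Krull's height theorem, both of which the paper leaves implicit.
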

\begin{proof}
We consider the local cohomology groups $H^i_Z(X,\Oo_X)$ associated with the stucture sheaf $\Oo_X$ and the closed subset $Z\subset X$. By \cite[Ex. III.2.3]{Hartshorne}, we have a long exact sequence of local cohomology 
\begin{center}
$0\rightarrow H^0_Z(X,\Oo_X)\rightarrow H^0(X,\Oo_X)\rightarrow H^0(X\setminus Z,\Oo_X)\rightarrow H^1_Z(X,\Oo_X)\rightarrow H^1(X,\Oo_X)\rightarrow ...$
\end{center}
By our assumptions, the groups $H^0_Z(X,\Oo_X)$ and $H^1_Z(X,\Oo_X)$ are trivial. On the other hand, recall the cohomological interpretation of the depth of an ideal $I\subset \Oo(X)$ (see \cite[Ex. III.3.4]{Hartshorne}):
\begin{center}
$\depth_I(\Oo(X))\geq n \iff H^i_{\Vv(I)}(X, \Oo_X)=0, \forall i<n$,
\end{center}
where $\Vv(I)$ denotes the zero set of $I$ in $X$. Applying this to the ideal $I$ defining $Z$, we obtain the result by virtue of the inequalities
\begin{center}
$\codime_X(Z)\geq\depth_I(\Oo(X))\geq 2$.
\end{center}
\end{proof}

\begin{rem}\label{Rem_AlmostPrinc_CanTakeSmoothLocus}
The proof of the last proposition shows that given a $G$-equivariant diagonalizable almost principal $\Gamma$-bundle $q:Y\rightarrow X$, the restriction $q^{-1}(X_{\rm sm})\rightarrow X_{\rm sm}$ is a $G$-equivariant $\Gamma$-torsor, and $q^{-1}(X_{\rm sm})$ has a complement of codimension $\geq 2$ in $Y$.
\end{rem}

\begin{defn}\label{Def_TypeDiagTorsor}
Let $q:Y\rightarrow X$ be a $G$-equivariant diagonalizable almost principal $\Gamma$-bundle. Then, the natural morphism 
\begin{center}
$\type_G(Y):\hat{\Gamma}\rightarrow \clg^G(X)$, $\chi\mapsto [(q_*\Oo_Y)_\chi]$,
\end{center}
is called the \textit{$G$-equivariant type} of $Y$. The morphism
\begin{center}
$\type(Y):=\phi\circ\type_G(Y):\hat{\Gamma}\rightarrow \clg(X)$,
\end{center}
where $\phi:\clg^G(X)\rightarrow\clg(X)$ is the forgetful morphism, is the \textit{type} of $Y$.
\end{defn}

Proposition \ref{Prop_Forme_Gen_AlmostBundle} gives the general form of a $G$-equivariant diagonalizable almost principal $\Gamma$-bundle over $X$. These objects naturally form a category, taking for arrows the $G\times\Gamma$-equivariant morphisms over $X$. Assume that $\Oo(X)^{*G}\simeq k^*$, and let $q:Y\rightarrow X$ be a $G$-equivariant diagonalizable almost principal $\Gamma$-bundle. Using \ref{EqIsomorphismsClg_Codim2}, \ref{Prop_GenForm_DiagTorsorII}, and \ref{Prop_Forme_Gen_AlmostBundle}, we see that the $G$-equivariant type of $Y$ determines its isomorphism class. However, we don't know a priori if conversely any morphism
\begin{center}
$\hat{\Gamma}\rightarrow\clg^G(X)$
\end{center}
defines a $G$-equivariant diagonalizable almost principal $\Gamma$-bundle over $X$. Indeed, such a morphism defines a quasi-coherent $\hat{\Gamma}$-graded $\Oo_X$-algebra $\Aa^G$ satisfying the two conditions of \ref{Prop_Forme_Gen_AlmostBundle}, but it might not be of finite type. The next proposition gives a sufficient condition to ensure that the algebra $\Aa^G$ defined by any morphism as above is of finite type. Moreover, its proof shows how to construct any $G$-equivariant diagonalizable almost principal bundle over $X$ from the equivariant characteristic space of $(X,x)$. 

\begin{prop}\label{Prop_AlmostPrinc_FiniteType}
Suppose that $\Oo(X)^{*G}\simeq k^*$, and that the equivariant Cox sheaf of $(X,x)$ is of finite type. Consider a quasi-coherent $\hat{\Gamma}$-graded $\Oo_X$-algebra $\Aa^G$ satisfying the two conditions of Proposition \ref{Prop_Forme_Gen_AlmostBundle}. Then, $\Aa^G$ is of finite type as an $\Oo_X$-algebra.
\end{prop}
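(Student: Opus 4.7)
The plan is to realize $\Aa^G$ as the invariant subring of a finitely generated $k$-algebra under an action of the linearly reductive group $\Gamma_{\clg^G(X)}$, then invoke the Hilbert--Nagata theorem.

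First, I would reduce to a canonical choice of $\Aa^G$ by appealing to a classification: the argument behind \ref{Prop_GenForm_DiagTorsorII}, combined with \ref{EqIsomorphismsClg_Codim2}, the rigidity \ref{PropIsoClgGRigididfiedEq}, and \ref{Prop_Forme_Gen_AlmostBundle}, shows that an $\Aa^G$ satisfying the two conditions of \ref{Prop_Forme_Gen_AlmostBundle} is determined up to isomorphism by its equivariant type $\mu := \type_G(\spec_X\Aa^G) : \hat\Gamma \to \clg^G(X)$. Letting $\Ff^x_m$ denote the canonical rigidified $G$-linearized divisorial sheaf in the class $m \in \clg^G(X)$ (constructed before \ref{PropDefCoxSheafGeq}), I would choose the canonical model
\[
\Aa^G = \bigoplus_{\chi \in \hat\Gamma} \Ff^x_{\mu(\chi)},
\]
with multiplication inherited from the equivariant Cox sheaf $\Rr_X^{G,x}$ via the equivalence relations $\sim^G_U$. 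By construction this model has type $\mu$, and it suffices to prove it is of finite type.

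Next, I would realize $\Aa^G$ as a ring of invariants. Dualize $\mu$ to a morphism of diagonalizable groups $\mu^\vee : \Gamma_{\clg^G(X)} \to \Gamma$, and equip $\Rr_X^{G,x} \otimes_k k[\hat\Gamma]$ with the diagonal $\Gamma_{\clg^G(X)}$-action: the grading action on the first factor, and on the second the action obtained from $\mu^\vee$ together with the natural weight decomposition $k[\hat\Gamma] = \bigoplus_\chi k \cdot e_\chi$. Over an affine open $U \subset X$, a weight-space computation then identifies
\[
\bigl(\Rr_X^{G,x}(U) \otimes_k k[\hat\Gamma]\bigr)^{\Gamma_{\clg^G(X)}} \;\simeq\; \bigoplus_{\chi \in \hat\Gamma} \Ff^x_{\mu(\chi)}(U) \;=\; \Aa^G(U),
\]
compatibly with the multiplication.

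Finally, finiteness follows at once: by hypothesis $\Rr_X^{G,x}$ is of finite type over $\Oo_X$, so $\Rr_X^{G,x}(U)$ is a finitely generated $k$-algebra; since $\Gamma$ is affine algebraic, $k[\hat\Gamma]$ is a finitely generated $k$-algebra; hence the tensor product above is a finitely generated $k$-algebra carrying a rational action of the diagonalizable (hence linearly reductive in characteristic zero) group $\Gamma_{\clg^G(X)}$. By Hilbert--Nagata, the invariant subring is finitely generated over $k$, and therefore finitely generated as an $\Oo_X(U)$-algebra. An affine cover of $X$ then yields that $\Aa^G$ is of finite type as an $\Oo_X$-algebra.

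The only genuine obstacle is the bookkeeping: fixing sign conventions for the $\mu^\vee$-twisted action on $k[\hat\Gamma]$ so that the $\Gamma_{\clg^G(X)}$-invariants in $\Rr_X^{G,x}(U) \otimes_k k[\hat\Gamma]$ pick out exactly the components $\Ff^x_{\mu(\chi)}(U)$, and then checking that the inherited multiplication on the invariants matches the canonical one on $\Aa^G$. Once this identification is established, the finite generation is a formal consequence of Hilbert--Nagata.
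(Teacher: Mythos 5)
Your proof is correct and is essentially the paper's argument in algebraic clothing: the paper realizes $\Aa^G$ as $(f_*\Oo_{\hat{X}^G\times\Gamma})^{\Gamma_{\clg^G(X)}}$ for the $\varphi$-twisted action on $\hat{X}^G\times\Gamma$, where $\varphi$ is dual to $\type_G$, and this is exactly your computation of $\Gamma_{\clg^G(X)}$-invariants in $\Rr_X^{G,x}\otimes_k k[\hat\Gamma]$, with finite generation following from linear reductivity in both cases.
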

\begin{proof}
We are going to realize $\Aa^G$ as a subalgebra of invariants by a diagonalizable group. Consider the morphism of diagonalizable groups $\varphi:\Gamma_{\clg^G(X)}\rightarrow\Gamma$ dually defined by the morphism 
\begin{center}
$\type_G(Y):\hat{\Gamma}\rightarrow \clg^G(X)$,
\end{center}
and the $\Gamma_{\clg^G(X)}\times\Gamma$-action on $\hat{X}^G\times\Gamma$ defined on schematic points by $(g,h_1).(x,h_2):=(g.x,\varphi(g)^{-1}h_1h_2)$. Then, we have a good quotient $f:\hat{X}^G\times\Gamma\rightarrow X$ by this action. Indeed, consider the morphisms
\begin{center}
$\hat{X}^G\times\Gamma\xrightarrow{p_{\hat{X}^G}}\hat{X}^G\xrightarrow{q_1}X$
\end{center}
where $p_{\hat{X}^G}$ is the projection, and $q_1$ is the structural morphism of the equivariant characteristic space $\hat{X}^G$ over $X$. Both morphisms are good quotients, and the composition is a good quotient by $\Gamma_{\clg^G(X)}\times\Gamma$, hence $f=q_1\circ p_{\hat{X}^G}$. As a consequence of this factorization, $f$ is in addition a $G$-equivariant diagonalizable almost principal $\Gamma_{\clg^G(X)}\times\Gamma$-bundle over $X$. Performing first the quotient by $\Gamma_{\clg^G(X)}$, we obtain another factorization
\begin{center}
$\hat{X}^G\times\Gamma\xrightarrow{q_2} Y\xrightarrow{q_3}X$
\end{center}
of $f$ into $G$-equivariant diagonalizable almost principal bundles. The $\Oo_X$-algebra $q_{3*}\Oo_Y$ is obtained by computing $\Gamma_{\clg^G(X)}$-invariants: 
\begin{center}
$(f_*\Oo_{\hat{X}^G\times\Gamma})^{\Gamma_{\clg^G(X)}}=(\bigoplus_{(\lambda,[\Ff])\in \hat{\Gamma}\times\clg^G(X)}\Ff^x\lambda)^{\Gamma_{\clg^G(X)}}=\bigoplus_{\lambda\in\hat{\Gamma}}\Ff_\lambda^x$,
\end{center}
where $[\Ff_\lambda]=\type_G(Y)(\lambda)$, $\forall\lambda\in\hat{\Gamma}$. Hence, we obtain
\begin{center}
$q_{3*}\Oo_Y=\Aa^G$,
\end{center}
which proves that this $\Oo_X$-algebra is of finite type.
\end{proof}

Assume from now that $\Oo(X)^{*G}\simeq k^*$, and that the equivariant Cox sheaf of $(X,x)$ is of finite type. By the last proposition and the above discussion, we obtain a one-to-one correspondence between $G$-equivariant diagonalizable almost principal bundles over $X$ and $G$-equivariant diagonalizable torsors over $X_{\rm sm}$. This yields in particular the

\begin{prop}\label{Prop_FormGen_AlmostPrincII}
Suppose that $\Oo(X)^{*G}\simeq k^*$, that the equivariant Cox sheaf of $(X,x)$ is of finite type, and fix an isomorphism $\hat{\Gamma}\simeq \ZZ^r\times\ZZ/n_1\ZZ\times...\times\ZZ/n_s\ZZ$. Then a $G$-equivariant diagonalizable almost principal $\Gamma$-bundle $Y$ over $X$ is defined up to isomorphism by choosing 
\begin{itemize}
\item a $r$-tuple $([\Ff_1],...,[\Ff_r])\in\clg^G(X)\times...\times\clg^G(X)$,
\item a $s$-tuple $([\Ff_{r+1}],...,[\Ff_{r+s}])\in \clg^G(X)[n_1]\times...\times\clg^G(X)[n_s]$,
\end{itemize}
and setting  $Y:=\spec_X\Aa^{G,x}$ where $\Aa^{G,x}$ is the $\hat{\Gamma}$-graded $\Oo_X$-algebra
\begin{center}
$\bigoplus_{(k_i)\in\ZZ^r\times\ZZ/n_1\ZZ\times ... \times\ZZ/n_s\ZZ}\,\,\, (\Ff_1^{\star k_1})^x\star...\star(\Ff_{r+s}^{\star k_{r+s}})^x$.
\end{center}
\end{prop}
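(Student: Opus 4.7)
The plan is to combine the bijection between $G$-equivariant diagonalizable almost principal $\Gamma$-bundles on $X$ and $G$-equivariant $\Gamma$-torsors on $X_{\rm sm}$, which is established in the discussion immediately preceding the statement by \ref{Prop_AlmostPrinc_FiniteType} and \ref{Rem_AlmostPrinc_CanTakeSmoothLocus}, with the classification of such torsors already obtained in \ref{Prop_GenForm_DiagTorsorII}. The invariance of (equivariant) class groups under passage to the smooth locus, furnished by \ref{EqIsomorphismsClg_Codim2}, then allows the description to be transported from $X_{\rm sm}$ back to $X$.

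In detail, I would first start from a $G$-equivariant diagonalizable almost principal $\Gamma$-bundle $q:Y\rightarrow X$. By \ref{Rem_AlmostPrinc_CanTakeSmoothLocus}, the restriction $q^{-1}(X_{\rm sm})\rightarrow X_{\rm sm}$ is a genuine $G$-equivariant $\Gamma$-torsor; the hypothesis $\Oo(X)^{*G}\simeq k^*$ passes to $X_{\rm sm}$ because the complement has codimension $\geq 2$, and the basepoint $x$ lies in $X_{\rm sm}$ by definition of a pointed normal variety. Applying \ref{Prop_GenForm_DiagTorsorII} on $(X_{\rm sm},x)$ produces an $r$-tuple in $\pic^G(X_{\rm sm})$ and an $s$-tuple of $n_j$-torsion classes in $\pic^G(X_{\rm sm})$, which under \ref{EqIsomorphismsClg_Codim2} correspond to tuples of classes in $\clg^G(X)$ and its torsion subgroups of the prescribed shape. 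Conversely, given such tuples, I would assemble the $\hat{\Gamma}$-graded $\Oo_X$-algebra $\Aa^{G,x}$ from the rigidified $G$-linearized divisorial sheaves $(\Ff_i^{\star k_i})^x$ of Section \ref{SecCoxRingEq}; the multiplication is the canonical one between sections of rigidified $G$-linearized sheaves, which is well-defined thanks to $\Oo(X)^{*G}\simeq k^*$. Finite type is granted by \ref{Prop_AlmostPrinc_FiniteType}, and the two compatibility conditions of \ref{Prop_Forme_Gen_AlmostBundle} are immediate from the construction, so $Y:=\spec_X\Aa^{G,x}$ is indeed a $G$-equivariant diagonalizable almost principal $\Gamma$-bundle over $X$. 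To see that the two assignments are mutually inverse, it suffices to restrict to $X_{\rm sm}$ and invoke the uniqueness part of \ref{Prop_GenForm_DiagTorsorII} together with the one-to-one correspondence between almost principal bundles on $X$ and torsors on $X_{\rm sm}$.

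The step that I expect to require the most care is the handling of the torsion factors $\ZZ/n_j\ZZ$: each class $[\Ff_{r+j}]$ carries an implicit choice of a trivialization $\Ff_{r+j}^{\star n_j}\simeq\Oo_X$ that fixes the algebra structure, and one must verify that the rigidification at $x$ (together with $\Oo(X)^{*G}\simeq k^*$) pins down this trivialization canonically, so that the $\Oo_X$-algebra $\Aa^{G,x}$ depends only on the classes. This issue is already resolved in \ref{Prop_GenForm_DiagTorsorII} on $X_{\rm sm}$, and the key point is to check that rigidified representatives extend coherently from $X_{\rm sm}$ to $X$ via the pushforward along the codimension-two open immersion, which is precisely the mechanism built into the construction of Section \ref{SecCoxRingEq}.
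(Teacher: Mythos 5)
Your proof is correct and follows essentially the same route as the paper: the paper likewise derives this proposition directly from the one-to-one correspondence between $G$-equivariant diagonalizable almost principal bundles over $X$ and $G$-equivariant diagonalizable torsors over $X_{\rm sm}$ (via \ref{Prop_Forme_Gen_AlmostBundle}, \ref{Rem_AlmostPrinc_CanTakeSmoothLocus}, \ref{Prop_AlmostPrinc_FiniteType}), combined with the classification \ref{Prop_GenForm_DiagTorsorII} and the isomorphisms of \ref{EqIsomorphismsClg_Codim2}. Your additional attention to the torsion factors and the canonicity of the rigidified representatives is consistent with the mechanism of Section \ref{SecCoxRingEq} that the paper relies on implicitly.
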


\begin{rem}\label{Rem_CanonicalMorphismFromEqCharSpace}
With the assumptions of the last proposition, let $Y=\spec_X(\Aa^{G,x})$ be a $G$-equivariant diagonalizable almost principal $\Gamma$-bundle over $X$. There is a canonical morphism of graded $\Oo_X$-algebras
\begin{center}
$\theta^G:\Aa^{G,x}=\bigoplus_{\lambda\in\hat{\Gamma}}\Ff_\lambda^x\rightarrow\Rr^{G,x}$
\end{center}
whose associated morphism between grading groups is $\type_G(Y)$. Indeed, with the notation of Section \ref{SecCoxRingEq}, consider the projection $\theta^G_U$ associated to the equivalence relation $\sim^G_U$ for an open subset $U\subset X$. The equivalence relations $\sim^G_U$ are compatible with the algebra structures on $\Aa^{G,x}(U)$, and with  restrictions to open subsets. It follows that the family of projections $(\theta^G_U)_{U\subset X}$ defines a family of graded algebras morphisms $\Aa^{G,x}(U)\rightarrow\Rr_X^{G,x}(U)$. These morphisms define all together a morphism $\theta^G$ of graded $\Oo_X$-algebras, the induced morphism between grading groups being $\type_G(Y)$ by construction. This morphism corresponds to an equivariant morphism 
\begin{center}
$\hat{X}^{G,x}\rightarrow Y$
\end{center}
over $X$. In particular, the structural morphism of the $G$-equivariant characteristic space over $X$ factors through any $G$-equivariant diagonalizable almost principal bundle over $X$.
\end{rem}

As a consequence of the last proposition, $\Rr_X^{G,x}$ doesn't depend on the choice of the point $x$ up to isomorphism, so that we can speak of the the equivariant Cox sheaf of $X$ (resp. the equivariant characteristic space of $X$). In fact, we add some flexibility by introducing the following definition.

\begin{defn}
A \textit{$G$-equivariant characteristic space} over $X$ is an algebraic scheme $Z$ over $X$ endowed with an action of a diagonalizable group $\Gamma$ such that there exists a pair $(f,\varphi)$ consisting of
\begin{itemize}
\item an isomorphism $f:Z\rightarrow \hat{X}^G$ over $X$,
\item an isomorphism $\varphi:\Gamma\rightarrow\Gamma_{\clg(X)}$,
\end{itemize}
such that the square 
\begin{center}
	\begin{tikzcd}
		\Gamma\times Z \arrow[r,""] \arrow[d,  "\varphi\times f"] & Z \arrow[d, "f", swap]\\
		\Gamma_{\clg(X)}\times \hat{X}^G \arrow[r, ""] & \hat{X}^G\,
	\end{tikzcd}
\end{center}
commutes, where the horizontal arrows denote the respective actions. A \textit{$G$-equivariant total coordinate space} of $X$ is the affinization of a $G$-equivariant charateristic space. A\textit{ $G$-equivariant Cox ring} of $X$ is the (graded) coordinate ring of a $G$-equivariant total coordinate space.
\end{defn}

\begin{rem}
Equivalently, a characteristic space over $X$ is a $G$-equivariant diagonalizable almost principal bundle over $X$ whose $G$-equivariant type is an isomorphism onto $\clg^G(X)$.  Also, a $G$-equivariant Cox ring of $X$ is unique up to graded automorphism.
\end{rem}

\begin{ex}\label{Ex_EqCox_HomSpace}
Let $G$ be a connected algebraic group, and $G/H$ be a homogeneous space. Consider the faithfully flat morphism $\varphi:H\rightarrow \Gamma_{\hat{H}}$ of algebraic groups induced by the inclusion $k[\hat{H}]\subset \Oo(H)$. This yields an exact sequence of algebraic groups
\begin{center}
$1\rightarrow K:=\ker \varphi\rightarrow H \xrightarrow{\varphi} \Gamma_{\hat{H}}\rightarrow 1$,
\end{center}
where the normal subgroup $K$ of $H$ is the intersection of characters of $H$. Hence $H/K\simeq\Gamma_{\hat{H}}$ is the diagonalizable group whose character group is $\hat{H}$. It follows that we have a factorization
\begin{center}
$G\xrightarrow{q_1} G/K\xrightarrow{q_2} G/H$
\end{center}
of the canonical projection $\pi:G\rightarrow G/H$, and that $q_2:G/K\rightarrow G/H$ is a $G$-equivariant diagonalizable $\Gamma_{\hat{H}}$-torsor. 

On the other hand, there is a natural isomorphism $\hat{H}\rightarrow \pic^G(G/H)$, $\lambda\mapsto [\Ll_\lambda]$, where $\Ll_\lambda$ denotes the $G$-linearized invertible sheaf of sections of the $G$-line bundle $G\times^\lambda \AAA^1$ over $G/H$. Recall that for an open subset $V\subset G/H$ we have $\Ll_\lambda(V)\simeq\Oo_G(\pi^{-1}(V))^{(H)}_\lambda$. This yields the equalities
\begin{align*}
  q_{2*}\Oo_{G/K}(V) & = \Oo_{G/K}(q_2^{-1}(V))\\
  		& = (q_{1*}\Oo_G(q_2^{-1}(V)))^{K}\\
  		& = \Oo_G(\pi^{-1}(V))^{K}\\
  		& = \bigoplus_{\lambda\in\hat{H}} \Oo_G(\pi^{-1}(V))^{(H)}_\lambda\\
    	& \simeq\bigoplus_{\lambda\in\hat{H}}\Ll_\lambda(V),
\end{align*}
where the last equality follows from the fact that every $H/K$-module is a direct sum of eigenspaces. Thus, we have obtained
\begin{center}
$q_{2*}\Oo_{G/K}=\bigoplus_{\lambda\in\hat{H}}\Ll_\lambda$.
\end{center}
Hence, the $G$-equivariant type of $G/K$ defines an isomorphism with $\pic^G(G/H)$, whence $G/K$ is a $G$-equivariant characteristic space of $G/H$. When $G$ is semisimple and simply connected, the equivariant Cox sheaf is canonically isomorphic to the usual one, and we retrieve \cite[Thm 4.5.1.8]{coxrings}.
\end{ex}

\subsection{Equivariant quotient presentations}

When a normal variety $X$ admits a Cox sheaf of finite type, then the characteristic space $\hat{X}$ is naturally a diagonalizable almost principal $\Gamma_{\clg(X)}$-bundle over $X$ such that the regular invertible $\Gamma_{\clg(X)}$-homogeneous functions are constant. This leads to the notion of a \textit{quotient presentation} of $X$ (\cite[4.2.1.1]{coxrings}). We recall this notion and introduce a $G$-equivariant analogue.

\begin{defn}
Let $\Gamma$ a diagonalizable group, and $X$ a normal variety such that $\Oo(X)^*\simeq k^*$. Then, a \textit{quotient presentation} of $X$ is a diagonalizable almost principal $\Gamma$-bundle $q:Y\rightarrow X$ such that the invertible $\Gamma$-homogeneous regular functions on $Y$ are constant. 
\end{defn}

\begin{rem}
In fact, the definition \cite[4.2.1.1]{coxrings} of a quotient presentation is different but equivalent to the one given above. Indeed, a quotient presentation in loc. cit. is a normal variety $Y$ together with a morphism $q:Y\rightarrow X$ verifying
\begin{enumerate}
\item $q$ is a good quotient by a diagonalizable group $\Gamma$.
\item The $\Gamma$-homogeneous invertible regular functions on $Y$ are constant.
\item There exists a $\Gamma$-stable open $Y_0\subset Y$ whose complement is of codimension $\geq 2$, and on which $\Gamma$ acts freely with closed orbits.
\end{enumerate}
Suppose that $q:Y\rightarrow X$ verifies the above conditions, then $q(Y_0)$ is open with a complement of codimension $\geq 2$ in $X$. Also, as $\Gamma$ acts with closed orbits on $Y_0$ and $q$ is a good quotient, the fibers of $q_{\mid Y_0}:Y_0\rightarrow q(Y_0)$ are orbits. Moreover, $Y_0$ has the quotient topology with respect to $q_{\mid Y_0}$. Indeed, consider an open subset of the form $q^{-1}(V)$, where $V$ is a subset of $q(Y_0)$. Because $q$ is a good quotient, we have that $q(Y_0\setminus q^{-1}(V))=q(Y_0)\setminus V$ is closed in $q(Y_0)$, thus $V$ is open in $q(Y_0)$. It follows that $q$ induces a geometric quotient $Y_0\rightarrow q(Y_0)$. Thus, it induces a $\Gamma$-torsor $Y_0\rightarrow q(Y_0)$ because $\Gamma$ acts freely on $Y_0$ (\cite[Chap. 0, 4.0.9]{MumfordGIT}). Hence, $q:Y\rightarrow X$ is a diagonalizable almost principal $\Gamma$-bundle with only constant $\Gamma$-homogeneous invertible regular functions.

Conversely, consider a diagonalizable almost principal $\Gamma$-bundle $q:Y\rightarrow X$ with only constant $\Gamma$-homogeneous invertible regular functions. Then it satisfies all the conditions above by definition, except that $Y$ is not a priori a variety. However, this will be verified in Proposition \ref{Prop_EqPres_Variety} below.
\end{rem}

\begin{defn}
Let $\Gamma$ a diagonalizable group, and $X$ a normal $G$-variety such that $\Oo(X)^{*G}\simeq k^*$. Then, a \textit{$G$-equivariant quotient presentation} of $X$ is a $G$-equivariant diagonalizable almost principal $\Gamma$-bundle $q:Y\rightarrow X$ such that the invertible $G$-invariant $\Gamma$-homogeneous regular functions on $Y$ are constant.
\end{defn}

\begin{prop}\label{PropLASSTypInjFoncInvEq}
Let $\Gamma$ a diagonalizable group, $X$ a normal $G$-variety such that $\Oo(X)^{*G}\simeq k^*$, and $q:Y\rightarrow X$ a $G$-equivariant diagonalizable almost principal $\Gamma$-bundle. Then $q$ is a $G$-equivariant quotient presentation if and only if the $G$-equivariant type of $Y$ is injective.
\end{prop}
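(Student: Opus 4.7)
The plan is to work with the presentation $Y=\spec_X\Aa^G$ from \ref{Prop_Forme_Gen_AlmostBundle}, where $\Aa^G=q_*\Oo_Y=\bigoplus_{\chi\in\hat{\Gamma}}\Aa^G_\chi$. Under this identification, $\Oo(Y)_\chi=\Gamma(X,\Aa^G_\chi)$ and a $G$-invariant $\Gamma$-homogeneous regular function on $Y$ of weight $\chi$ is simply a $G$-invariant global section of the $G$-linearized divisorial sheaf $\Aa^G_\chi$. The strategy is to identify the set of weights $\chi\in\hat{\Gamma}$ admitting such an invertible section with $\ker(\type_G(Y))$. The hypothesis $\Oo(X)^{*G}\simeq k^*$ then handles the weight $\chi=0$ case, and the statement follows at once.

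The key step is the following characterization. Given $f_\chi\in\Gamma(X,\Aa^G_\chi)^G$, any inverse of $f_\chi$ in $\Oo(Y)$ must, by grading considerations, be $\Gamma$-homogeneous of weight $-\chi$. So invertibility of $f_\chi$ amounts to the existence of $g_{-\chi}\in\Gamma(X,\Aa^G_{-\chi})^G$ with $f_\chi\cdot g_{-\chi}=1$ under the multiplication $\Aa^G_\chi\otimes_{\Oo_X}\Aa^G_{-\chi}\to\Oo_X$. By \ref{Prop_Forme_Gen_AlmostBundle}, this multiplication restricts to an isomorphism of $G$-linearized invertible sheaves over $X_{\rm sm}$; hence the existence of such a pair $(f_\chi,g_{-\chi})$ is equivalent to $f_\chi$ being a nowhere vanishing $G$-invariant section of $\Aa^G_\chi|_{X_{\rm sm}}$, that is, a trivialization of $\Aa^G_\chi|_{X_{\rm sm}}$ as a $G$-linearized invertible sheaf. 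Conversely, a trivialization on $X_{\rm sm}$ extends uniquely to $X$ by reflexivity of $\Aa^G_\chi$, and the equation $f_\chi g_{-\chi}=1$ extends from $X_{\rm sm}$ to all of $X$ by normality.

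Via the isomorphism $\clg^G(X)\simeq\pic^G(X_{\rm sm})$ of \ref{EqIsomorphismsClg_Codim2}, such a trivialization exists if and only if $\type_G(Y)(\chi)=[\Aa^G_\chi]=0$ in $\clg^G(X)$. Consequently, the $\hat{\Gamma}$-graded group of $G$-invariant $\Gamma$-homogeneous units of $\Oo(Y)$ has a nonzero piece in weight $\chi$ exactly when $\chi\in\ker\type_G(Y)$, and for $\chi=0$ that piece is $\Oo(X)^{*G}=k^*$ by hypothesis. Therefore all $G$-invariant $\Gamma$-homogeneous invertible regular functions on $Y$ are constant precisely when $\ker\type_G(Y)=0$, i.e.\ when $\type_G(Y)$ is injective. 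The main subtlety in executing this plan is the passage between $G$-linearized data on $X_{\rm sm}$ and on $X$; it is handled uniformly by the codimension $\geq 2$ of $X\setminus X_{\rm sm}$ together with the reflexivity of divisorial sheaves, as already recorded in \ref{EqIsomorphismsClg_Codim2} and \ref{Rem_AlmostPrinc_CanTakeSmoothLocus}.
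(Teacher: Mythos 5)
Your proof is correct and follows essentially the same route as the paper's (much terser) argument: a nontrivial element of $\ker\type_G(Y)$ corresponds exactly to a homogeneous component of $q_*\Oo_Y$ that is trivial as a $G$-linearized divisorial sheaf, i.e.\ that admits an invertible $G$-invariant global section of nonzero weight, while the weight-zero units are $\Oo(X)^{*G}=k^*$. Your version just fills in the details the paper leaves implicit (the inverse of a homogeneous unit is homogeneous of opposite weight, and the reduction to $X_{\rm sm}$ via reflexivity), so there is nothing to correct.
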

\begin{proof}
Indeed, $\type_G(Y)$ is injective if and only if its kernel is trivial. But this kernel contains a non-trivial element if and only if $q_*\Oo_Y$ admits an homogeneous component isomorphic to
$\Oo_X$ endowed with the trivial $G$-linearization. The latter means that this homogeneous component admits an invertible $G$-invariant global section.
\end{proof}

Let $G$ an algebraic group, and $(X,x)$ a pointed normal $G$-variety such that $\Oo(X)^{*G}\simeq k^*$. We define the category of quotient presentations of $X$ by defining a morphism between two quotient presentations $q_i:Y_i\xrightarrow{//\Gamma_i} X$ to be a pair $(f,\varphi)$ consisting of
\begin{itemize}
\item a morphism $f:Y_1\rightarrow Y_2$ over $X$,
\item a morphism $\varphi:\Gamma_1\rightarrow\Gamma_2$,
\end{itemize}
such that the square
\begin{center}
	\begin{tikzcd}
		\Gamma_1\times Y_1 \arrow[r,""] \arrow[d,  "\varphi\times f"] & Y_1 \arrow[d, "f", swap]\\
		\Gamma_2\times Y_2 \arrow[r, ""] & Y_2\,
	\end{tikzcd}
\end{center}
commutes, where the horizontal arrows denote the respective actions. By the preceding section, a $G$-equivariant quotient presentation of $X$ is defined up to isomorphism by a finitely generated subgroup of $\clg^G(X)$. Indeed, the injectivity of the $G$-equivariant type says that a quotient presentation is up to isomorphism the relative spectrum $\hat{X}^G(M)$ of the $M$-graded $\Oo_X$-algebra (of finite type)
\begin{center}
$\Aa^G(M):=\bigoplus_{[\Ff]\in M}\Ff^x$,
\end{center}
where $M$ is a finitely generated subgroup of $\clg^G(X)$. The following Proposition is a generalization of \cite[4.2.1.4]{coxrings}.

\begin{prop}\label{PropFoncteurSubGroupPicG}
Suppose that the equivariant Cox sheaf of $X$ is of finite type, and consider the category of subgroups of $\clg^G(X)$ with morphisms being the inclusions arrows. Then, the association $M\mapsto \hat{X}^G(M)$ defines an essentially surjective contravariant functor to the category of $G$-equivariant quotient presentations of $X$.
\end{prop}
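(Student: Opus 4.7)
The plan is to verify three things: well-definedness into the target category, functoriality (contravariance), and essential surjectivity. Most of the content has already been established in earlier propositions, so this is largely a matter of assembling results. For well-definedness, let $M\subset\clg^G(X)$ be a subgroup. Since $\Rr_X^{G,x}$ is of finite type, Remark \ref{RemFinitenessClg} shows that $\clg^G(X)$ is finitely generated, hence $M$ is also finitely generated. The graded $\Oo_X$-algebra $\Aa^G(M)=\bigoplus_{[\Ff]\in M}\Ff^x$ inherits the natural $G$-linearization and multiplication from $\Rr_X^{G,x}$, so it satisfies the hypotheses of Proposition \ref{Prop_Forme_Gen_AlmostBundle}; by Proposition \ref{Prop_AlmostPrinc_FiniteType} it is of finite type as an $\Oo_X$-algebra. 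Hence $\hat{X}^G(M)\to X$ is a $G$-equivariant diagonalizable almost principal $\Gamma_M$-bundle whose $G$-equivariant type is the inclusion $M\hookrightarrow\clg^G(X)$, which is injective. By Proposition \ref{PropLASSTypInjFoncInvEq}, $\hat{X}^G(M)\to X$ is then a $G$-equivariant quotient presentation.

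For contravariance, an inclusion $\iota:M_1\hookrightarrow M_2$ induces an inclusion of graded $G$-linearized $\Oo_X$-algebras $\Aa^G(M_1)\hookrightarrow\Aa^G(M_2)$, yielding a $G$-equivariant affine morphism $f:\hat{X}^G(M_2)\to\hat{X}^G(M_1)$ over $X$. Dually, $\iota$ corresponds to the faithfully flat morphism $\varphi:\Gamma_{M_2}\to\Gamma_{M_1}$, and the pair $(f,\varphi)$ is by construction compatible with the respective actions (checked on homogeneous components), defining a morphism in the category of $G$-equivariant quotient presentations. Compatibility with composition is immediate from functoriality of $\spec_X$.

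For essential surjectivity, let $q:Y\to X$ be a $G$-equivariant quotient presentation with acting diagonalizable group $\Gamma$. By Proposition \ref{PropLASSTypInjFoncInvEq}, the $G$-equivariant type $\tau:=\type_G(Y):\hat{\Gamma}\to\clg^G(X)$ is injective; set $M:=\tau(\hat{\Gamma})$. Then $\tau$ factors as an isomorphism $\hat{\Gamma}\xrightarrow{\simeq} M$, dually yielding an isomorphism $\Gamma_M\xrightarrow{\simeq}\Gamma$. Proposition \ref{Prop_FormGen_AlmostPrincII} determines the isomorphism class of a $G$-equivariant diagonalizable almost principal bundle from its $G$-equivariant type, so we obtain an identification $q_*\Oo_Y\simeq\Aa^G(M)$ of graded $G$-linearized $\Oo_X$-algebras along $\hat{\Gamma}\simeq M$, and hence an isomorphism $(Y,\Gamma)\simeq(\hat{X}^G(M),\Gamma_M)$ in the target category. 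The only subtle point has already been handled in Proposition \ref{Prop_FormGen_AlmostPrincII} (the isomorphism class of a quotient presentation is pinned down by its type); the remainder of the argument is the routine packaging of this fact as an essentially surjective functor.
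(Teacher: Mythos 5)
Your proof is correct and follows essentially the same route as the paper: the paper handles well-definedness and essential surjectivity in the discussion immediately preceding the proposition (via the injectivity of the equivariant type and Proposition \ref{Prop_FormGen_AlmostPrincII}, exactly as you do) and its proof body only checks functoriality. Your functoriality argument via the inclusion of graded $\Oo_X$-algebras $\Aa^G(M_1)\hookrightarrow\Aa^G(M_2)$ is just the algebraic restatement of the paper's factorization $\hat{X}^G(M)\to\hat{X}^G(M')\to X$ through the quotient by $\Gamma_{M/M'}$, so the two presentations coincide in substance.
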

\begin{proof}
We have only to check the functoriality. Consider two finitely generated subgroups $M'\subset M\subset\clg^G(X)$. This gives an exact sequence of diagonalizable groups
\begin{center}
$1\rightarrow \Gamma_{M/M'}\rightarrow\Gamma_M\rightarrow\Gamma_{M'}\rightarrow 1$,
\end{center}
which translates into the factorization of the $G$-equivariant diagonalizable almost principal $\Gamma_M$-bundle $\hat{X}^G(M)\rightarrow X$ in $G$-equivariant diagonalizable almost principal bundles
\begin{center}
$\hat{X}^G(M)\xrightarrow{//\Gamma_{M/M'}}\hat{X}^G(M')\xrightarrow{//\Gamma_{M'}} X$.
\end{center}
The morphism $\hat{X}^G(M)\xrightarrow{//\Gamma_{M/M'}}\hat{X}^G(M')$ is equivariant with respect to the morphism $\Gamma_M\rightarrow\Gamma_{M'}$, and with respect to the $G$-action, whence the claim.
\end{proof}

\begin{prop}\label{Prop_EqPres_Variety}
Consider a $G$-equivariant quotient presentation  $q:Y\rightarrow X$ of $X$. Then $Y$ is a normal variety.
\end{prop}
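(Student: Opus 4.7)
The scheme $Y$ is normal by Proposition \ref{Prop_Forme_Gen_AlmostBundle}, and since a normal noetherian scheme is irreducible as soon as it is connected (two distinct irreducible components through a common point would produce zero divisors in the local ring there, contradicting normality), it suffices to show that $Y$ is connected. The plan is to argue by contradiction: assuming that $Y$ has connected components $V_1,\ldots,V_r$ with $r\geq 2$, I will manufacture a non-constant $G$-invariant $\Gamma$-homogeneous invertible regular function on $Y$.

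The first step is to show that $\Gamma$ permutes the $V_i$ transitively. The indicator functions $e_i:=1_{V_i}\in\Oo(Y)$ form a complete orthogonal system of idempotents, and $\Gamma$ acts on them by permutation. For each $\Gamma$-orbit $O\subset\{V_1,\ldots,V_r\}$, the sum $\sum_{V_i\in O}e_i$ is a $\Gamma$-invariant idempotent, lying in $\Oo(Y)^\Gamma=\Oo(X)$ by the good quotient property. Since $X$ is irreducible, $\Oo(X)\subset k(X)$ is a domain and has no idempotents other than $0$ and $1$, which forces a single $\Gamma$-orbit.

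Next, let $\Gamma'\subset\Gamma$ be the stabilizer of $V_1$, so that $\Gamma/\Gamma'$ acts simply transitively on $\{V_1,\ldots,V_r\}$. I fix coset representatives $\bar\gamma_i\in\Gamma/\Gamma'$ with $\bar\gamma_i\cdot V_1=V_i$, and for each character $\chi$ of the finite group $\Gamma/\Gamma'$ (viewed as a character of $\Gamma$ trivial on $\Gamma'$) I form
$$f_\chi := \sum_{i=1}^r \chi(\bar\gamma_i)^{-1}\,e_i \;\in\; \Oo(Y).$$
A direct computation starting from $\gamma\cdot e_i=e_{\sigma(i)}$, where $\sigma$ is determined by $\bar\gamma\cdot\bar\gamma_i=\bar\gamma_{\sigma(i)}$, yields $\gamma\cdot f_\chi=\chi(\gamma)f_\chi$, so $f_\chi$ is $\Gamma$-homogeneous of weight $\chi$; and the relation $f_\chi\cdot f_{\chi^{-1}}=\sum_i e_i=1$ shows it is invertible.

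The main obstacle will be the $G$-invariance of $f_\chi$. Here I use that $G$, being a connected algebraic group, acts on $Y$ by scheme automorphisms and therefore permutes the finite set $\{V_1,\ldots,V_r\}$ through a homomorphism into a finite symmetric group; any such homomorphism from a connected group is trivial, so $G$ preserves every $V_i$, fixes every $e_i$, and hence fixes $f_\chi$. Since $r\geq 2$, there exists a non-trivial character $\chi$ of $\Gamma/\Gamma'$, and $f_\chi$ is then a non-constant $G$-invariant $\Gamma$-homogeneous invertible regular function on $Y$, contradicting the quotient presentation hypothesis. This forces $r=1$, so $Y$ is a connected normal scheme, hence a variety.
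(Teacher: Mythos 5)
Your proof is correct, and it takes a genuinely different route from the paper's. Both arguments begin the same way (reduce to connectedness of the normal scheme $Y$, and show $\Gamma$ permutes the finitely many components transitively because $\Oo(Y)^{\Gamma}\simeq\Oo(X)$ is a domain), but they diverge afterwards. The paper passes to the smooth locus, identifies $Y$ with a quotient presentation $\hat{X}^G(M)$ via Proposition \ref{PropFoncteurSubGroupPicG}, and derives a contradiction from the factorization $\type_G(Y)=\type_G(Y_1)\circ j^\sharp$ together with the injectivity of the equivariant type (Proposition \ref{PropLASSTypInjFoncInvEq}); this implicitly uses the finite-type hypothesis on the equivariant Cox sheaf. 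You instead work directly with the defining property of a quotient presentation and exhibit, from the idempotents $e_i$ of the components and a non-trivial character $\chi$ of the finite abelian group $\Gamma/\Gamma'$, an explicit non-constant invertible $\Gamma$-homogeneous $G$-invariant function $f_\chi$. This is more elementary, avoids the classification machinery and the smoothness reduction entirely, and the computations ($\gamma\cdot f_\chi=\chi(\gamma)f_\chi$, $f_\chi f_{\chi^{-1}}=1$, non-constancy of $f_\chi$ for $\chi\neq 1$ because a constant has trivial weight) are all correct.

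One small caveat: your $G$-invariance step rests on $G$ being connected, so that it preserves each component $V_i$. The paper's proof of this particular proposition does not use connectedness of $G$ (it never touches the $G$-action beyond the type morphism), and the standing hypotheses of this subsection only declare $G$ to be an algebraic group. In practice this is harmless --- essentially all of the surrounding structural results (the exact sequence (\ref{EqSEClGroupEquivariantOubli}), Proposition \ref{Prop_ClgG_CommutativeSquare}, and every application later in the paper) already assume $G$ connected --- but you should state the connectedness hypothesis explicitly, or note that for disconnected $G$ your construction of $f_\chi$ would need to be modified, since $G/G^\circ$ could then permute the components non-trivially.
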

\begin{proof}
Because $Y$ is a normal algebraic scheme, it is a variety if and only if it is connected. This is verified if and only if $q_*\Oo_Y$ is a sheaf of integral algebras, so that we can suppose that $X$ (hence $Y$) is smooth. By the last proposition $Y\simeq \hat{X}^G(M)$ for a finitely generated subgroup $M\subset\pic^G(X)$ that we can suppose non-trivial. To verify that $Y$ is connected, we adapt the arguments from  \cite[6.3]{HausenHomogeneous}.  Let $Y_1\cup...\cup Y_r$ be the decomposition of $Y$ as a union of its connected components and suppose by contradiction that $r>1$. These components are transitively permuted by $\Gamma_M$ because $X$ is connected. Also by $\Gamma_M$-invariance, we have $q(Y_1)=X$. Let $\Gamma$ be the stabilizer of $Y_1$ in $\Gamma_M$, which is by assumption a proper subgroup. We let
\begin{center}
$j:\Gamma\xhookrightarrow{}\Gamma_M$
\end{center}
be the inclusion. Consider the $\Gamma_M$-equivariant morphism $\psi:Y\rightarrow \Gamma_M/\Gamma$ which maps a point $y\in Y$ to the element of $\Gamma_M/\Gamma$ corresponding to the connected component of $Y$ containing $y$. It follows that we have a cartesian square 
\begin{center}
\begin{tikzcd}
		Y_1\times \Gamma_M \arrow[r, "p_2"] \arrow[d, "/\Gamma"] &  \Gamma_M \arrow[d, "\pi"] \\
		 Y \arrow[r, "\psi"]  & \Gamma_M/\Gamma\,\,,
\end{tikzcd}
\end{center}
where vertical arrows are $\Gamma$-torsors. Also, restricting $q:Y\rightarrow X$ to $Y_1$ yields a $\Gamma$-torsor $Y_1\rightarrow X$. Together with the square above, we obtain a cartesian square
\begin{center}
\begin{tikzcd}
		Y_1\times \Gamma_M \arrow[r, "p_1"] \arrow[d, "/\Gamma"] &  Y_1 \arrow[d, "/\Gamma"] \\
		 Y \arrow[r, "q"]  & X\,\,,
\end{tikzcd}
\end{center}
where all arrows are torsors. Proceeding as in the proof of \ref{Prop_AlmostPrinc_FiniteType}, we obtain the equality
\begin{center}
$\type_G(Y)=\type_G(Y_1)\circ j^\sharp$.
\end{center}
But $\type_G(Y)$ is injective by assumption, thus $j^\sharp$ is necessarily injective. Now $j^\sharp$ is surjective, being the restriction of characters. It follows that $\Gamma=\Gamma_M$, a contradiction. 
\end{proof}

\begin{cor}\label{Prop_CoxG_NormalVar}
The $G$-equivariant Cox ring is a normal integral $\clg^G(X)$-graded $G$-algebra.
\end{cor}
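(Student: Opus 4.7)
The plan is to deduce the statement from Proposition \ref{Prop_EqPres_Variety} applied to the equivariant characteristic space itself. First I would observe that the structural morphism $q : \hat{X}^{G,x} \to X$ is a $G$-equivariant diagonalizable almost principal $\Gamma_{\clg^G(X)}$-bundle whose $G$-equivariant type $\type_G(\hat{X}^{G,x}) : \clg^G(X) \to \clg^G(X)$ is the identity, and in particular injective. By Proposition \ref{PropLASSTypInjFoncInvEq}, $q$ is then a $G$-equivariant quotient presentation of $X$, and Proposition \ref{Prop_EqPres_Variety} yields that $\hat{X}^{G,x}$ is a normal variety.

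Next I would identify $\cox^{G,x}(X) = \Gamma(X, \Rr^{G,x}_X)$ with $\Gamma(\hat{X}^{G,x}, \Oo_{\hat{X}^{G,x}})$ via the relative spectrum construction. Since $\hat{X}^{G,x}$ is an integral normal scheme, writing
\begin{equation*}
\Gamma(\hat{X}^{G,x}, \Oo_{\hat{X}^{G,x}}) = \bigcap_{y \in \hat{X}^{G,x}} \Oo_{\hat{X}^{G,x}, y}
\end{equation*}
inside the function field $k(\hat{X}^{G,x})$ realizes this ring as an intersection of integrally closed local domains in a field, hence as an integrally closed integral domain.

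Finally, the $\clg^G(X)$-grading is inherited from the grading on $\Rr^{G,x}_X$ (Proposition \ref{PropDefCoxSheafGeq}), and the $G$-action on $\hat{X}^{G,x}$ provided by that same proposition induces an action of $G$ on $\cox^{G,x}(X)$ by $\clg^G(X)$-graded algebra automorphisms. Rationality of this action follows from the decomposition
\begin{equation*}
\cox^{G,x}(X) = \bigoplus_{[\Ff] \in \clg^G(X)} \Gamma(X, \Ff^x),
\end{equation*}
each summand being the space of global sections of a coherent $G$-linearized sheaf, and thus a rational $G$-module. I expect no significant obstacle: the corollary essentially assembles the outputs of the preceding two propositions, the only mildly subtle step being the identification of $\hat{X}^{G,x}$ as a $G$-equivariant quotient presentation of $X$ via the identity type on $\clg^G(X)$.
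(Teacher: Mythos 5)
Your proposal is correct and follows exactly the route the paper intends: the corollary is stated as an immediate consequence of Proposition \ref{Prop_EqPres_Variety}, applied to the equivariant characteristic space viewed (via its injective, indeed identity, $G$-equivariant type and \ref{PropLASSTypInjFoncInvEq}) as a $G$-equivariant quotient presentation, so that $\cox^{G,x}(X)=\Oo(\hat{X}^{G,x})$ is the ring of global functions on a normal variety. The remaining points (grading, rational $G$-module structure on each $\Gamma(X,\Ff^x)$) are handled as in the construction of \ref{PropDefCoxSheafGeq}, so nothing is missing.
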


Suppose that $\Oo(X)^*\simeq k^*$. We now give an insight in the structure of the group of units of $\cox^G(X)$. 
As $\cox^G(X)$ is a $\clg^G(X)$-graded algebra, it is also interesting to consider the subgroup of units which are homogeneous. For example, every unit is homogeneous if e.g. $\clg^G(X)$ is torsion-free. Because of the assumption $\Oo(X)^*\simeq k^*$ and the exact sequence (\ref{EqSEClGroupEquivariantOubli}), we have the $\hat{G}$-graded $k$-subalgebra
\begin{center}
$A^G(\hat{G}):=\bigoplus_{\lambda\in \hat{G}} \Gamma(X,\Oo_X(\lambda)^x)$
\end{center}
of $\cox^G(X)$. The group of units of this algebra is obviously isomorphic to $\hat{G}\times k^*$ via the multiplication map.

\begin{prop}\label{PropUnitsCoxEq}
Suppose that $\Oo(X)^*\simeq k^*$. Then
\begin{enumerate}
\item The group of homogeneous units of $\cox^G(X)$ is isomorphic to $\hat{G}\times k^*$ via the multiplication map.
\item If $\Oo(X)\simeq k$, then every unit is homogeneous.
\end{enumerate}
\end{prop}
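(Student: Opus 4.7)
For part (1), I would first argue that a homogeneous unit $u\in\cox^G(X)$ of degree $[\Ff]\in\clg^G(X)$ is a nowhere-vanishing global section of $\Ff^x$, which induces an isomorphism $\Ff\simeq\Oo_X$ of $\Oo_X$-modules (ignoring the $G$-linearization). By the exact sequence (\ref{EqSEClGroupEquivariantOubli}), $[\Ff]$ therefore lies in $\gamma(\hat G)$, so $u$ belongs to the subalgebra $A^G(\hat G)\simeq\Oo(X)[\hat G]$. Since $\Oo(X)^*\simeq k^*$, the homogeneous units of $A^G(\hat G)$ are exactly the elements $c\cdot e_\lambda$ with $c\in k^*$ and $\lambda\in\hat G$, where $e_\lambda\in\Gamma(X,\Oo_X(\lambda)^x)$ denotes the rigidified generator corresponding to $1\in\Oo(X)$. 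The multiplication map $\hat G\times k^*\to\cox^G(X)$, $(\lambda,c)\mapsto c\cdot e_\lambda$, is then an isomorphism onto the group of homogeneous units.

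For part (2), my strategy combines the classical leading-term argument in graded integral domains with an extra structural input to handle torsion in $\clg^G(X)$. Assuming $\Oo(X)\simeq k$ so that $\cox^G(X)_0=k$, let $u\in\cox^G(X)^*$ with inverse $v$. Because $u$ and $v$ have finite homogeneous support, they lie in the $M'$-graded subring $A':=\bigoplus_{\mu\in M'}\cox^G(X)_\mu$, where $M'\subset\clg^G(X)$ is the finitely generated subgroup spanned by their degrees. The ring $A'$ is an integral domain by \ref{Prop_CoxG_NormalVar}, with $A'_0=k$. Choose a total order on the free abelian group $M'/M'_{\mathrm{tors}}$ compatible with addition and compare top and bottom $M'/M'_{\mathrm{tors}}$-degrees in $uv=1$: since the top and bottom degree products in a domain cannot cancel, this forces $u$ and $v$ to each be homogeneous modulo $M'_{\mathrm{tors}}$, with complementary classes $\bar\alpha_0$ and $-\bar\alpha_0$ in $M'/M'_{\mathrm{tors}}$.

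It remains to collapse the torsion components, which I would reduce to the following structural observation: the subring $B:=\bigoplus_{\tau\in M'_{\mathrm{tors}}}\cox^G(X)_\tau$ is an integral domain graded by the finite abelian group $M'_{\mathrm{tors}}$ with $B_0=k$, and must itself be equal to $k$. Indeed, any homogeneous $b\in B_\tau$ satisfies $b^{|M'_{\mathrm{tors}}|}\in B_0=k$; writing $b^{|M'_{\mathrm{tors}}|}=d^{|M'_{\mathrm{tors}}|}$ via algebraic closedness of $k$ and factoring $\prod_\zeta(b-\zeta d)=0$ over the $|M'_{\mathrm{tors}}|$-th roots of unity in the domain $B$ shows $b\in k$, hence $b=0$ unless $\tau=0$. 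Given $B=k$, pick any nonzero homogeneous component $w=u_{\alpha_0+\tau_0}$ of $u$; then $wv$ is supported in $M'_{\mathrm{tors}}$, so $wv\in B=k$, and since $wv\neq 0$ in the domain we get $wv=c\in k^*$. Combined with $uv=1$ this yields $u=w/c$, which is homogeneous.

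The main obstacle is the structural claim $B=k$: the free-part leading-term argument is standard, but the torsion in $\clg^G(X)$ would otherwise leave room for a unit spread across torsion-graded components, and ruling these out requires precisely the combination of algebraic closedness of $k$, finiteness of $M'_{\mathrm{tors}}$, and the integral domain property of $\cox^G(X)$.
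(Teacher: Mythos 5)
Your proof is correct. Part (1) is essentially the paper's argument: a homogeneous unit of degree $[\Ff]$ trivializes $\Ff$ as a divisorial sheaf, the exact sequence (\ref{EqSEClGroupEquivariantOubli}) then places $[\Ff]$ in $\gamma(\hat{G})$, and the unit lives in $A^G(\hat{G})$, whose homogeneous units are $\hat{G}\times k^*$ because $\Oo(X)^*\simeq k^*$. (One cosmetic point: rather than ``nowhere-vanishing section'', it is cleaner to say that the inverse $v\in(\Ff^\vee)^x(X)$ gives mutually inverse morphisms between $\Oo_X$ and $\Ff$ over $X_{\rm sm}$, which extend by reflexivity; this is what the paper's ``admits an invertible global section'' amounts to.)

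For part (2) you take a genuinely different route. The paper coarsens the $\clg^G(X)$-grading to the $\Im\phi$-grading via the forgetful morphism, invokes the argument of \cite[1.5.2.5]{coxrings} to conclude that any unit is $\Im\phi$-homogeneous of degree zero, and then observes that the degree-zero part of that coarser grading is exactly $A^G(\hat{G})$, reducing to part (1). You instead work directly with the $\clg^G(X)$-grading: a leading/trailing-term comparison for a total order on the free quotient $M'/M'_{\rm tors}$ of the subgroup generated by the degrees of $u$ and $v$ pins the unit down modulo torsion, and the torsion-graded subring $B$ is collapsed to $k$ via $b^{N}\in B_0=k$ (with $N=|M'_{\rm tors}|$) and the factorization $b^{N}-d^{N}=\prod_{\zeta^N=1}(b-\zeta d)$ in the integral domain $\cox^G(X)$; this step uses exactly the hypotheses available, namely $\Oo(X)\simeq k$, algebraic closedness of $k$, and integrality from \ref{Prop_CoxG_NormalVar}. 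The final deduction $wv\in B=k$, hence $w=cu$, is clean. Your version is self-contained where the paper delegates the torsion-handling to the cited lemma on ordinary Cox rings; the paper's version is shorter and makes the reduction to the non-equivariant statement transparent, but both rest on the same two mechanisms (ordering the free part, killing torsion degrees using that the degree-zero component is $k$).
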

\begin{proof}
If a homogeneous component $\Ff^x$ of $\Rr_X^G$ admits an invertible global section $f$, then $\Ff^x$ is isomorphic to $\Oo_X$ as a divisorial sheaf. It follows that $f\in A^G(\hat{G})$, whence the first assertion. Now assume $\Oo(X)\simeq k$ and let $f\in\cox^G(X)^*$. Consider the $\Im\phi$-grading on $\cox^G(X)$ induced by the forgetful morphism $\phi$. Then proceeding as in \cite[1.5.2.5]{coxrings}, we obtain that $f$ is $\Im\phi$-homogeneous of degree zero. But the $\Im\phi$-homogeneous component of degree zero is  the $\hat{G}$-graded $k$-subalgebra $\bigoplus_{\lambda\in \hat{G}} \Gamma(X,\Oo_X(\lambda)^x)$ so we conclude as before.
\end{proof}

\subsection{Recognizing an equivariant characteristic space}
\label{Sec_RecognizingAnEquivariantCharacteristicSpace}

Let $G$ an algebraic group and $(X,x)$ a pointed normal $G$-variety such that $\Oo(X)^{*G}\simeq k^*$. Suppose moreover that the $G$-equivariant Cox sheaf is of finite type. In this section, we give a criterion to recognize a $G$-equivariant characteristic space $Y\rightarrow X$, under some mild assumptions on $X$. As defined in Section \ref{Sec_ClassificationAlmostPrincipalBundles}, a $G$-equivariant characteristic space is characterized by its equivariant type being an isomorphism onto $\clg^G(X)$. The equivariant type of $Y$ is injective if and only if $Y$ is a $G$-equivariant quotient presentation of $X$. We now look for an interpretation of the surjectivity. This turns out to be related to the factoriality of the grading on $\Oo(Y)$, we start by recalling this notion.

\begin{defn}\citep[1.5.3]{coxrings}
Let $M$ be a finitely generated abelian group, and $A$ be an $M$-graded integral $k$-algebra.
\begin{itemize}
\item A non-zero non-invertible element $f\in A$ is \textit{$M$-prime} if it is homogeneous and $(f\mid gh)$ with $g,h$ homogeneous implies $f\mid g$ or $f\mid h$.
\item $A$ is \textit{factorially $M$-graded} if every non-zero non-invertible homogeneous element $f$ is a product of $M$-primes.
\item A normal $\Gamma_M$-variety $X$ is \textit{$\Gamma_M$-factorial} if every $\Gamma_M$-invariant Weil divisor on $X$ is principal. 
\end{itemize}
\end{defn}

Let $\Gamma$ a diagonalizable group, and $Y$ a normal quasi-affine $\Gamma$-variety. By \cite[1.5.3.3]{coxrings}, $Y$ is $\Gamma$-factorial if and only if $\Oo(Y)$ is $\hat{\Gamma}$-factorial. In the next proposition, we give further characterizations of $\hat{\Gamma}$-factoriality.

\begin{prop}\label{Prop_LASS_TypeSurj}
Let $\Gamma$ a diagonalizable group, and $q:Y\rightarrow X$ a $G$-equivariant diagonalizable almost principal $\Gamma$-bundle. The following assertions are equivalent
\begin{enumerate}
\item $Y$ is $\Gamma$-factorial.
\item $\type(Y)$ is surjective.
\item The morphism $\gamma:\hat{\Gamma}\rightarrow\clg^\Gamma(Y)$, $\lambda\mapsto[\Oo_Y(\lambda)]$ is surjective.
\end{enumerate}
Moreover, if one of this conditions is satisfied, then $Y$ is quasi-affine.
\end{prop}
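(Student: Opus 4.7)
The plan is to establish the cycle $(2) \Leftrightarrow (3) \Leftrightarrow (1)$ and then derive the quasi-affineness consequence.

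For $(2) \Leftrightarrow (3)$, I would apply Proposition \ref{Prop_ClgG_CommutativeSquare} with the trivial group in place of ``$G$'' and with $H = \Gamma$, producing a descent isomorphism $q^* : \clg(X) \xrightarrow{\sim} \clg^{\Gamma}(Y)$. The key step is to check that this isomorphism carries $\type(Y)(\lambda) = [\Aa_\lambda]$ to $\gamma(\lambda) = [\Oo_Y(\lambda)]$. Over the smooth locus $X_{\rm sm}$, where $q$ restricts to a genuine $\Gamma$-torsor, the classical description of diagonalizable torsors (Proposition \ref{Prop_GenForm_DiagTorsor}) yields $q^* \Aa_\lambda \cong \Oo_{q^{-1}(X_{\rm sm})}(\lambda)$ as $\Gamma$-linearized invertible sheaves, and Proposition \ref{EqIsomorphismsClg_Codim2} propagates the identification globally, giving $q^* \circ \type(Y) = \gamma$.

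For $(3) \Rightarrow (1)$, take a $\Gamma$-invariant Weil divisor $D$ on $Y$; the divisorial sheaf $\Oo_Y(D)$ carries a canonical $\Gamma$-linearization from the $\Gamma$-action on $k(Y)$, and by (3) one has $[\Oo_Y(D)] = [\Oo_Y(\lambda)]$ in $\clg^{\Gamma}(Y)$ for some $\lambda \in \hat{\Gamma}$. A $\Gamma$-equivariant isomorphism unwinds into a $\Gamma$-semi-invariant $f \in k(Y)^*$ of weight $\lambda$ with $\divi(f) = D$, so $D$ is principal. Conversely, for $(1) \Rightarrow (3)$, let $[\Ff] \in \clg^{\Gamma}(Y)$. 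Since $\Gamma$ is diagonalizable, the rational $\Gamma$-action on the one-dimensional $k(Y)$-module of rational sections of $\Ff$ admits a weight eigenvector, exhibiting $\Ff \cong \Oo_Y(D)$ for a $\Gamma$-invariant $D$. By (1), $D = \divi(g)$ with $g \in k(Y)^*$, and a standard cocycle argument (exploiting the smallness of $\Oo(Y)^{*\Gamma} = \Oo(X)^*$ inherited from the standing hypotheses) upgrades $g$ to a $\Gamma$-semi-invariant defining equation of some weight $\mu$, so $[\Ff] = \gamma(-\mu)$.

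The main obstacle is the quasi-affineness claim. My plan is to realize $Y$ as an open subscheme of its affinization $\spec \Oo(Y)$, using the canonical factorization $\hat{X}^G \to Y \to X$ from Remark \ref{Rem_CanonicalMorphismFromEqCharSpace}: the equivariant characteristic space $\hat{X}^G$ is quasi-affine under the standing assumption that its Cox sheaf is of finite type, and $\hat{X}^G \to Y$ is a good quotient by a closed diagonalizable subgroup of $\Gamma_{\clg^G(X)}$. Surjectivity of $\type(Y)$ (condition (2)) translates into having enough $\Gamma$-semi-invariant global sections on $Y$ to separate points of $q^{-1}(X_{\rm sm})$, and extending regularity across the codimension-$\geq 2$ complement by normality (as in Remark \ref{Rem_AlmostPrinc_CanTakeSmoothLocus}) yields the desired open immersion $Y \hookrightarrow \spec \Oo(Y)$.
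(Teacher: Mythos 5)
Your treatment of the equivalences $(1)\Leftrightarrow(2)\Leftrightarrow(3)$ is essentially sound and close to the paper's: the identification $q^*\circ\type(Y)=\gamma$ via the torsor structure over $X_{\rm sm}$ together with the descent isomorphism of \ref{Prop_ClgG_CommutativeSquare} is exactly the content of the paper's auxiliary lemma (stated there as $\type(Y)=q_*^\Gamma\circ\gamma$), and your $(3)\Rightarrow(1)$ matches the paper. For $(1)\Rightarrow(3)$ the paper instead proves $(1)\Rightarrow(2)$ directly, by writing $\type(Y)(\lambda)=[\Oo_X(q_*^\Gamma(\divi(f)))]$ for a $\Gamma$-homogeneous rational $f$ of weight $\lambda$ and using that every Weil divisor on $X$ is $q_*^\Gamma$ of an invariant (hence principal) divisor on $Y$; your variant is workable but, as you note, hinges on producing a semi-invariant eigenvector of rational sections and on a Rosenlicht-type cocycle argument, neither of which is spelled out (note that $k(Y)$ is not a rational $\Gamma$-module, so you should extract the eigenvector from $\Ff(V)$ for a $\Gamma$-stable affine open $V$).

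The genuine gap is the quasi-affineness claim, and your proposed route does not work. First, a good quotient of a quasi-affine variety by a diagonalizable group need not be quasi-affine: $\AAA^2\setminus\{0\}\rightarrow\PP^1_k$ is a good quotient (indeed a $\GG_m$-torsor) with quasi-affine source and non-quasi-affine target, so quasi-affineness of $\hat{X}^G$ cannot be transported down the factorization $\hat{X}^G\rightarrow Y$ by any general principle. Second, ``enough semi-invariant sections to separate points of $q^{-1}(X_{\rm sm})$'' is not the relevant criterion: quasi-affineness requires (by \cite[II.5.1.2]{EGA}) a cover by affine opens of the form $Y_f=\{f\neq 0\}$ with $f\in\Oo(Y)$, which separation of points plus normality does not deliver. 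Third, there is a circularity in the paper's logical order: the quasi-affineness of $\hat{X}^G$ is itself proved in the subsequent proposition \emph{by} the argument of the present one (together with the finite-exponent trick for $\coker\phi$), since $\type(\hat{X}^G)=\phi$ need not be surjective. The correct argument is direct: cover $X$ by affine opens $X\setminus\supp(D)$ with $D$ an effective Weil divisor; by surjectivity of $\type(Y)$ the canonical section of $\Oo_X(D)$ can be regarded as a $\Gamma$-homogeneous $f\in\Oo(Y)$ with $\divi_X(f)=D$, so that $q^{-1}(X\setminus\supp D)=Y_f$ is a principal affine open of $Y$ (affine because $q$ is affine), and \cite[II.5.1.2]{EGA} applies.
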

\begin{proof}
The assertions $2$ and $3$ are equivalent because of the lemma below. We suppose that the condition $1$ is verified and show $2$. For this, we can suppose that $X$ is smooth, and that $q$ is a $\Gamma$-torsor. With this assumption, the pullback morphism $q^*:\wdiv(X)\rightarrow\wdiv(Y)$ induces an isomorphism onto the subgroup $\wdiv(Y)^{\Gamma}$ which consists of $\Gamma$-invariant Weil divisors on $Y$. By abuse of notation, we also denote $q_*^\Gamma$ the inverse of this isomorphism. Let's consider a $\Gamma$-homogeneous rational function $f$ on $Y$, in other words a rational section of a divisorial sheaf on $X$. Then, we have $q^*(\divi_X(f))=\divi(f)$ and thus $\divi_X(f)=q_*^\Gamma(\divi(f))$. By this last remark we can write explicitly
\begin{center}
$\type(Y):\hat{\Gamma}\rightarrow \pic(X)$, $\lambda\mapsto [\Oo_X(q_*^\Gamma(\divi(f)))]$,
\end{center}
where $f$ is an arbitrary $\Gamma$-homogeneous rational function of degree $\lambda$ on $Y$. But this morphism is surjective because every Weil divisor on $X$ is the direct image by $q_*^\Gamma$ of a $\Gamma$-invariant Weil divisor on $Y$, which is principal by hypothesis. Now we suppose $3$ and show $1$. Consider a $\Gamma$-invariant Weil divisor $D$ on $Y$.  The divisorial sheaf $\Oo_Y(D)$ is naturally $\Gamma$-linearized. Thus, its class in $\clg(Y)$ is trivial by hypothesis. Hence, $D$ is the divisor of a rational function which is necessarily $\Gamma$-homogeneous. We have shown that $Y$ is $\Gamma$-factorial. It remains to show that $Y$ is quasi-affine. As $\type(Y)$ is surjective, we can cover $X$ by affine open subsets of the form $X_f:=X\setminus \supp(\divi_X(f))$, where $f$ is a  $\Gamma$-homogeneous regular function on $Y$. Indeed, the complement in $X$ of any affine open subvariety is the zero set of a global section of some divisorial sheaf. Because of the surjectivity of $\type(Y)$, this section can be viewed as a homogeneous element of $\Oo(Y)$. It follows that $Y$ is covered by the affine open subsets $Y_f=q^{-1}(X_f)$, whence the claim by \cite[II.5.1.2]{EGA}.
\end{proof}

\begin{lem}
Let $\Gamma$ be a diagonalizable group, and $q:Y\rightarrow X$ a $G$-equivariant diagonalizable almost principal $\Gamma$-bundle. Consider the morphism $\gamma:\hat{\Gamma}\rightarrow\clg^{\Gamma}(Y), \lambda\mapsto [\Oo_Y(\lambda)]$. Then, we have the identity
\begin{center}
$\type(Y)=q_*^{\Gamma}\circ\gamma$,
\end{center}
where $q_*^{\Gamma}:\clg^{\Gamma}(Y)\rightarrow\clg(X)$ is the isomorphism provided by \ref{Prop_ClgG_CommutativeSquare}.
\end{lem}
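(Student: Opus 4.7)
The plan is to unfold both sides of the claimed identity in terms of the $\hat{\Gamma}$-graded decomposition of $q_*\Oo_Y$ and compare. By Definition \ref{Def_TypeDiagTorsor}, $\type(Y)(\lambda) = \phi([(q_*\Oo_Y)_\lambda]) = [(q_*\Oo_Y)_\lambda]$ in $\clg(X)$, where $q_*\Oo_Y$ carries its natural $\hat{\Gamma}$-grading coming from the $\Gamma$-action on $Y$. On the other hand, by Proposition \ref{Prop_ClgG_CommutativeSquare} applied with the roles of $G$ and $H$ taken by the trivial group and $\Gamma$ respectively, the inverse isomorphism $q_*^{\Gamma}:\clg^{\Gamma}(Y)\rightarrow\clg(X)$ of the pullback $q^*$ is explicitly given by $[\Ff]\mapsto[(q_*\Ff)^{\Gamma}]$. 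So the identity to prove reduces to
\[
[(q_*\Oo_Y(\lambda))^{\Gamma}] = [(q_*\Oo_Y)_\lambda] \quad\text{in } \clg(X).
\]

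To verify this, I would observe that $\Oo_Y(\lambda)$ is by definition the structure sheaf $\Oo_Y$ equipped with the $\Gamma$-linearization obtained by twisting the canonical one by the character $\lambda$. Concretely, if the canonical $\Gamma$-linearization decomposes $q_*\Oo_Y = \bigoplus_{\chi\in\hat{\Gamma}} (q_*\Oo_Y)_\chi$ into weight spaces, then under the $\lambda$-twisted linearization the piece $(q_*\Oo_Y)_\chi$ acquires weight $\chi - \lambda$. Taking $\Gamma$-invariants therefore picks out exactly the summand of original weight $\lambda$, giving the canonical isomorphism
\[
(q_*\Oo_Y(\lambda))^{\Gamma} \;\simeq\; (q_*\Oo_Y)_\lambda
\]
of $\Oo_X$-modules.

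The only thing left to check is that this isomorphism is an isomorphism of divisorial sheaves on $X$, so that it makes sense in $\clg(X)$. By Remark \ref{Rem_AlmostPrinc_CanTakeSmoothLocus}, the restriction $q^{-1}(X_{\rm sm})\rightarrow X_{\rm sm}$ is a genuine $\Gamma$-torsor, so on $X_{\rm sm}$ each weight component $(q_*\Oo_Y)_\chi$ is an invertible sheaf (Proposition \ref{Prop_Forme_Gen_AlmostBundle}(i)). Since both sheaves in the above isomorphism are reflexive of rank one on $X$ and divisorial sheaves are determined by their restriction to the smooth locus (Proposition \ref{EqIsomorphismsClg_Codim2}), the equality of their classes in $\clg(X)$ follows from the set-theoretic identification on $X_{\rm sm}$. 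The main (mild) pitfall is purely bookkeeping: a sign convention in how one twists linearizations by a character; once that is fixed consistently with the identification $q_*^\Gamma[\Ff] = [(q_*\Ff)^\Gamma]$, the computation is immediate.
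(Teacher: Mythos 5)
Your argument is correct, but it takes a different route from the paper's. You unfold both sides algebraically: $\type(Y)(\lambda)=[(q_*\Oo_Y)_\lambda]$ by Definition \ref{Def_TypeDiagTorsor}, $q_*^{\Gamma}\gamma(\lambda)=[(q_*\Oo_Y(\lambda))^{\Gamma}]$ by Proposition \ref{Prop_ClgG_CommutativeSquare}, and then you identify $(q_*\Oo_Y(\lambda))^{\Gamma}$ with the weight-$\lambda$ piece of $q_*\Oo_Y$ by observing that twisting the linearization of $\Oo_Y$ by $\lambda$ shifts the grading on the pushforward, so that taking invariants extracts exactly that summand. The paper instead argues geometrically: after reducing to the smooth locus (where $q$ is a genuine torsor), it forms the associated fiber bundle $Z:=Y\times^{\lambda}\GG_m\rightarrow X$ and notes that this single $\GG_m$-torsor represents both classes at once --- it is the image of $[Y]$ under $H^1(X,\lambda)$, hence $\type(Y)(\lambda)$, and it is the descent along $q$ of the $\Gamma$-linearized sheaf $\Oo_Y(\lambda)$, hence $q_*^{\Gamma}\gamma(\lambda)$. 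The two proofs are really the algebraic and geometric faces of the same computation (the coordinate algebra of $Z$ over $X$ is $\bigoplus_{n}(q_*\Oo_Y(n\lambda))^{\Gamma}$); yours is more elementary and self-contained, while the paper's reuses the associated-bundle formalism of Section \ref{Sec_EqDiagTorsors} and avoids any discussion of how the grading shifts. Two small remarks on your write-up: the identification $(q_*\Oo_Y(\lambda))^{\Gamma}=(q_*\Oo_Y)_{\lambda}$ is in fact an equality of subsheaves of $q_*\Oo_Y$, so your final paragraph about checking it on $X_{\rm sm}$ and invoking reflexivity is not needed; and the sign ambiguity you flag is real but harmless --- it is resolved by the convention of \cite[4.1.7]{LinearizationGBrion} already used to define $\gamma$ in the exact sequence (\ref{EqSEClGroupEquivariantOubli}), and in the only place the lemma is used (Proposition \ref{Prop_LASS_TypeSurj}, where surjectivity of $\type(Y)$ and of $\gamma$ are compared) a sign would be immaterial anyway.
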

\begin{proof}
To prove this, we can suppose that $X$ is smooth and consider (equivariant) Picard groups instead of (equivariant) class groups. Now, for each character $\lambda\in\hat{\Gamma}$, consider the cartesian square
\begin{center}
\begin{tikzcd}
		Y\times \GG_m \arrow[r, "p_1"] \arrow[d, ""] &  Y \arrow[d, "q"] \\
	    Z:=Y\times^\lambda \GG_m \arrow[r, ""]  & X\,\,,
\end{tikzcd}
\end{center}
defined by the fiber bundle associated with $\GG_m$ endowed with the $\Gamma$-action defined by $\lambda$. Then, $Z\rightarrow X$ is a $\GG_m$-torsor which by construction corresponds to $q_*^{\Gamma}\circ\gamma(\lambda)\in\pic(X)$. But this is also $\type(Y)(\lambda)$, by definition of the type of $Y$.
\end{proof}

\begin{prop}
Suppose that $\cox^G(X)$ is a finitely generated $k$-algebra. Then, the affinization morphism $\hat{X}^G\rightarrow\tilde{X}^G$ is an open immersion and its image has a complement of codimension $\geq 2$ in $\tilde{X}^G$.
\end{prop}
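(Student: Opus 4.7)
The plan proceeds in three steps: first, exhibit $\hat{X}^G$ as a quasi-affine variety; second, realize the affinization as an open immersion via an explicit principal affine cover; third, establish the codimension-two bound on the boundary.

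For the first step, $\hat{X}^G$ is a $G$-equivariant diagonalizable almost principal $\Gamma_{\clg^G(X)}$-bundle over $X$ whose equivariant type is the identity on $\clg^G(X)$, and in particular surjective. An argument in the spirit of Proposition \ref{Prop_LASS_TypeSurj}, using $G$-semi-invariant $\Gamma$-homogeneous elements (available thanks to surjectivity of the equivariant type), produces finitely many homogeneous elements $f_1, \ldots, f_n \in \cox^G(X) = \Oo(\hat{X}^G)$ whose non-vanishing loci $X_{f_i} = X \setminus \supp \divi_X(f_i)$ form an affine open cover of $X$, with $\hat{X}^G = \bigcup_i q^{-1}(X_{f_i})$ and each piece affine. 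In particular, $\hat{X}^G$ is quasi-affine.

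For the second step, I would verify the algebraic identification $\Rr^{G,x}_X(X_{f_i}) \simeq \cox^G(X)_{f_i}$ (localization at $f_i$). Since $f_i$ trivializes its divisorial sheaf on $X_{f_i}$, every homogeneous section of any divisorial sheaf over $X_{f_i}$ extends, after multiplication by a sufficiently large power of $f_i$, to a global section, using normality of $X$ to bound pole orders and finite generation of $\cox^G(X)$ to bound the powers. Consequently each $q^{-1}(X_{f_i})$ is identified with the principal open $\tilde{X}^G_{f_i} \subset \tilde{X}^G$, and gluing these identifications realizes the affinization as an open immersion $\hat{X}^G \hookrightarrow \tilde{X}^G$ with image $\bigcup_i \tilde{X}^G_{f_i}$ and complement $V(f_1, \ldots, f_n)$.

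For the third step, one shows that no height-one prime of $\cox^G(X)$ contains every $f_i$. Any codimension-one component of $V(f_1, \ldots, f_n)$ lies in the $\Gamma_{\clg^G(X)}$-stable complement, hence is stabilized by the identity component of $\Gamma_{\clg^G(X)}$, so its defining prime is homogeneous. Using Proposition \ref{Prop_ClgG_CommutativeSquare} together with the $\Gamma$-factoriality properties of $\hat{X}^G$, the $\Gamma_{\clg^G(X)}$-invariant prime divisors of $\tilde{X}^G$ that meet $\hat{X}^G$ are in bijection (via $q$) with prime divisors of $X$; since the $X_{f_i}$ cover $X$, each such divisor meets some $\tilde{X}^G_{f_i} \subset \hat{X}^G$ and so cannot belong to the complement. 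To exclude a $\Gamma_{\clg^G(X)}$-invariant codimension-one component of the complement disjoint from $\hat{X}^G$, one uses the noetherian finiteness of such components combined with a homogeneous prime-avoidance argument to produce a homogeneous element $\pi \in \cox^G(X)$ nowhere vanishing on $\hat{X}^G$; such a $\pi$ is a unit in $\Oo(\hat{X}^G) = \cox^G(X)$, contradicting its membership in a proper homogeneous prime ideal. This third step is the main obstacle, requiring careful interplay between $\Gamma$-factoriality, noetherian finiteness of boundary components, and the homogeneous structure of the Cox ring.
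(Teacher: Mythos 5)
Your first two steps follow the paper's route. Quasi-affineness of $\hat{X}^G$ is obtained by covering $X$ with affine opens of the form $X_f$, $f$ a homogeneous section, exactly as at the end of the proof of \ref{Prop_LASS_TypeSurj}; the one point your sketch glosses over is that $\type(\hat{X}^G)=\phi$ is in general only surjective up to finite exponent (\ref{RemFiniteIndexForgetGLin}), so one must pass to a multiple $nD$ of the divisor cutting out the complement of a given affine open and use $X_f=X_{f^n}$ --- surjectivity of the \emph{equivariant} type alone does not give surjectivity of $\type(\hat{X}^G)$. Once $\hat{X}^G$ is known to be quasi-affine with finitely generated ring of global sections, the affinization is an open immersion by the standard characterization (\cite[II.5.1.2]{EGA}); your explicit identification $\Rr^{G,x}_X(X_{f_i})\simeq\cox^G(X)_{f_i}$ is a correct, if more laborious, way of seeing the same thing.

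Step three is where there is a genuine gap. To exclude a codimension-one component $Z$ of the boundary you propose to produce a homogeneous element $\pi$ in the height-one prime of $Z$ vanishing nowhere on $\hat{X}^G$. Prime avoidance cannot deliver this: for $V(\pi)\cap\hat{X}^G=\emptyset$ you would need $\pi$ to avoid \emph{every} height-one prime whose divisor meets $\hat{X}^G$, and there are infinitely many of these. What would rescue the argument is that the ideal of $Z$ be principal, generated by a homogeneous element, i.e. $\Gamma_{\clg^G(X)}$-factoriality of $\hat{X}^G$; but by \ref{Prop_LASS_TypeSurj} this is equivalent to surjectivity of $\type(\hat{X}^G)=\phi$, which is precisely what is \emph{not} available here. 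The step you describe as the main obstacle is in fact the easiest one: the restriction map $\Oo(\tilde{X}^G)\rightarrow\Oo(\hat{X}^G)$ is an isomorphism, since both rings are $\cox^G(X)$ by construction, so Lemma \ref{Lem_LocalCohomology} (depth $\geq 2$ via local cohomology) applied to the open immersion of step two immediately yields that the complement has codimension $\geq 2$, with no divisor-theoretic or factoriality input at all. This is the paper's argument, and you should replace your third step by it.
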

\begin{proof}
Using that the cokernel of $\type(\hat{X}^G)$ has finite exponent (\ref{RemFiniteIndexForgetGLin}), and that $X_f=X_{f^n}$ for any power of a homogeneous section $f\in\Rr_X(X)$, we can proceed as in the end of the proof of \ref{Prop_LASS_TypeSurj} to show that $\hat{X}^G$ is quasi-affine. The claim on codimension is obtained by applying \ref{Lem_LocalCohomology}.
\end{proof}

\begin{thm}\label{Thm_Recognize_EqCharacteristic_Space}
Let $G$ be a connected algebraic group with trivial Picard group, $(X,x)$ be a pointed normal $G$-variety with only constant invertible regular functions, $\Gamma$ be a diagonalizable group, and $q:Y\rightarrow X$ be a $G$-equivariant diagonalizable almost principal $\Gamma$-bundle. The conditions
\begin{enumerate}
\item The group of $\Gamma$-homogeneous invertible regular functions on $Y$ is isomorphic to $\hat{G}\times k^*$.
\item $Y$ is $\Gamma$-factorial.
\end{enumerate}
are necessary and sufficient for $Y$ to be a $G$-equivariant characteristic space of $X$.
\end{thm}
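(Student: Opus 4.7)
The plan is as follows. By the discussion of Section \ref{Sec_ClassificationAlmostPrincipalBundles}, $Y\to X$ is a $G$-equivariant characteristic space if and only if its $G$-equivariant type $\type_G(Y):\hat{\Gamma}\to\clg^G(X)$ is an isomorphism, so the goal is to show that conditions (1) and (2) together are equivalent to the bijectivity of this morphism.

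Necessity is direct: if $\type_G(Y)$ is an isomorphism, then $\Oo(Y)$ identifies with $\cox^G(X)$ as a $\hat{\Gamma}$-graded $G$-algebra, and (1) follows from Proposition \ref{PropUnitsCoxEq}(1); moreover $\type(Y)=\phi\circ\type_G(Y)$ is surjective because $\phi$ is so under the hypothesis $\pic(G)=0$ (see \ref{RemFiniteIndexForgetGLin}), so Proposition \ref{Prop_LASS_TypeSurj} gives (2).

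For sufficiency, I would first extract from the exact sequence (\ref{EqSEClGroupEquivariantOubli}) and the hypothesis $\Oo(X)^*\simeq k^*$ that $\gamma:\hat{G}\to\clg^G(X)$ is injective with image $\ker\phi$. Writing $q_*\Oo_Y=\bigoplus_{\chi\in\hat{\Gamma}}\Ff_\chi^x$, a $\Gamma$-homogeneous regular function of degree $\chi$ on $Y$ is invertible if and only if $\Ff_\chi^x$ is trivial as an ordinary divisorial sheaf, equivalently $\chi\in M:=\type_G(Y)^{-1}(\ker\phi)$. Because $\Oo(X)^*\simeq k^*$, any such $\Ff_\chi^x$ is then isomorphic to $\Oo_X(\lambda_\chi)$ as a $G$-linearized sheaf for a unique character $\lambda_\chi\in\hat{G}$, and any trivializing section is $G$-semi-invariant of weight $\lambda_\chi$. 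This yields a canonical extension $1\to k^*\to\{\Gamma\textrm{-hom. units on }Y\}\to M\to 0$ and a natural homomorphism $\psi:M\to\hat{G}$, $\chi\mapsto\lambda_\chi$, through which condition (1) translates into $\psi$ being an isomorphism.

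Finally, injectivity of $\psi$ yields $\ker\type_G(Y)\subset\ker\psi=0$, whence injectivity of $\type_G(Y)$ (equivalently, by \ref{PropLASSTypInjFoncInvEq}, the $G$-invariant $\Gamma$-homogeneous units on $Y$ are constant). Surjectivity of $\psi$ yields $\Im\type_G(Y)\supset\gamma(\hat{G})=\ker\phi$, and combining with the surjectivity of $\phi\circ\type_G(Y)$ provided by condition (2) via Proposition \ref{Prop_LASS_TypeSurj} forces $\type_G(Y)$ to be surjective. I expect the main technical point to be the canonical identification of the group of $\Gamma$-homogeneous units with pairs $(\chi,\lambda_\chi)$ up to scalars; once in hand, the remainder is a diagram chase in (\ref{EqSEClGroupEquivariantOubli}).
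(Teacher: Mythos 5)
Your proposal is correct and follows essentially the same route as the paper: necessity via \ref{PropUnitsCoxEq} and \ref{Prop_LASS_TypeSurj}, and sufficiency by showing $\type_G(Y)$ is bijective using the exact sequence (\ref{EqSEClGroupEquivariantOubli}), the identification of the $\Gamma$-homogeneous units (condition (1)) to get injectivity together with $\ker\phi\subset\Im(\type_G(Y))$, and condition (2) for surjectivity of $\type(Y)=\phi\circ\type_G(Y)$. Your map $\psi$ is just an explicit repackaging of the paper's subgroup $M\subset\hat{G}$ arising as the degree-zero part of the induced $\clg(X)$-grading, so the two arguments coincide in substance.
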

\begin{proof}
Suppose that the above conditions are fulfilled. By the first one, $q$ is a quotient presentation of $X$, hence $\type_G(Y)$ is injective. Moreover, with the assumptions of the statement the exact sequence (\ref{EqSEClGroupEquivariantOubli}) can be written as
\begin{center}
$0\rightarrow\hat{G}\xrightarrow{\gamma} \clg^G(X)\xrightarrow{\phi}\clg(X)\rightarrow 0$.
\end{center}
By the second condition, $\type(Y)$ is surjective (\ref{Prop_LASS_TypeSurj}). Also, the degree zero part of $\Oo(Y)$ with respect to the induced $\clg(X)$-grading is
\begin{center}
$A^G(M):=\bigoplus_{\lambda\in \hat{G}} \Gamma(X,\Oo_X(\lambda)^x)$,
\end{center}
for a certain subgroup $M$ of $\hat{G}$. Proceeding as in the proof of \ref{PropUnitsCoxEq}, we obtain that the group of homogeneous units of $\Oo(Y)$ is isomorphic with $M\times k^*$ via multiplication map. Hence, we have in fact $M=\hat{G}$ by the first condition, so that $\hat{G}\simeq\ker\phi\subset\Im(\type_G(Y))$. In view of the equality $\type(Y)=\phi\circ\type_G(Y)$, we obtain the surjectivity of $\type_G(Y)$. Conversely, if $Y\rightarrow X$ is a $G$-equivariant characteristic space, then the first condition is satisfied by \ref{PropUnitsCoxEq}. The second is also satisfied because $\type_G(Y)$ and $\phi$ are surjective by hypothesis (\ref{Prop_LASS_TypeSurj}).
\end{proof}

\subsection{Equivariant Cox ring of an open $G$-stable subvariety}
\label{Sec_EqCoxRing_GStableOpenSubvariety}
Let $G$ be a connected algebraic group, $(X,x)$ a pointed normal $G$-variety, and $X_0\xhookrightarrow{j} X$ a $G$-stable open subset containing $x$. Suppose moreover that $\Oo_X(X_0)^{*G}\simeq k^*$, so that both equivariant Cox rings of $X$ and $X_0$ are well-defined. Let $(D_i)_{i=1,..,n}$ the possibly empty family of prime divisors lying in the complement of $X_0$, and $(r_i)_{i=1,..,n}$ the associated family of respective canonical sections of the $G$-linearized divisorial sheaves $\Oo_X(D_i)^x$, $i=1,..,n$.

\begin{prop}\label{RelationCoxEqToGInvSubset}
The $k$-subalgebra of $\cox^G(X)$ generated by $r_1,...,r_n$ is polynomial, and there is an isomorphism of graded $k$-algebras
\begin{center}
$\cox^G(X_0)\simeq\cox^G(X)/((r_i-1)_{1\leq i\leq n})$,
\end{center}
where the right-hand side algebra is endowed with the $\clg^G(X_0)$-grading induced by the surjective morphism $j^*:\clg^G(X)\rightarrow\clg^G(X_0)$.
Moreover, $(r_i-1)_{i=1,...,n}$ is a regular sequence in $\cox^G(X)$.
\end{prop}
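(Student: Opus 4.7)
The plan is to use the equivariant localization sequence (\ref{EqSEClGroupEquivariant}) to pin down the grading, then realize the quotient morphism $\cox^G(X)/((r_i-1)) \to \cox^G(X_0)$ directly by restriction of sections, and finally deduce the regular sequence property by a short induction on the number of removed divisors.

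First, since $k^* \subset \Oo(X)^{*G} \subset \Oo(X_0)^{*G} \simeq k^*$, we have $\Oo(X)^{*G} \simeq k^*$ as well. Feeding this into (\ref{EqSEClGroupEquivariant}) collapses the divisor map to zero, so the map $\bigoplus_{i=1}^n \ZZ D_i \to \clg^G(X)$ is injective, and $\clg^G(X_0)$ identifies with $\clg^G(X) / \bigoplus \ZZ [\Oo_X(D_i)^x]$. This settles the first assertion at once: the $r_i$ are nonzero homogeneous elements of the integral domain $\cox^G(X)$ (\ref{Prop_CoxG_NormalVar}) with $\ZZ$-linearly independent degrees, so their monomials are $k$-linearly independent and $k[r_1,\ldots,r_n]$ is polynomial.

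For the isomorphism, pullback along $j$ preserves the canonical rigidification at $x \in X_0$, so restriction of global sections defines a graded ring homomorphism $\rho : \cox^G(X) \to \cox^G(X_0)$ over $j^*$; moreover, $\rho(r_i)$ is the canonical section of $j^*\Oo_X(D_i)^x = \Oo_{X_0}^x$, namely the constant $1$. Thus $\rho$ factors through a graded morphism
\[
\bar\rho : \cox^G(X) / ((r_i-1)_{1\le i\le n}) \longrightarrow \cox^G(X_0),
\]
the source being graded by $\clg^G(X) / \ker j^* = \clg^G(X_0)$. I would invert $\bar\rho$ explicitly: given a homogeneous $s \in (j^*\Ff)^x(X_0) \subset k(X)$, choose integers $m_i \ge 0$ large enough that $s \cdot r_1^{m_1} \cdots r_n^{m_n}$ defines a global section of $(\Ff \star \Oo_X(\sum m_i D_i))^x$ on $X$, and map $s$ to the class of this element in the quotient. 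Well-definedness with respect to the choice of $(m_i)$ and of the lift of $[j^*\Ff]$ in $\clg^G(X)$ follows because any two choices differ by a monomial in the $r_i$, which reduces to $1$ modulo $(r_i-1)$; this is where I expect the main technical care to be needed, as it rests on the interplay between the multigrading of $\cox^G(X)$ and the freeness of $\ker j^*$ inside $\clg^G(X)$. Once $\psi$ is checked to be a graded ring morphism, the identities $\bar\rho \circ \psi = \id$ and $\psi \circ \bar\rho = \id$ are verified on homogeneous elements directly.

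For the regular sequence claim, set $X^{(\ell)} := X \setminus (D_1 \cup \cdots \cup D_\ell)$ for $\ell = 0,\ldots,n$. Each $X^{(\ell)}$ satisfies the hypotheses of the proposition, since its ring of $G$-invariant units is sandwiched between $\Oo(X)^{*G} = k^*$ and $\Oo(X_0)^{*G} \simeq k^*$. Applying the already-proved isomorphism to the inclusion $X^{(\ell)} \hookrightarrow X$ yields
\[
\cox^G(X) / ((r_j-1)_{1\le j\le \ell}) \simeq \cox^G(X^{(\ell)}).
\]
By \ref{Prop_CoxG_NormalVar} each right-hand side is an integral domain, so the class of $r_{\ell+1} - 1$ is a nonzero element of an integral domain, hence a non-zero-divisor, and induction on $\ell$ concludes that $(r_1 - 1, \ldots, r_n - 1)$ is a regular sequence in $\cox^G(X)$.
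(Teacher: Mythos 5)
Your proof is correct. For the polynomiality of $k[r_1,\ldots,r_n]$ and for the isomorphism itself you follow essentially the paper's route: the collapsed localization sequence makes the degrees of the $r_i$ linearly independent, and your explicit inverse $\psi$ (clear the poles of a section over $X_0$ by a large monomial in the $r_i$) is the paper's observation that each graded piece $\Mm_{[\Gg]}(X_0)$ of $\Rr^G_X(X_0)$ is the localization of $\Mm_{[\Gg]}(X)$ at $r_1\cdots r_n$, repackaged as a map; the cross-multiplication identity $t'r^{c^-}=t\,r^{c^+}$ in $\cox^G(X)$ is what makes your well-definedness check go through (it plays the role of Lemma \ref{UsualTrick}), and the extension step tacitly uses that sections of divisorial sheaves extend over the codimension $\geq 2$ part of $X\setminus X_0$, which the paper makes explicit by shrinking to the smooth locus and assuming $X_0=X\setminus\bigcup_i D_i$. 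Where you genuinely diverge is the regular-sequence claim. The paper proves it by showing each $\Mm_{[\Gg]}(X_0)$ is a free $k[r_1^{\pm1},\ldots,r_n^{\pm1}]$-module and descending regularity through Lemma \ref{Lem_RegSeq}. You instead apply the already-established isomorphism to the intermediate open sets $X^{(\ell)}=X\setminus(D_1\cup\cdots\cup D_\ell)$ --- which do satisfy the hypotheses, since $k^*\subseteq\Oo(X^{(\ell)})^{*G}\subseteq\Oo(X_0)^{*G}\simeq k^*$ and the prime divisors of $X\setminus X^{(\ell)}$ are exactly $D_1,\ldots,D_\ell$ --- and then invoke integrality of $\cox^G(X^{(\ell)})$ via \ref{Prop_CoxG_NormalVar}: the image of $r_{\ell+1}$ there is the canonical section of $\Oo_{X^{(\ell)}}(D_{\ell+1}\cap X^{(\ell)})^x$, which vanishes on the nonempty divisor $D_{\ell+1}\cap X^{(\ell)}$ and so is not a unit, whence $r_{\ell+1}-1$ is a nonzero element of a domain and hence a non-zero-divisor. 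This is shorter and more conceptual than the paper's module-theoretic argument, at the cost of leaning on \ref{Prop_CoxG_NormalVar}; since that corollary precedes the proposition in the paper, there is no circularity.
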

\begin{proof}
With these assumptions, the exact sequence (\ref{EqSEClGroupEquivariant}) becomes
\begin{center}
$0\rightarrow \bigoplus_{i=1}^n \ZZ D_i \xrightarrow{D\mapsto [D]}\clg^G(X)\xrightarrow{j^*}\clg^G(X_0)\rightarrow 0$.
\end{center}
For all $[\Gg]\in \clg^G(X_0)$, choose $[\Ff]\in \clg^G(X)$ such that $j^*([\Ff])=[\Gg]$. By the above exact sequence, we can write
\begin{center}
$\Rr_X^{G}=\bigoplus_{[\Gg] \in \clg^G(X_0)}\Mm_{[\Gg]}$, where $\Mm_{[\Gg]}:=\bigoplus_{(n_i)\in \ZZ^n}(\Ff\otimes \Oo_X(\sum_{i=1}^n n_iD_i))^x$.
\end{center}
Notice that $\Mm_{[\Oo_{X_0}]}$ is a $\ZZ^n$-graded $\Oo_X$-subalgebra of $\Rr_X^{G}$. Because $k[r_1,...,r_n]$ is a $\ZZ^n$-graded $k$-subalgebra of $\Mm_{[\Oo_{X_0}]}(X)$ spanned by elements of linearly independent degrees, it immediately follows that it is a polynomial subalgebra of $\cox^{G}(X)$. Also, each $\Mm_{[\Gg]}(X)$ is a $\ZZ^n$-graded $k[r_1,...,r_n]$-module, and $\Mm_{[\Gg]}(X_0)$ is the localisation of $\Mm_{[\Gg]}(X)$ at the homogeneous element $r_1...r_n$. Indeed, up to removing a $G$-stable closed subvariety of codimension $\geq 2$, we can suppose that $X$ is smooth and $X_0=X\setminus \cup_{i=1}^n D_i$. As the support of the divisor of zeroes of $r_1...r_n$ on $X$ is  $\cup_{i=1}^n D_i$, we conclude by applying \cite[I.9.3.1]{EGA}. It follows that $\Rr_X^G(X_0)$ is the localization of $\cox^G(X)$ at the homogeneous element $r_1...r_n$. 

We use the notation of \ref{SecCoxRingEq} for the equivalence relations between sections over open subsets of $X_0$ of $G$-linearized divisorial sheaves on $X_0$. Given two homogeneous sections $s_1,s_2\in\Rr_X^G(X_0)$, we have $s_1\sim^G_{X_0} s_2$ if and only if both are $\ZZ^n$-homogeneous elements of some $\Mm_{[\Gg]}(X_0)$ and there exists a Laurent monomial $r_1^{\alpha_1}..r_n^{\alpha_n}$ such that $s_1=s_2r_1^{\alpha_1}..r_n^{\alpha_n}$. By Lemma \ref{UsualTrick}, imposing the relations $r_i=1$ in $\Rr_X^G(X_0)$ is the same as passing to the quotient modulo $\sim^G_{X_0}$. Thus, we obtain an isomorphism 
\begin{center}
$\cox^G(X)[\frac{1}{r_1..r_n}]/((r_i-1)_{1\leq i\leq n})\simeq \cox^G(X_0)$,
\end{center}
which implies the isomorphism in the statement above.

To prove that the sequence $((r_i-1)_{i=1,...,n})$ is regular in $\cox^{G}(X)$, we first prove that each $\Mm_{[\Gg]}(X_0)$ is a free $k[r_1^{\pm 1},...,r_n^{\pm 1}]$-module. An element $t\in\Mm_{[\Gg]}(X_0)$ decomposes uniquely as a sum of homogeneous elements so that we can suppose $t$ homogeneous, say of degree $(d_i)\in\ZZ^n$. The multiplication by $r_1^{d_1}...r_n^{d_n}$ defines an isomorphism of $k$-vector spaces 
\begin{center}
$\Gg^x(X_0)\rightarrow (\Gg\otimes \Oo_X(\sum_{i=1}^n d_iD_i))^x(X_0)$.
\end{center}
Let $(f_i)_i$ a $k$-basis of $\Gg^x(X_0)$, then $t$ decomposes uniquely as a linear combination $t=\sum_i \alpha_i r^{d_1}...r^{d_n}f_i$, whence the free module structure. Now, we claim that for $j=1,...,n$, there is an injective morphism of $k[r_1,...,r_n]$-modules
\begin{center}
$\Mm_{[\Gg]}(X)/((r_i-1)_{i=1,...,j})\xhookrightarrow{}\Mm_{[\Gg]}(X_0)/((r_i-1)_{i=1,...,j})$.
\end{center}
For this, it suffices to apply Lemma \ref{Lem_RegSeq} $j$ times, considering for the $k$th step ($1\leq k\leq j$) the $\ZZ$-grading induced by the projection 
\begin{center}
$\ZZ^n\rightarrow \ZZ$, $(d_1,...,d_n)\mapsto d_k$.
\end{center}
Now, the result follows from the claim, the fact that $\Mm_{[\Gg]}(X_0)$ is a free $k[r_1^{\pm 1},...,r_n^{\pm 1}]$-module, and that the sequence $((r_i-1)_{1\leq i\leq n})$ is regular in $k[r_1^{\pm 1},...,r_n^{\pm 1}]$.
\end{proof}

\begin{lem}\label{UsualTrick}
Let $M$ a graded module over an algebra of Laurent polynomials $k[t_1^{\pm 1},...,t_n^{\pm 1}]$ endowed with the standard $\ZZ^n$-grading. For all $x\in M$, the submodule generated by the family $(t_1^{d_1}...t_n^{d_n}.x-x)_{(d_i)\in \ZZ^n}$ equals the submodule generated by the family $(t_i.x-x)_{1\leq i\leq n}$.
\end{lem}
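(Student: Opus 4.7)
Let $N_1$ denote the submodule generated by $(t_1^{d_1}\cdots t_n^{d_n}x - x)_{(d_i)\in\ZZ^n}$ and $N_2$ the submodule generated by $(t_i x - x)_{1\leq i\leq n}$. The inclusion $N_2\subseteq N_1$ is immediate, since each $t_i x - x$ appears as a generator of $N_1$ (take $d_i = 1$ and $d_j = 0$ for $j\neq i$). Thus the only content is the reverse inclusion.

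For the reverse inclusion, the plan is to write each generator $t_1^{d_1}\cdots t_n^{d_n}x - x$ of $N_1$ as an explicit $k[t_1^{\pm 1},\ldots,t_n^{\pm 1}]$-linear combination of elements of $N_2$. I would use the standard telescoping decomposition
\begin{equation*}
t_1^{d_1}\cdots t_n^{d_n}x - x \;=\; \sum_{i=1}^{n} t_1^{d_1}\cdots t_{i-1}^{d_{i-1}}\bigl(t_i^{d_i}x - x\bigr),
\end{equation*}
which reduces the problem to showing $(t_i^{d}-1)x \in N_2$ for every $1\leq i\leq n$ and every $d\in\ZZ$.

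The final step is a two-case geometric series argument in the single variable $t_i$. For $d\geq 0$ one has the polynomial identity
\begin{equation*}
t_i^{d} - 1 \;=\; (t_i - 1)(t_i^{d-1} + t_i^{d-2} + \cdots + 1),
\end{equation*}
so $(t_i^d - 1)x = (t_i^{d-1}+\cdots+1)(t_i x - x)\in N_2$. For $d<0$, writing $t_i^{d} - 1 = -t_i^{d}(t_i^{-d}-1)$ reduces to the previous case, since $N_2$ is stable under multiplication by $t_i^{d}\in k[t_1^{\pm 1},\ldots,t_n^{\pm 1}]$. Combining these two steps yields $t_1^{d_1}\cdots t_n^{d_n}x - x \in N_2$ for all $(d_i)\in\ZZ^n$, whence $N_1\subseteq N_2$.

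There is no real obstacle here: the lemma is essentially a formal identity that does not use the grading on $M$ at all (the grading hypothesis is only relevant for the ambient application in the proof of \ref{RelationCoxEqToGInvSubset}), and the whole argument amounts to the telescoping identity followed by the factorization of $t_i^d - 1$.
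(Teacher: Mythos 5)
Your proof is correct and follows essentially the same route as the paper: the paper's proof consists of exactly the two identities $t_it_j.x-x=t_i.(t_j.x-x)+(t_i.x-x)$ (one step of your telescoping, which iterated also gives the geometric-series factorization of $t_i^d-1$) and $t_i^{-1}.x-x=(-t_i^{-1}).(t_i.x-x)$ (your reduction of negative exponents to positive ones). Your observation that the grading plays no role here is also accurate.
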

\begin{proof}
This comes from the identities
\begin{center}
$t_it_j.x-x=t_i.(t_j.x-x)+(t_i.x-x)$ and $t_i^{-1}.x-x=(-t_i^{-1}).(t_i.x-x)$.
\end{center}
\end{proof}

\begin{lem}\label{Lem_RegSeq}
Let $k[r]$ be a polynomial $k$-algebra with its standard $\ZZ$-grading, and $M$ a $\ZZ$-graded $k[r]$-module such that $r$ is a non-zero divisor in $M$. Then, $(r-1)M[1/r]\cap M=(r-1)M$.
\end{lem}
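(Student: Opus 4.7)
The inclusion $(r-1)M \subseteq (r-1)M[1/r] \cap M$ is immediate, so the content is the reverse inclusion. My plan is to take $x \in (r-1)M[1/r] \cap M$, write a witness $x = (r-1)(m/r^n)$ with $m \in M$ and $n \geq 0$, and induct on $n$ to show that we may in fact take $n = 0$, i.e., the witness lives in $M$ itself.

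Clearing denominators, the existence of such a witness means $r^n x = (r-1)m$ in $M$. The base case $n = 0$ is exactly the desired conclusion. For $n \geq 1$, I would rewrite the relation as
\[
m \;=\; rm - r^n x \;=\; r\bigl(m - r^{n-1}x\bigr),
\]
which expresses $m$ as $r$ times the element $m' := m - r^{n-1}x \in M$. Substituting $m = rm'$ into $r^n x = (r-1)m$ gives $r^n x = r(r-1)m'$, and now I invoke the hypothesis that $r$ is a non-zero divisor on $M$ to cancel one factor of $r$, obtaining $r^{n-1} x = (r-1) m'$. This is the same kind of relation with $n$ replaced by $n-1$, so induction finishes the argument.

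Note that the $\ZZ$-grading assumption never gets used: the only structural hypothesis needed is that $r$ acts without zero divisors on $M$. The main (minor) obstacle to watch for is simply making sure the bookkeeping in the inductive step is correct --- in particular that $m - r^{n-1}x$ really lies in $M$ (which requires $n \geq 1$ so that $r^{n-1}$ is a genuine element of $k[r]$) and that the cancellation of $r$ is justified at every step. Once these are in place, the induction descends cleanly from $n$ to $0$ and exhibits $x \in (r-1)M$.
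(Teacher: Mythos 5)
Your proof is correct, and it takes a genuinely different route from the paper's. The paper exploits the $\ZZ$-grading: it writes the witness $m\in M[1/r]$ with $(r-1)m\in M$ as a sum of homogeneous components $m_d+\dots+m_{d+l}$, observes that the homogeneous components of $(r-1)m$ are $rm_{d+l},\ rm_{d+l-1}-m_{d+l},\ \dots,\ rm_d-m_{d+1},\ -m_d$, each of which must lie in the graded submodule $M$, and then recovers $m_{d+i}\in M$ by an ascending induction on the degree. You instead induct on the power of $r$ in the denominator of the witness: from $r^n x=(r-1)m$ you factor $m=r(m-r^{n-1}x)$ and cancel $r$ (legitimate since $r$ is a non-zero divisor on $M$) to descend from $n$ to $n-1$. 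Your bookkeeping is fine: $m-r^{n-1}x$ does lie in $M$ when $n\geq 1$, and passing from the identity in $M[1/r]$ to the identity $r^nx=(r-1)m$ in $M$ uses the injectivity of $M\to M[1/r]$, again guaranteed by the non-zero-divisor hypothesis. As you note, your argument never touches the grading, so it proves a slightly more general statement (valid for any $k[r]$-module on which $r$ acts injectively); conceptually this reflects the fact that $r$ acts as the identity, hence invertibly, on $M/(r-1)M$. The paper's graded argument buys nothing extra here beyond matching the setup in which the lemma is applied, so your version is a perfectly good, and arguably cleaner, substitute.
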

\begin{proof}
The localization morphism provides a graded injective morphism of $\ZZ$-graded $k[r]$-module
\begin{center}
$M=\bigoplus_{d\in\ZZ}M_d\xhookrightarrow{} M[1/r]=\bigoplus_{d\in\ZZ}(M[1/r])_d$.
\end{center}
It suffices to prove that for an element $m\in M[1/r]$ such that $(r-1)m\in M$, we have that $m\in M$. Consider the decomposition $m=m_d+...+m_{d+l}$, $l\geq 0$ as a sum of homogeneous elements. Then, the decomposition of $(r-1)m$ reads
\begin{center}
$(r-1)m=rm_{d+l}+(rm_{d+l-1}-m_{d+l})+...+(rm_{d}-m_{d+1})-m_d$.
\end{center}
Necessarily, each term of this sum lies in $M$. It follows that $m_{d+i}\in M$, $i=0,..,l$ by an immediate induction, whence the result.
\end{proof}

\begin{cor}\label{Cor_OPS_XSmoothProjective}
There exists a smooth complete $G$-variety $X'$ containing $X_{\rm sm}$ as a an open $G$-stable variety. As a consequence, we have
\begin{center}
$\cox^G(X)\simeq\cox^G(X')/((r_i-1)_i)$,
\end{center}
where the $r_i$ are the canonical sections in the $G$-linearized divisorial sheaves associated to the $G$-stable prime divisors lying in $X'\setminus X_{\rm sm}$.
\end{cor}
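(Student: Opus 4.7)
The plan is to reduce the statement to an application of Proposition \ref{RelationCoxEqToGInvSubset} to the pair $(X', X_{\rm sm})$, after constructing $X'$ by classical methods and after observing that the passage from $X$ to $X_{\rm sm}$ is harmless at the level of equivariant Cox rings.

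First I would construct $X'$ in two steps. Starting from the smooth normal $G$-variety $X_{\rm sm}$ (with $G$ connected), Sumihiro's equivariant completion theorem produces a normal complete $G$-variety $\bar{X}$ in which $X_{\rm sm}$ sits as an open $G$-stable subvariety. Since we work in characteristic zero, $G$-equivariant resolution of singularities applied to $\bar{X}$ yields a smooth complete $G$-variety $X' \to \bar{X}$ which is an isomorphism over the smooth locus; in particular $X_{\rm sm}$ is preserved and embeds as an open $G$-stable subvariety of $X'$. The base point $x \in X_{\rm sm}$ then furnishes a pointing for both $X$ and $X'$.

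Next I would verify that $\cox^G(X) \simeq \cox^G(X_{\rm sm})$ canonically. By \ref{EqIsomorphismsClg_Codim2}, the pullback along $j:X_{\rm sm} \hookrightarrow X$ induces an isomorphism $\clg^G(X) \xrightarrow{\simeq} \clg^G(X_{\rm sm})$, and since every divisorial sheaf is reflexive, $j_*j^*\Ff \simeq \Ff$ for each such sheaf on $X$; passing to rigidified representatives and taking global sections (compatibly with the $\star$-multiplications) one obtains a canonical graded isomorphism $\cox^G(X) \xrightarrow{\simeq} \cox^G(X_{\rm sm})$.

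Then I would apply Proposition \ref{RelationCoxEqToGInvSubset} with the smooth complete $G$-variety $X'$ playing the role of the ambient variety and $X_{\rm sm} \subset X'$ playing the role of the open $G$-stable subvariety. The hypothesis $\Oo_{X'}(X_{\rm sm})^{*G}\simeq k^*$ required to apply that proposition holds, because $\Oo_{X'}(X_{\rm sm}) = \Oo(X_{\rm sm}) = \Oo(X)$ (by normality and the codimension $\geq 2$ of $X \setminus X_{\rm sm}$) and $\Oo(X)^{*G} \simeq k^*$ is in force. Writing $(D_i)_i$ for the $G$-stable prime divisors of $X'$ contained in $X' \setminus X_{\rm sm}$ and $r_i$ for the associated canonical sections, \ref{RelationCoxEqToGInvSubset} gives $\cox^G(X_{\rm sm}) \simeq \cox^G(X')/((r_i-1)_i)$, which combined with the previous paragraph yields the claimed isomorphism.

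The main obstacle is really the construction of $X'$: one must simultaneously equivariantly complete and equivariantly desingularize while keeping the open subvariety $X_{\rm sm}$ fixed. Once this is secured by invoking Sumihiro and $G$-equivariant resolution in characteristic zero, the rest is a formal consequence of the codimension-$\geq 2$ invariance of the equivariant Cox construction (\ref{EqIsomorphismsClg_Codim2}) and of the localization-type presentation already established in \ref{RelationCoxEqToGInvSubset}.
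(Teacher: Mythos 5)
Your proposal is correct and follows essentially the same route as the paper: Sumihiro's equivariant completion followed by $G$-equivariant resolution of singularities (an isomorphism over the smooth locus), the identification $\cox^G(X)\simeq\cox^G(X_{\rm sm})$ via the codimension-$\geq 2$ invariance, and then Proposition \ref{RelationCoxEqToGInvSubset} applied to $X_{\rm sm}\subset X'$. The only difference is cosmetic (the paper completes $X$ rather than $X_{\rm sm}$ before resolving), and your explicit check of the hypothesis $\Oo(X_{\rm sm})^{*G}\simeq k^*$ is a welcome addition.
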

\begin{proof}
By a theorem of Sumihiro (\cite[Thm. 3]{Sumihiro}), $X$ can be embedded equivariantly as an open subvariety of a normal complete $G$-variety $\overline{X}$. Now, there is a $G$-equivariant resolution of singularities $\varphi:X'\rightarrow \overline{X}$, that is, $X'$ is a smooth $G$-variety and $\varphi$ is a projective birational equivariant morphism that induces an isomorphism over $\overline{X}_{\rm sm}$ (\cite[Proposition 3.9.1 and Theorem 3.36]{KollarSingularities}). As we have $\cox^G(X)\simeq \cox^G(X_{\rm sm})$, the statement follows from \ref{RelationCoxEqToGInvSubset}.
\end{proof}

\begin{prop}\label{PropCoxGFreeModuleCoxG}
Suppose that $X$ is almost homogeneous. Then the polynomial algebra $k[r_1,...,r_n]$ is the subalgebra of invariants $(\cox^G(X))^G$.
\end{prop}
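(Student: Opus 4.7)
The inclusion $k[r_1,\ldots,r_n]\subset\cox^G(X)^G$ is immediate: each $D_i$ is $G$-stable, so the canonical section $r_i\in\Gamma(X,\Oo_X(D_i)^x)$ is $G$-invariant for the canonical $G$-linearization. For the converse, since $G$ is connected it acts trivially on isomorphism classes of $G$-linearized sheaves, so the $G$-action on $\cox^G(X)$ preserves the $\clg^G(X)$-grading. Any $G$-invariant element therefore decomposes into $G$-invariant homogeneous pieces, and it suffices to prove that every non-zero $s\in\Ff^x(X)^G$ lies in $k[r_1,\ldots,r_n]$.

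Let $X_0\simeq G/H$ be the open $G$-orbit. Since $\Ff^x$ is a divisorial (hence torsion-free) sheaf and $X_0$ is dense, the restriction $s_{|X_0}$ is a non-zero $G$-invariant section of the $G$-linearized invertible sheaf $\Ff^x_{|X_0}\simeq\Ll_\lambda$, where $\lambda\in\hat{H}$ is the character attached to this sheaf as in \ref{Ex_EqCox_HomSpace}. Using $\Gamma(G/H,\Ll_\lambda)\simeq\Oo(G)^{(H)}_\lambda$ with $G$ acting by left translation, the $G$-invariants are the constants transforming under $\lambda$, so the existence of a non-zero invariant forces $\lambda=0$, i.e., $\Ff^x_{|X_0}\simeq\Oo_{X_0}$ as $G$-linearized sheaves. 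By the exact sequence (\ref{EqSEClGroupEquivariant}) applied to $X_0\xhookrightarrow{} X$, this means $[\Ff]=[\Oo_X(D)]$ for some $D=\sum_{i=1}^n n_iD_i$ with $n_i\in\ZZ$.

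Fix the unique isomorphism $\Ff^x\simeq\Oo_X(D)^x$ of $G$-linearized rigidified sheaves. Under this identification, $s$ corresponds to a $G$-invariant rational function $f\in k(X)$ with $\divi(f)+D\geq 0$, while $r_1^{n_1}\cdots r_n^{n_n}$ is the section of $\Oo_X(D)^x$ corresponding to the constant rational function $1$. Restriction to the homogeneous space $X_0$ shows that $f$ is a $G$-invariant regular function on $G/H$, hence a constant $c\in k$; the effectivity condition $\divi(c)+D=D\geq 0$ then forces $n_i\geq 0$ for all $i$, so $s=c\cdot r_1^{n_1}\cdots r_n^{n_n}\in k[r_1,\ldots,r_n]$. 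The principal technical obstacle is the careful bookkeeping of canonical rigidifications so that the multiplicative structure on $\cox^G(X)$ translates cleanly to multiplication of the underlying rational functions; once this correspondence is in place the rest of the argument is straightforward.
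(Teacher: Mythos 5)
Your proof is correct and follows essentially the same route as the paper's: both reduce to a homogeneous invariant section, observe that its divisor of zeroes must be supported on the $G$-stable prime divisors $D_1,\dots,D_n$ (equivalently, that its class lies in the image of $\bigoplus_i\ZZ D_i$), and then rule out a nontrivial unit factor using that $G$-invariant regular functions on the open orbit are constant. The paper argues slightly more directly --- $\divi_X(f)$ is a $G$-invariant effective divisor, hence a nonnegative combination of the $D_i$ --- whereas you first pin down the $\clg^G(X)$-degree via restriction to $G/H$ and the localization sequence before identifying the section, but the content is the same.
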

\begin{proof}
Let $f\in(\cox^G(X))^G$ that we can suppose homogeneous because $G$ acts on homogeneous components. Then $\divi_X(f)$ is a $G$-invariant effective divisor on $X$, hence is a positive linear combination of the $D_i$. It follows that $f$ is the product of a monomial in the $r_i$ with a $G$-invariant homogeneous unit. However, a $G$-invariant homogeneous unit is necessarily of trivial $\clg^G(X)$-degree, otherwise the multiplication by such a section would provide an isomorphism of some non-trivial $G$-linearized divisorial sheaf with $\Oo_X$ endowed with the trivial $G$-linearization, a contradiction. We conclude, that $f$ is a non-zero scalar multiple of a monomial in the $r_i$. 
\end{proof}

Suppose that $X$ is almost homogeneous, let $X_0\simeq G/H$ be the dense orbit, and consider the equivariant characteristic space $q:\hat{X}^G\rightarrow X$. In the corollary below, we describe the open subvariety $q^{-1}(X_0)$. In order to state it, we need some preliminary observations. Clearly, $q^{-1}(X_0)$ is a homogeneous space for $G\times \Gamma_{\clg^G(X)}$ since $X_0$ is a homogeneous space for $G$ and $q$ induces is a $\Gamma_{\clg^G(X)}$-torsor over $X_0$. Also, the last proposition implies that we have a $\Gamma_{\clg^G(X)}$-equivariant good quotient $\pi:\tilde{X}^G\rightarrow \AAA_k^n$. Finally, the exact sequence (\ref{EqSEClGroupEquivariant}) yields by duality an exact sequence of algebraic groups
\begin{center}
$1\rightarrow\Gamma_{\hat{H}}\rightarrow\Gamma_{\clg^G(X)}\rightarrow\GG_m^n\rightarrow 1$,
\end{center}
whence a $\Gamma_{\hat{H}}$-torsor $\Gamma_{\clg^G(X)}\rightarrow\GG_m^n$. 

\begin{cor}\label{Cor_StructOpenOrbit}
The good quotient $\pi$ restricts to a $\Gamma_{\clg^G(X)}$-equivariant geometric quotient $q^{-1}(X_0)\rightarrow \GG_m^n$. Moreover, $q^{-1}(X_0)$ is isomorphic as a $G\times\Gamma_{\clg^G(X)}$-variety over $\GG_m^n$ to the associated fiber bundle
\begin{center}
$G/K\times^{\Gamma_{\hat{H}}}\Gamma_{\clg^G(X)}\rightarrow \GG_m^n$,
\end{center}
where $G/K$ is the equivariant characteristic space of $G/H$ (\ref{Ex_EqCox_HomSpace}).
\end{cor}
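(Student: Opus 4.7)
The plan is to combine \ref{RelationCoxEqToGInvSubset} and \ref{PropCoxGFreeModuleCoxG} to identify $\pi^{-1}(\GG_m^n)$ with $q^{-1}(X_0)$, compute the fiber of $\pi$ over the unit point via the equivariant Cox ring of the open orbit, and then recognize $q^{-1}(X_0)$ as an associated fiber bundle through the standard homogeneous space dictionary.

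First I would observe that $\pi:\tilde{X}^G\to\AAA^n_k$ is induced by the inclusion $k[r_1,\ldots,r_n]=\cox^G(X)^G\hookrightarrow\cox^G(X)$, so that $\pi^{-1}(\GG_m^n)$ is the open subscheme where $r_1\cdots r_n$ is invertible. The proof of \ref{RelationCoxEqToGInvSubset} shows that $\Rr_X^G(X_0)$ is precisely the localization of $\cox^G(X)$ at the homogeneous element $r_1\cdots r_n$, whence $\pi^{-1}(\GG_m^n)=q^{-1}(X_0)$. Moreover, by \ref{RelationCoxEqToGInvSubset} again the scheme-theoretic fiber of $\pi$ over $(1,\ldots,1)$ is $\spec(\cox^G(X)/((r_i-1)_{1\leq i\leq n}))\simeq\spec(\cox^G(X_0))$, and by \ref{Ex_EqCox_HomSpace} this is nothing but the equivariant characteristic space $G/K$ of $G/H$.

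Next I would verify that $q^{-1}(X_0)$ is homogeneous under $G\times\Gamma_{\clg^G(X)}$: since $X_0\simeq G/H$ is smooth, \ref{Rem_AlmostPrinc_CanTakeSmoothLocus} yields that $q$ restricts to a genuine $\Gamma_{\clg^G(X)}$-torsor over $X_0$, which combines with the transitive $G$-action on $X_0$ to give transitivity on the total space. The morphism $\pi:q^{-1}(X_0)\to\GG_m^n$ is $G$-invariant (the $r_i$ being $G$-invariant) and $\Gamma_{\clg^G(X)}$-equivariant for the action of $\Gamma_{\clg^G(X)}$ on $\GG_m^n$ through the surjection with kernel $\Gamma_{\hat{H}}$. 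As the fiber over $(1,\ldots,1)$ is the single $G$-orbit $G/K$ and $\Gamma_{\clg^G(X)}$ acts transitively on the base, all fibers of $\pi$ are single $G$-orbits; this, together with the good-quotient structure inherited from $\tilde{X}^G\to\AAA^n_k$, gives the asserted $\Gamma_{\clg^G(X)}$-equivariant geometric quotient.

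Finally, for the fiber-bundle description, I would invoke the standard homogeneous space argument: the stabilizer of $(1,\ldots,1)\in\GG_m^n$ in $\Gamma_{\clg^G(X)}$ is $\Gamma_{\hat{H}}$, so the embedding of the fiber $G/K$ into $q^{-1}(X_0)$ extends to a $G\times\Gamma_{\clg^G(X)}$-equivariant surjection $G/K\times\Gamma_{\clg^G(X)}\to q^{-1}(X_0)$ that descends to an isomorphism
\[
G/K\times^{\Gamma_{\hat{H}}}\Gamma_{\clg^G(X)}\xrightarrow{\simeq} q^{-1}(X_0)
\]
over $\GG_m^n$. The main obstacle I anticipate is verifying that the $\Gamma_{\hat{H}}$-action induced on $G/K$ through this construction coincides with the natural $\Gamma_{\hat{H}}$-action on the equivariant characteristic space of $G/H$ given by \ref{Ex_EqCox_HomSpace}. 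This I would settle by inspecting the $\clg^G(X_0)$-grading on $\cox^G(X)/((r_i-1)_{1\leq i\leq n})$: the exact sequence (\ref{EqSEClGroupEquivariant}) in the almost homogeneous setting, together with \ref{Ex_EqCox_HomSpace}, identifies $\clg^G(X_0)\simeq\hat{H}$, and the induced grading is exactly the one dual to the residual $\Gamma_{\hat{H}}$-action coming from the stabilizer, so the two actions match by construction.
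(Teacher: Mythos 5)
Your overall architecture is the same as the paper's (compute the fiber of $\pi$ over $(1,\dots,1)$, use transitivity of $\Gamma_{\clg^G(X)}$ on $\GG_m^n$ to see all fibers are copies of $G/K$, then invoke the standard associated-bundle dictionary), but the step where you identify the fiber contains a genuine error. You claim $\pi^{-1}(\GG_m^n)=q^{-1}(X_0)$ and that the scheme-theoretic fiber $\spec\bigl(\cox^G(X)/((r_i-1)_i)\bigr)\simeq\spec(\cox^G(X_0))$ ``is nothing but'' the equivariant characteristic space $G/K$. Both assertions conflate the equivariant \emph{total coordinate space} of $X_0$ (the spectrum of global sections, i.e.\ the affinization of $G/K$) with the equivariant \emph{characteristic space} $G/K$ itself, and they fail whenever $G/K$ is not affine. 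Already for $X=X_0=\PP^1_k$ under $G=\SL_2$ (so $n=0$, $H=B$, $K=U$) one has $q^{-1}(X_0)=\hat{X}^G=\AAA^2_k\setminus\{0\}$ while $\pi^{-1}(\GG_m^0)=\tilde{X}^G=\AAA^2_k$ and $\spec(\cox^G(X_0))=\AAA^2_k\neq G/K$. The equality of global sections $\Rr^G_X(X_0)=\cox^G(X)[1/(r_1\cdots r_n)]$ only says that $\pi^{-1}(\GG_m^n)$ is the affinization of $q^{-1}(X_0)$; it does not make the open immersion $q^{-1}(X_0)\hookrightarrow\pi^{-1}(\GG_m^n)$ surjective. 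Since your subsequent argument (``the fiber over $(1,\dots,1)$ is the single $G$-orbit $G/K$'') relies on this identification, the gap is not cosmetic.

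The repair is to compute the fiber of the \emph{restriction} $\pi|_{q^{-1}(X_0)}$ relatively over $X_0$ rather than via global sections: the proof of \ref{RelationCoxEqToGInvSubset} gives an isomorphism of $\Oo_{X_0}$-algebras $\Rr^G_X|_{X_0}/((r_i-1)_i)\simeq\Rr^G_{X_0}$, so the fiber of $\pi|_{q^{-1}(X_0)}$ over $(1,\dots,1)$ is the relative spectrum $\spec_{X_0}(\Rr^G_{X_0})=\hat{X}_0^G$, which is $G/K$ by \ref{Ex_EqCox_HomSpace}. This is exactly how the paper proceeds (it cites \ref{RelationCoxEqToGInvSubset} and \ref{Ex_EqCox_HomSpace} for the fiber at $(1,\dots,1)$ of the restriction), and the remainder of your argument --- homogeneity of $q^{-1}(X_0)$, equivariance of $\pi$ with respect to $\Gamma_{\clg^G(X)}\to\GG_m^n$ with kernel $\Gamma_{\hat{H}}$, and descent of $G/K\times\Gamma_{\clg^G(X)}\to q^{-1}(X_0)$ to the associated bundle --- then goes through and matches the paper's proof.
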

\begin{proof}
The morphism $\pi$ pulls back the standard coordinates of $\AAA^n_k$ to the $r_i$, whence a $\Gamma_{\clg^G(X)}$-equivariant restriction $q^{-1}(X_0)\rightarrow \GG_m^n$. By virtue of \ref{RelationCoxEqToGInvSubset} and \ref{Ex_EqCox_HomSpace}, the fiber at $(1,...,1)$ of this restriction is isomorphic to $G/K$ as a $G\times\Gamma_{\hat{H}}$-variety. By $\Gamma_{\clg^G(X)}$-equivariance of $\pi$ and surjectivity of the morphism $\Gamma_{\clg^G(X)}\rightarrow\GG_m^n$, the same holds for any fiber of this restriction, whence a geometric quotient. By standard properties of associated fiber bundles, $q^{-1}(X_0)$ is isomorphic to $G/K\times^{\Gamma_{\hat{H}}}\Gamma_{\clg^G(X)}$ as a $G\times\Gamma_{\clg^G(X)}$-variety over $\GG_m^n$.
\end{proof}

The inclusion $k[\hat{H}]\subset\Oo(H)$ yields a morphism of algebraic groups $H\rightarrow \Gamma_{\hat{H}}$. Composed with the injective morphism $\Gamma_{\hat{H}}\rightarrow\Gamma_{\clg^G(X)}$ described above, we obtain a morphism $\varphi:H\rightarrow\Gamma_{\clg^G(X)}$. In the next proposition, we view $H$ as a subgroup of $G\times \Gamma_{\clg^G(X)}$ by identifying it with the image of the morphism $h\mapsto (h,\varphi(h))$. 

\begin{prop}
Let $\hat{x}$ be a point in the fiber $q^{-1}(x)$, where $x\in X_0$ is the point identified with $H/H$. Then, $H$ is the stabilizer of $\hat{x}$ in $G\times \Gamma_{\clg^G(X)}$.
\end{prop}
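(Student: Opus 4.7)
The strategy is to compute the stabilizer via the explicit model $q^{-1}(X_0)\simeq G/K\times^{\Gamma_{\hat{H}}}\Gamma_{\clg^G(X)}$ supplied by \ref{Cor_StructOpenOrbit}, reducing the problem to identifying a single group homomorphism $\psi:H\rightarrow\Gamma_{\clg^G(X)}$ whose graph is the stabilizer.

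First, I would reduce the structure of the stabilizer $S$ of $\hat{x}$ in $G\times\Gamma_{\clg^G(X)}$ to such a graph. Since $q$ restricts to a $\Gamma_{\clg^G(X)}$-torsor over $X_0$, the subgroup $\{e\}\times\Gamma_{\clg^G(X)}$ acts freely on $q^{-1}(X_0)\ni\hat{x}$, so $S\cap(\{e\}\times\Gamma_{\clg^G(X)})$ is trivial and the first projection $p_1:S\rightarrow G$ is injective with image contained in $\mathrm{Stab}_G(x)=H$. Conversely, given $h\in H$, the point $h\cdot\hat{x}$ lies in $q^{-1}(x)$, on which $\Gamma_{\clg^G(X)}$ acts simply transitively, so there is a unique $\psi(h)\in\Gamma_{\clg^G(X)}$ with $h\cdot\hat{x}=\psi(h)\cdot\hat{x}$; using that the $G$- and $\Gamma_{\clg^G(X)}$-actions commute, we get $(h,\psi(h)^{-1})\in S$, and unicity makes $\psi$ a group homomorphism. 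Hence $S$ is the graph of $h\mapsto\psi(h)^{-1}$, and it remains to verify that $\psi(h)^{-1}=\varphi(h)$.

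Second, I would identify $\psi$ via the isomorphism from \ref{Cor_StructOpenOrbit}. Any element of $q^{-1}(x)$ can be represented as a class $[eK,\gamma_0]$ in $G/K\times^{\Gamma_{\hat{H}}}\Gamma_{\clg^G(X)}$: starting from an arbitrary representative $[gK,\gamma]$ of $\hat{x}$ with $g\in H$, the defining equivalence of the associated bundle lets us absorb $g$ into the second coordinate. Then $h\cdot\hat{x}$ corresponds to $[hK,\gamma_0]$, and the equivalence relation—encoding the compatible $\Gamma_{\hat{H}}\simeq H/K$-actions on $G/K$ (from right multiplication by $H$ on $G$) and on $\Gamma_{\clg^G(X)}$ (via the inclusion $\Gamma_{\hat{H}}\hookrightarrow\Gamma_{\clg^G(X)}$ of \ref{Ex_EqCox_HomSpace})—rewrites this as $[eK,\varphi(h)^{-1}\gamma_0]=\varphi(h)^{-1}\cdot\hat{x}$. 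Comparing with $\psi(h)\cdot\hat{x}$ and invoking freeness of the $\Gamma_{\clg^G(X)}$-action on $q^{-1}(x)$ yields $\psi(h)=\varphi(h)^{-1}$, whence $S$ is the image of $h\mapsto(h,\varphi(h))$, as claimed.

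The main obstacle is bookkeeping the sign/side conventions implicit in the associated bundle construction: which side of $\Gamma_{\hat{H}}$ acts on $G/K$ and on $\Gamma_{\clg^G(X)}$, and how these are combined in the defining equivalence. Once these conventions are aligned with those fixed in \ref{Ex_EqCox_HomSpace} and \ref{Cor_StructOpenOrbit}, the identification $\psi=\varphi^{-1}$ is a direct one-line manipulation.
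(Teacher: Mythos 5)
Your proof is correct, and it takes a somewhat different route from the paper's. The paper restricts the generating linearized sheaves $\Ff_1,\dots,\Ff_m$ to $X_0\simeq G/H$, obtains characters $\chi_i\in\hat{H}\simeq\pic^G(G/H)$, embeds $q^{-1}(X_0)$ into the fibered product of line bundles $L_{\chi_1}\times_{X_0}\cdots\times_{X_0}L_{\chi_m}$, and reads off the identity $h\cdot\hat{x}=(\chi_1(h)\hat{x}_1,\dots,\chi_m(h)\hat{x}_m)=\varphi(h^{-1})\cdot\hat{x}$ directly in these coordinates; this forces $t=\varphi(h)$ for any $(h,t)$ in the stabilizer. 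You instead first establish abstractly, from the torsor property over $X_0$ and the commutativity of the two actions, that the stabilizer is the graph of a homomorphism $\psi:H\to\Gamma_{\clg^G(X)}$, and then compute $\psi$ in the associated-bundle model $G/K\times^{\Gamma_{\hat{H}}}\Gamma_{\clg^G(X)}$ of \ref{Cor_StructOpenOrbit}. Both arguments hinge on the same core fact --- that $h\in H$ acts on the fiber $q^{-1}(x)$ exactly as $\varphi(h)^{-1}\in\Gamma_{\clg^G(X)}$ does --- but your graph lemma makes explicit the group-theoretic structure that the paper leaves implicit (in particular, that the reverse inclusion $(h,\varphi(h))\in\stab(\hat{x})$ holds for every $h\in H$), at the cost of the convention-checking in the associated bundle that you rightly flag; the paper's coordinates make that sign bookkeeping essentially automatic since the characters $\chi_i$ act by scalars on the fibers. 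One small misattribution: the injection $\Gamma_{\hat{H}}\hookrightarrow\Gamma_{\clg^G(X)}$ you use comes from dualizing the localization sequence (\ref{EqSEClGroupEquivariant}) in the paragraph preceding \ref{Cor_StructOpenOrbit}, not from \ref{Ex_EqCox_HomSpace}; this does not affect the argument.
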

\begin{proof}
Let $\Ff_1,...,\Ff_m$ be $G$-linearized divisorial sheaves on $X$ whose classes generate the abelian group $\clg^G(X)$. Via restriction of this sheaves to $X_0\simeq G/H$, we obtain $G$-linearized invertible sheaves $\Ll_{\chi_1},...,\Ll_{\chi_m}$ corresponding to certain characters $\chi_i\in\hat{H}\simeq\pic^G(G/H)$. Let $L_{\chi_i}$ be the $G$-linearized line bundle over $X_0$ corresponding to $\Ll_{\chi_i}$, and view $q^{-1}(X_0)$ as a subvariety of $L_{\chi_1}\times_{X_0}...\times_{X_0}L_{\chi_m}$. Obviously, the image of the projection to $G$ of $\stab_{G\times \Gamma_{\clg^G(X)}}(\hat{x})$ has to be included in $H$. Then, consider an element $(h,t)\in\stab_{G\times \Gamma_{\clg^G(X)}}(\hat{x})\subset H\times\Gamma_{\clg^G(X)}$. We have $h.\hat{x}=(\chi_1(h)\hat{x}_1,...,\chi_m(h)\hat{x}_m)=\varphi(h^{-1}).\hat{x}$, which forces $t=\varphi(h)$.
\end{proof}

\subsection{Relation between the equivariant Cox ring and the usual Cox ring}
\label{Sec_RelationCoxG_To_Cox}

Let $G$ be a connected algebraic group acting on a pointed normal $G$-variety $(X,x)$ such that $\Oo(X)^*\simeq k^*$, so that both equivariant and usual Cox sheaves are well defined. In view of Remark \ref{Rem_CanonicalMorphismFromEqCharSpace}, there exists a canonical morphism of graded $\Oo_X$-algebras
\begin{center}
$\theta:\Rr_X^{G,x}\rightarrow \Rr_X^x$,
\end{center}
whose associated morphism between grading groups is $\clg^G(X)\xrightarrow{\phi}\clg(X)$, this is the \textit{forgetful morphism}. 

On the other hand, $\hat{G}$ is a free $\ZZ$-module of finite rank which acts by automorphisms of graded $k$-vector space on $\Rr_X^{G}(U)$, for any open subset $U\subset X$. To see this, consider $[\Ff]\in \clg^G(X)$, $\lambda\in\hat{G}$, and notice that $(\Ff^x\otimes\Oo_X(\lambda))^x\simeq\Ff^x$ as divisorial sheaves. Hence, for a section $s\in\Ff^x(U)$, we define the section
\begin{center}
$\lambda.s\in(\Ff^x\otimes\Oo_X(\lambda))^x(U)$
\end{center}
to be the unique section that corresponds to $s$ modulo $\sim_U$. This assignment defines an action of $k[\hat{G}]$ on $\Rr^{G}_X(U)$ that respects multiplication, so that we get a $k[\hat{G}]$-algebra structure on $\Rr^{G}_X(U)$. In the next propositions we make this structure more explicit, and study the forgetful morphism.

\begin{prop}
The equivariant Cox ring is a free $k[\hat{G}]$-module. More precisely, we have
\begin{center}
$\cox^G(X)=\bigoplus_{[\Ff]\in \Im \phi}M_{[\Ff]}$,
\end{center}
where $M_{[\Ff]}:=\bigoplus_{\mu\in \hat{G}}(\Ff^x\otimes\Oo(\mu))^x(X)$ is $\hat{G}$-graded and free of rank $\dim_k(\Ff^x(X))$ over $k[\hat{G}]$. 
\end{prop}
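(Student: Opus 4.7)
The plan is to use the exact sequence~(\ref{EqSEClGroupEquivariantOubli}) to describe how $\hat{G}$ partitions $\clg^G(X)$ into cosets over $\Im\phi$, and then to transport a $k$-basis of the degree-zero component of each $M_{[\Ff]}$ via the $\hat{G}$-action in order to extract a $k[\hat{G}]$-basis.

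First I would observe that the hypothesis $\Oo(X)^*\simeq k^*$ forces $\chi=0$ in~(\ref{EqSEClGroupEquivariantOubli}), so that $\gamma:\hat{G}\to\clg^G(X)$ is injective. The fibers of $\phi:\clg^G(X)\to\clg(X)$ are then exactly cosets of $\gamma(\hat{G})$, canonically parametrized by $\Im\phi$. Grouping the $\clg^G(X)$-graded decomposition of $\cox^G(X)$ according to this partition yields at once the direct sum
\[
\cox^G(X)=\bigoplus_{[\Ff]\in\Im\phi}M_{[\Ff]},
\]
and the grouping by $\mu\in\hat{G}$ inside each fiber equips $M_{[\Ff]}$ with a natural $\hat{G}$-grading.

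Next, I would verify that the $k[\hat{G}]$-action described in the paragraph preceding the proposition preserves each $M_{[\Ff]}$ and is graded with respect to this $\hat{G}$-grading: for $s$ in the summand of degree $\mu$, the element $e^\lambda\cdot s$ lies in the summand of degree $\mu+\lambda$, and the shift is realized by a $k$-linear isomorphism. Indeed, $(\Ff^x\otimes\Oo_X(\mu))^x$ and $(\Ff^x\otimes\Oo_X(\mu+\lambda))^x$ have the same underlying divisorial sheaf (namely $\Ff^x$), differing only by their $G$-linearization, so the canonical isomorphism of rigidified sheaves gives the desired $k$-linear bijection on global sections. In particular, each graded piece of $M_{[\Ff]}$ has the same $k$-dimension as $\Ff^x(X)$. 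Associativity of the action over $k[\hat{G}]$ follows from the canonical identifications $((\Ff^x\otimes\Oo_X(\mu))^x\otimes\Oo_X(\lambda))^x\simeq(\Ff^x\otimes\Oo_X(\mu+\lambda))^x$.

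Finally, picking any $k$-basis $(e_i)_{i\in I}$ of $\Ff^x(X)$, I would show that $(e^\mu\cdot e_i)_{\mu\in\hat{G},\,i\in I}$ is a $k$-basis of $M_{[\Ff]}$. This is immediate from the preceding step, since each $\hat{G}$-graded piece of $M_{[\Ff]}$ is reached bijectively by a single $\mu\in\hat{G}$ and the $\hat{G}$-shift is a $k$-linear isomorphism. This is precisely the statement that $(e_i)_{i\in I}$ is a $k[\hat{G}]$-basis of $M_{[\Ff]}$, so $M_{[\Ff]}$ is free of rank $\dim_k(\Ff^x(X))$ over $k[\hat{G}]$.

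I do not anticipate a serious obstacle: the proposition is a formal consequence of the injectivity of $\gamma$ together with the fact that twisting by $\Oo_X(\lambda)$ is invertible on rigidified divisorial sheaves. The only real point of care is to check that the $k[\hat{G}]$-action is well defined and associative across the rigidifications, which reduces to the compatibility between the star product and the canonical rigidifications already used in the construction of $\Rr_X^{G,x}$ in Section~\ref{SecCoxRingEq}.
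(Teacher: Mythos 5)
Your proposal is correct and follows essentially the same route as the paper: the decomposition $\cox^G(X)=\bigoplus_{[\Ff]\in\Im\phi}M_{[\Ff]}$ comes from the injectivity of $\gamma$ in the exact sequence (\ref{EqSEClGroupEquivariantOubli}) (forced by $\Oo(X)^*\simeq k^*$), and freeness comes from the fact that multiplication by $\lambda\in\hat{G}$ is a $k$-linear isomorphism between the graded pieces $(\Ff^x\otimes\Oo_X(\mu))^x(X)$ and $(\Ff^x\otimes\Oo_X(\mu+\lambda))^x(X)$, so any $k$-basis of $\Ff^x(X)$ is a $k[\hat{G}]$-basis of $M_{[\Ff]}$. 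No gaps.
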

\begin{proof}
The action of $k[\hat{G}]$ stabilizes the $k$-vector spaces $M_{[\Ff]}$. With this action, they inherit a structure of $\hat{G}$-graded $k[\hat{G}]$-module. Moreover the exact sequence (\ref{EqSEClGroupEquivariantOubli}) becomes
\begin{center}
$0\rightarrow\hat{G}\xrightarrow{\gamma} \clg^G(X)\xrightarrow{\phi} \clg(X)$,
\end{center}
which justifies the decomposition of $\cox^G(X)$ as the direct sum of the modules $M_{[\Ff]}$. In order to check that they are free, let's consider $[\Ff]\in\clg^G(X)$ and the $k$-vector space $\Ff^x(X)$. For any $\lambda\in\hat{G}$, the multiplication by $\lambda$ defines an isomorphism of $k$-vector spaces with $(\Ff\otimes\Oo_X(\lambda))^x(X)$, the inverse being the multiplication by $\lambda^{-1}$. By a simple verification as in the proof of \ref{RelationCoxEqToGInvSubset}, we obtain that $M_{[\Ff]}$ is freely generated over $k[\hat{G}]$ by any $k$-basis of $\Ff^x(X)$. 
\end{proof}

\begin{prop}\label{PropMorphismeCoxGToCox}
The forgetful morphism $\theta:\Rr_X^G\rightarrow \Rr_X$ is an integral extension of graded $\Oo_X$-algebras, whose kernel is the sheaf of ideals generated by the family of global sections $(1-\lambda_j.1)_{1\leq j\leq n}$, where $(\lambda_j)_{1\leq j\leq n}$ is an arbitrary $\ZZ$-basis of $\hat{G}$, and $1\in\Oo(X)$ has trivial degree. Moreover, this family defines a regular sequence in $\Rr_X^G(U)$ for any open subset $U\subset X$, and we have
\begin{center}
$\Im\theta(U)\simeq \Rr_X^G(U)/(1_{|U}-(\lambda_j.1)_{|U})_{1\leq j\leq n}$.
\end{center}
\end{prop}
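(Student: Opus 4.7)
The plan is to describe $\theta$ degree by degree, identify the kernel using the $k[\hat G]$-action introduced above, and then deduce the ring-theoretic consequences from the freeness result of the previous proposition. First, I would observe that for each $[\Ff]\in\clg^G(X)$, the hypothesis $\Oo(X)^*\simeq k^*$ implies that rigidified divisorial sheaves on $X$ have no nontrivial automorphisms, so $\theta$ restricts on the $[\Ff]$-component to the canonical isomorphism $\Ff^x\xrightarrow{\sim}(\phi([\Ff]))^x$ identifying the canonical rigidified representatives of the same class in $\clg(X)$. In particular $\Im\theta=\bigoplus_{[\Gg]\in\Im\phi}\Gg^x$. Integrality of $\theta$ then follows from \ref{RemFiniteIndexForgetGLin}: the cokernel of $\phi$ has finite exponent $N$, so every $\clg(X)$-homogeneous $s\in\Rr_X(U)$ satisfies $s^N\in\Im\theta(U)$ and hence a monic equation over $\Im\theta(U)$, and an arbitrary section decomposes as a finite sum of homogeneous ones.

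To identify the kernel, I would first note that by construction the $k[\hat G]$-action satisfies $\theta(\lambda\cdot s)=\theta(s)$ and is implemented by multiplication by $\lambda\cdot 1$, so $(1-\lambda\cdot 1)s$ lies in $\ker\theta$ for every $\lambda\in\hat G$ and every section $s$. Using the identities $ab-1=(a-1)b+(b-1)$ and $a^{-1}-1=-a^{-1}(a-1)$ with $a_j=\lambda_j\cdot 1$, and expanding any $\lambda=\sum n_j\lambda_j$ multiplicatively, one sees that $1-\lambda\cdot 1$ is already a combination of the $n$ prescribed generators, so the ideal they generate contains all the $(1-\lambda\cdot 1)$.

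The reverse inclusion is the main technical point. Given $\alpha\in\ker\theta(U)$, I decompose $\alpha=\sum_{[\Gg]\in\Im\phi}\alpha^{[\Gg]}$ along the fibres of $\phi$. Fixing $[\Ff_0]\in\phi^{-1}([\Gg])$, the injectivity of $\gamma\colon\hat G\hookrightarrow\clg^G(X)$ (which is forced by $\Oo(X)^*\simeq k^*$ via the exact sequence (\ref{EqSEClGroupEquivariantOubli})) means every other preimage has the form $[\Ff_0]+\gamma(\lambda)$ for a unique $\lambda\in\hat G$, so I can write
\[
\alpha^{[\Gg]}=\sum_\lambda(\lambda\cdot 1)\,t_\lambda,\qquad t_\lambda\in\Ff_0^x(U).
\]
Through the isomorphism $\theta\colon\Ff_0^x(U)\xrightarrow{\sim}\Gg^x(U)$ and the identity $\theta(\lambda\cdot 1)=1$, the vanishing $\theta(\alpha^{[\Gg]})=0$ reads $\sum_\lambda t_\lambda=0$, whence
\[
\alpha^{[\Gg]}=\sum_\lambda\bigl((\lambda\cdot 1)-1\bigr)t_\lambda
\]
lies in the ideal. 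Summing over $[\Gg]$ gives $\ker\theta(U)=(1_{|U}-(\lambda_j\cdot 1)_{|U})_{1\leq j\leq n}$, and the isomorphism $\Im\theta(U)\simeq\Rr_X^G(U)/(1_{|U}-(\lambda_j\cdot 1)_{|U})$ is then the first isomorphism theorem.

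Finally, the regularity claim reduces to the preceding proposition: its proof works verbatim on any open $U$ and exhibits $\Rr_X^G(U)$ as a free $k[\hat G]$-module. Since $k[\hat G]=k[\lambda_1^{\pm 1},\dots,\lambda_n^{\pm 1}]$, each successive quotient by $\lambda_j\cdot 1 - 1$ is again a Laurent polynomial ring in strictly fewer variables, so $(1-\lambda_j\cdot 1)_{1\leq j\leq n}$ is a regular sequence in $k[\hat G]$ and remains regular on any free $k[\hat G]$-module, in particular on $\Rr_X^G(U)$. The main obstacle is the bookkeeping in the kernel computation: keeping track of how the $k[\hat G]$-action identifies the various $\clg^G(X)$-components lying over a single $\clg(X)$-degree so as to realise the kernel relations as multiples of the $n$ advertised generators.
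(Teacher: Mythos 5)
Your proposal is correct and follows essentially the same route as the paper: both identify the kernel via the $k[\hat G]$-module structure and the freeness of $\Rr_X^G(U)$ over $k[\hat G]$, reduce the generating family to a $\ZZ$-basis of $\hat G$ by the Laurent-monomial identities of Lemma \ref{UsualTrick}, deduce regularity of the sequence from that freeness, and obtain integrality from the finite exponent of $\coker\phi$ (a power of each divisorial sheaf being linearizable). Your fibrewise computation of $\ker\theta(U)$ merely makes explicit a step the paper asserts more tersely.
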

\begin{proof}
Let $U\subset X$ be a non-empty open subset. Replacing $X$ by $U$ in the last proposition and considering the action of $k[\hat{G}]$ on $\Rr_X^G(U)$, we see that the forgetful morphism corresponds to the quotient of $\Rr_X^G(U)$ by the sub-$k[\hat{G}]$-module spanned by $(f_{\Ff,i}-\lambda.f_{\Ff,i})_{[\Ff]\in\Im\phi,i,\lambda\in\hat{G}}$, where $(f_{\Ff,i})_i$ is a $k$-basis of $\Ff^x(U)$ for all $[\Ff]\in \Im\phi$. Using Lemma \ref{UsualTrick}, this submodule is spanned by the elements of the family $(f_{\Ff,i}-\lambda_j.f_{\Ff,i})_{[\Ff]\in\Im\phi,i,1\leq j\leq n}$ where $(\lambda_j)_{1\leq j\leq n}$ is an arbitrary $\ZZ$-basis of $\hat{G}$. Finally, we observe that this sub-$k[\hat{G}]$-module is the ideal of $\Rr_X^{G}(U)$ generated by $(1_{|U}-(\lambda_j.1)_{|U})_{1\leq j\leq n}$ where $1\in \Oo(X)$ has trivial degree. 

The sequence $(1_{|U}-(\lambda_j.1)_{|U})_{1\leq j\leq n}$ is regular because the sequence $(1-\lambda_j)_{1\leq j\leq n}$ is regular in $k[\hat{G}]$ and $\Rr_X^{G,x}(U)$ is a free $k[\hat{G}]$-module. Now, we show that an element $f\in\Rr_X^x(U)$ is integral over $\Rr_X^{G,x}(U)$. We may assume this element homogeneous, say $f\in\Ff^x(U)$. Then we use the fact that a non-zero power of $\Ff^x$ is linearizable (\ref{RemFiniteIndexForgetGLin}). For the last assertion, we argue exactly as in \cite[1.4.3.5]{coxrings} to prove the isomorphism.
\end{proof}

\begin{cor}\label{CorCoxGToCoxIII}
Suppose that the Picard group of $G$ is trivial, then
\begin{center}
$\cox(X)\simeq \cox^G(X)/(1-\lambda_j.1)_{1\leq j\leq n}$,
\end{center}
where $(\lambda_j)_{1\leq j\leq n}$ is an arbitrary $\ZZ$-basis of $\hat{G}$, and $1\in\Oo(X)$ has trivial degree.
\end{cor}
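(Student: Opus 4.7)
The plan is to reduce directly to Proposition \ref{PropMorphismeCoxGToCox}. Taking $U = X$ in its last statement gives the identification
\begin{center}
$\Im\theta(X)\simeq \cox^G(X)/(1-\lambda_j.1)_{1\leq j\leq n}$,
\end{center}
so the only thing left to establish is that $\theta$ is surjective on global sections, i.e. that $\Im\theta(X) = \cox(X)$.

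To get this surjectivity, I would first observe that $\theta$ respects the grading: its restriction to the summand $\Ff^x$ of $\Rr_X^G$ indexed by $[\Ff]\in \clg^G(X)$ is, by construction of $\theta$ in the opening of Section \ref{Sec_RelationCoxG_To_Cox} (and the canonical nature of the rigidified representatives built in Section \ref{SecCoxRing}), an isomorphism onto the summand of $\Rr_X$ indexed by $\phi([\Ff])\in \clg(X)$. Hence $\theta$ is surjective as soon as every class in $\clg(X)$ is hit by $\phi$. Under the hypothesis $\pic(G) = 0$, Remark \ref{RemFiniteIndexForgetGLin} tells us precisely that the forgetful morphism $\phi$ is surjective. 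Since taking global sections commutes with the direct sum decompositions of $\Rr_X^G$ and $\Rr_X$ along their respective gradings, the surjectivity of $\theta$ on homogeneous components immediately gives surjectivity of $\theta(X):\cox^G(X)\rightarrow \cox(X)$.

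Combining the two steps, the induced map
\begin{center}
$\cox^G(X)/(1-\lambda_j.1)_{1\leq j\leq n}\xrightarrow{\simeq} \cox(X)$
\end{center}
is an isomorphism of graded $k$-algebras, as claimed. I do not anticipate any genuine obstacle: all the substantive work (verifying that the kernel of $\theta$ is generated by the sequence $(1-\lambda_j.1)_j$, and that this sequence is regular) is already carried out in Proposition \ref{PropMorphismeCoxGToCox}; the present corollary only packages that proposition together with the surjectivity of $\phi$ coming from the assumption $\pic(G)=0$.
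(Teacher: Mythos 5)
Your proposal is correct and follows essentially the same route as the paper: the paper likewise deduces surjectivity of $\theta$ from the surjectivity of the forgetful morphism $\phi$ (granted by \ref{RemFiniteIndexForgetGLin} under $\pic(G)=0$) and then invokes Proposition \ref{PropMorphismeCoxGToCox}. Your additional remarks spelling out why surjectivity of $\phi$ implies surjectivity of $\theta(X)$ (componentwise isomorphisms onto the summands indexed by $\Im\phi$, compatibility of global sections with the direct sum) are a correct elaboration of what the paper leaves implicit.
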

\begin{proof}
With this assumption, $\clg^G(X)\xrightarrow{\phi}\clg(X)$ is surjective (\ref{RemFiniteIndexForgetGLin}). It follows that $\theta$ is also surjective. We conclude by applying the last proposition.
\end{proof}

\subsection{Pullback of equivariant characteristic spaces}

In this section, we study the pullback of an equivariant characteristic space along an equivariant almost principal bundle. Let $G,H$ be algebraic groups with $G$ connected, $(X,x)$ a pointed normal $G$-variety, $(Y,y)$ a pointed normal $G\times H$-variety, and $\pi_1:Y\rightarrow X$ a $G$-equivariant almost principal $H$-bundle. Suppose that $\Oo(X)^{*G}\simeq\Oo(Y)^{*G\times H}\simeq k^*$, and that $X$ has a $G$-equivariant Cox sheaf of finite type. Then, we claim that there is a cartesian square
\begin{center}
\begin{tikzcd}
		\hat{Y}^{G\times H} \arrow[r, "\pi_2"] \arrow[d, "q_2"] &  \hat{X}^G \arrow[d, "q_1"] \\
	    Y \arrow[r, "\pi_1"]  & X,
\end{tikzcd}
\end{center}
where $q_1$  is a $G$-equivariant characteristic space over $X$, $q_2$  is a $G\times H$-equivariant characteristic space over $Y$, and the horizontal arrows are $G$-equivariant almost principal $H$-bundles. Indeed, because of \ref{Prop_ClgG_CommutativeSquare} we can write
\begin{center}
$\Rr^{G\times H}_Y=\bigoplus_{[\Ff]\in \clg^G(X)}(\pi_1^*\Ff)^y$.
\end{center}
Thus, $\hat{Y}^{G\times H}\rightarrow Y$ is the pullback of $\hat{X}^G\rightarrow X$ along $\pi_1$. As $\pi_2$ induces a $G$-equivariant $H$-torsor over $q_1^{-1}(X_{\rm sm})$, it follows that $\pi_2$ is a $G$-equivariant almost principal $H$-bundle. In fact, $\pi_2$ corresponds to the graded $\Oo_X$-algebra morphism
\begin{center}
$\Rr^G_X=\bigoplus_{[\Ff]\in\clg^G(X)}\Ff^x\rightarrow\pi_{1*}\Rr^{G\times H}_Y$
\end{center}
defined by the natural morphisms  $\Ff^x\rightarrow\pi_{1*}\pi_1^*\Ff^x$ between homogeneous components. As $Y$ admits a $G\times H$-equivariant Cox sheaf of finite type, it also admits a $G$-equivariant Cox sheaf of finite type when $H$ is connected and $\Oo(Y)^{*G}\simeq k^*$ (\ref{PropMorphismeCoxGToCox}). When $H=\TT$ is a torus, we can say more (see also \cite[2.2]{LBraun}):

\begin{prop}\label{Prop_TorsorTorus_SameCoxSheaf}
Suppose that $H=\TT$ is a torus and that $\Oo(Y)^{*G}\simeq k^*$. Then $\hat{X}^G\xrightarrow{q_1} X$ factors through $Y$ via the morphism $\pi_1$, and this factorization realizes $\hat{X}^G$ as the $G$-equivariant characteristic space of $Y$.
\end{prop}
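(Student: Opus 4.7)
The strategy is to produce a morphism $\tilde{q}: \hat{X}^G \to Y$ and show that it realizes $\hat{X}^G$ as a $G$-equivariant characteristic space of $Y$, by verifying that it is a $G$-equivariant diagonalizable almost principal bundle whose equivariant type is an isomorphism onto $\clg^G(Y)$. The factorization is immediate from the theory developed earlier: since $\pi_1$ is a $G$-equivariant diagonalizable almost principal $\TT$-bundle and the equivariant Cox sheaf of $X$ is of finite type, Remark \ref{Rem_CanonicalMorphismFromEqCharSpace} provides a canonical morphism $\tilde{q}: \hat{X}^G \to Y$ over $X$, equivariant with respect to the surjection $\Gamma_{\clg^G(X)} \twoheadrightarrow \TT$ dual to $\type_G(Y)$. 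At the level of pushforward algebras, $\tilde{q}$ corresponds to the inclusion $\pi_{1*}\Oo_Y = \bigoplus_{\chi \in \hat{\TT}} \Ff_\chi^x \hookrightarrow \Rr_X^{G,x}$, where $[\Ff_\chi] = \type_G(Y)(\chi) \in \clg^G(X)$.

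Next, the assumption $\Oo(Y)^{*G} \simeq k^*$ forces every $G$-invariant $\TT$-homogeneous invertible regular function on $Y$ to be constant, so $\pi_1$ is a $G$-equivariant quotient presentation of $X$. Proposition \ref{PropLASSTypInjFoncInvEq} then says $\type_G(Y): \hat{\TT} \hookrightarrow \clg^G(X)$ is injective, whence a subgroup $\Gamma_0 := \Gamma_{\clg^G(X)/\hat{\TT}} \subset \Gamma_{\clg^G(X)}$ acting on $\hat{X}^G$ and preserving the fibers of $\tilde{q}$. To see that $\tilde{q}$ is a $G$-equivariant diagonalizable almost principal $\Gamma_0$-bundle, I would use the factorization $q_{1*}\Oo_{\hat{X}^G} = \pi_{1*}\tilde{q}_*\Oo_{\hat{X}^G}$ to group the homogeneous components of $\Rr_X^{G,x}$ according to cosets modulo $\hat{\TT}$, writing
\begin{center}
$\tilde{q}_*\Oo_{\hat{X}^G} = \bigoplus_{\bar{\alpha} \in \clg^G(X)/\hat{\TT}} \Gg_{\bar{\alpha}}$, \quad $\pi_{1*}\Gg_{\bar{\alpha}} = \bigoplus_{[\Ff]\in \bar{\alpha}} \Ff^x$.
\end{center}
Over the smooth locus, where $\pi_1$ is a genuine $\TT$-torsor, one identifies $\Gg_{\bar{\alpha}}$ with the pullback $\pi_1^*\Ff_0$ of any representative $\Ff_0$ of $\bar{\alpha}$, which is a $G$-linearized invertible sheaf on $Y$ of class $\pi_1^*[\Ff_0] \in \clg^G(Y)$. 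The multiplicative conditions of Proposition \ref{Prop_Forme_Gen_AlmostBundle} then hold by inheritance from $\Rr_X^{G,x}$, so $\tilde{q}$ is a $G$-equivariant diagonalizable almost principal $\Gamma_0$-bundle over $Y$, with equivariant type the induced map $\pi_1^*: \clg^G(X)/\hat{\TT} \to \clg^G(Y)$.

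It then remains to show this type is an isomorphism. I would establish the bigroup analog of the exact sequence (\ref{EqSEClGroupEquivariantOubli}),
\begin{center}
$0 \to \Oo(Y)^{*G\times\TT} \to \Oo(Y)^{*G} \to \hat{\TT} \to \clg^{G\times\TT}(Y) \to \clg^G(Y) \to 0$,
\end{center}
where the last map (forgetting the $\TT$-linearization while keeping the $G$-linearization) is surjective because $\pic(\TT) = 0$ (adapting \cite[5.2.1]{LinearizationGBrion} equivariantly, as in Remark \ref{RemFiniteIndexForgetGLin}), and the leftmost nontrivial map vanishes thanks to $\Oo(Y)^{*G} \simeq k^*$. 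Composing with the isomorphism $\pi_1^*: \clg^G(X) \simeq \clg^{G\times\TT}(Y)$ of Proposition \ref{Prop_ClgG_CommutativeSquare}, the image of $\hat{\TT}$ in $\clg^{G\times\TT}(Y)$ corresponds precisely to $\type_G(Y)(\hat{\TT}) \subset \clg^G(X)$, yielding the desired isomorphism $\clg^G(X)/\hat{\TT} \simeq \clg^G(Y)$. The main technical obstacle is precisely this bigroup version of (\ref{EqSEClGroupEquivariantOubli}); although it is a direct extension of the one-group case proved by Brion, it requires careful bookkeeping for the two commuting actions, and the critical input throughout is the vanishing of $\pic(\TT)$, which is special to the assumption that $H$ is a torus.
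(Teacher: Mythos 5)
Your proposal is correct and follows essentially the same route as the paper's proof: both regroup the homogeneous components of $\Rr_X^{G,x}$ by cosets modulo $\hat{\TT}$, invoke the exact sequence $0\to\hat{\TT}\to\clg^{G\times\TT}(Y)\to\clg^G(Y)\to 0$ (injectivity from $\Oo(Y)^{*G}\simeq k^*$, surjectivity from $\pic(\TT)=0$), and compose with the isomorphism $\clg^{G\times\TT}(Y)\simeq\clg^G(X)$ of Proposition \ref{Prop_ClgG_CommutativeSquare} to identify the equivariant type with the identity of $\clg^G(Y)$. The only cosmetic difference is that the paper obtains the factorization through $Y$ by identifying the degree-zero part of the induced $\clg^G(Y)$-grading with $\pi_{1*}\Oo_Y$, whereas you invoke Remark \ref{Rem_CanonicalMorphismFromEqCharSpace}; both are valid.
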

\begin{proof}
In this situation, the exact sequence (\ref{EqSEClGroupEquivariantOubli}) reads
\begin{center}
$0\rightarrow\hat{\TT}\rightarrow\clg^{G\times\TT}(Y)\xrightarrow{\phi}\clg^G(Y)\rightarrow 0$,
\end{center}
and we have an isomorphism $\pi_{1*}^\TT:\clg^{G\times\TT}(Y)\rightarrow\clg^G(X)$, $[\Ff]\mapsto [(\pi_{1*}\Ff)^\TT]$ (\ref{Prop_ClgG_CommutativeSquare}). This allows us to write
\begin{center}
$\Rr^G_X=q_{1*}\Oo_{\hat{X}^G}=\bigoplus_{[\Gg]\in\clg^G(Y)}\bigoplus_{\lambda\in\hat{\TT}}(\pi_{1*}(\Ff\otimes_{\Oo_Y}\Oo_Y(\lambda))^{\TT})^x$,
\end{center}
where for each $[\Gg]\in\clg^G(Y)$, we choose $[\Ff]\in\clg^{G\times\TT}(Y)$ such that $\phi([\Ff])=[\Gg]$. This yields a factorization of $q_1$ through the relative spectrum over $X$ of the degree zero part of the graded algebra $\Rr^G_X$ with respect to the induced $\clg^G(Y)$-grading. This degree zero part is
\begin{center}
$\bigoplus_{\lambda\in\hat{\TT}}\pi_{1*}(\Oo_Y\otimes_{\Oo_Y}\Oo_Y(\lambda))^{\TT}=\bigoplus_{\lambda\in\hat{\TT}}(\pi_{1*}\Oo_Y)^{(\TT)}_\lambda=\pi_{1*}\Oo_Y$.
\end{center}
Hence, the factorization is
\begin{center}
$\hat{X}\xrightarrow{\varphi}Y\xrightarrow{\pi_1}X$,
\end{center}
where $\varphi$ is a good quotient by $\Gamma_{\clg^G(Y)}$. In fact, because $q_1$ is a $G$-equivariant almost principal $\Gamma_{\clg^G(X)}$-bundle, the good quotient $\varphi$ is a $G$-equivariant almost principal $\Gamma_{\clg^G(Y)}$-bundle. For every open subset $V\subset X$, we have
\begin{align*}
  \varphi_*\Oo_{\hat{X}}(\pi_1^{-1}(V)) & = q_*\Oo_{\hat{X}}(V)\\
    	& = \bigoplus_{[\Gg]\in\clg^G(Y)}\bigoplus_{\lambda\in\hat{\TT}}\,(\Ff\otimes_{\Oo_Y}\Oo_Y(\lambda))^y(\pi_1^{-1}(V))^{\TT}\\
    	& = \bigoplus_{[\Gg]\in\clg^G(Y)}\bigoplus_{\lambda\in\hat{\TT}}\,\Ff^y(\pi_1^{-1}(V))^{(\TT)}_\lambda\\
    	& = \bigoplus_{[\Gg]\in\clg^G(Y)}\Ff^y(\pi_1^{-1}(V))\\
    	& \simeq \bigoplus_{[\Gg]\in\clg^G(Y)}\Gg^y(\pi_1^{-1}(V)).
\end{align*}
As $\varphi_*\Oo_{\hat{X}}$ is a quasi-coherent sheaf on $Y$, and $\pi_1$ is affine, this shows that the $G$-equivariant type of $\hat{X}^G$ viewed as a $G$-equivariant almost principal $\Gamma_{\clg^G(Y)}$-bundle over $Y$ is the identity of $\clg^G(Y)$.
\end{proof}

\section{Normal rational $G$-varieties of complexity one}

Let $G$ denote a connected reductive algebraic group, $B$ a Borel subgroup, $T$ a maximal torus in $B$, and $U$ the unipotent part of $B$. In this section, we begin by recalling basic facts on the geometry of normal $G$-varieties of complexity one. Then, we obtain a necessary and sufficient condition for the (equivariant) Cox ring to exist and be finitely generated. Then, we give a presentation by generators and relations of the subalgebra of $U$-invariants of the equivariant Cox ring.

\subsection{Generalities}
\label{Sec_GeneralitiesComplexityOne}

Let ($X,x)$ be a pointed normal $G$-variety. The \textit{complexity} of the $G$-action on $X$ is the minimal codimension of a $B$-orbit. As any two Borel subgroups of $G$ are conjugated, the complexity doesn't depend on the choice of $B$. Suppose that $X$ is of complexity one. By a theorem of Rosenlicht (\cite[5.1]{TimashevBook}) there exists a unique up to isomorphism smooth projective algebraic curve $C$ whose field of rational functions is $k(X)^B$. Let's consider the rational quotient
\begin{center}
$\pi:X\dashrightarrow C$
\end{center}
by $B$ given by the inclusion $k(X)^B\subset k(X)$. We show that this quotient admits a rational section. For this, we use the local structure theorem of Brion, Luna and Vust (\cite[Thm 1.4]{BLV}). It is a very useful tool for the study of varieties with group action. We present a variant of this theorem and a corollary, both due to Knop.

\begin{thm}\cite[1.2]{Knop93}\label{ThmLocalStruct1}
Let $G$ a connected reductive group, $X$ a $G$-variety equipped with an ample $G$-linearized invertible sheaf $\Ll$, and $s\in\Ll(X)^{(B)}$ a $B$-semi-invariant section. Let $P$ the parabolic subgroup stabilizer of $s$ in $\PP(\Ll(X))$, with Levi decomposition $P_u\rtimes L$, $T\subset L$. Then the open subvariety $X_s$ is $P$-stable, and there exists a closed $L$-stable subvariety $Z\subset X_s$ such that the morphism induced by the $P$-action
\begin{center}
$P_u\times Z\rightarrow X_s$
\end{center}
is a $P$-equivariant isomorphism.
\end{thm}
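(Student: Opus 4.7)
The plan is to first pin down the parabolic structure around $[s]$, then embed $X$ in a projective space to put $X_s$ inside an affine linear model, and finally construct $Z$ as an $L$-equivariant transversal to the $P_u$-orbits.

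Since $s$ is $B$-semi-invariant of some weight $\chi$, the line $ks \subset \Ll(X)$ is $B$-stable, making $[s] \in \PP(\Ll(X))$ a $B$-fixed point; hence $P = \mathrm{Stab}_G([s])$ contains $B$ and is parabolic. The weight $\chi$ extends uniquely to a character of $P$ (its commutator subgroup acts trivially on the line $ks$), so $s$ is $P$-semi-invariant, which forces $X_s$ to be $P$-stable. Ampleness of $\Ll$ ensures $X_s$ is affine, since a suitable power $s^{\otimes n}$ defines a hyperplane section for a projective embedding.

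To produce $Z$, I would embed $X$ equivariantly in $\PP(V^*)$ using a finite-dimensional $G$-submodule $V \subset \Ll(X)$ that contains $s$, so that $X_s$ sits as a closed subvariety of the affine open $A := \PP(V^*) \setminus \{\xi : \xi(s)=0\}$. Reductivity of $L$ gives an $L$-stable complement $V = ks \oplus V_0$ and an $L$-fixed point $\xi_0 \in A$, identifying $A \simeq V_0^*$ as $L$-varieties. On this model $P_u$ acts trivially on $ks$ and on the quotient $V/ks$, so its action on $A$ takes a strongly controlled affine form. The next step is to verify that $P_u$ acts freely on $X_s$, so that the invariants $\Oo(X_s)^{P_u}$ form a finitely generated $k$-algebra defining an affine quotient $X_s // P_u$; reductivity of $L$ (which normalizes $P_u$) then produces an $L$-equivariant section $Z \hookrightarrow X_s$ of this quotient, and the $P$-equivariant action map $P_u \times Z \to X_s$ is forced to be an isomorphism by combining freeness with the section property.

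The main obstacle is establishing $P_u$-freeness on $X_s$ together with the existence of the $L$-equivariant section $Z$. This is the classical Brion--Luna--Vust input: one uses the filtration of $V$ by $P$-submodules whose successive $L$-module quotients carry trivial $P_u$-action, so that $P_u$-orbits exhaust the transverse directions, combined with the vanishing of $H^1$ for reductive representations to assemble the $L$-equivariant section from local data.
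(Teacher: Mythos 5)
This theorem is cited in the paper from Knop (it goes back to Brion--Luna--Vust) and is not proved there, so I am comparing your argument with the standard proof in the literature rather than with anything in the text. Your first two steps are correct and are indeed how that proof begins: $[s]$ is a $B$-fixed point of $\PP(\Ll(X))$, so $P\supseteq B$ is parabolic; the weight of $s$ extends to a character of $P$, so $s$ is a $P$-eigenvector and $X_s$ is $P$-stable; and after replacing $\Ll$ by a power one realizes $X_s$ inside the affine chart $A=\{\xi\in V^{*}:\xi(s)=1\}$ of $\PP(V^{*})$, with an $L$-fixed origin coming from an $L$-stable splitting $V=ks\oplus V_0$.

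The third step, however, is where the entire content lies, and the two implications you lean on there are not valid. First, freeness of a unipotent group action does not give finite generation of $\Oo(X_s)^{P_u}$, nor does it make $\spec\Oo(X_s)^{P_u}$ a geometric quotient whose fibres are single orbits; pathologies of exactly this kind are what the local structure theorem is designed to rule out, so they cannot be taken as inputs. (It is not even clear a priori that $P_u$ acts freely on $X_s$: in the actual proof freeness is a \emph{consequence} of the product decomposition, not a hypothesis.) Second, even granting an affine quotient and an $L$-equivariant section $Z$, ``free action plus section'' does not force $P_u\times Z\to X_s$ to be an isomorphism: one still has to show that every fibre of the quotient map is a single orbit and that the orbit maps are isomorphisms, which is again the substance of the theorem. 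The missing mechanism is the following. Reduce to $X=\PP(V^{*})$ (a $P_u$-stable locally closed subvariety of a product $P_u\times Z'$, with $P_u$ acting by left translation on the first factor, is automatically of the form $P_u\times Z_0$, so the general case follows from the ambient one --- note that you propose instead to work directly on $X_s$, where no linear structure is available); replace $V$ by the simple submodule generated by $s$, with highest weight $\mu$ and $P=P(\mu)$; and consider the explicit morphism $\psi\colon \PP(V^{*})_s\to \mathrm{Lie}(P_u^{-})^{*}$ given by $\psi([\xi])(y)=\xi(y\cdot s)/\xi(s)$, which is $L$-equivariant for the coadjoint action on the target. Using that the stabilizer of $[s]$ is exactly $P$ (equivalently, that $\mu$ pairs nontrivially with every root of $P_u$), one checks that for each point $a$ of the chart the map $P_u\to\mathrm{Lie}(P_u^{-})^{*}$, $u\mapsto\psi(u\cdot a)$, is an isomorphism of varieties; this single statement yields freeness, the fact that every $P_u$-orbit meets $Z:=\psi^{-1}(0)$ in exactly one point, and the isomorphism $P_u\times Z\to\PP(V^{*})_s$ all at once. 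Your closing appeal to a $P$-stable filtration of $V$ with trivial $P_u$-action on the graded pieces, and to $H^1$-vanishing for reductive groups, points in the right direction but does not by itself produce this transversal $\psi$ or the transitivity of $P_u$ on its fibres.
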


\begin{cor}\cite[§2]{Knop93}\label{ThmLocalStruct2}
Let $G$ a connected reductive group, $X$ a normal $G$-variety, and $P(X)$ the smallest stabilizer of a $B$-stable divisor in $X$, with Levi decomposition $P(X)_u\rtimes L(X)$, $T\subset L(X)$. Then, there exists an affine $L(X)$-stable subvariety $Z$ such that $X_0:=P(X)Z$ is an open affine subvariety of $X$. The natural morphism $P(X)_u\times Z\rightarrow X_0$ is an isomorphism and the derived subgroup of $L(X)$ acts trivially on $Z$.
\end{cor}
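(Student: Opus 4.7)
The plan is to deduce the corollary from Theorem \ref{ThmLocalStruct1} by producing a $B$-semi-invariant section whose stabilizer in the projectivized space of sections is precisely $P(X)$, and then analyzing the residual $L(X)$-action on the transversal slice.

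First, I would reduce to a quasi-projective situation. By Sumihiro's theorem, any point of the normal $G$-variety $X$ has an open $G$-stable quasi-projective neighborhood, so after shrinking $X$ around a chosen point we may assume that $X$ admits an ample $G$-linearized invertible sheaf $\Ll$. Let $D_1,\dots,D_r$ be the $B$-stable prime divisors on $X$; each $\mathrm{Stab}_G(D_i)$ is a standard parabolic containing $B$, and the minimality hypothesis forces $P(X)=\bigcap_i\mathrm{Stab}_G(D_i)=\mathrm{Stab}_G(D)$ with $D:=\sum_i D_i$. Because each $D_i$ is $B$-invariant, the sheaf $\Oo_X(D)$ carries a canonical $B$-linearization, and for $n\gg 0$ the sheaf $\Ll^{\otimes n}\otimes \Oo_X(-D)$ is globally generated and $B$-linearized; multiplying a section with the canonical section of $\Oo_X(D)$ yields a $B$-semi-invariant section $s\in\Ll^{\otimes n}(X)^{(B)}$ with $\mathrm{div}(s)=D+E$ where $E$ does not meet a prescribed point. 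The stabilizer in $G$ of the line $ks$ contains $P(X)$ (it is a parabolic stabilizing $\mathrm{div}(s)$), and one checks it equals $P(X)$ after choosing the auxiliary section generically so that $E$ contributes no new stabilizer.

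Applying Theorem \ref{ThmLocalStruct1} to this $s$ then produces a closed $L(X)$-stable affine subvariety $Z\subset X_s$ and a $P(X)$-equivariant isomorphism $P(X)_u\times Z\xrightarrow{\simeq}X_s$; the set $X_0:=X_s$ is affine as the non-vanishing locus of a section of an ample sheaf, and $X_0=P(X)Z$ by construction.

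The remaining and main obstacle is to show that the derived subgroup $[L(X),L(X)]$ acts trivially on $Z$. Here I would argue by contradiction: if the semisimple part of $L(X)$ acted non-trivially on the affine $L(X)$-variety $Z$, a Borel subgroup of $[L(X),L(X)]$ would have a non-dense orbit and hence produce a new $B$-stable prime divisor on $Z$, which via the isomorphism $P(X)_u\times Z\simeq X_0$ would yield a $B$-stable divisor on $X_0$ (and by closure on $X$) whose stabilizer is a parabolic strictly smaller than $P(X)$, contradicting the minimality assumption. Granting this semisimplicity argument, one concludes that $[L(X),L(X)]$ fixes $Z$ pointwise, completing the proof.
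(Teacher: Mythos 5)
The paper takes this corollary as a black box from Knop (\S 2 of the cited reference) and gives no in-text proof, so I can only judge your argument on its own terms. Your architecture — linearize, manufacture a $B$-semi-invariant section whose line-stabilizer is $P(X)$, apply Theorem \ref{ThmLocalStruct1}, then use minimality of $P(X)$ to kill the action of $[L(X),L(X)]$ on $Z$ — is the standard one, but the two steps that carry all the weight both have genuine gaps. First, the section $s$: multiplying an arbitrary global section of $\Ll^{\otimes n}\otimes\Oo_X(-D)$ by the canonical section of $\Oo_X(D)$ is not $B$-semi-invariant unless the auxiliary section is itself a $B$-eigenvector, so it cannot be "chosen generically" — and this is precisely what blocks your claim that $E$ contributes no new stabilizer. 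Moreover your justification that $\mathrm{Stab}_G(ks)\supseteq P(X)$ ("it is a parabolic stabilizing $\mathrm{div}(s)$") proves the wrong inclusion: minimality only gives $P(X)\subseteq\mathrm{Stab}_G(\mathrm{div}(s))$, and passing from the stabilizer of the divisor to the stabilizer of the line requires sections with equal divisors to be proportional, i.e.\ $\Oo^*\simeq k^*$, which fails on the quasi-projective model you reduced to (for the same reason, $X_s$ need not be affine there). The standard repair is to pass to a normal projective equivariant completion $\bar X$, take $D$ a $B$-stable effective divisor whose support contains $\bar X\setminus X$, avoids a chosen point, and carries pairwise distinct large multiplicities on the finitely many non-$G$-stable $B$-stable prime divisors, so that $\mathrm{Stab}_G(D)=\bigcap_i\mathrm{Stab}_G(D_i)=P(X)$, and then use the canonical section of $D$ inside an ample power.

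Second, the triviality of the $[L(X),L(X)]$-action. The implication "a Borel of $[L,L]$ has a non-dense orbit, hence there is a new $B$-stable prime divisor on $Z$" is not valid (the non-dense orbits may sweep out a locus of codimension $\geq 2$, and a $B_{[L,L]}$-stable divisor need not be $B\cap L(X)$-stable); more seriously, your contradiction requires the produced divisor to be \emph{not} $L(X)$-stable, which nothing in your sketch guarantees — a "new" $B$-stable divisor whose stabilizer still contains $P(X)$ contradicts nothing. The robust argument runs through functions: if $[L,L]$ acts non-trivially on the affine variety $Z$, then $\Oo(Z)$ contains a simple $L(X)$-submodule of dimension $\geq 2$; a highest-weight vector $f$ has a non-zero $(B\cap L(X))$-stable divisor of zeroes, and not all of its components can be $L(X)$-stable, since otherwise $g\cdot f/f$ would be a unit for every $g\in L(X)$ and (units being $L(X)$-eigenvectors, with $\Oo(Z)^*/k^*$ discrete) the line $kf$ would be $L(X)$-stable. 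Taking the closure in $X$ of $P(X)_u\times(\text{such a component})$ then produces a $B$-stable prime divisor whose stabilizer does not contain $P(X)$, which is the contradiction you want.
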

Let $X_0\subset X$ be as in the corollary, and $H$ be the kernel of the $L(X)$ action on $Z$. Then $\TT:=L(X)/H$ is a torus acting faithfully on $Z$. The local structure of $Z$ for such an action is well known: up to replacing $Z$ by a smaller $L(X)$-stable subvariety, we have an $L(X)$-equivariant isomorphism $Z\simeq \TT\times C_0$, where $L(X)$ acts on $\TT$ by left multiplication via the projection $L(X)\rightarrow \TT$, and $C_0$ is an algebraic curve. We end up with a $P(X)$-equivariant isomorphism
\begin{center}
$X_0\simeq P(X)_u\times \TT\times C_0$,
\end{center}
which identifies $\pi_{\mid X_0}$ with the projection to $C_0$, and yields the desired rational section.  

Recall that the (equivariant) class group being finitely generated is a necessary condition for the (equivariant) Cox ring to be finitely generated (\ref{RemFinitenessClg}). In fact, we show in the two next propositions that for complexity one normal varieties, this is also sufficient.

\begin{prop}\label{PropClassGroupFinitelyGeneratedC1Rational}
The (equivariant) class group of $X$ is finitely generated if and only if $X$ is a rational variety.
\end{prop}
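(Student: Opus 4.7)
The plan is to reduce both properties to the smooth projective curve $C$ with $k(C) = k(X)^B$, exploiting the product-shape description $X_0 \simeq P(X)_u \times \TT \times C_0$ of an open subvariety of $X$ given by Corollary~\ref{ThmLocalStruct2}. Since the forgetful morphism $\clg^G(X) \to \clg(X)$ has cokernel of finite exponent (Remark~\ref{RemFiniteIndexForgetGLin}), finite generation of the equivariant and ordinary class groups are equivalent, so I would only treat $\clg(X)$.

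First, observe that $P(X)_u \simeq \AAA^m$ (a connected unipotent group in characteristic zero) and $\TT \simeq \GG_m^n$, so $X_0$ embeds as an open subvariety of $\AAA^N \times C$ (with $N = m + n$), whose complement has only finitely many irreducible components of codimension one: the pullbacks of the coordinate hyperplanes in $\AAA^n \setminus \GG_m^n$, and the divisors $\AAA^N \times \{p\}$ for $p \in C \setminus C_0$. Combining the localization sequence~(\ref{EqSEClGroup}) applied to the inclusions $X_0 \subset \AAA^N \times C$ and $X_0 \subset X_{\rm sm}$ with the homotopy-invariance isomorphism $\pic(\AAA^N \times C) \simeq \pic(C)$, I would express $\clg(X)$ as an extension involving $\pic(C)$ and finitely generated groups. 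In particular, $\clg(X)$ is finitely generated if and only if $\pic(C)$ is.

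On the rationality side, $k(X) = k(X_0)$ is a purely transcendental extension of $k(C)$ coming from the $P(X)_u \times \TT$ factor, so $X$ is rational if and only if $k(C)$ is purely transcendental over $k$, which by Lüroth's theorem is equivalent to $C \simeq \PP^1$. It then remains to show that $\pic(C)$ is finitely generated if and only if $C \simeq \PP^1$. The genus zero case is immediate since $\pic(\PP^1) \simeq \ZZ$; for $g(C) \geq 1$ the Jacobian $J(C)(k) = \pic^0(C)$ is a subgroup of $\pic(C)$ with full $n$-torsion $(\ZZ/n\ZZ)^{2g}$ for every $n \geq 1$ (as $k$ is algebraically closed of characteristic zero), hence has infinite torsion subgroup and cannot be finitely generated.

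The main technical point I expect to handle carefully is the Picard bookkeeping in the second paragraph: one must keep track of the contribution of units to both localization sequences so as to cleanly reach the equivalence ``$\clg(X)$ finitely generated $\iff \pic(C)$ finitely generated''. This amounts to a snake-lemma computation starting from $\Gamma(\AAA^N \times C,\Oo^*) \simeq \Gamma(C,\Oo^*)$, and it should be the only step requiring genuine care.
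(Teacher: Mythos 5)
Your proof is correct and follows essentially the same route as the paper: reduce to $\clg(X)$ via the forgetful morphism, use the local structure decomposition $X_0\simeq P(X)_u\times\TT\times C_0$ and localization sequences to show $\clg(X)$ is finitely generated iff $\pic(C)$ is, then conclude with L\"uroth and the non-finite-generation of $\pic(C)$ in genus $\geq 1$. The only cosmetic difference is that the paper invokes a homotopy-invariance result to identify $\clg(X_0)$ with $\clg(C_0)$ directly instead of embedding $X_0$ into $\AAA^N\times C$, and it leaves the Jacobian torsion argument and the L\"uroth step implicit, which you spell out.
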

\begin{proof}
By \ref{RemFiniteIndexForgetGLin}, the equivariant class group is finitely generated if and only if the class group is so. By the localization sequence (\ref{EqSEClGroup}), $\clg(X)$ is finitely generated if and only if  $\clg(X_0)$ is so. But because $P(X)_u\times \TT$ is a rational variety with trivial class group, thus $\clg(X_0)$ is isomorphic to $\clg(C_0)$ via the pullback associated with the projection to $C_0$ (\cite[5.1.2]{LinearizationGBrion}). By the localisation sequence again, we deduce that $\clg(X)$ is finitely generated if and only if $\clg(C)$ is. This last condition is equivalent to $C\simeq \PP^1$, which yields the statement.
\end{proof}

\begin{prop}\label{PropCoxGFinitelyGeneratedC1Rational}
Suppose that $X$ is a rational normal variety of complexity one satisfying $\Oo(X)^{*G}\simeq k^*$. Then $\cox^G(X)$ is finitely generated. If moreover  $\Oo(X)^{*}\simeq k^*$, then $\cox(X)$ is finitely generated.
\end{prop}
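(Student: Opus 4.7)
The plan is to establish finite generation of $\cox^G(X)$ first; the corresponding statement for $\cox(X)$ under the additional hypothesis $\Oo(X)^*\simeq k^*$ will then follow via the forgetful morphism. As a preliminary remark, Proposition \ref{PropClassGroupFinitelyGeneratedC1Rational} already supplies the necessary condition that $\clg^G(X)$ be finitely generated. My first move would be to apply Corollary \ref{Cor_OPS_XSmoothProjective} to embed $X_{\rm sm}$ as a $G$-stable open subvariety of a smooth complete $G$-variety $X'$, yielding an isomorphism $\cox^G(X)\simeq\cox^G(X')/((r_i-1)_i)$. Since $X'$ is complete, $\Oo(X')^{*G}\simeq k^*$ holds automatically, and $X'$ inherits both rationality and complexity one from $X_{\rm sm}$. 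This reduces the problem to the case where $X$ is smooth and complete, in which each homogeneous component $\Gamma(X,\Ff^x)$ is finite-dimensional.

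Under this reduction, $\cox^G(X)$ is an integral normal $\clg^G(X)$-graded $G$-algebra by Corollary \ref{Prop_CoxG_NormalVar}. Since $G$ is connected reductive, I would invoke the classical theorem of Hadziev (equivalently, Grosshans's subgroup theorem applied to $U\subset G$): a rational $G$-algebra is finitely generated if and only if its subalgebra of $U$-invariants is. The task then reduces to showing that $\cox^G(X)^U$ is a finitely generated $k$-algebra. This ring is bigraded by $\hat{T}\times\clg^G(X)$, with each bigraded piece consisting of $U$-invariant sections of fixed $T$-weight in a fixed homogeneous component.

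The heart of the argument is to analyse $\cox^G(X)^U$ via the local structure of complexity-one $G$-varieties. By Corollary \ref{ThmLocalStruct2}, one obtains a $P(X)$-stable affine open subvariety $X_0\simeq P(X)_u\times\TT\times C_0$, where $\TT$ is a torus and $C_0$ is open in a smooth projective curve $C$; rationality of $X$ forces $C\simeq\PP^1$, exactly as in the proof of \ref{PropClassGroupFinitelyGeneratedC1Rational}. Restricting $U$-invariant sections to $X_0$ yields an injection of $\cox^G(X)^U$ into the coordinate ring of $\TT\times C_0$, graded by $T$-weight and by pole orders along the finitely many $B$-stable prime divisors lying in $X\setminus X_0$. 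Since the Cox ring of the toric variety $\TT\times\PP^1$ is a finitely generated polynomial algebra, and only finitely many boundary divisors occur, careful bookkeeping of pole orders should produce a finite generating set for $\cox^G(X)^U$. The main obstacle lies precisely here: one must show that the monoid of admissible multidegrees (arising from the combinatorics of $B$-stable divisors on $X$ mapping dominantly to $\PP^1$) is finitely generated, which is where the combinatorial structure of divisorial polytopes of complexity-one varieties enters.

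For the second assertion, assume in addition $\Oo(X)^*\simeq k^*$. By Remark \ref{RemFiniteIndexForgetGLin}, I would choose a finite central isogeny $\tilde{G}\rightarrow G$ of connected reductive groups with $\pic(\tilde{G})=0$. Then $X$ is a rational $\tilde{G}$-variety of complexity one with $\Oo(X)^{*\tilde{G}}\simeq k^*$, so the first part of the proposition gives finite generation of $\cox^{\tilde{G}}(X)$. Corollary \ref{CorCoxGToCoxIII} then presents $\cox(X)$ as the quotient of the finitely generated algebra $\cox^{\tilde{G}}(X)$ by finitely many relations of the form $1-\lambda_j\cdot 1$, so $\cox(X)$ is finitely generated as well.
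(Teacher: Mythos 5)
Your opening and closing reductions are sound: passing to a smooth complete model via \ref{Cor_OPS_XSmoothProjective}, invoking the Hadziev--Grosshans equivalence between finite generation of a rational $G$-algebra and of its algebra of $U$-invariants, and deducing the second assertion from \ref{CorCoxGToCoxIII} after replacing $G$ by a finite cover with trivial Picard group are all legitimate. But the heart of the proof --- finite generation of $\cox^G(X)^U$ --- is asserted rather than proved. You write that ``careful bookkeeping of pole orders should produce a finite generating set'' and yourself flag as the main obstacle that a certain monoid of multidegrees be finitely generated; that is exactly the point where an argument is needed and none is given. Worse, the target you identify would not suffice even if established: since $X$ has complexity one, $k(X)^B\simeq k(\PP^1_k)$ is nontrivial, so the $(\clg^G(X)\times\hat{T})$-homogeneous components of $\cox^G(X)^U$ are generally of dimension greater than one (they are spaces of sections over a curve), and finite generation of the monoid of occurring degrees does not by itself imply finite generation of the algebra. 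What is actually needed is the divisor-factoring argument of Theorem \ref{ThmUinvCoxG} (every $B$-semiinvariant homogeneous section has divisor of zeroes a sum of finitely many exceptional divisors and of parametric divisors $\pi^*([u:v])$, the latter cut out by $va-ub$ for two fixed sections $a,b$ of the pullback of $\Oo_{\PP^1_k}(1)$); note that in the paper that theorem comes after, and partly relies on, the present proposition.

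The paper's own proof is quite different and avoids $U$-invariants altogether: after replacing $X$ by its smooth locus, a surjection $\ZZ^n\rightarrow\pic^G(X)$ yields a closed embedding $\Gamma_{\pic^G(X)}\hookrightarrow\TT$ into a torus, and the associated fiber bundle $Y=\hat{X}^G\times^{\Gamma_{\pic^G(X)}}\TT$ is a $G$-equivariant $\TT$-torsor over $X$ whose coordinate ring surjects onto $\cox^G(X)$ by \ref{Rem_CanonicalMorphismFromEqCharSpace}. Since $Y$ is then a smooth rational $G\times\TT$-variety of complexity one, Knop's finiteness theorem \cite[Satz 1]{KnopHibertVierzhen} gives finite generation of $\Oo(Y)$, hence of its quotient $\cox^G(X)$. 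If you wish to keep your strategy, replace the ``bookkeeping'' paragraph by the explicit generation argument sketched above, which produces the finite generating set consisting of $a$, $b$, the canonical sections of the exceptional divisors, and $k[\hat{G}]$.
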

\begin{proof}
Replacing $X$ by its smooth locus doesn't change the equivariant Cox ring, hence we can suppose $X$ smooth. With this assumption, the Cox sheaf is of finite type and the equivariant characteristic space exists as a normal variety. Let's embed $\Gamma_{\pic^G(X)}$ in a torus $\TT$ by means of an injective morphism of diagonalizable groups $\varphi:\Gamma_{\pic^G(X)}\rightarrow \TT$ dually defined as follows: choose representatives $(\Ll_i)_{i=1,..,n}$ of elements in a generating set of $\pic^G(X)$ and consider the free $\ZZ$-module $\ZZ^n$ generated by these representatives. Then, define $\varphi^\sharp$ to be the natural surjective morphism
\begin{center}
$\varphi^\sharp:\ZZ^n\rightarrow \pic^G(X)$, $(d_1,...,d_n)\mapsto [\Ll_1^{\otimes d_1}\otimes_{\Oo_X}...\otimes_{\Oo_X}\Ll_n^{\otimes d_n}]$.
\end{center}
Let $\Gamma_{\pic^G(X)}$ act on $\TT$ through $\varphi$, and consider the cartesian square
\begin{center}
	\begin{tikzcd}
		\hat{X}^G\times \TT \arrow[r,"p_{\hat{X}^G}"] \arrow[d,  "/\Gamma_{\pic^G(X)}"] & \hat{X}^G \arrow[d, "/\Gamma_{\pic^G(X)}"]\\
		Y:=\hat{X}^G\times^{\Gamma_{\pic^G(X)}}\TT \arrow[r, "\psi", swap] & X\,,
	\end{tikzcd}
\end{center}
where $\psi:Y\rightarrow X$ is the $G$-equivariant $\TT$-torseur over $X$ whose $G$-equivariant type is $\varphi^\sharp$. By \ref{Rem_CanonicalMorphismFromEqCharSpace}, there is a morphism of graded $\Oo_X$-algebras
\begin{center}
$\theta:\Aa^G:=\bigoplus_{(d_i)\in\ZZ^n}(\Ll_1^x)^{\otimes d_1}\otimes_{\Oo_X}...\otimes_{\Oo_X}(\Ll_n^x)^{\otimes d_n}\rightarrow \Rr^{G,x}$
\end{center}
such that $\theta(U)$ is surjective for any open subvariety $U\subset X$. Hence, to prove that $\cox^G(X)$ is finitely generated, we are reduced to prove that the algebra of regular functions on $Y$ is so. Because $Y$ is a $G$-equivariant $\TT$-torsor over $X$, it is a smooth rational $G\times \TT$-variety of complexity one. Then, a result of Knop \cite[Satz 1]{KnopHibertVierzhen} says that $\Oo(Y)$ is a finitely generated $k$-algebra. The second assertion is then an application of \ref{PropMorphismeCoxGToCox}.
\end{proof}

\begin{rem}\label{RemAlmostHomogeneousC1AreRational}
The second assertion in the last statement is \cite[Thm 4.3.1.5]{coxrings}. Also, notice that almost homogeneous varieties of complexity one are automatically rational. Indeed, we have in this situation a natural injection $k(C)\rightarrow k(G)$ from which we see that $C$ is unirational because $G$ is a rational variety. By Luröth's theorem, one deduces that $C$ is rational and hence so is $X$ in view of the isomorphism after Corollary \ref{ThmLocalStruct2}. Moreover, we have $\Oo(X)^{*G}\simeq k^*$ because $G$ has an open orbit. As a consequence, the equivariant Cox ring of $X$ is well-defined and finitely generated.
\end{rem}

Suppose that $X$ is rational of complexity one and such that $\Oo(X)^*\simeq k^*$, so that $\cox(X)$ is well defined and finitely generated. Because $U$ has trivial character group and Picard group, every divisorial sheaf on $X$ has a unique $U$-linearization, whence a canonical isomorphism
\begin{center}
$\cox^U(X)\simeq\cox(X)$ \quad (\ref{Rem_CoxEqIsomCox_SemisimpleSimplyConnected}).
\end{center}
We consider $\cox(X)$ endowed with this canonical structure of $\clg(X)$-graded $U$-algebra.

\begin{prop}\label{prop_PresCoxU}
Suppose that $\pic(G)=0$. Then, the $k$-algebra $\cox(X)^U$ is a finitely generated normal domain, and there is an isomorphism of $\clg(X)$-graded $k$-algebra
\begin{center}
$\cox(X)^U\simeq\cox^G(X)^U/(1-\lambda_j.1)_{1\leq j\leq n}$,
\end{center}
where $(\lambda_j)_{1\leq j\leq n}$ is an arbitrary $\ZZ$-basis of $\hat{G}$. From this isomorphism, $\cox(X)^U$ is endowed with a structure of $\clg(X)\times\hat{T}$-graded algebra.
\end{prop}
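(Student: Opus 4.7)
The plan is to derive the statement from Corollary \ref{CorCoxGToCoxIII} by taking $U$-invariants. Since $\pic(G)=0$, that corollary yields an isomorphism of $\clg(X)$-graded $k$-algebras
\[
\cox(X) \simeq \cox^G(X)/I, \qquad I := (1-\lambda_j\cdot 1)_{1\leq j\leq n}.
\]
First I would check that $I$ is a $U$-stable ideal. As $G$ is connected reductive, every character $\lambda \in \hat{G}$ factors through the torus $G/[G,G]$ and hence restricts trivially to $U\subset[G,G]$, so the canonical section $\lambda_j\cdot 1\in\Gamma(X,\Oo_X(\lambda_j)^x)$ is $U$-invariant. Moreover, the forgetful morphism $\cox^G(X)\to\cox(X)$ is $U$-equivariant by uniqueness of the $U$-linearization of each divisorial sheaf. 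We thus obtain a short exact sequence of rational $U$-modules
\[
0 \to I \to \cox^G(X) \to \cox(X) \to 0.
\]

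The key step is to show that $(-)^U$ is exact on this sequence and that $I^U$ coincides with the ideal of $\cox^G(X)^U$ generated by the $(1-\lambda_j\cdot 1)_j$. To this end, I would use the decomposition $\cox^G(X) = \bigoplus_{[\Gg]\in\Im\phi} M_{[\Gg]}$ from the proposition preceding \ref{PropMorphismeCoxGToCox}, which is precisely the $\Im\phi$-grading and hence compatible with the graded ideal $I$. For each class $[\Gg]$, fix a lift $[\Ff]\in\clg^G(X)$ of $[\Gg]$. The natural isomorphism $(\Ff\otimes\Oo(\mu))^x\simeq\Ff^x$ of rigidified divisorial sheaves is in fact $U$-equivariant, because $\mu|_U$ is trivial and the $U$-linearization is unique. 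This yields an isomorphism of rational $U$-modules
\[
M_{[\Gg]} \;\simeq\; \Ff^x(X)\otimes_k k[\hat{G}]
\]
in which $U$ acts only on the first tensor factor. Taking $U$-invariants produces a free $k[\hat{G}]$-module $(M_{[\Gg]})^U\simeq\Ff^x(X)^U\otimes_k k[\hat{G}]$, and similarly $(I\cap M_{[\Gg]})^U\simeq\Ff^x(X)^U\otimes_k\mathfrak{a}$, where $\mathfrak{a}\subset k[\hat{G}]$ is the augmentation ideal. Quotienting summand by summand and summing over $[\Gg]$ produces the desired isomorphism $\cox^G(X)^U/(1-\lambda_j\cdot 1)_{1\leq j\leq n}\simeq\cox(X)^U$.

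For the remaining assertions: finite generation of $\cox^G(X)^U$ is a consequence of Hadziev's theorem applied to the finitely generated rational $G$-algebra $\cox^G(X)$ (finite generation by Proposition \ref{PropCoxGFinitelyGeneratedC1Rational}), and this passes to the quotient. Integrality of $\cox(X)^U$ is automatic as a subring of the integral domain $\cox(X)$ (Corollary \ref{Prop_CoxG_NormalVar}), and normality follows from the standard fact that any ring of invariants of a normal domain under an arbitrary group action is a normal domain. Finally, the $\clg(X)\times\hat{T}$-grading arises from the $\clg(X)$-grading transported by the isomorphism together with the rational action of $T=B/U$ on $\cox(X)^U$, whose $\hat{T}$-weight decomposition commutes with the $\clg(X)$-grading.

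The main obstacle is the potential failure of $(-)^U$ to be exact on the short exact sequence, since unipotent groups lack a Reynolds operator. This is circumvented by the free $k[\hat{G}]$-module structure on $\cox^G(X)$ together with the triviality of the $U$-action on $k[\hat{G}]$, which makes the $U$-invariants computable summand by summand.
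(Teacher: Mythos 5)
Your proposal is correct and takes essentially the same route as the paper: specialize Corollary \ref{CorCoxGToCoxIII} and pass to $U$-invariants, with normality inherited from $\cox(X)$ and the grading obtained by transport of structure. The paper's own proof is only a few lines and simply asserts that \ref{CorCoxGToCoxIII} "yields the isomorphism", whereas you explicitly verify the one nontrivial point — that taking $U$-invariants commutes with the quotient by $(1-\lambda_j.1)_j$ — using the free $k[\hat{G}]$-module decomposition of $\cox^G(X)$ and the triviality of the $U$-action on $k[\hat{G}]$, which is a worthwhile supplement to the argument.
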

\begin{proof}
The $k$-algebra $\cox(X)^U$ is a normal domain because $\cox(X)$ is so. Moreover, \ref{CorCoxGToCoxIII} yields the isomorphism of the statement, whence the finite generation of $\cox(X)^U$. The right hand side is naturally $\clg(X)\times\hat{T}$-graded, so is the left hand side by transport of structure.
\end{proof}

Corollary \ref{Cor_OPS_XSmoothProjective} can be useful for instance if one is looking for a description $\cox^G(X)$ by generators and relations. Indeed, reusing the notations of this result, $X'$ is a smooth complete rational $G$-variety of complexity one, and $\cox^G(X)$ is the quotient of (the finitely generated) $\cox^G(X')$ by a particularly simple regular sequence. So one may rather look for a description of $\cox^G(X')$. This task might be easier thanks to the following proposition (which is probably known but for which we could not find a reference).

\begin{prop}\label{Prop_SmoothCompleteRational_FreePic}
Suppose that $X$ is a smooth complete rational $G$-variety of complexity one. Then, the abelian groups $\pic(X)$ and $\pic^G(X)$ are free of finite rank.
\end{prop}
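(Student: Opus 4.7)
The plan is to reduce both statements to the known freeness of the Picard group for smooth projective rational varieties, and then pass to the equivariant setting via the forgetful exact sequence.

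For $\pic(X)$, finite generation is already provided by Proposition \ref{PropClassGroupFinitelyGeneratedC1Rational}. For freeness, the plan is to embed $\pic(X)$ into the Picard group of a smooth projective rational variety. Applying Chow's lemma followed by Hironaka's resolution of singularities (available in characteristic zero), one produces a proper birational morphism $\pi:X'\rightarrow X$ with $X'$ smooth projective; since $X'$ is birational to the rational variety $X$, it is again rational. The pullback $\pi^{*}:\pic(X)\rightarrow \pic(X')$ is injective: if $\pi^*L\simeq\Oo_{X'}$, then the projection formula together with $\pi_*\Oo_{X'}\simeq\Oo_X$ (which holds because $X$ is normal and $\pi$ is proper birational, by Zariski's main theorem) gives $L\simeq\pi_*\pi^*L\simeq\Oo_X$. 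One then invokes the classical fact that $\pic(X')$ is free of finite rank for $X'$ a smooth projective rational variety in characteristic zero: $h^1(\Oo_{X'})=0$ forces $\pic^0(X')=0$, so $\pic(X')$ equals the finitely generated Néron--Severi group, while $\pi_1(X')=0$ (rational connectedness) ensures $H^2(X',\ZZ)$ is torsion free.

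For $\pic^G(X)$, the plan is to exploit the forgetful exact sequence (\ref{EqSEClGroupEquivariantOubli}). Since $X$ is complete, one has $\Oo(X)^*=\Oo(X)^{*G}=k^*$, so the sequence reduces to
\begin{center}
$0\rightarrow \hat{G}\xrightarrow{\gamma}\pic^G(X)\xrightarrow{\phi}\pic(X)$.
\end{center}
Because $G$ is connected reductive, $\hat{G}$ embeds into the character lattice $\hat{T}$ of a maximal torus, so it is free of finite rank. The image $\Im\phi$ is a subgroup of the finitely generated free group $\pic(X)$ (first step), hence itself free of finite rank. Consequently $\pic^G(X)$ fits into a short exact sequence $0\rightarrow \hat{G}\rightarrow \pic^G(X)\rightarrow \Im\phi\rightarrow 0$ of finitely generated free abelian groups, which necessarily splits, giving $\pic^G(X)\simeq\hat{G}\oplus\Im\phi$, free of finite rank.

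The principal technical ingredient I anticipate is the passage to a smooth projective model via resolution of singularities; once the freeness of $\pic$ for a smooth projective rational variety is granted as a standard consequence of rational connectedness and Hodge theory, the rest is a purely formal manipulation of the exact sequences previously established in the paper.
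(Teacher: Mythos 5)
Your argument is correct, and the equivariant half (reducing $\pic^G(X)$ to $\pic(X)$ via the forgetful sequence (\ref{EqSEClGroupEquivariantOubli}), which becomes $0\rightarrow\hat{G}\rightarrow\pic^G(X)\rightarrow\pic(X)$ because $\Oo(X)^*\simeq k^*$, and then splitting the extension of free groups) is exactly what the paper does. Where you diverge is the key step, the freeness of $\pic(X)$ itself. You pass to a smooth projective birational model $X'\rightarrow X$ via Chow's lemma and Hironaka, check injectivity of $\pi^*$ by the projection formula and $\pi_*\Oo_{X'}\simeq\Oo_X$, and then invoke the classical fact that a smooth projective rational variety has torsion-free, finitely generated Picard group ($h^1(\Oo)=0$ kills $\pic^0$, simple connectedness kills torsion in $H^2$). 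The paper instead stays inside its own toolkit: it first reduces to the projective case by Sumihiro's equivariant completion theorem, then produces a non-empty open subvariety $U\subset X$ with $\Oo(U)^*\simeq k^*$ and $\pic(U)=0$ by means of a Bialynicki-Birula decomposition for a generic one-parameter subgroup of a maximal torus (here the complexity-one hypothesis is used: the fixed-point components are points or rational curves, and the attracting set of the open cell maps to $\AAA^1_k$ or a point with trivializable Picard group), and concludes from the localization sequence (\ref{EqSEClGroup}) that $\pic(X)$ is a quotient-free image of the lattice on the boundary divisors, hence free. Your route is shorter and more general --- it needs only smoothness, completeness and rationality, not complexity one --- at the cost of importing Hodge-theoretic/topological input (or the birational invariance of $\pic^0$ and of torsion in the N\'eron--Severi group) from outside the paper's framework; the paper's route is longer but self-contained within the equivariant machinery it has already set up. Both proofs are valid.
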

\begin{proof}
By \cite[Thm. 2]{Sumihiro} there is a $G$-equivariant projective birational morphism $\varphi:X'\rightarrow X$, where $X'$ is a (projective) normal $G$-variety. This implies that the pullback morphism $\varphi^*:\pic(X)\rightarrow\pic(X')$ is injective. Thus we can suppose that $X$ is projective. Also, using the exact sequence (\ref{EqSEClGroupEquivariantOubli}), we are reduce to prove that $\pic(X)$ is free of finite rank. We look for a non-empty open subvariety $U\subset X$ such that $\Oo(U)^*\simeq k^*$ and $\pic(U)=0$. By the exact sequence (\ref{EqSEClGroup}), finding such a subvariety $U$ yields the result.

Choose a maximal torus $T\subset G$ and a 1-parameter subgroup $\lambda$ of $T$ having the same fixed points in $X$ as $T$. By the next lemma, each $G$-orbit admit only finitely many fixed points. Because $X$ is of complexity one, it follows that the connected components $(C_i)_{0\leq i\leq r}$ of the fixed points variety $X^T=X^\lambda$ are points or curves. Also, the Bialynicki-Birula decomposition \cite[Thm. 13.47]{Milne} says that $X$ is the disjoint union of the subvarieties $X_i$, $i=0,...,r$ such that for all $x\in X_i$, the limit $\lim_{t\rightarrow 0}\lambda(t).x$ exists and lies in $C_i$. Moreover, the maps
\begin{center}
$f_i:X_i\rightarrow C_i, x\mapsto \lim_{t\rightarrow 0}\lambda(t).x,\quad i=0,...,r$
\end{center}
are good quotients for the $\GG_m$-action. We can assume that $X_0$ is an open subvariety. If $C_0$ is a curve, then it is isomorphic to $\PP^1_k$. Indeed, it is a projective normal curve which is rational because $k(C_0)\subset k(X)$. Consider $U:=\pi^{-1}(\AAA^1_k)$. Then $U$ is affine and its coordinate algebra is $\ZZ_{\geq 0}$-graded. Moreover, the homogeneous component of degree zero is $\Oo(\AAA^1_k)$, whence $\Oo(U)^*\simeq k^*$. Let $\Ll$ be a $\GG_m$-linearized invertible sheaf on $U$. By \cite[Cor. 6.4]{BassHaboush}, we have $f^*i^*\Ll\simeq\Ll$ as invertible sheaves on $U$, where $i$ denotes the closed immersion $\AAA^1_k\rightarrow U$ and $f$ is the restriction of $f_0$ to $U$. Thus, $\Ll$ is trivial as an invertible sheaf. This implies that $\pic(U)$ is trivial since the forgetful morphism $\pic^{\GG_m}(U)\rightarrow\pic(U)$ is surjective. If $C_0$ is a point, we let $U:=\pi^{-1}(C_0)$ and one proves similarly as above that $\Oo(U)^*\simeq k^*$ and $\pic(U)=0$.
\end{proof}

\begin{lem}
Consider an homogeneous space $G/H$, and a maximal torus $T\subset G$. Then there are only finitely many $T$-fixed points in $G/H$.
\end{lem}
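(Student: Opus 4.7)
The plan is to reformulate the fixed-point condition in terms of conjugates of $T$ and reduce finiteness to a Weyl-group bound. First, I would note that a coset $gH \in G/H$ is $T$-fixed if and only if $TgH = gH$, equivalently $g^{-1}Tg \subset H$. Thus $(G/H)^T$ is in bijection with $\{g \in G \mid g^{-1}Tg \subset H\}/H$, and it suffices to bound the cardinality of this double-quotient.

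If the set is empty there is nothing to prove. Otherwise, I would pick a representative $g_0$ with $S := g_0^{-1}Tg_0 \subset H$. Since $S$ is a torus of $G$ of maximal dimension sitting inside $H$, it must in fact be a maximal torus of $H$ as well (forcing $\mathrm{rk}(H) = \mathrm{rk}(G)$). For any other $g$ with $g^{-1}Tg \subset H$, the same dimension count shows that $g^{-1}Tg$ is a maximal torus of $H$. Invoking conjugacy of maximal tori in $H$, I obtain $h \in H$ with $h^{-1}(g^{-1}Tg)h = S$; a direct manipulation then gives $ghg_0^{-1} \in N_G(T)$. Hence
$$\{g \in G \mid g^{-1}Tg \subset H\} \;=\; N_G(T) \cdot g_0 H.$$

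Consequently, $(G/H)^T$ coincides with the image of the map $N_G(T) \to G/H$, $n \mapsto ng_0H$. Its fibers are left cosets of $N_G(T) \cap g_0 H g_0^{-1}$, and since $T = g_0 S g_0^{-1} \subset g_0 H g_0^{-1}$, this intersection contains $T$. Therefore the number of $T$-fixed points is bounded by $|N_G(T)/T| = |W(G,T)|$, which is finite. There is no substantial obstacle: the argument is simply the classical fact that torus fixed points on a homogeneous space are parametrized by a subset of the Weyl group, realized here through conjugacy of maximal tori inside $H$.
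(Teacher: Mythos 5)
Your proof is correct and follows essentially the same route as the paper's: both characterize a fixed point $gH$ by $g^{-1}Tg\subset H$, invoke conjugacy of maximal tori of $H$ to show $N_G(T)$ acts transitively on the fixed-point set, and conclude finiteness from the finiteness of the Weyl group. The only cosmetic difference is that the paper rebases at a fixed point so that $T\subset H$, while you carry the representative $g_0$ through the computation.
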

\begin{proof}
We can suppose that $T$ admit at least one fix point $x$. Choosing $x$ as a base point, we have $T\subset H$. Consider a $T$-fixed point $gH$ in $G/H$. We have $g^{-1}Tg=h^{-1}Th$ for a certain $h\in H$, because any two maximal tori in $H$ are conjugate by an element of $H$. But we can translate $g$ by an element of $H$, and consequently suppose that $g\in N_G(T)$. Thus, $N_G(T)$ acts transitively on the set of fixed points. The stabilizer of $x$ is the centralizer $N_H(T)$, whence the finiteness of the set of fixed points.
\end{proof}

\subsection{The algebra $\cox^G(X)^U$}
\label{SecCoxU}

Let $(X,x)$ be a pointed normal rational $G$-variety of complexity one such that $\Oo(X)^{*G}\simeq k^*$. Suppose that $G$ has trivial Picard group. To study $\cox^G(X)$, it is classical in invariant theory to investigate the more accessible structure of the algebra of $U$-invariant $\cox^G(X)^U$. By \cite[D.5]{TimashevBook}, \ref{PropCoxGFinitelyGeneratedC1Rational}, and \ref{Prop_CoxG_NormalVar}, this subalgebra is a finitely generated normal domain. We give a description by generators and relations of $\cox^G(X)^U$ when $\Oo(X)^*\simeq k^*$. This is a generalization of a previous result by Ponomareva (\cite[Thm 4]{Ponomareva}).

Notice that $\cox^G(X)^U$ is naturally  $\clg^G(X)\times \hat{T}$-graded, the homogeneous elements being global sections of $G$-linearized divisorial sheaves whose divisors of zeroes are the $B$-stable effective divisors of $X$. To see this, consider a $B$-invariant effective divisor $D$, and the divisorial sheaf $\Oo_X(D)^x$ endowed with a $G$-linearization (such a linearization exists by assumption on $\pic(G)$, see \ref{RemFiniteIndexForgetGLin}). We have to verify that the canonical section associated to $D$ spans a $B$-stable line in $\cox^G(X)$. As $\Oo(X)^{*G}\simeq k^*$, every automorphism of $\Oo_X(D)^x$ preserving the $G$-linearization is given by the multiplication of sections by a non-zero constant. It follows that the canonical section associated with $D$ is, up to multiplication by a non-zero constant, the unique global section of this $G$-linearized divisorial sheaf whose divisor of zeroes is $D$. Hence, this section is a $B$-eigenvector.

Consider the maximal open subvariety $V\subset X$ on which the rational quotient $\pi:X\dashrightarrow \PP^1_k$ by $B$ defines a morphism. By \cite[II.7.3.6]{EGA}, the complement of $V$ in $X$ is of codimension $\geq2$. Hence, there exists up to isomorphism a unique divisorial sheaf $\Ff$ whose restriction to $V$ is isomorphic to $\pi^*_{|V}\Oo_{\PP^1_k}(1)$. By  hypothesis on the Picard group of $G$, we endow the sheaf $\Ff$ with a $G$-linearization, and consider the rigidified $G$-linearized divisorial sheaf $\Ff^x$ associated to $\Ff$. After choosing homogeneous coordinates $x,y$ on $\PP^1_k$, the sections $\pi^*_{|V}(x)$ and $\pi^*_{|V}(y)$ defining the morphism $\pi_{|V}$ uniquely extend to global sections $a,b\in\Ff^x(X)$ which are $B$-semi-invariant of the same weight. The pullback of Weil divisors on $\PP^1_k$ is well-defined, this is just the usual pullback of Cartier divisors
\begin{center}
$\pi^*:\wdiv(\PP_k^1)\rightarrow \wdiv(X), p=[\alpha:\beta]\mapsto\divi_X(\beta a-\alpha b)$.
\end{center}
As in the preceding section, consider a $P(X)$-stable open affine subvariety $X_0$ given by the local structure theorem. The complement of $X_0$ in $X$ is a finite union of prime $B$-stable divisors. On the other hand, $\pi_{|X_0}$ is a geometric quotient by $B$ whose schematic fibers are subvarieties of codimension one in $X_0$. We deduce that schematic fibers of $\pi_{|V}$ in general position define $B$-stable prime divisors of $X$ by taking closure. These divisors are nothing but the prime divisors in the image of $\pi^*$.

\begin{defn}\cite[3]{Ponomareva}
The prime divisors in the image of $\pi^*$ are the \textit{parametric divisors}. The finite set of $B$-stable prime divisors that are not parametric is the set of \textit{exceptional divisors}. For an exceptional divisor $E$, the image of $\pi_{|E}$ is either dense or a point in $\PP^1_k$. In this last case, the image point is called \textit{exceptional}.
\end{defn}

\begin{nota}
Let $(x_i)_{i\in I}$ be the finite family of exceptional points with respective homogeneous coordinates $[\alpha_i:\beta_i]$, $i\in I$. For all $i\in I$, let $(E^{x_i}_j)_j$ be the finite family of exceptional divisors sent to $x_i$ by $\pi$. Also, let $(E_k)_k$ be the finite family of exceptional divisors dominating $\PP^1_k$. Finally, equip $\Oo(E^{x_i}_j)$ (resp. $\Oo(E_k)$) with arbitrary $G$-linearizations, let $s_{ij}$ (resp. $s_k$) denote the canonical sections associated with $E^{x_i}_j$ (resp. $E_k$) in the $G$-linearized sheaves $\Oo(E^{x_i}_j)^x$ (resp. $\Oo(E_k)^x$), and let $n_{ij}$ denote the (integral) coefficient of $E^{x_i}_j$ in the divisor $\pi^*(x_i)$. 
\end{nota}

\begin{thm}\label{ThmUinvCoxG}
Suppose that $\Oo(X)^*\simeq k^*$. Then $\cox^G(X)^U$ is generated as a $k[\hat{G}]$-algebra by the elements $a,b,(s_{ij})_{ij}, (s_k)_k$. The ideal of relations contains the following identities
\begin{center}
$\beta_i a-\alpha_i b=\lambda_i\prod_j s_{ij}^{n_{ij}},$
\end{center}
where for all $i\in I$, $\lambda_i$ is a certain character of $G$. If moreover, the condition
\begin{itemize}[label=$(\star)$]
\item the common degree of the sections $a,b$ is $\ZZ$-torsion free in $\clg^G(X)\times\hat{T}$
\end{itemize}
is satisfied, then the above relations generate the whole ideal.
\end{thm}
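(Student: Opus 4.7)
The key observation for generation and for the stated relations is a single divisor-theoretic device. Given a $\clg^G(X)\times\hat T$-homogeneous element $f\in\cox^G(X)^U$, viewed as a $B$-semi-invariant global section of a $G$-linearized divisorial sheaf, decompose $\divi(f)=\sum_{i,j}m_{ij}E_{ij}^{x_i}+\sum_km'_kE_k+\sum_\ell\pi^*(p_\ell)$ with $p_\ell=[\alpha_\ell:\beta_\ell]\in\PP^1_k$ non-exceptional (the exceptional points' preimages lie entirely in the exceptional part by definition). Setting $g=\prod s_{ij}^{m_{ij}}\prod s_k^{m'_k}$ and $h=\prod_\ell(\beta_\ell a-\alpha_\ell b)$, one has $\divi(gh)=\divi(f)$; since $a,b$ share a common $T$-weight and are $U$-invariant, $h$ is $U$-invariant, so $f/(gh)$ is a homogeneous $U$-invariant unit of $\cox^G(X)$. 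By \ref{PropUnitsCoxEq}, and noting that characters of $G$ are automatically $U$-invariant, this unit belongs to $k^*\cdot\hat G$. This proves generation by $a,b,(s_{ij}),(s_k)$ as a $k[\hat G]$-algebra. Specializing the same analysis to $\beta_ia-\alpha_ib$, whose divisor of zeros equals $\pi^*(x_i)=\sum_jn_{ij}E_{ij}^{x_i}$ by definition of the exceptional point $x_i$, yields the stated relation after absorbing a scalar into $\lambda_i$.

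For completeness under $(\star)$, let $R=k[\hat G][A,B,(S_{ij}),(S_k)]$ with its $\clg^G(X)\times\hat T$-grading and let $J\subset R$ be the homogeneous ideal generated by $R_i:=\beta_iA-\alpha_iB-\lambda_i\prod_jS_{ij}^{n_{ij}}$, so that $\bar\phi\colon R/J\to\cox^G(X)^U$ is a graded surjection. Iteratively using each $R_i$ in the direction $\lambda_i\prod_jS_{ij}^{n_{ij}}\equiv\beta_iA-\alpha_iB\pmod J$ reduces every element modulo $J$ to a $k[\hat G]$-linear combination of \emph{normal-form} monomials $A^pB^q\prod_{i,j}S_{ij}^{a_{ij}}\prod_kS_k^{a_k}$, characterized by the property that for every $i\in I$ the tuple $(a_{ij})_j$ is not componentwise $\geq(n_{ij})_j$. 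Injectivity of $\bar\phi$ thus reduces to the $k[\hat G]$-linear independence of normal-form monomials in $\cox^G(X)^U$.

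The role of $(\star)$, and the crux of the proof, lies in this last step. Fix a degree $(D,\chi)\in\clg^G(X)\times\hat T$: the contribution of a normal-form monomial to $(D,\chi)$ decomposes as its fixed $S$-part plus $n\cdot D_\pi$, where $D_\pi$ is the common degree of $a,b$ and $n=p+q$. As $D_\pi$ is $\ZZ$-torsion-free by $(\star)$, the integer $n$ is uniquely determined by $(D,\chi)$ once the $S$-exponents are fixed. For fixed $S$-exponents and fixed $n$, the $n+1$ monomials $A^pB^{n-p}$ map to $a^pb^{n-p}$ times a fixed product in the $s_{ij},s_k$, and these are $k$-linearly independent because $a,b$ are algebraically independent over $k$ (the rational quotient $\pi$ is dominant with fibers of codimension one, cf.\ Corollary \ref{ThmLocalStruct2}). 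Normal-form monomials with distinct $S$-profiles produce sections whose exceptional divisorial contributions on $X$ differ, ruling out any cross-profile cancellation. The main obstacle is precisely this independence argument: $(\star)$ is exactly the hypothesis needed to prevent torsion-induced collapses in the grading, which would otherwise allow different $(A,B)$-degrees to land in the same graded piece and force extra relations beyond the stated $R_i$.
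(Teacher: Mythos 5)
Your generation argument and the derivation of the relations $\beta_i a-\alpha_i b=\lambda_i\prod_j s_{ij}^{n_{ij}}$ are correct and follow the paper's route exactly: decompose the divisor of a homogeneous $B$-semi-invariant section into exceptional and parametric parts, match it with a monomial in $a,b,(s_{ij}),(s_k)$, and absorb the discrepancy into a homogeneous unit via \ref{PropUnitsCoxEq}. Your completeness argument is packaged differently (a rewriting system producing normal-form monomials, then linear independence of their images) whereas the paper directly reduces an arbitrary relation term by term; the two are equivalent in substance, and your reduction to linear independence of normal forms is fine.

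The gap is in the independence step, which is the entire content of completeness. Your justification --- ``normal-form monomials with distinct $S$-profiles produce sections whose exceptional divisorial contributions differ, ruling out any cross-profile cancellation'' --- does not work, for two reasons. First, homogeneous elements of the same degree with pairwise distinct divisors can perfectly well be linearly dependent (think of $x$, $y$, $x+y$), so distinct divisorial contributions rule out nothing. Second, and more tellingly, your argument never uses the normal-form condition $(a_{ij})_j\not\geq(n_{ij})_j$; as written it would equally ``prove'' independence of $\prod_j s_{ij}^{n_{ij}}$ and $\beta_i a-\alpha_i b$, which are dependent by the very relations you are quotienting by. (The auxiliary claim is also false as stated when $[0:1]$ or $[1:0]$ is an exceptional point, since then $\divi(a)$ or $\divi(b)$ itself contributes to the exceptional part.) The missing idea is the paper's divisor comparison: if $\lambda m$ and $\mu m'$ are two such terms in the same $\clg^G(X)\times\hat T$-graded piece, their ratio lies in $k(X)^B=k(a/b)$, so the difference of their divisors is a $\ZZ$-linear combination of fibers $\pi^*(p)$. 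Since the $E_k$ dominate $\PP^1_k$ they occur in no fiber, forcing equal $(a_k)$-exponents; and over each exceptional point the fiber is $\sum_j n_{ij}E^{x_i}_j$, forcing $a_{ij}-a'_{ij}=d_i n_{ij}$ for some $d_i\in\ZZ$. It is only here that the normal-form condition enters: $d_i>0$ would give $a_{ij}\geq n_{ij}$ for all $j$, contradicting normality of $m$ (and $d_i<0$ that of $m'$), so $d_i=0$ and the two terms have the same $S$-profile. Only after this reduction does your final step (independence of $a^pb^{n-p}$ for fixed $n$, using $(\star)$ and the algebraic independence of $a,b$) apply. With that lemma inserted your proof closes, and becomes a normal-form reformulation of the paper's argument.
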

\begin{proof}
We first prove that the elements $a,b,(s_{ij})_{ij},(s_k)_k$ generate $\cox^G(X)^U$ as a $k[\hat{G}]$-algebra. The divisor of zeroes of a $\clg^G(X)\times\hat{T}$-homogeneous element $s$ can be written as
\begin{center}
$\divi_X(s)=\sum_{ij}p_{ij}E^{x_i}_j+\sum_kp_kE_k+\sum_l q_lD_l$,
\end{center}
where the $D_l$ are $B$-stable parametric divisors. We infer that $s$ is associated, by mean of an homogeneous unit, with an element
\begin{center}
$\prod_{ij}s_{ij}^{p_{ij}}\prod_ks_k^{p_k}\prod_l(v_l a- u_l b)^{q_l}$,
\end{center}
where $[u_l:v_l]$ is a homogeneous coordinate of the point $\pi(D_l)\in\PP^1_k$. By virtue of \ref{PropUnitsCoxEq}, it follows that the elements  $a,b,(s_{ij})_{ij},(s_k)_k$ generate $\cox^G(X)^U$ as $k[\hat{G}]$-algebra. 

By the same arguments, we see that the relations of the statement  hold. Now suppose that the common degree of $a,b$ is $\ZZ$-torsion free. We show that these relations generate the whole ideal. For this, consider an arbitrary relation
\begin{center}
$\lambda\prod_{ij}s_{ij}^{p_{ij}}\prod_ks_k^{p_k}f(a,b)+\mu\prod_{ij}s_{ij}^{q_{ij}}\prod_ks_k^{q_k}g(a,b)+...=0$,
\end{center}
where $f,g...$ are $\ZZ$-homogeneous polynomials in two indeterminates (by assumption on torsion-freeness of the degree of $a,b$), and $\lambda, \mu,...$ are characters of $G$. We claim that by substituting relations from the statement, one can transform a sum
\begin{center}
$\lambda\prod_{ij}s_{ij}^{p_{ij}}\prod_ks_k^{p_k}f(a,b)+\mu\prod_{ij}s_{ij}^{q_{ij}}\prod_ks_k^{q_k}g(a,b)$,
\end{center}
in an expression of the form $\nu\prod_{ij}s_{ij}^{r_{ij}}\prod_ks_k^{r_k}h(a,b)$, where $h$ is a $\ZZ$-homogeneous polynomial in two indeterminates, and $\nu\in \hat{G}$. Iterating this process, we obtain the trivial relation in a finite number of steps which finishes the proof. We set $A=\lambda\prod_{ij}s_{ij}^{p_{ij}}\prod_ks_k^{p_k}f(a,b)$, and $B=\mu\prod_{ij}s_{ij}^{q_{ij}}\prod_ks_k^{q_k}g(a,b)$, then $A/B\in k(X)^B=k(a/b)$. Taking the divisor of zeroes of $A/B$, we obtain an equation of the form
\begin{center}
$\sum_{ij}p_{ij}E^{x_i}_j+\sum_kp_kE_k+\divi_X(f_A(a,b))=\sum_{ij}q_{ij}E^{x_i}_j+\sum_kq_kE_k+\divi_X(f_B(a,b))$
\end{center}
where $f_A$ and $f_B$ are $\ZZ$-homogeneous polynomials in two indeterminates. We see right away that $p_k=q_k$ holds necessarily for all $k$, so that we can put in factor $\prod_ks_k^{p_k}$ in $A+B$. On the other hand, for each exceptional point $x_i$ we get an equality
\begin{center}
$\sum_j(p_{ij}-q_{ij})E^{x_i}_j=\divi_X(\beta_i a- \alpha_i b)^{d_i}$, where $[\alpha_i:\beta_i]=x_i$.
\end{center}
We deduce that $p_{ij}-q_{ij}=d_i n_{ij}$ for all $i,j$. We can suppose  that $d_i\geq 0$, so that $p_{ij}\geq q_{ij}$ for all $j$. Hence by using relations of the statement we write
\begin{center}
$\prod_j s_{ij}^{p_{ij}}=\prod_js_{ij}^{q_{ij}}\prod_js_{ij}^{p_{ij}-q_{ij}}=\lambda_i^{-d_i}(\beta_i a-\alpha_i b)^{d_i}\prod_js_{ij}^{q_{ij}}$.
\end{center}
This allows us to factor $\prod_js_{ij}^{q_{ij}}$ in $A+B$. Doing this for each exceptional point, we see that the remaining factor is an homogeneous expression of the form
\begin{center}
$\lambda'f'(a,b)+\mu'g'(a,b)$,
\end{center}
where $\lambda',\mu'\in \hat{G}$ and $f',g'$ are $\ZZ$-homogeneous polynomials in two indeterminates. We necessarily have $\lambda'=\mu'$, and $\deg f'=\deg g'$, whence the sought reduction.
\end{proof}

\begin{prop}\label{RemUinvCoxG}
The condition $(\star)$ is satisfied in the following situations
\begin{itemize}
\item $X$ is smooth and complete,
\item $G=\TT$ is a torus and $\Oo(X)^{\TT}\simeq k$,
\item $X$ is almost homogeneous.
\end{itemize}
\end{prop}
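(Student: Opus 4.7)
The plan is to give one uniform argument handling all three situations. Write $d = ([\Ff], \chi) \in \clg^G(X) \times \hat{T}$ for the common degree of $a$ and $b$, and suppose, toward a contradiction, that $n d = 0$ for some integer $n \geq 1$. Since $\hat{T}$ is torsion-free as the character group of a torus, the equality $n\chi = 0$ forces $\chi = 0$, so $a$ and $b$ are $B$-invariant global sections of $\Ff$.

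The core representation-theoretic step, which I expect to be the main technical point, is to upgrade $B$-invariance to $G$-invariance. For this I will use the standard fact that in a rational module $W$ over a connected reductive group $G$, the $B$-fixed subspace coincides with the trivial $G$-isotypic component: in any simple module $V_\lambda$ with highest weight $\lambda$, a vector that is simultaneously $U$-invariant and of $T$-weight zero forces $\lambda = 0$. Applying this to $W = \Ff(X)$, which is a rational $G$-module because $\Ff$ is $G$-linearized, I conclude that $a, b \in \Ff(X)^G$.

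From $n[\Ff] = 0$ in $\clg^G(X)$, I fix a $G$-equivariant isomorphism $\Ff^{\star n} \xrightarrow{\sim} \Oo_X$, which sends the nonzero $G$-invariant sections $a^n, b^n$ to nonzero elements of $\Oo(X)^G$. It then remains to check $\Oo(X)^G = k$ in each of the three cases: completeness of $X$ yields $\Oo(X) = k$ in Case 1; the hypothesis $\Oo(X)^\TT = k$ is exactly Case 2; in Case 3, the dense orbit $X_0 \simeq G/H$ gives $k(X)^G = k(G/H)^G = k$, hence $\Oo(X)^G \subset k$.

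Consequently $a^n, b^n \in k^*$, so $(a/b)^n$ is a nonzero scalar; since $k$ is algebraically closed, this forces $a/b \in k$. But $a/b$ is the pullback via $\pi$ of a non-constant rational function on $\PP^1_k$, giving the required contradiction. The rest of the argument is a straightforward chain of implications; the only nontrivial input beyond the hypotheses is the $B$-invariant $=$ $G$-invariant dictionary used in the second step.
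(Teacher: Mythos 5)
Your proof is correct, but it takes a genuinely different and more uniform route than the paper. The paper treats the three cases separately: for $X$ smooth and complete it invokes the freeness of $\pic^G(X)$ (Proposition \ref{Prop_SmoothCompleteRational_FreePic}) and reduces to the non-vanishing of the common class of $a,b$; for a torus it restricts to a chart $X_0\simeq\TT\times Z$ from the local structure theorem and reads off a contradiction with the linear independence of $a,b$; for $X$ almost homogeneous it deduces $a^n,b^n\in\Oo(X)^G$, uses that $\cox^G(X)^G$ is integrally closed in $\cox^G(X)$ to get $a,b\in\cox^G(X)^G$, and contradicts the one-dimensionality of the graded pieces of that subalgebra (Proposition \ref{PropCoxGFreeModuleCoxG}). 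You instead run a single argument: torsion of the degree kills the $\hat{T}$-component, the standard highest-weight fact (a $U$-invariant vector of $T$-weight zero in a rational module over a connected reductive group lies in the trivial isotypic component) upgrades $a,b$ to $G$-invariant sections, and the three hypotheses are used only to secure $\Oo(X)^G\simeq k$, after which $(a/b)^n\in k^*$ forces $a/b\in k$, contradicting the transcendence of $a/b$ over $k$ (recall $k(X)^B=k(a/b)$ has transcendence degree one). Each step checks out: $\Ff(X)$ is indeed a rational $G$-module since $\Ff$ is $G$-linearized, the equivariant trivialization of $\Ff^{\star n}$ sends $G$-invariants to $G$-invariants, and $\Oo(X)^G=k$ holds in each of the three situations for the reasons you give. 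What your approach buys is a cleaner, case-free proof which in fact establishes the stronger statement that $(\star)$ holds whenever $\Oo(X)^G\simeq k$; what the paper's approach buys is independence from the semisimplicity of rational $G$-modules, staying instead within the algebraic machinery ($\Gamma$-gradings, integral closure, local structure) already developed in the text.
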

\begin{proof}
In the first situation, $\pic^G(X)$ is free of finite rank (\ref{Prop_SmoothCompleteRational_FreePic}). Consequently, it suffices to check that the common $\pic^G(X)$-degree of $a,b$ is non-trivial. This is verified because $a,b$ are linearly independent and $\Oo(X)\simeq k$. 

In the second situation, suppose that there exists an integer $n\geq 1$ that kills the common degree of $a,b$. Then, consider a non-empty $\TT$-stable open subvariety $X_0$ equivariantly isomorphic to the product $\TT\times Z$, where $\TT$ acts on itself by left multiplication, and $Z$ is an open subvariety of $\PP^1_k$ on which $\TT$ acts trivially. Restricting the homogeneous sections $a,b$ to $X_0$, we have
$$
\begin{cases}
a,b\in\bigoplus_{\clg^\TT(X)}\Oo(Z)[\hat{\TT}]\\
a^n,b^n\in k, \textrm{ of } \clg^\TT(X)\textrm{-degree zero}\\
\end{cases}
$$
Hence, we necessarily have $a,b\in k$, in a certain $\clg^\TT(X)$-homogeneous component. This is a contradiction as these two sections are linearly independent.

In the third situation, suppose again that there exists an integer $n\geq 1$ that kills the common degree of $a,b$. Then we obtain $a^n, b^n\in\Oo(X)^B$, hence $a^n, b^n\in\Oo(X)^G$. As $G$ is connected, $\cox^G(X)^G$ is integrally closed in $\cox^G(X)$, whence $a,b\in\cox^G(X)^G$. But the homogeneous components of this last algebra are one dimensional $k$-vector spaces (\ref{PropCoxGFreeModuleCoxG}), and $a,b$ are linearly independent, a contradiction. 
\end{proof}

\begin{rem}
In \cite[Thm 4.4.1.6]{coxrings}, Hausen and Süss give a presentation by generators and relations of the Cox ring of a normal rational $\TT$-variety of complexity one satisfying $\Oo(X)\simeq k$. In view of the preceding proposition, Theorem \ref{ThmUinvCoxG} combined with \ref{CorCoxGToCoxIII} yields a similar presentation for the slightly more general case of a normal rational $\TT$-variety of complexity one satisfying $\Oo(X)^{\TT}\simeq k$.
\end{rem}

\subsection{$\cox(X)^U$ as the Cox ring of a $T$-variety of complexity one}
\label{Sex_CoxXU_II}
Let $(X,x)$ a pointed normal rational $G$-variety of complexity one such that $\Oo(X)^*\simeq k^*$. Suppose moreover that $G$ has trivial Picard group, and that the condition $(\star)$ of \ref{ThmUinvCoxG} is satisfied, e.g. $X$ is almost homogeneous (\ref{RemUinvCoxG}). 

In \cite[3.4.2]{coxrings}, are studied the finitely generated integral normal factorially graded $k$-algebras such that the grading is effective, pointed, and of complexity one. Geometrically, these are the coordinate algebras of normal affine varieties $Z$ endowed with an effective action of a diagonalizable group $\Gamma$ such that $Z$ is $\Gamma$-factorial, of complexity one under the action of $\Gamma^{\circ}$, and the invariant regular functions as well as the invertible homogeneous regular functions on $Z$ are constant.  By \cite[4.4.2.3]{coxrings}, an algebra satisfying these conditions is up to graded isomorphism obtained by the 

\begin{cons}\label{ConsRingFactoC1}\cite[3.4.2.1]{coxrings}
Fix integers $r\in\ZZ_{\geq 1}$, $m\in\ZZ_{\geq 0}$, a sequence of integers $n_0,...,n_r\in\ZZ_{\geq 1}$, and let $n:=n_0+...+n_r$. Consider as inputs

\begin{itemize}[label=$\bullet$]
\item A matrix $A:=[a_0,...,a_r]$ with pairwise linearly independent column vectors $a_0,...,a_r\in k^2$.
\item An $r\times (n+m)$ block matrix  $P_0=[L\,\,\, 0_{r,m}]$, where $L$ is an  $r\times n$ matrix  built from the $n_i$-tuples $l_i:=(l_{i1},...,l_{in_i})\in\ZZ^{n_i}_{\geq 1}$, $0\leq i\leq r$, called \textit{exponent vectors}, as below
\[
L = \begin{bmatrix} 
    -l_0& l_1 & \dots & 0 \\
    \vdots & \vdots & \ddots & \vdots \\
    -l_0 &  0  &  \dots   & l_r 
    \end{bmatrix}
\]
\end{itemize}
Now consider the polynomial algebra $k[T_{ij},S_k]$, where $0\leq i\leq r$, $1\leq j\leq n_i$, and $1\leq k\leq m$. For every $0\leq i\leq r$, define a monomial
\begin{center}
$T_i^{l_i}:=T_{i1}^{l_{i1}}...T_{in_i}^{l_{in_i}}$,
\end{center}
whence the name "exponent vector".
Denote $\mathscr{I}$ the set of triples $(i_1,i_2,i_3)$ with $0\leq i_1< i_2< i_3\leq r$, and for all $I\in\mathscr{I}$, consider the trinomial
\[g_I:=\det \begin{bmatrix} 
    T_{i_1}^{l_{i_1}}& T_{i_2}^{l_{i_2}} & T_{i_3}^{l_{i_3}} \\
    a_{i_1} &  a_{i_2}  &  a_{i_3} 
    \end{bmatrix}.
\]
We introduce a grading on $k[T_{ij},S_k]$ by the abelian group $K_0:=\ZZ^{n+m}/\Im(^tP_0)$, where $^tP_0$ is the transpose of  $P_0$. Let $Q_0:\ZZ^{n+m}\rightarrow K_0$ be the projection, and set
\begin{center}
$\deg T_{ij}:=Q_0(e_{ij})$, and $\deg S_k:=Q_0(e_k)$,
\end{center}
where $(e_{ij}, e_k)$ is the standard basis of $\ZZ^{n+m}$. Finally consider the $K_0$-graded $k$-algebra
\begin{center}
$R(A,P_0):=k[T_{ij},S_k]/(g_I)_{I\in\mathscr{I}}$.
\end{center}
\end{cons}

\begin{prop}\label{PropR_A_P_0}
The $k$-algebra $\cox(X)^U$ is isomorphic to an algebra $R(A,P_0)$ constructed as in \ref{ConsRingFactoC1}.
\end{prop}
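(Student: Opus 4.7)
The plan is to invoke Theorem \ref{ThmUinvCoxG} for a presentation of $\cox^G(X)^U$, pass to $\cox(X)^U$ via Proposition \ref{prop_PresCoxU}, and match the resulting relations with those of $R(A,P_0)$ in Construction \ref{ConsRingFactoC1}.

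By $(\star)$ and Theorem \ref{ThmUinvCoxG}, $\cox^G(X)^U$ is generated as a $k[\hat{G}]$-algebra by $a, b, (s_{ij})_{i,j}, (s_k)_k$, with the entire ideal of relations generated by $\beta_i a - \alpha_i b = \lambda_i \prod_j s_{ij}^{n_{ij}}$, one for each exceptional point $x_i = [\alpha_i : \beta_i]$. Passing to $\cox(X)^U$ via Proposition \ref{prop_PresCoxU} kills $\hat{G}$, so each $\lambda_i$ becomes a non-zero scalar $c_i \in k^*$. I absorb these scalars by rescaling each $s_{i1}$ by a suitable $n_{i1}$-th root of $c_i$ (available since $k$ is algebraically closed of characteristic zero), reducing the relations to the normalized form $\beta_i a - \alpha_i b = T_i^{l_i}$, where $T_{ij} := s_{ij}$, $l_{ij} := n_{ij}$, and $S_k := s_k$.

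Next I identify the trinomials $g_I$. If there are at least two exceptional points --- and if not, I enlarge the presentation by adjoining tautological relations $\beta a - \alpha b = t$ for a generic $[\alpha:\beta] \in \PP^1_k$, where $t$ is the canonical section of the parametric divisor $\pi^*([\alpha:\beta])$ --- the first two normalized relations allow me to express $a, b$ as $k$-linear combinations of $T_0^{l_0}$ and $T_1^{l_1}$, using that $(\alpha_0, \beta_0)$ and $(\alpha_1, \beta_1)$ are linearly independent. The consistency condition for any three of the normalized relations indexed by $I = (i_1, i_2, i_3)$, viewed as a linear system in the two unknowns $(a, b)$, is precisely the vanishing of the determinant
\begin{center}
$g_I = \det \begin{bmatrix} T_{i_1}^{l_{i_1}} & T_{i_2}^{l_{i_2}} & T_{i_3}^{l_{i_3}} \\ a_{i_1} & a_{i_2} & a_{i_3} \end{bmatrix}$,
\end{center}
with $a_i := (\alpha_i, \beta_i)^t \in k^2$; these columns are pairwise linearly independent since the exceptional points are pairwise distinct in $\PP^1_k$, matching the requirement of Construction \ref{ConsRingFactoC1}.

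The main step is to show that, after elimination of $a, b$, the trinomials $g_I$ (equivalently, only the $g_{(0,1,i)}$ for $i \geq 2$, since after substitution any three of the $T_i^{l_i}$ lie in the plane spanned by $T_0^{l_0}, T_1^{l_1}$ and are therefore linearly dependent) generate the full ideal of relations among the $T_{ij}, S_k$ in $\cox(X)^U$. This relies on the "full ideal" assertion of Theorem \ref{ThmUinvCoxG}: since the normalized $\beta_i a - \alpha_i b - T_i^{l_i}$ generate the entire relation ideal in $k[a, b, T_{ij}, S_k]$, solving the first two of them for $a, b$ and substituting in the remaining ones realizes $\cox(X)^U$ as $k[T_{ij}, S_k]/(g_{(0,1,i)})_{i \geq 2} = k[T_{ij}, S_k]/(g_I)_{I \in \mathscr{I}}$. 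This yields the sought isomorphism $\cox(X)^U \simeq R(A, P_0)$, with $A = [a_0, \ldots, a_r]$ and $P_0$ built from the exponent vectors $l_i$ as prescribed.
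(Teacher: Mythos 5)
Your proposal is correct and follows essentially the same route as the paper: both extract the presentation of $\cox(X)^U$ from Theorem \ref{ThmUinvCoxG} together with Proposition \ref{prop_PresCoxU}, eliminate $a,b$ using the relations attached to two points of $\PP^1_k$ (artificially added with exponent vector $(1)$ in the case of at most two exceptional points), and identify the remaining relations with the trinomials $g_{(0,1,i)}$, which generate $(g_I)_{I\in\mathscr{I}}$. The only cosmetic differences are that the paper normalizes the first two points to $[1:0]$ and $[0:-1]$ so that $a$ and $b$ are directly monomials in the $s_{ij}$, and that it cites the identity $g_{(i,j,l)}=\alpha_{jl}g_{(i,j,k)}-\alpha_{ij}g_{(j,k,l)}$ from \cite[3.4.2.4]{coxrings} where you argue by linear dependence of the columns.
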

\begin{proof}
Consider a presentation by generators and relations of $\cox(X)^U$ as provided by \ref{ThmUinvCoxG} and \ref{prop_PresCoxU}. Suppose first that $X$ admits at least two exceptional points. Consider the algebra $R(A,P_0)$ built from the following input data
\begin{itemize}[label=$\bullet$]
\item $m$ is the number of exceptional divisors in $X$ dominating $\PP^1_k$,
\item $a_0,...,a_r$ are homogemeous coordinates on $\PP^1_k$ of the exceptional points, and we can suppose that these coordinates are of the form $[1:0], [0:-1],[\alpha_2:\beta_2],...,[\alpha_r:\beta_r]$,
\item the matrix $A$ is \[
\begin{bmatrix} 
    1 & 0 & \alpha_2 &\dots & \alpha_r \\
    0 & -1 & \beta_2 & \dots & \beta_r 
    \end{bmatrix},
\]
\item for $0\leq i\leq r$, set 
\begin{center}
$l_i:=(n_{i,j})_j$ (notation of \ref{ThmUinvCoxG}).
\end{center}
\end{itemize}
On the other hand, using the relations $a=\prod_j s_{0j}^{n_{0j}}$, and $b=\prod_j s_{1j}^{n_{1j}}$ given by \ref{ThmUinvCoxG}, we can remove $a,b$ from our generating set of $\cox(X)^U$. We end up with the same number of generators as for the $k$-algebras $R(A,P_0)$. The ideal of relations is generated by the following ones
\begin{center}
$\alpha_i \prod_j s_{0j}^{n_{0,j}}-\beta_i \prod_j s_{1j}^{n_{1,j}}=\prod_j s_{ij}^{n_{i,j}}$, $i=2,...,r$.
\end{center}
With regard to the algebra $R(A,P_0)$, we have the identities
\begin{center}
$g_{(i,j,l)}=\alpha_ {jl}g_{(i,j,k)}-\alpha_{ij}g_{(j,k,l)}$\quad (\cite[3.4.2.4]{coxrings}),
\end{center}
where $0\leq i<j<k<l\leq r$, and $\alpha_{ij}:=\det(a_i,a_j)$. From this, we infer that the trinomials $g_{(0,1,i)}$, $2\leq i \leq r$ generate the ideal $((g_I)_{I\in\mathscr{I}})$. But these trinomials are, up to the change of variables $T_{ij}\mapsto s_ {ij}$, the trinomials appearing in relations listed above for $\cox(X)^U$. This finishes the proof in this case.

If $X$ admits at most two exceptional points, then $\cox(X)^U$ is a polynomial algebra. In this case we use almost the same input data for $R(A,P_0)$, except that we consider two column vectors $a_0,a_1\in k^2$ which are homogeneous coordinates of two distinct points in $\PP^1_k$ that contain all the exceptional points, and for the $a_i$ which are not exceptional, we set $l_i:=(1)$.
\end{proof}

\begin{cor}
The spectrum of $\cox(X)^U$ is a complete intersection.
\end{cor}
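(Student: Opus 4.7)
The plan is to combine Proposition \ref{PropR_A_P_0} with the structural information already collected about the algebras $R(A,P_0)$. By Proposition \ref{PropR_A_P_0}, $\cox(X)^U \simeq R(A,P_0)$ for suitable input data, so it suffices to prove that $\spec R(A,P_0)$ is a complete intersection inside the ambient affine space $\spec k[T_{ij},S_k] \simeq \AAA^{n+m}_k$.

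The first step is to count defining equations. As was observed in the proof of Proposition \ref{PropR_A_P_0}, the identities
\[
g_{(i,j,l)} = \alpha_{jl}\, g_{(i,j,k)} - \alpha_{ij}\, g_{(j,k,l)}, \qquad 0 \leq i<j<k<l \leq r,
\]
show that the ideal $((g_I)_{I\in\mathscr{I}})$ is in fact generated by the $r-1$ trinomials $g_{(0,1,i)}$ for $2 \leq i \leq r$. Hence $R(A,P_0)$ is cut out by at most $r-1$ equations in a polynomial ring in $n+m$ variables.

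The second step is to check that this bound is sharp, i.e.\ that the $r-1$ trinomials form a regular sequence. This is equivalent to the dimension equality $\dim R(A,P_0) = n+m-(r-1)$. The algebra $R(A,P_0)$ is already known to be an integral normal domain (being isomorphic to $\cox(X)^U$, cf.\ \ref{prop_PresCoxU}), so the dimension can be obtained from the general structure theory of Hausen and Herppich: by \cite[3.4.2--3.4.3]{coxrings}, $\spec R(A,P_0)$ is the total coordinate space of an explicit rational $\TT$-variety of complexity one, whose dimension is read directly from the combinatorial data $(A,P_0)$ and gives exactly $n+m-r+1$.

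There is no real obstacle: the content of this corollary lies entirely in the identification $\cox(X)^U \simeq R(A,P_0)$ carried out in \ref{PropR_A_P_0}, and in the dimension count which is already available in the reference. The proof is therefore essentially one sentence invoking \ref{PropR_A_P_0} and \cite[3.4.2--3.4.3]{coxrings}.
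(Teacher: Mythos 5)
Your proposal is correct and follows essentially the same route as the paper: reduce to $R(A,P_0)$ via Proposition \ref{PropR_A_P_0} and then invoke the structure theory of these rings, where the paper cites \cite[3.4.2.3]{coxrings} directly for the fact that $\spec R(A,P_0)$ is a complete intersection. Your extra unpacking (reduction to the $r-1$ trinomials $g_{(0,1,i)}$ plus the dimension count $\dim R(A,P_0)=n+m-r+1$) is exactly the content of that cited result, so nothing genuinely new is needed or missing.
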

\begin{proof}
By \cite[3.4.2.3]{coxrings}, the spectrum of any algebra $R(A,P_0)$ constructed as in \ref{ConsRingFactoC1} is a complete intersection, whence the result.
\end{proof}

\begin{cor}
\label{Cor_TotalCoordSpaceC1_OnlyConstantInvertibleFunctions}
The total coordinate space $\tilde{X}$ has only constant invertible functions.
\end{cor}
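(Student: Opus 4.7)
The plan is to apply the exact sequence $(\ref{EqSEClGroupEquivariantOubli})$ to the pair $(U, \tilde{X})$. The total coordinate space $\tilde{X}$ is endowed with a natural $U$-action coming from the canonical isomorphism $\cox^U(X) \simeq \cox(X)$, which is available because $U$ has trivial character group and trivial Picard group (compare Remark \ref{Rem_CoxEqIsomCox_SemisimpleSimplyConnected}). Since $\tilde{X}$ is a normal variety and $U$ is connected, the first terms of that sequence read
\begin{center}
$0 \to \Oo(\tilde{X})^{*U} \to \Oo(\tilde{X})^{*} \to \hat{U}$,
\end{center}
and $\hat{U} = 0$ because $U$ is unipotent. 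Hence $\cox(X)^{*} = \cox(X)^{*U} = (\cox(X)^U)^{*}$; in other words, every unit of the Cox ring is automatically $U$-invariant.

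It then suffices to establish that $(\cox(X)^U)^{*} = k^{*}$. By Proposition \ref{PropR_A_P_0}, $\cox(X)^U$ is isomorphic to an algebra $R(A, P_0)$ of the form given by Construction \ref{ConsRingFactoC1}. Such algebras are known to have only constant invertible regular functions: they are $K_0$-graded, and by construction the grading is effective and pointed with degree-zero component $k$, which via a standard ordering argument on the grading group forces any unit to be homogeneous of trivial degree, and hence constant (see \cite[Section 3.4.2]{coxrings}). Combining the two steps yields $\cox(X)^{*} = k^{*}$, i.e., $\tilde{X}$ has only constant invertible regular functions.

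The crux of the argument lies in the previous proposition; once the identification $\cox(X)^U \simeq R(A, P_0)$ is established and the constant unit group of these algebras is granted, the exact sequence $(\ref{EqSEClGroupEquivariantOubli})$ applied to the $U$-variety $\tilde{X}$ bridges the gap by forcing every unit of $\cox(X)$ to lie in $\cox(X)^U$.
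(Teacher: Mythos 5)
Your proposal is correct and follows essentially the same route as the paper: the paper also argues that every unit of $\cox(X)$ is $U$-invariant (citing the weight character $\chi_f\in\hat{U}=0$, which is precisely the map $\chi$ in the first terms of the exact sequence (\ref{EqSEClGroupEquivariantOubli}) you invoke) and then concludes via Proposition \ref{PropR_A_P_0} and the fact that any $R(A,P_0)$ has only constant units. Your intermediate identification $\cox(X)^{*U}=(\cox(X)^U)^{*}$ is a harmless repackaging of the same argument.
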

\begin{proof}
An invertible function $f\in\cox(X)^*$ is $U$-invariant (\cite[4.1.6]{LinearizationGBrion}). Thus, it is constant because any algebra $R(A,P_0)$ has only constant invertible functions (\cite[3.4.2.3]{coxrings}).
\end{proof}

\begin{cor}\label{Cor_CoxXU_CoxRingTVarC1}
The $k$-algebra $\cox(X)^U$ is the Cox ring of a normal variety of complexity one under a torus action.
\end{cor}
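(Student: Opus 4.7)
The plan is a direct appeal to the classification of complexity-one Cox rings due to Hausen and Herppich. By Proposition \ref{PropR_A_P_0}, there is an isomorphism of $K_0$-graded $k$-algebras $\cox(X)^U \simeq R(A,P_0)$, where $R(A,P_0)$ is one of the algebras produced by Construction \ref{ConsRingFactoC1}. The theorem \cite[3.4.3]{coxrings} characterizes the Cox rings of normal rational varieties of complexity one under a torus action (up to graded isomorphism) as precisely the algebras $R(A,P)$ with admissible input data. So the whole task is to recognize $R(A,P_0)$ as belonging to the scope of this classification.

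To make the invocation rigorous, I would check each of the hypotheses in turn. Finite generation and the trinomial presentation are built into Construction \ref{ConsRingFactoC1}. Integrality and normality of $\cox(X)^U$ follow from \ref{Prop_CoxG_NormalVar} together with $\cox(X) \simeq \cox^U(X)$ and the fact that the subalgebra of $U$-invariants of a normal integral domain is again normal integral. The factoriality of the $K_0$-grading and the fact that the grading is effective and pointed of complexity one are verified for any $R(A,P_0)$ in \cite[3.4.2.3]{coxrings} (they depend only on the combinatorial shape of the matrix $P_0$ used in the construction). The property that invertible homogeneous elements are constant is granted either by the same reference, or, independently, by Corollary \ref{Cor_TotalCoordSpaceC1_OnlyConstantInvertibleFunctions}, since any unit of $\cox(X)^U$ is a unit of $\cox(X)$ and thus constant.

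With these properties in hand, \cite[3.4.3]{coxrings} produces a normal variety $Y$ endowed with an effective action of the torus $\Gamma_{K_0}^\circ$ of complexity one, whose Cox ring is precisely $R(A,P_0) \simeq \cox(X)^U$. The main obstacle is not conceptual but bookkeeping: matching the hypotheses of the Hausen--Herppich theorem to our concrete $R(A,P_0)$, which has already been essentially done when establishing Proposition \ref{PropR_A_P_0}. No additional idea beyond that proposition and the cited classification is required.
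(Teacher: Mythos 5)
Your overall strategy -- reduce to Proposition \ref{PropR_A_P_0} and then quote the Hausen--Herppich/ABHW machinery to realize $R(A,P_0)$ as a Cox ring -- is the same as the paper's, which simply cites the construction of \cite[Prop.~5.1]{HausenIteration} to produce a normal affine rational $T$-variety of complexity one with Cox ring $\cox(X)^U$. Your re-verification of normality, integrality, factoriality of the grading, etc.\ is redundant: those are exactly the hypotheses of the classification \cite[3.4.2]{coxrings} that is already invoked to prove Proposition \ref{PropR_A_P_0}, so they buy nothing new at this stage.

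The genuine gap is in the sentence ``\cite[3.4.3]{coxrings} produces a normal variety $Y$ endowed with an effective action of the torus $\Gamma_{K_0}^\circ$ of complexity one, whose Cox ring is precisely $R(A,P_0)$.'' Taken literally this is false. The varieties of \cite[3.4.3]{coxrings} are built from an \emph{enlarged} matrix $P$ obtained by appending $s\geq 1$ rows to $P_0$ (needed to make the columns pairwise distinct, primitive and generating -- $P_0$ itself has zero and non-primitive columns in general), and their class group is $K=\ZZ^{n+m}/\Im({}^tP)$, a quotient of $K_0$ of strictly smaller rank; the torus acting with complexity one is the $s$-dimensional torus dual to the added rows, not $\Gamma_{K_0}^\circ$. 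A dimension count rules out your version: since the $K_0$-grading of $R(A,P_0)$ is of complexity one, $\dim\spec R(A,P_0)=\rk K_0+1$, so a variety with class group $K_0$ and total coordinate space $\spec R(A,P_0)$ would be a curve, which cannot have class group of rank $\rk K_0\geq 2$. What the corollary asserts (and what is actually used later, e.g.\ to apply Braun's $1$-Gorenstein theorem to $\spec\cox(X)^U$) is that the \emph{ring} $R(A,P_0)$ is a Cox ring, its class-group grading being the coarsening of the $K_0$-grading by $K$, the remaining degrees being accounted for by the lifted torus action. So the missing step is precisely: exhibit an admissible extension $P$ of $P_0$ and check that $X(A,P)$ has Cox ring equal, as a ring, to $R(A,P_0)$. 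This is not mere bookkeeping; it is the content of \cite[Prop.~5.1]{HausenIteration}, and you should either cite that statement or carry out the choice of the $d$-block explicitly.
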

\begin{proof}
Proceeding as in \cite[Prop. 5.1]{HausenIteration}, we construct a normal affine rational variety of complexity one under a torus action, whose Cox ring is $\cox(X)^U$.
\end{proof}

\begin{rem}\label{Rem_Inputdata}
Following the proof of \ref{PropR_A_P_0}, the input data to be used in Construction \ref{ConsRingFactoC1} in order to obtain $\cox(X)^U$ can interpreted geometrically:
\begin{itemize}[label=$\bullet$]
\item $m$ is the number of exceptional divisors in $X$ dominating $\PP^1_k$,
\item $a_0,...,a_r$ are homogemeous coordinates on $\PP^1_k$ of the exceptional points $x_0,...,x_r$.
\item the exponent vectors are the vectors formed by the multiplicities of the exceptional divisors in the pullbacks $\pi^*(x_i)$, $i=0,...,r$.
\end{itemize}
\end{rem}

\begin{rem}\label{Rem_Platonic}
Consider the spectrum $\tilde{Y}$ of an algebra $R(A,P_0)$ viewed as the total coordinate space of a normal rational variety $Y$ of complexity one under a torus action. The geometries of $\tilde{Y}$ and $Y$ highly depend on the exponent vectors $l_i$, $i=0,...,r,$ involved in the equations. In \cite{HausenIteration}, the study of singularities on $Y$ leads to the following notion that will be used later: a ring $R(A,P_0)$ is a \textit{Platonic ring} either if $r\leq 1$, or if every tuple $(l_{0i_0},...,l_{ri_r})$ is \textit{Platonic}, i.e. after ordering it decreasingly, the first triple is one of the \textit{Platonic triples}
\begin{center}
$(5,3,2),(4,3,2),(3,3,2),(x,2,2),(x,y,1)$, $x\geq y\geq 1$,
\end{center}
and the remaining integers of the tuple equal one.
\end{rem}

\subsection{Log terminality in the total coordinate space}
\label{Sec_LogTermCoxRing}
Let $G$ be a connected reductive group. In this section, we characterize the log terminality of singularities in the total coordinate spaces of almost homogeneous $G$-varieties of complexity $\leq 1$. One key ingredient is a theorem of Braun (\cite[Thm 3.1]{LBraun}) saying that a total coordinate space $\tilde{X}$ is always $1$-Gorenstein . Thus, applying \cite[6.2.12, 6.2.15]{Ishii} yields the equivalence
\begin{center}
$\tilde{X}$ has log terminal singularities $\iff$ $\tilde{X}$ has rational singularities.
\end{center}

\begin{prop}\label{Prop_SphericalCoxLogTerm}
Let $X$ be a spherical $G$-variety with only constant invertible regular functions. Then, the total coordinate space $\tilde{X}$ has log terminal singularities.
\end{prop}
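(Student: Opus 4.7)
The plan is to reduce log terminality to rational singularities via Braun's theorem, and then to recognize $\tilde{X}$ itself as a spherical variety under a larger reductive group so as to invoke the classical theorem that spherical varieties have rational singularities.

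First, by \cite{LBraun}, $\tilde{X}$ is always $1$-Gorenstein. Since a $1$-Gorenstein normal variety has log terminal singularities if and only if it has rational singularities (\cite[6.2.12, 6.2.15]{Ishii}), it suffices to prove that $\tilde{X}$ has rational singularities.

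Next I would show that $\tilde{X}$ is a spherical variety under the connected reductive group $\tilde{G} := G \times \Gamma_{\clg(X)}^{\circ}$, where taking the identity component is only to preserve connectedness of Borels (the argument works verbatim for the full $G \times \Gamma_{\clg(X)}$). A Borel subgroup is $\tilde{B} := B \times \Gamma_{\clg(X)}^{\circ}$. By Section~\ref{Sec_ClassificationAlmostPrincipalBundles}, $\hat{X}$ is open in $\tilde{X}$ with complement of codimension $\geq 2$, and the structural morphism $q : \hat{X} \to X$ restricts to a $\Gamma_{\clg(X)}$-torsor over an open subvariety $X_0 \subset X$ whose complement has codimension $\geq 2$. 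Since $X$ is spherical, the open $B$-orbit meets $X_0$; picking a base point $x$ in the intersection, the preimage $q^{-1}(B \cdot x) \subset \hat{X}$ is $\tilde{B}$-stable and is a $\Gamma_{\clg(X)}$-torsor over the single $B$-orbit $B \cdot x$, hence consists of a single $\tilde{B}$-orbit. This produces an open $\tilde{B}$-orbit in $\hat{X}$, which by the codimension-$\geq 2$ assertion extends to an open $\tilde{B}$-orbit in $\tilde{X}$. Therefore $\tilde{X}$ is spherical under $\tilde{G}$.

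Finally, I would invoke the classical theorem that a normal spherical variety under a connected reductive group has rational singularities (this follows, for instance, from the existence of toroidal equivariant resolutions of spherical varieties together with Grauert--Riemenschneider vanishing in characteristic zero). Applied to $\tilde{X}$, together with the reduction in the first paragraph, this concludes the proof.

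The main (minor) obstacle is the sphericity step: one must ensure that $q^{-1}(B \cdot x)$ really is a single $\tilde{B}$-orbit rather than a union of them, and that the dense $B$-orbit meets the torsor locus $X_0$. The former is automatic from the torsor structure (a $\Gamma_{\clg(X)}$-torsor over a $B$-orbit is a single $B \times \Gamma_{\clg(X)}$-orbit by the very definition of a torsor), and the latter follows from openness and density of both subvarieties in $X$. Everything else is a direct application of results already in place.
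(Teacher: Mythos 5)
Your overall strategy is exactly the paper's: reduce log terminality to rational singularities via Braun's $1$-Gorenstein theorem and \cite[6.2.12, 6.2.15]{Ishii}, then recognize $\tilde{X}$ as a spherical variety and invoke rational singularities of spherical varieties. The only difference in presentation is that you prove sphericity of $\tilde{X}$ by hand (torsor over the open $B$-orbit), whereas the paper simply cites \cite[Thm 4.2.3.2]{coxrings} for the statement that $\tilde{X}$ is spherical under $G_1\times\Gamma_{\clg(X)}^\circ$.

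There is, however, one genuine gap in your sphericity step: you take for granted that $G$ (hence $B$) acts on $\hat{X}$ and $\tilde{X}$, lifting the action on $X$ and commuting with $\Gamma_{\clg(X)}$. This is precisely the lifting problem that the paper flags in its introduction as failing in general: the ordinary Cox sheaf is a priori only a $\Gamma_{\clg(X)}$-object, and equipping it with a compatible $G$-linearization requires, in general, replacing $G$ by a finite cover $G_1$ (e.g.\ one with trivial Picard group; this is the content of \cite[Thm 4.2.3.2]{coxrings}, and is why the paper's proof writes $G_1\times\Gamma_{\clg(X)}^\circ$ rather than $G\times\Gamma_{\clg(X)}^\circ$). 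Concretely, in the paper's own framework $\tilde{X}$ sits inside $\tilde{X}^{G}$ as the zero locus of the sections $1-\lambda_j.1$, and since the $\lambda_j.1$ are $G$-semi-invariant of nonzero weight this locus need not be $G$-stable. Without the lifted action, the set $q^{-1}(B\cdot x)$ is not $\tilde{B}$-stable and your orbit argument does not start. The gap is repairable by the standard device: pass to a finite cover $G_1\to G$ for which the lift exists; $G_1$ is still connected reductive, its Borel surjects onto $B$, so sphericity of $X$ under $G$ gives sphericity under $G_1$, and your torsor argument (which is otherwise correct, noting that the open $B$-orbit lies in $X_{\rm sm}$, over which $q$ is a genuine torsor) then goes through verbatim for $G_1\times\Gamma_{\clg(X)}^\circ$. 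With that correction your proof coincides with the paper's.
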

\begin{proof}
The total coordinate space $\tilde{X}$ is a spherical variety under a connected reductive group $G_1\times\Gamma_{\clg(X)}^\circ$, where $G_1$ is a finite cover of $G$ (see \cite[Thm 4.2.3.2]{coxrings}). As spherical varieties have rational singularities (\cite[Thm. 15.20]{TimashevBook}), this completes the proof.
\end{proof}

\begin{cor}
Let $X$ be a $\QQ$-factorial projective spherical variety. Then $X$ is of Fano type.
\end{cor}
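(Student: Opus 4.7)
The plan is to reduce the claim to a direct application of the Gongyo--Okawa--Sannai--Takagi criterion (\cite[Thm 1.1]{Gongyo}) mentioned in the introduction: a $\QQ$-factorial normal projective variety is of Fano type if and only if its Cox ring is finitely generated and has log terminal singularities. Thus the proof boils down to verifying the two hypotheses of this criterion for our $X$.

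First I would check that Proposition \ref{Prop_SphericalCoxLogTerm} applies. For this I need to know that $\Oo(X)^* \simeq k^*$. Since $X$ is projective and irreducible (being a variety), every global regular function is constant, so in particular $\Oo(X)^* \simeq k^*$. Therefore the proposition furnishes the log terminality of $\tilde{X}$.

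Next I need finite generation of $\cox(X)$. This is the classical result of Brion \cite{Brion2007} for spherical varieties; alternatively, it already falls out of the proof of Proposition \ref{Prop_SphericalCoxLogTerm}, since the total coordinate space is identified there with a spherical variety whose coordinate ring is therefore a finitely generated $k$-algebra. Either way, $\cox(X)$ is finitely generated.

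Having both finite generation and log terminality of the total coordinate space, and with $X$ being $\QQ$-factorial projective by hypothesis, the Gongyo--Okawa--Sannai--Takagi theorem yields that $X$ is of Fano type. No step here is really an obstacle: the substance is entirely absorbed into Proposition \ref{Prop_SphericalCoxLogTerm} and the cited theorem \cite[Thm 1.1]{Gongyo}; the only thing to be careful about is the harmless observation that projectivity supplies the hypothesis $\Oo(X)^* \simeq k^*$ needed to invoke the proposition.
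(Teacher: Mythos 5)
Your proof is correct and follows essentially the same route as the paper, which simply combines \cite[Thm 1.1]{Gongyo} with Proposition \ref{Prop_SphericalCoxLogTerm}. The extra remarks you supply (projectivity gives $\Oo(X)^*\simeq k^*$, and finite generation of $\cox(X)$ comes from Brion's theorem or from the identification of $\tilde{X}$ with a spherical variety in the proof of the proposition) are exactly the details the paper leaves implicit.
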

\begin{proof}
Apply \cite[Thm 1.1]{Gongyo} together with the last proposition.
\end{proof}

Now, we consider the total coordinate space of an almost homogeneous $G$-variety of complexity one, and prove the

\begin{thm}\label{Thm_MainTheorem}
Let $X$ be an almost homogeneous $G$-variety of complexity one with only constant invertible regular functions. Then
\begin{center}
$\tilde{X}$ has log terminal singularities $\iff$ $\cox(X)^U$ is a Platonic ring \quad (see \ref{Rem_Platonic}).
\end{center}
\end{thm}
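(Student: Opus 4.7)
The plan is to reduce both sides of the equivalence to a common geometric condition on $\spec\cox(X)^U$, regarded as the total coordinate space of a complexity-one torus variety. By Braun's theorem \cite{LBraun}, $\tilde X$ is $1$-Gorenstein, so from \cite[6.2.12, 6.2.15]{Ishii} (as already noted at the start of this subsection) log terminality and rational singularities coincide on $\tilde X$. Since $\cox(X)^U$ is itself the Cox ring of some complexity-one torus variety $Y$ by Corollary \ref{Cor_CoxXU_CoxRingTVarC1}, the variety $\spec\cox(X)^U\simeq\tilde Y$ is again a total coordinate space, and Braun's theorem applies to it as well, so the same equivalence of log terminality with rational singularities holds on $\spec\cox(X)^U$.

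The Platonic side of the theorem then comes from the combinatorial criterion of Arzhantsev--Braun--Hausen--Wrobel \cite{HausenIteration}, building on the Hausen--Herppich classification \cite[3.4.2, 3.4.3]{coxrings}: log terminal singularities on the total coordinate space of a complexity-one torus variety are characterized in terms of the Platonic condition of Remark \ref{Rem_Platonic} on the exponent vectors of the defining trinomials of its Cox ring. Applied to $Y$ through the presentation of $\cox(X)^U$ as an algebra $R(A,P_0)$ furnished by Proposition \ref{PropR_A_P_0}, this yields the equivalence $\spec\cox(X)^U$ is log terminal $\iff$ $\cox(X)^U$ is Platonic.

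It remains to establish the transfer: $\tilde X$ has rational singularities if and only if $\spec\cox(X)^U$ does. Up to replacing $G$ by a finite central cover $\tilde G\to G$ with $\pic(\tilde G)=0$, which preserves log terminality (the induced map on total coordinate spaces is finite and surjective) and does not affect $U$ or the Platonic condition, one may assume the forgetful morphism $\clg^G(X)\to\clg(X)$ is surjective (Remark \ref{RemFiniteIndexForgetGLin}). Then by Section \ref{Sec_RelationCoxG_To_Cox} the $G$-action lifts to $\tilde X$ and the canonical identification $\cox^U(X)\simeq\cox(X)$ realises $\cox(X)^U$ as $k[\tilde X]^U$, so that $\spec\cox(X)^U\simeq\tilde X/\!/U$. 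The equivalence of rational singularities on $\tilde X$ and on $\tilde X/\!/U$ is the main technical point; a natural strategy is the Grosshans trick, writing $k[\tilde X]^U\simeq(k[\tilde X]\otimes_k k[G/U])^G$ and combining the smoothness of the projection $\tilde X\times G/U\to\tilde X$ with the descent of rational singularities along reductive good quotients to handle the direction $\tilde X\Rightarrow\tilde X/\!/U$. I expect this transfer to be the main obstacle, since the reverse direction lies outside the scope of Boutot-type descent and will likely require a geometric argument specific to the almost homogeneous setting, exploiting either an explicit resolution arising from the local structure theorem (Corollary \ref{ThmLocalStruct2}) or the very concrete presentation of $\cox(X)^U$ given in Theorem \ref{ThmUinvCoxG}.
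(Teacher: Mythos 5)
Your overall strategy coincides with the paper's, but the step you yourself flag as ``the main obstacle'' --- that $\tilde X$ has rational singularities if and only if $\spec\cox(X)^U\simeq\tilde X//U$ does --- is precisely where your argument stops, so the proof is not complete. The paper closes this gap not with Boutot-type descent plus a bespoke converse, but by invoking the standard transfer principle for $U$-invariants (\cite[Thm. D.5]{TimashevBook}): for a finitely generated $G$-algebra $A$ with $G$ reductive, $\spec A$ has rational singularities if and only if $\spec A^U$ does, and this equivalence holds in both directions at once (it comes from the flat horospherical degeneration of $A$, not from descent along the quotient map). Once this is cited, no geometry specific to the almost homogeneous setting is needed for the transfer; almost homogeneity is instead used to verify the hypotheses of Theorem \ref{ThmHausenLogTerminal} for $\tilde Y:=\spec\cox(X)^U$, namely that $\tilde Y$ is rational and that $\Oo(\tilde Y)^{\TT}\simeq k$ for $\TT=T\times\Gamma_{\clg(X)}^\circ$ (the latter via \ref{PropCoxGFreeModuleCoxG} and the torsion-freeness of the degrees of the canonical sections $r_i$) --- checks your proposal does not carry out.

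There is a second, smaller gap in your use of the Arzhantsev--Braun--Hausen--Wrobel criterion. Applied to $\tilde Y$, Theorem \ref{ThmHausenLogTerminal} says that $\tilde Y$ is log terminal if and only if $\cox(\tilde Y)$ is Platonic --- the Cox ring of $\tilde Y$, one step further along the iteration, not the coordinate ring $\Oo(\tilde Y)=\cox(X)^U$ itself. Your proposal silently identifies the two. The paper supplies the missing equivalence ``$\cox(\tilde Y)$ Platonic $\iff$ $\cox(X)^U$ Platonic'' using the explicit formula of \cite[2.7]{Wrobel1} for how exponent vectors transform under passage to the Cox ring, followed by a case check against the list of Platonic triples. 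Finally, a cosmetic point: replacing $G$ by a finite cover changes neither $X$ nor $\tilde X$ (only the equivariant structure and the grading group of $\cox^G$), so there is no ``induced finite surjective map on total coordinate spaces'' whose effect on log terminality needs to be controlled.
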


The Cox ring of $X$ is well defined and finitely generated (\ref{RemAlmostHomogeneousC1AreRational}). Moreover, up to replacing $G$ by a finite cover, we can suppose that $\pic(G)=0$, and that $\cox(X)$ is endowed with a compatible structure of $\clg(X)$-graded $G$-algebra (see \cite[Thm. 4.2.3.2]{coxrings}). We choose a Borel subgroup $B$, a maximal torus $T$ in $B$, and denote by $U$ the unipotent part of $B$.

Proving that a spherical variety has rational singularities usually relies on a reduction to the toric case. Similarly we are going to reduce to the case of a normal variety of complexity one under a torus action, and use the following result of Arzhantsev, Braun, Hausen and Wrobel:

\begin{thm}\cite[Thm. 1]{HausenIteration}\label{ThmHausenLogTerminal}
Let $\tilde{Y}$ be a normal affine $\QQ$-Gorenstein rational $\TT$-variety of complexity one such that $\Oo(\tilde{Y})^\TT\simeq k$. Then $\tilde{Y}$ has log terminal singularities if and only if its Cox ring is Platonic.
\end{thm}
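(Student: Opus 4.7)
The plan is to reduce the question to an explicit combinatorial computation of discrepancies for a canonical partial resolution, exploiting the fact that $\tilde{Y}$ is a $\TT$-variety of complexity one and thus admits a polyhedral divisor presentation (Altmann--Hausen). First I would invoke the classification established in the excerpt (Construction \ref{ConsRingFactoC1} and Proposition \ref{PropR_A_P_0}) to write $\tilde{Y}$, up to a suitable localization and possibly an embedding, as $\spec R(A,P_0)$ for explicit input data $(A,P_0)$; the hypothesis $\Oo(\tilde{Y})^\TT\simeq k$ together with $\QQ$-Gorensteinness translates into arithmetic conditions on the exponent vectors $l_i=(l_{i1},\dots,l_{in_i})$ and on the matrix $A$. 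The essential reduction is then to the ``two-dimensional transverse slice'': the rational quotient $\pi:\tilde{Y}\dashrightarrow\PP^1$ by $\TT^\circ$ has generic toric fibers, so toric singularities contribute only log terminal (in fact canonical) behavior, and all non-toric singularities are concentrated in the special fibers over the exceptional points $a_0,\dots,a_r\in\PP^1$.

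Next I would construct an equivariant log resolution $\varphi:Z\rightarrow\tilde{Y}$ by combining a toric resolution along the generic fiber with a sequence of weighted blow-ups at each exceptional fiber, parametrized precisely by the tuples $(l_{0i_0},\dots,l_{ri_r})$. The key point is that the canonical divisor $K_{\tilde{Y}}$ is computable from $(A,P_0)$ (this is the content of Braun's $1$-Gorenstein theorem in the more general Cox-ring context, and was spelled out in the torus case by Petersen--Süss and Liendo--Süss), so each exceptional discrepancy $\alpha_E$ can be written as an explicit rational function of the entries of $l_i$. The condition $\alpha_E>-1$ for every $E$ then becomes the system of inequalities
\[
\frac{1}{l_{0i_0}}+\frac{1}{l_{1i_1}}+\dots+\frac{1}{l_{ri_r}}\;>\;r-1
\]
ranging over all choices $(i_0,\dots,i_r)$ with $l_{ji_j}\geq 2$; I would verify this reduction using the adjunction formula on the strict transform of the fiber over each $a_i$ and the standard discrepancy formulas on weighted blow-ups.

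Finally I would match this numerical inequality to the Platonic condition of Remark \ref{Rem_Platonic}. This step is a classical lemma: the inequality $\sum 1/x_i>r-1$ with integers $x_i\geq 2$ has a solution with $r\geq 2$ if and only if after reordering the first three terms form one of the Platonic triples $(5,3,2),(4,3,2),(3,3,2),(x,2,2),(x,y,1)$, the remaining $x_i$ being forced to equal $1$; the cases $r\leq 1$ are trivially log terminal because the corresponding singularity is toric. This identification is precisely the ADE/Platonic dichotomy underlying quotient singularities of $\SL_2(\CC)$, and gives the two implications of the theorem simultaneously.

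The main obstacle I anticipate is the explicit computation of the discrepancies: the combinatorial data $(A,P_0)$ gives a clean presentation of $\tilde{Y}$ but the canonical class and the pullbacks under the weighted blow-ups require a careful bookkeeping of the $\TT$-weights of a nowhere-vanishing top form on the smooth locus, together with control over the multiplicities with which the strict transform of each fiber $\pi^{-1}(a_i)$ appears. Once the discrepancy formula is in hand, the translation to the Platonic inequality and the classification step are essentially combinatorial.
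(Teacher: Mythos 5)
The first thing to note is that the paper does not prove this statement: it is quoted from \cite[Thm. 1]{HausenIteration} and used as a black box in the proof of Theorem \ref{Thm_MainTheorem}, so there is no internal proof to compare yours against. Judged on its own terms, your outline is aimed at the right target: the equivalence of log terminality with the inequality $1/l_{0i_0}+\dots+1/l_{ri_r}>r-1$ over all tuples, and the elementary identification of that inequality with the Platonic list of Remark \ref{Rem_Platonic}, are exactly the combinatorial endpoint of the argument in the cited reference, and your final classification step is correct (with all but three entries forced to equal $1$, the inequality reduces to the classical $1/a+1/b+1/c>1$).

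As a proof, however, the proposal has a genuine gap at its center: the entire content of the theorem is the passage from the geometry of $\tilde{Y}$ to that inequality, and you only assert it. You posit an equivariant log resolution built from a toric resolution along the generic fiber and weighted blow-ups over the exceptional fibers, together with a discrepancy formula that is ``an explicit rational function of the entries of $l_i$,'' but neither the resolution nor the formula is produced, and producing them is where all the work lies; in \cite{HausenIteration} this step is not done by explicit weighted blow-ups at all, but by embedding $\spec R(A,P)$ into a toric ambient variety and reading off log terminality from the lattice points of the \emph{anticanonical complex}, a polyhedral gadget generalizing the Fano polytope. Two further points need attention even within your framework. First, the criterion in the theorem bears on the Cox ring of $\tilde{Y}$, not on $\tilde{Y}$ itself, so you must identify the exponent data of $\cox(\tilde{Y})$ with the multiplicities in the pullbacks $\pi^*(a_i)$ computed on $\tilde{Y}$; this is true for a single iteration step but has to be argued, since exponent vectors do change under iteration (compare the use of \cite[2.7]{Wrobel1} at the end of Section \ref{Sec_LogTermCoxRing}). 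Second, the claim that toric singularities contribute ``log terminal (in fact canonical)'' behavior overstates the case: toric singularities are log terminal only once $\QQ$-Gorensteinness is in force, and they need not be canonical. None of this is fatal to the strategy, but as written the argument assumes the theorem's hard part.
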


Consider the categorical quotient $\tilde{Y}:=\spec\cox(X)^U$ of $\tilde{X}$ by $U$. By \cite[Thm. D.5]{TimashevBook}, $\tilde{Y}$ is a normal variety. Moreover, it is of complexity one under the induced action of the torus $\TT:=T\times\Gamma_{\clg(X)}^\circ$. Indeed, $\tilde{X}$ is almost homogeneous of complexity one under $G\times\Gamma_{\clg(X)}^\circ$.
\begin{lem}
The variety $\tilde{Y}$ satisfies the assumptions of Theorem \ref{ThmHausenLogTerminal}. Moreover, it is $1$-Gorenstein.
\end{lem}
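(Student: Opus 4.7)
The plan is to verify each hypothesis of Theorem \ref{ThmHausenLogTerminal} together with the $1$-Gorenstein property, leveraging the structural results of the preceding subsections. First, $\tilde{Y}$ is affine by construction, and $\cox(X)^U$ is a finitely generated normal integral domain by \ref{prop_PresCoxU}, so $\tilde{Y}$ is affine and normal. The torus $\TT = T \times \Gamma_{\clg(X)}^\circ$ acts naturally on $\cox(X)^U$: the $T$-factor acts because $T$ normalizes $U$, so the $B$-action on $\cox(X)$ descends to a $T$-action on $\cox(X)^U$; and $\Gamma_{\clg(X)}^\circ$ acts via its natural $\clg(X)$-grading action on $\cox(X)$, which commutes with the $U$-action.

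To establish rationality and complexity one under $\TT$, I would first check that $\tilde{X}$ is almost homogeneous under $G \times \Gamma_{\clg(X)}^\circ$: by \ref{Cor_StructOpenOrbit} (transported to the ordinary characteristic space via the canonical morphism $\hat{X} \to \hat{X}^G$ and the finiteness of $\Gamma_{\clg(X)}/\Gamma_{\clg(X)}^\circ$), the preimage in $\tilde{X}$ of the open $G$-orbit $X_0 \simeq G/H$ contains a dense $(G \times \Gamma_{\clg(X)}^\circ)$-orbit. In particular, $\tilde{X}$ is rational, and the complexity of $\tilde{X}$ under $G \times \Gamma_{\clg(X)}^\circ$ equals that of $X$ under $G$, namely $1$. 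Taking $U$-invariants gives $k(\tilde{Y})^\TT = k(\tilde{X})^{B \times \Gamma_{\clg(X)}^\circ}$, which has transcendence degree one over $k$; hence $\tilde{Y}$ has complexity one under $\TT$. Rationality of $\tilde{Y}$ then follows because it is the categorical quotient of the rational variety $\tilde{X}$ by the connected unipotent group $U$, whose generic orbits are affine spaces.

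For the condition $\Oo(\tilde{Y})^\TT \simeq k$, note that $\Oo(\tilde{Y})^\TT = \cox(X)^{B \times \Gamma_{\clg(X)}^\circ}$. The condition $(\star)$ of Theorem \ref{ThmUinvCoxG} holds in the almost homogeneous case by \ref{RemUinvCoxG}, so the presentation there applies. A homogeneous $(T \times \Gamma_{\clg(X)}^\circ)$-invariant element of $\cox(X)^U$ is, up to a nonzero scalar, a monomial in the generators $a, b, s_{ij}, s_k$ whose total degree in $\hat{T} \times \clg(X)/\mathrm{tors}$ vanishes; such an element corresponds to a $B$-stable effective divisor $D$ whose class in $\clg(X)/\mathrm{tors}$ is trivial. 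Then $nD$ is principal for some $n \geq 1$, so $nD = \divi(f)$ with $f$ regular by effectivity of $D$; but $\Oo(X)^* \simeq k^*$ forces $f$ to be constant, whence $D = 0$ and the invariant reduces to a scalar.

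Finally, the $1$-Gorenstein property follows cleanly from Proposition \ref{PropR_A_P_0}: $\cox(X)^U \simeq R(A, P_0)$ is the quotient of a polynomial ring by a system of trinomial relations, and by \cite[3.4.2.3]{coxrings} this exhibits $\tilde{Y}$ as a normal complete intersection in affine space. Normal complete intersections are Gorenstein, hence in particular $\QQ$-Gorenstein and $1$-Gorenstein. The main obstacle I anticipate is the bookkeeping in the verification of $\Oo(\tilde{Y})^\TT \simeq k$, where the interaction between the $\hat{T}$-grading, the $\clg(X)/\mathrm{tors}$-grading, and the torsion subgroup of $\clg(X)$ has to be carefully tracked, and where almost-homogeneity of $X$ must be genuinely used to rule out nontrivial torsion $B$-stable effective divisors.
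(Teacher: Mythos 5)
Your overall strategy matches the paper's (identify $k(\tilde{Y})^{\TT}$ with $k(\tilde{X})^{B\times\Gamma_{\clg(X)}^\circ}$, exploit almost homogeneity of $\tilde{X}$ under $G\times\Gamma_{\clg(X)}^\circ$), and your route to the $1$-Gorenstein property is a legitimate alternative: the paper invokes Braun's theorem after viewing $\tilde{Y}$ as a total coordinate space (\ref{Cor_CoxXU_CoxRingTVarC1}), whereas you use that $R(A,P_0)$ is a normal complete intersection, hence Gorenstein; both work. However, two steps as written are not valid. First, rationality of $\tilde{Y}$: the principle ``a categorical quotient of a rational variety by a connected unipotent group, with affine-space generic orbits, is rational'' is false in general. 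Rosenlicht's rational cross-section gives $k(\tilde{X})\simeq k(\tilde{Y})(t_1,\dots,t_n)$, so rationality of $\tilde{X}$ only yields \emph{stable} rationality of $\tilde{Y}$, which does not imply rationality in dimension $\geq 3$. The paper instead identifies $k(\tilde{Y})^{\TT}\simeq k(\PP^1_k)$ and uses the birational product structure $\tilde{Y}\sim \TT'\times C$ of a complexity-one torus variety; you have all the ingredients for this (you already note $\operatorname{trdeg}k(\tilde{Y})^{\TT}=1$, and $\tilde{Y}$ is unirational so Lüroth gives $k(\tilde{Y})^{\TT}\simeq k(\PP^1_k)$), but your stated justification does not deliver the conclusion.

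Second, in the verification of $\Oo(\tilde{Y})^{\TT}\simeq k$, the deduction ``$nD=\divi(f)$ with $f$ regular by effectivity of $D$; but $\Oo(X)^*\simeq k^*$ forces $f$ to be constant'' is broken: $f$ is regular because its divisor is effective, but it is \emph{not} a unit unless $D=0$, so the hypothesis on $\Oo(X)^*$ says nothing about it. What actually forces $f$ (equivalently $s^n$, where $s$ is your invariant section and $n$ is the order of $[D]$) to be constant is that it is a $B$-invariant regular function on $X$, hence $G$-invariant since $G/B$ is complete, hence constant because $X$ has a dense $G$-orbit. This is exactly where almost homogeneity enters, via the chain $\Oo(\tilde{X})^{B\times\Gamma_{\clg(X)}^\circ}=\Oo(\tilde{X})^{G\times\Gamma_{\clg(X)}^\circ}=k[(r_i)_i]^{\Gamma_{\clg(X)}^\circ}=k$ used in the paper (the $r_i$ being the canonical sections of the $G$-stable prime divisors, whose nonzero degrees are non-torsion in $\clg(X)$ precisely because a torsion effective $G$-stable divisor would produce a non-constant $G$-invariant regular function). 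You correctly anticipate at the end that almost homogeneity ``must be genuinely used'' at this point, but the argument you actually wrote substitutes the insufficient hypothesis $\Oo(X)^*\simeq k^*$ for it.
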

\begin{proof}
As $\tilde{X}$ is almost homogeneous of complexity one under $G\times\Gamma_{\clg(X)}^\circ$, it is a rational variety (\ref{RemAlmostHomogeneousC1AreRational}). Then, we have
\begin{center}
$k(\PP^1_k)\simeq k(\tilde{X})^{B\times\Gamma_{\clg(X)}^\circ}= k(\tilde{X})^{U.\TT}= \textrm{Frac}(\cox(X)^U)^{\TT}=k(\tilde{Y})^{\TT}$,
\end{center}
whence $\tilde{Y}$ is a rational variety (for a proof of the second equality, proceed similarly as in \cite[D.7]{TimashevBook}). By the result of Braun again, $\tilde{Y}$ is $1$-Gorenstein because it can be viewed as a total coordinate space (\ref{Cor_CoxXU_CoxRingTVarC1}). Finally, we verify that $\Oo(\tilde{Y})^{\TT}\simeq k$. We have identifications
\begin{center}
$\Oo(\tilde{Y})^{\TT}\simeq\Oo(\tilde{X})^{B\times\Gamma_{\clg(X)}^\circ}\simeq \Oo(\tilde{X})^{G\times\Gamma_{\clg(X)}^\circ}\simeq k[(r_i)_i]^{\Gamma_{\clg(X)}^\circ}$,
\end{center}
where $k[(r_i)_i]$ is a polynomial $k$-subalgebra in the canonical sections associated to the $G$-stable prime divisors in $X$. Indeed, because $X$ is almost homogeneous, a $G$-fixed homogeneous element in $\cox(X)$ is the product of a monomial in the $r_i$ with a homogeneous unit, and the homogeneous units are constant (\cite[1.5.2.5]{coxrings}). The algebra $k[(r_i)_i]$ is $\clg(X)$-graded, and any non-zero degree associated with a homogeneous element is torsion free in $\clg(X)$. It follows that 
\begin{center}
$k[(r_i)_i]^{\Gamma_{\clg(X)}^\circ}\simeq k[(r_i)_i]_0\simeq k$.
\end{center}
\end{proof}
By \cite[Thm D.5]{TimashevBook}, $\tilde{X}$ has rational singularities if and only if $\tilde{Y}$ do so. Using Theorem \ref{ThmHausenLogTerminal} together with the above lemma yields
\begin{align*}
  \tilde{X} \textrm{ has log terminal singularities} & \iff \tilde{Y} \textrm{ has log terminal singularities} \\
    	& \iff \cox(\tilde{Y}) \textrm{ is a Platonic ring}.
\end{align*}
To finish, we claim that
\begin{center}
$\cox(\tilde{Y})$ is a Platonic ring $\iff$ $\cox(X)^U$ is a Platonic ring.
\end{center}
The algebra $\cox(\tilde{Y})$ is again of the form given by Construction \ref{ConsRingFactoC1}, and the associated exponent vectors have an explicit description in term of the ones associated with $\cox(X)^U$ (see \cite[2.7]{Wrobel1}). Using this description, the claim is verified by means of an easy case by case check. This finishes the proof of the Theorem \ref{Thm_MainTheorem}.

\subsection{Iteration of (equivariant) Cox rings}
\label{Sec_IterationCoxRings}
In \cite{HausenIteration}, Arzhantsev, Braun, Hausen and Wrobel introduced the notion of \textit{iteration of Cox rings}: Let $Z$ be a normal variety with finitely generated Cox ring. If the total coordinate space $\tilde{Z}$ has a non-trivial class group and satisfies $\Oo(\tilde{Z})^*\simeq k^*$, then it has a non-trivial well-defined Cox ring. If the latter is finitely generated, we get a new total coordinate space $\tilde{Z}^{(2)}$, and so on. This iteration process yields a sequence of Cox rings which stops if only if one of the following cases occurs at some step:
\begin{itemize}
\item we obtain a total coordinate space whose Cox ring is not well defined (i.e. there exists $n\geq 0$ such that $\clg(\tilde{Z}^{(n)})$ has a non-trivial torsion subgroup, and $\Oo(\tilde{Z}^{(n)})^*\not\simeq k^*$).
\item we obtain a total coordinate space whose Cox ring is not finitely generated.
\item we obtain a factorial total coordinate space (i.e. with trivial class group).
\end{itemize}
If we never fall in one of the cases above, $Z$ is said to have \textit{infinite iteration of Cox rings}. Otherwise, $Z$ is said to have \textit{finite iteration of Cox rings}, and the last obtained Cox ring is the \textit{master Cox ring}. Toric varieties obviously admit finite iteration of Cox rings with a factorial master Cox ring in one step. More generally, spherical varieties admit finite iteration of Cox rings with a factorial (finitely generated) master Cox ring in at most two steps (\cite[Thm 1.1]{Gagliardi}). 
In this section, we prove the finite iteration of Cox rings in the complexity one case, assuming some technical condition (\ref{Thm_FiniteIterationCoxRings}). We use a similar approach as in \cite{LBraun}, namely we systematically decompose a diagonalizable torsor into a torsor by a torus followed by a torsor by a finite diagonalizable group, and use the fact that characteristic spaces are preserved by torsors under tori (\ref{Prop_TorsorTorus_SameCoxSheaf}).

The above definitions have obvious analogues in the equivariant setting. We then speak of \textit{iteration of equivariant Cox rings}. For a variety $X$ endowed with an action of an algebraic group $G$, we denote $\hat{X}^{G,(i)}$ the $i$th $G$-equivariant characteristic space (if it exists), and $\tilde{X}^{G,(i)}$ the ith $G$-equivariant total coordinate space. The next theorem establishes the finiteness of iteration of equivariant Cox rings for almost homogeneous varieties under a connected reductive algebraic group. This yields in particular the finiteness of iteration of Cox rings for almost homogeneous varieties under a semisimple algebraic group (\ref{Rem_CoxEqIsomCox_SemisimpleSimplyConnected}).

\begin{thm}\label{Thm_Itearation_Eq_CoxRing}
Let $G$ be a connected reductive group and $X$ be an almost homogeneous $G$-variety of complexity one. Then $X$ admits finite iteration of equivariant Cox rings. Moreover, each equivariant Cox ring occuring in the sequence is the Cox ring of a complexity one almost homogeneous $G$-variety. In particular, the equivariant master Cox ring is finitely generated and factorial.
\end{thm}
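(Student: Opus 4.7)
The plan is to proceed by induction on the iteration step, maintaining the invariant that the $n$-th iterate $\tilde{X}^{(n)}$ is almost homogeneous of complexity one under an enlarged connected reductive group $G_n$, defined by $G_0 := G$, $\tilde{X}^{(0)} := X$, and $G_{n+1} := G_n \times \Gamma_n$ where $\Gamma_n := \Gamma_{\clg^{G_n}(\tilde{X}^{(n)})}$; the iterate $\tilde{X}^{(n+1)}$ is then by definition the $G_n$-equivariant total coordinate space of $\tilde{X}^{(n)}$. Under this invariant, Remark \ref{RemAlmostHomogeneousC1AreRational} ensures rationality, and Proposition \ref{PropCoxGFinitelyGeneratedC1Rational} ensures that $\cox^{G_n}(\tilde{X}^{(n)})$ is well-defined, finitely generated, normal and integral. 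This simultaneously handles the well-definedness of the iteration and the assertion that each Cox ring in the sequence is the equivariant Cox ring of an almost homogeneous complexity-one variety.

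The inductive step---that $\tilde{X}^{(n+1)}$ remains almost homogeneous of complexity one under $G_{n+1}$---follows from Corollary \ref{Cor_StructOpenOrbit}: the open $G_{n+1}$-orbit of the equivariant characteristic space $\hat{X}^{(n+1)}$ is an explicit fiber bundle $G/K_n \times^{\Gamma_{\hat{H}_n}} \Gamma_n$, and a direct dimension count (using that $\Gamma_n$ enlarges the acting group but does not increase Borel-orbit codimensions above one in the fibers) shows the minimal codimension of a Borel orbit in $\tilde{X}^{(n+1)}$ is again one. The decisive reduction toward termination uses Proposition \ref{Prop_TorsorTorus_SameCoxSheaf}: factoring the diagonalizable torsor $\hat{X}^{(n+1)} \to \tilde{X}^{(n)}$ through its connected torus part $\Gamma_n^\circ$, the equivariant Cox sheaf is preserved across the torus torsor, so the only genuinely new grading at each iteration step comes from the finite quotient $\Gamma_n / \Gamma_n^\circ$.

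The main obstacle is bounding the total number of iterations; I would address it by passing to $U$-invariants. By Corollary \ref{Cor_CoxXU_CoxRingTVarC1}, each ordinary Cox ring $\cox(\tilde{X}^{(n)})^U$ is itself the Cox ring of a normal variety of complexity one under a torus action, so the Arzhantsev--Braun--Hausen--Wrobel finite iteration theorem (\cite[Thm.~1]{HausenIteration}, together with refinements in \cite{Wrobel1} and \cite{LBraun}) applies in that setting to bound the length of the sequence. At termination, the equivariant class group $\clg^{G_N}(\tilde{X}^{(N)})$ must be trivial, so by Proposition \ref{Prop_LASS_TypeSurj} the master equivariant total coordinate space is $\Gamma$-factorial, hence its coordinate ring---the master equivariant Cox ring---is factorial; its finite generation is automatic from the preserved complexity-one, rational, almost-homogeneous structure throughout the iteration.
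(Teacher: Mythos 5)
Your reduction via \ref{Prop_TorsorTorus_SameCoxSheaf} to the finite part of each diagonalizable torsor is exactly the right first move, and it matches the paper. But the termination argument has a genuine gap. You propose to bound the length of the iteration by passing to $U$-invariants and invoking the Arzhantsev--Braun--Hausen--Wrobel finite iteration theorem for torus varieties of complexity one. This does not work as stated, for two reasons. First, you never establish that the iteration of $G$-equivariant Cox rings of $X$ is compatible with the iteration of Cox rings of $\spec(\cox(X)^U)$ --- i.e.\ that taking $U$-invariants commutes with the Cox ring construction \emph{along the whole iteration sequence}; without such a comparison, finiteness of one sequence says nothing about the other. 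Second, even granting such a comparison, the cited torus-complexity-one result only guarantees finite iteration with a \emph{possibly non-finitely generated} master Cox ring, so it cannot deliver the finitely generated factorial master ring claimed in the theorem. Your final step is also off: at termination one does not get $\clg^{G_N}(\tilde{X}^{(N)})=0$ (the exact sequence $\hat{G}\to\clg^G\to\clg$ generally forces $\clg^G$ to be nontrivial); factoriality of the master ring is about the \emph{ordinary} class group of the master total coordinate space.

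The paper's termination mechanism, which is absent from your proposal, is group-theoretic and elementary: writing the open orbit as $G/H$, the torsion subgroup $\pic^G(X)_{\rm tor}$ embeds into $\pic^G(G/H)\simeq\hat{H}$ via the localization sequence (\ref{EqSEClGroupEquivariant}), so the finite torsor $X'\to X$ restricts over the open orbit to $G/H_1\to G/H$ with $H_1$ the intersection of the kernels of the corresponding characters of $H$. Iterating produces a descending chain $H\supset H_1\supset H_2\supset\cdots$ of subgroups all containing $H^\circ$ and of finite index in $H$, which must stabilize because $H/H^\circ$ is finite; once it stabilizes the torsion vanishes and the iteration stops, with each iterate an almost homogeneous complexity-one $G$-variety (hence with finitely generated Cox ring by \ref{PropCoxGFinitelyGeneratedC1Rational} and \ref{RemAlmostHomogeneousC1AreRational}). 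Note also that the theorem iterates $G$-equivariant Cox rings with $G$ \emph{fixed} (\ref{Prop_TorsorTorus_SameCoxSheaf} identifies the $G$-equivariant characteristic space of $\hat{X}^G$ with that of the finite quotient $X'$), so your enlargement $G_{n+1}=G_n\times\Gamma_n$ changes the object being iterated and should be dropped.
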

\begin{proof}
We can suppose that $X$ is a smooth $G$-variety, up to replacing it by its smooth locus. By \ref{PropCoxGFinitelyGeneratedC1Rational} and \ref{RemAlmostHomogeneousC1AreRational}, the $G$-equivariant characteristic space $\hat{X}^G$ exists and $q_1:\hat{X}^G\rightarrow X$ is a $G$-equivariant $\Gamma_{\pic^G(X)}$-torsor. Consider the exact sequence of abelian groups
\begin{center}
$0\rightarrow \pic^G(X)_{\rm tor}\rightarrow\pic^G(X)\rightarrow M\rightarrow 0$,
\end{center}
where $\pic^G(X)_{\rm tor}$ denotes the torsion part of $\pic^G(X)$. This defines a factorization
\begin{center}
$\hat{X}^G\xrightarrow{g_1}X'\xrightarrow{f_1} X$
\end{center}
of $q_1$ into $G$-equivariant diagonalizable torsors. In particular, $g_1$ is a torsor under a torus, and $f_1$ is a torsor under a finite diagonalizable group. Denote $H$ the stabilizer of a base point in the open $G$-orbit of $X$. Then $X$ is a normal $G/H$-embedding, and we denote $D_1,...,D_r$ the $G$-stable prime divisors lying in $X\setminus G/H$. Using the exact sequence (\ref{EqSEClGroupEquivariant}) applied to the restriction $\pic^G(X)\rightarrow\pic^G(G/H)$, we infer that the group $\pic^G(X)_{\rm tor}$ embeds in $\pic^G(G/H)\simeq\hat{H}$. Let $H_1$ be the intersection of the kernels of the characters of $H$ corresponding to elements of $\pic^G(X)_{\rm tor}$. This is a normal subgroup of $H$, and $\pic^G(X)_{\rm tor}\simeq \widehat{H/H_1}$ under the above embedding. Then $G/H_1\rightarrow G/H$ is a diagonalizable torsor under the group $H/H_1$, and $G/H_1$ identifies with the $G$-open orbit in $X'$. Because $X'$ is an almost homogeneous $G$-variety of complexity one, it has a well defined $
G$-equivariant characteristic space. Proposition \ref{PropCoxGFreeModuleCoxG} guarantees that $\Oo(\hat{X}^G)^{*G}\simeq k^*$, thus we can apply \ref{Prop_TorsorTorus_SameCoxSheaf} in order to obtain that $\hat{X}^G$ has also a well defined $G$-equivariant characteristic space which coincides with that of $X'$. This construction can be iterated so that we obtain a commutative diagram
\begin{center}
\begin{tikzcd}
... \arrow[d, ""]                       & ... \arrow[d, "f_3"]      & ... \arrow[ld, "q_3"] \arrow[d, ""]                                                    & ... \arrow[d, ""]                 \\
G/H_2 \arrow[r, "", hook] \arrow[d, ""] & X'^{(2)} \arrow[d, "f_2"] & {\hat{X}^{G,(2)}} \arrow[l, "g_2"] \arrow[ld, "q_2"] \arrow[d, ""] \arrow[r, "", hook] & {\tilde{X}^{G,(2)}} \arrow[d, ""] \\
G/H_1 \arrow[r, "", hook] \arrow[d, ""] & X' \arrow[d, "f_1"]       & \hat{X}^G \arrow[l, "g_1"] \arrow[ld, "q_1"] \arrow[r, "", hook]                       & \tilde{X}^G                       \\
G/H \arrow[r, "", hook]                 & X                         &                                                                                        &                                  
\end{tikzcd}
\end{center}
Moreover, as each $H_i$, $i\geq 1$ is necessarily of finite index in $H$, it follows that this construction gives rise to a descending sequence $(H_i)_{i\geq 1}$ of subgroups of $H$, all of which containing the neutral component $H^\circ$. Thus, this sequence is constant after a certain rank. As a consequence $X$ admits finite iteration of equivariant Cox rings with a finitely generated factorial equivariant master Cox ring.
\end{proof}

Let $G$ be a connected reductive algebraic group, and $X$ be a normal rational $G$-variety of complexity one satisfying the condition
\begin{center}
$(\Cc):$ $X$ and each of its iterated characteristic spaces have only constant invertible functions.
\end{center}
By adapting the argument in the last proof, we show in Theorem \ref{Thm_FiniteIterationCoxRings} that $X$ admits finite iteration of Cox rings. The condition $(\Cc)$ guarantees that $X$ has well defined iterated Cox rings, and is satisfied in the following situations
\begin{itemize}
\item $\Oo(X)\simeq k$ (e.g. $X$ is complete).
\item $X$ is a quasicone.
\item $\Oo(X)^*\simeq k^*$, and $X$ is almost homogeneous.
\item $G=\TT$ is a torus, $\Oo(X)^*\simeq k^*$, and $\Oo(X)^\TT\simeq k$.
\end{itemize}
Indeed, for the first two cases, apply \cite[4.4]{LBraun},  and for the remaining cases, use repeatedly  \ref{Cor_TotalCoordSpaceC1_OnlyConstantInvertibleFunctions}.

\begin{thm}\label{Thm_FiniteIterationCoxRings}
Let $G$ be a connected reductive group, and $X$ be a normal rational $G$-variety of complexity one satisfying condition $(\Cc)$. Then, $X$ admits finite iteration of Cox rings with well defined iterated Cox rings. If moreover $X$ is almost homogeneous, then the master Cox ring is factorial and finitely generated.
\end{thm}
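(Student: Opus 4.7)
The approach is to transpose the strategy of Theorem~\ref{Thm_Itearation_Eq_CoxRing} from the equivariant to the ordinary Cox ring setting, using Proposition~\ref{Prop_TorsorTorus_SameCoxSheaf} applied with trivial $G$ as the crucial input: a torus torsor $Y \to X$ then induces the equality $\hat{Y} = \hat{X}$ of ordinary characteristic spaces, so such torsors may be collapsed in the iteration sequence.

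Set $X^{(0)} := X$ and inductively $X^{(n+1)} := \hat{X^{(n)}}$. First I would verify by induction that each $X^{(n)}$ is a normal rational variety of complexity one under a connected reductive group action. Replacing $G$ once and for all by a central finite cover with trivial Picard group, the $G$-action lifts to every characteristic space, and $X^{(n)}$ inherits the action of a connected reductive group $G_n$ built from $G$ and the tori $\Gamma_{\clg(X^{(i)})}^\circ$ for $i<n$. Rationality persists because $X^{(n+1)}$ is birational to $X^{(n)} \times \Gamma_{\clg(X^{(n)})}$, and complexity one is preserved because the new torus factor acts with trivial generic stabilizer on the fibers of the characteristic space morphism. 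Condition $(\Cc)$ ensures $\Oo(X^{(n)})^* \simeq k^*$ at each step, yielding well-definedness of the Cox ring and, via Proposition~\ref{PropCoxGFinitelyGeneratedC1Rational}, finite generation.

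For termination in the almost homogeneous case, I would mimic the equivariant argument. Write the open $G$-orbit of $X$ as $G/H$. The short exact sequence $0 \to \clg(X^{(n)})_{\rm tor} \to \clg(X^{(n)}) \to M \to 0$ with $M$ free dualizes to a factorization $X^{(n+1)} \to X'^{(n)} \to X^{(n)}$ of the characteristic space morphism as a torus torsor followed by a torsor under the finite diagonalizable group $\Gamma_{\clg(X^{(n)})_{\rm tor}}$. The intermediate variety $X'^{(n)}$ is almost homogeneous under $G$ with open orbit $G/H_{n+1}$, where $H_{n+1} \trianglelefteq H_n$ is the intersection of kernels of the characters of $H_n$ obtained by restricting torsion elements of $\clg(X^{(n)})$ to the open orbit; in particular $H_{n+1}$ has finite index in $H_n$ and contains $H^\circ$. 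Applying Proposition~\ref{Prop_TorsorTorus_SameCoxSheaf} (with trivial $G$) to the torus torsor $X^{(n+1)} \to X'^{(n)}$ gives $\hat{X^{(n+1)}} = \hat{X'^{(n)}}$, so the iteration is controlled by the descending chain $H \supseteq H_1 \supseteq H_2 \supseteq \cdots$ living in the finite group $H/H^\circ$, which must stabilize. Once $H_n = H_{n+1}$, the torsion part $\clg(X^{(n)})_{\rm tor}$ vanishes, so $X^{(n+1)} \to X^{(n)}$ is itself a torus torsor, and Proposition~\ref{Prop_TorsorTorus_SameCoxSheaf} forces $\hat{X^{(n+1)}} = X^{(n+1)}$; equivalently $\clg(X^{(n+1)}) = 0$, so the master Cox ring is factorial (and finitely generated by the previous step).

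The main obstacle will be termination in the general case of hypothesis $(\Cc)$ without the almost homogeneous assumption, since the stabilizer chain above is unavailable. The natural path is to reduce to the complexity one torus case via the $U$-invariant subalgebra (Corollary~\ref{Cor_CoxXU_CoxRingTVarC1}) and invoke the iteration finiteness result of Arzhantsev--Braun--Hausen--Wrobel~\cite{HausenIteration} for complexity one torus varieties. The key technical challenge is to make this $U$-invariant reduction compatible with Cox ring iteration on both sides, and in particular to check that condition $(\Cc)$ propagates through the successive iterates.
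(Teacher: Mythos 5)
Your proposal does not close the main case of the theorem, and the paper's actual argument is quite different from what you sketch. For a general $X$ satisfying $(\Cc)$ you explicitly defer termination to a hoped-for reduction to the torus case via $\cox(\cdot)^U$ and \cite{HausenIteration}, flagging the compatibility of the $U$-invariant functor with iteration as an unresolved ``technical challenge'' --- but that compatibility is precisely the content that would need to be proved, and the paper does not take this route at all. Instead it argues directly: it decomposes each characteristic space into a torus torsor followed by a finite diagonalizable torsor $f_n$, shows (Lemma \ref{Lem_LiftGroupActions}) that the finite torsors compose to a torsor under a finite group commuting with $G$, and tracks the induced ramified covers $\varphi_n:\PP^1_k\rightarrow\PP^1_k$ between the bases of the rational quotients by $B$. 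The key combinatorial invariant is the \emph{exponent vector} of a point $x\in\PP^1_k$ (the multiplicities in $\pi^*(x)$); formula (\ref{Eq_Formula_ExponentVector}) shows each ramification point divides a non-primitive exponent vector by the ramification index, Lemma \ref{Lem_PhiUnramified_TorsionFreePicardGroup} shows that an unramified $\varphi_n$ forces torsion-free Picard groups (hence termination), and infinite iteration would force unbounded divisibility of a fixed finite collection of exponent vectors --- a contradiction. None of this appears in your proposal.

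Your treatment of the almost homogeneous case also has a gap at the step ``once $H_n=H_{n+1}$, the torsion part $\clg(X^{(n)})_{\rm tor}$ vanishes.'' Stabilization of the chain only tells you that torsion classes restrict trivially to the open orbit. In the equivariant setting (Theorem \ref{Thm_Itearation_Eq_CoxRing}) this suffices because the kernel of $\clg^G(X)\rightarrow\clg^G(G/H)$ is the free group $\bigoplus\ZZ D_i$ (sequence (\ref{EqSEClGroupEquivariant}), using $\Oo(G/H)^{*G}\simeq k^*$), so torsion embeds into $\widehat{H}$. For ordinary class groups the kernel is $\bigoplus\ZZ D_i/\divi(\Oo(G/H)^*)$ by (\ref{EqSEClGroup}), and since $\Oo(G/H)^*$ need not reduce to $k^*$ (the central torus of $G$ contributes units), this quotient can carry torsion supported on the boundary divisors; your chain argument cannot see it and does not terminate the iteration. (A smaller inaccuracy: $X^{(n+1)}$ is \emph{not} birational to $X^{(n)}\times\Gamma_{\clg(X^{(n)})}$ --- the finite part of the torsor is a connected \'etale cover, which is generically nonsplit; indeed the paper allows the iteration to stop precisely because some $\tilde{X}^{(n)}$ fails to be rational.) In the paper, the almost homogeneous refinement is instead an easy byproduct: the iterated varieties remain almost homogeneous of complexity one, hence rational with finitely generated Cox rings by \ref{PropCoxGFinitelyGeneratedC1Rational} and \ref{RemAlmostHomogeneousC1AreRational}, so the already-established termination can only occur at a factorial stage.
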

\begin{proof}
Up to replacing $X$ by its smooth locus, we can suppose that $X$ is smooth as this doesn't affect the assumptions and the iteration of Cox rings. By virtue of the condition $(\Cc)$ and 
\ref{PropClassGroupFinitelyGeneratedC1Rational}, the iteration stops at the step $n\geq 0$ if and only if  
\begin{center}
either $\tilde{X}^{(n)}$ is not rational, or $\tilde{X}^{(n)}$ is factorial.
\end{center}
To begin with, let us examine the first iteration in detail. We can suppose that $\pic(X)$ has a non-trivial torsion subgroup, as otherwise $\hat{X}$ is factorial and iteration stops (\cite[1.4.1.5]{coxrings}). Consider the diagonalizable torsor $f_1:X'\rightarrow X$ associated to the inclusion morphism $\pic(X)_{\rm tor}\xhookrightarrow{}\pic(X)$ (\ref{Cor_PicG_RepresentH1}). Also, consider the exact sequence of abelian groups
\begin{center}
$0\rightarrow \pic(X)_{\rm tor}\rightarrow\pic(X)\rightarrow M\rightarrow 0$.
\end{center}
This defines a factorization
\begin{center}
$\hat{X}\xrightarrow{g_1}X'\xrightarrow{f_1} X$
\end{center}
of the characteristic space $q_1:\hat{X}\rightarrow X$ into diagonalizable torsors. In particular, $g_1$ is a torsor under the torus $\Gamma_M$, and $f_1$ is a torsor under the finite diagonalizable group $\Gamma_1$ with character group $\pic(X)_{\rm tor}$. In fact we can suppose, up to replacing $G$ by a connected finite cover, that $f_1$, $g_1$ are a $G$-equivariant diagonalizable torsors (\cite[4.2.3.2]{coxrings}). Also, we can suppose that $X'$ is rational, otherwise $\hat{X}$ is not rational and iteration stops. Then, there is a commutative diagram
\begin{center}
\begin{tikzcd}
X' \arrow[r, "\pi_1", dashed] \arrow[d, phantom] \arrow[d, "f_1"] & \PP^1_k \arrow[d, "\varphi_1"] \\
X \arrow[r, "\pi_0", dashed]                                       & \PP^1_k,           
\end{tikzcd}
\end{center}
where the $\pi_i$ are the respective rational quotients by $B$, and $\varphi_1$ is a geometric quotient by $\Gamma_1$. Notice that the $\pi_i$ are defined at every codimension $1$ point. Let $\Gamma_{1,\rm eff}$ be the quotient of $\Gamma_1$ by the kernel of the $\Gamma_1$-action on $\PP^1_k$, and $d:=\mid\Gamma_{1,\rm eff}\mid$. The locus where $\Gamma_{1,\rm eff}$ acts freely is non-empty open, $\varphi_1$ is étale at each point of this locus and ramified at each point of its complement. Let $x$ be a point 
in $\PP^1_k$, and consider the pullback
\begin{equation}\label{Eq_Formula_Pullbackpi}
\pi_0^*(x)=\sum_{i=1}^{N_x}n_{i,x}E^x_i,
\end{equation}
\begin{center}
\end{center}
where the $E_i^x$ are pairwise distinct $B$-stable prime divisors in $X$, and $n_{i,x}\in\ZZ_{>0}$, $i=1,...,N_x$. Inspired by Construction \ref{ConsRingFactoC1}, the vector $(n_{i,x})_{i=1,...,N_x}$ is called the \textit{exponent vector} associated with $x$. On the other hand, consider the set theoretic fiber $\varphi_1^{-1}(x)=\lbrace x'_1,...,x'_{l_x}\rbrace$. Then, we have
\begin{equation}\label{Eq_Formula_Pullbackphi}
\varphi_1^*(x)=\frac{d}{l_x}x'_1+...+\frac{d}{l_x}x'_{l_x}.
\end{equation}
Indeed, the ramification indices associated to the $x'_k$, $k=1,...,l_x$, are equal and their sum is $d$ since the finite extension of fields of rational functions defined by $\varphi_1$ is Galois of degree $d$. By definition, $\varphi_1$ is ramified over $x$ if and only if $d/l_x>1$. Now we compute the pullbacks $\pi_1^*(x'_k)$. Since $f_1$ is étale, there are no multiplicities for the prime divisors occuring in $f_1^*(E^x_i)$. Moreover, denoting $\Sigma_{x,i}$ the set of prime divisors occuring in the support of $f_1^*(E^{x}_i)$, the morphism $\pi_1$ induces an equivariant map of transitive $\Gamma_1$-sets
\begin{center}
$\Sigma_{x,i}\rightarrow \lbrace x'_1,...,x'_{l_x}\rbrace$.
\end{center}
It follows that we have $f_1^*(E^x_i)=\sum_{k=1}^{l_x}(E^{x'_k,1}_i+...+E^{x'_k,m_i}_i)$, where the $E^{x'_k,j}_i$ are pairwise distinct $B$-stable prime divisors in $X'$ which are respectively sent to $x'_k$ by $ \pi_1$, and $m_i$ is the cardinality of the orbit $\stab_{\Gamma_1}(x'_1).E^{x'_k,1}_i$. Using the commutativity of the above square and the formulas (\ref{Eq_Formula_Pullbackpi}) and (\ref{Eq_Formula_Pullbackphi}), we obtain
\begin{equation}\label{Eq_Formula_ExponentVector}
\pi_1^*(x'_k)=\sum_{i=1}^{N_x} \frac{n_{i,x}}{d/l_x}(E^{x'_k,1}_i+...+E^{x'_k,m_i}_i).
\end{equation}
It follows from this computation that if $\varphi_1$ is ramified over $x$, then it is an exceptional point, and the coordinates of the associated exponent vector have a non-trivial greatest common divisor. We call \textit{primitive} an exponent vector whose coordinates have a trivial greatest common divisor. By the above formula, the exponent vectors associated to the points $x'_k$ lying in the fiber $\varphi_1^{-1}(x)$ are equal to each other. Also, if the exponent vector associated with $x$ is primitive, then the exponent vectors associated with the points $x'_k$ are also primitive. The idea of the proof is roughly to observe that there is a finite number of non-primitive exponent vectors, and that this number decreases along the iteration process (up to identifying equal exponent vectors). Then, assuming infinite iteration Cox rings easily yields a contradiction. We now give the details

Thanks to condition $(\Cc)$ and Proposition \ref{Prop_TorsorTorus_SameCoxSheaf}, the above construction can be iterated together with the iteration of Cox rings. At the $n$th step of the iteration of Cox rings, we obtain  a diagram
\begin{center}
\begin{tikzcd}
X^{'(n)} \arrow[d, "\pi_n", dashed] \arrow[r, "f_n"] & ... \arrow[r, "f_3"]       & X^{'(2)} \arrow[d, "\pi_2", dashed] \arrow[r, "f_2"] & X' \arrow[d, "\pi_1", dashed] \arrow[r, "f_1"] & X \arrow[d, "\pi_0", dashed] \\
\PP^1_k \arrow[r, "\varphi_n"]                       & ... \arrow[r, "\varphi_3"] & \PP^1_k \arrow[r, "\varphi_2"]                       & \PP^1_k \arrow[r, "\varphi_1"]                 & \PP^1_k,                   
\end{tikzcd}
\end{center}
where $X^{'(i)}$ and $\hat{X}^{(i)}$ have the common characteristic space $\hat{X}^{(i+1)}$ ($i=1,...,n$), and the $f_i$ are $G$-equivariant $\Gamma_i$-torsors ($\Gamma_i:=\pic(X'^{(i-1)})_{\rm tor}$, $i=1,...,n-1$). Here, the $\Gamma_i$ must be non-trivial because otherwise iteration would stop before the $n$th step. Notice that for the torsors $f_i$ to be $G$-equivariant, we have possibly replaced $G$ by the choice of a finite cover, however the morphisms in the above diagram don't depend on this choice. By repeatedly applying Lemmas \ref{Lem_PhiUnramified_TorsionFreePicardGroup} and \ref{Lem_LiftGroupActions} below, we obtain respectively the following facts:
\begin{itemize}
\item The morphisms $\varphi_{i}$ are ramified for $i=1,...,n-1$. 
\item The composition $F_n:=f_1\circ...\circ f_{n-1}\circ f_n$ is a $G$-equivariant torsor under the action of a finite group $S_n$. Moreover, the actions of $S_n$ and $G$ commute.
\end{itemize}
In particular, $F_n$ is finite étale and $\Phi_n:=\varphi_1\circ...\circ\varphi_{n-1}\circ\varphi_n$ is a geometric quotient by $S_n$. Now, in view of the commutativity of the diagram
\begin{center}
\begin{tikzcd}
X^{'(n)} \arrow[r, "\pi_n", dashed] \arrow[d, "F_n"] & \PP^1_k \arrow[d, "\Phi_n"] \\
X \arrow[r, "\pi_0", dashed]                         & \PP^1_k,                    
\end{tikzcd}
\end{center}
the exponent vector associated with a point in a fiber $\Phi_n^{-1}(x)$ can be obtained by using the same computation as the one leading to formula (\ref{Eq_Formula_ExponentVector}). It follows that the exponent vectors associated to points in a common fiber $\Phi_n^{-1}(x)$ are equal to each other. We say that two exponent vectors are \textit{equivalent} if they are associated with points in a common fiber $\Phi_n^{-1}(x)$, this defines an equivalence relation. Consider the exceptional points $x_1,...,x_r\in\PP^1_k$ associated with $\pi_0$ and such that each associated exponent vector is not primitive. Let $u_n$ be the number of classes of non-primitive exponent vectors defined by pullbacks of points of $\PP_k^1$ by $\pi_n$. We have $u_0=r$ and $(u_n)_n$ is clearly decreasing with values in $\NN$. Suppose by contradiction that we have infinite iteration of Cox rings, which implies that $u_n$ is defined for all $n\in \NN$. Then, this sequence clearly admits a limit $l\geq 0$. We cannot have $l=0$ because there would exist an integer $n\geq 0$ such that $\varphi_n$ is unramified. Thus, there is an exceptional point $x_k$, $1\leq k\leq r$, for which the greatest common divisor of the coordinates of the associated exponent vector is unbounded, a contradiction. 

To finish the proof of the finiteness of the iteration of Cox rings, we now give the two lemmas mentionned above:

\begin{lem}\label{Lem_PhiUnramified_TorsionFreePicardGroup}
Suppose that $\varphi_1$ is not ramified. Then $\varphi_1$ is the identity of $\PP^1_k$. Moreover, $X'$ and $\hat{X}$ have torsion-free Picard groups.
\end{lem}
\begin{proof}
If $\varphi_1$ is not ramified, it is a finite surjective étale morphism, whence an isomorphism since $\PP^1_k$ is simply connected. This means that $\Gamma_1$ acts trivially on $\PP^1_k$, thus $\varphi_1=\Id_{\PP^1_k}$. For the second statement, we show first that the torsion freeness of $\pic(\hat{X})$ implies the torsion freeness of $\pic(X')$. Using condition $(\Cc)$, the exact sequence (\ref{EqSEClGroupEquivariantOubli}) applied to $\hat{X}$ viewed as a $\Gamma_M$-variety gives
\begin{center}
$0\rightarrow M\rightarrow \pic^{\Gamma_M}(\hat{X})\rightarrow \pic(\hat{X})\rightarrow 0$.
\end{center}
This exact sequence guarantees that the torsion subgroup of $\pic^{\Gamma_M}(\hat{X})$ embeds in that of $\pic(\hat{X})$. To finish, we have an isomorphism $\pic^{\Gamma_M}(\hat{X})\simeq\pic(X')$ (\ref{Prop_ClgG_CommutativeSquare}).

Now we prove that $\pic(\hat{X})$ is torsion free. By hypothesis, we have a commutative triangle
\begin{center}
\begin{tikzcd}
X' \arrow[rd, "\pi_1", dashed] \arrow[d, "f_1"] &         \\
X \arrow[r, "\pi_0", dashed]                  & \PP^1_k,
\end{tikzcd}
\end{center}
and we claim that all but a finite number of $B$-stable prime divisors $E$ in $X$ satisfy $f_1^*(E)=E'$ where $E'$ is a $B\times\Gamma_1$-stable prime divisor in $X'$. Indeed, it suffices to consider an affine $B$-stable open subvariety $X_0\subset X$ as in the paragraph after Corollary \ref{ThmLocalStruct2} and the geometric quotient $X_0\rightarrow C_0\subset\PP^1_k$ by $B$ induced by the restriction of $\pi_0$. Then $\pi_1$ also induces a geometric quotient $f_1^{-1}(X_0)\rightarrow C_0$ by $B$. For all $x\in C_0$, the pullbacks $\pi_0^*(x)=E^x$ and $\pi_1^*(x)=E^{'x}$ are $B$-stable prime divisors. It follows that the support of $f_1^*(E^x)$ is $E^{'x}$ and there is no multiplicity as $f_1$ is étale. As all but a finite number of $B$-stable prime divisors in $X$ are pullbacks of points of $C_0$, the claim follows. Now, because $g_1$ is a torsor under a torus, we conclude that for all but a finite number of $B$-stable prime divisors $E$ in $X$, we have the property
\begin{center}
$q_1^*(E)=\hat{E}$, where $\hat{E}$ is a $B\times\Gamma_M$-stable prime divisor in $\hat{X}$.
\end{center}
For any prime divisor $E$ in $X$, we have $q_1^*(E)=\divi(s_E)$, where we view the canonical section $s_E$ associated with $E$ as a regular function on $\hat{X}$. Hence, all but a finite number of $B\times\Gamma_M$-stable prime divisors in $\hat{X}$ are principal. Let $\lbrace E_1,...,E_r \rbrace$ be the finite set of $B$-stable prime divisors in $X$ not satisfying the above property. We have $\divi(s_{E_i})=q_1^*(E_i)=\hat{E}_i^1+...+\hat{E}_i^{l_i}$ where the $\hat{E}_i^k$ are $B\times\Gamma_M$-stable prime divisors in $\hat{X}$ ($1\leq k\leq l_i$). Consider the $B\times\Gamma_M$-stable open subvariety $\hat{X}_0\subset \hat{X}$ obtained by removing the supports of the $q_1^*(E_i)$, $i=1,...,r$. The exact sequence (\ref{EqSEClGroup}) reads
\begin{center}
$0\rightarrow \Oo(\hat{X}_0)^*\rightarrow \bigoplus_{i,k} \ZZ \hat{E}_i^k \rightarrow\pic(\hat{X})\rightarrow\pic(\hat{X}_0)=0.$
\end{center}
Indeed, all the $B\times\Gamma_M$-stable prime divisors in $\hat{X}_0$ are principal whence $\pic(\hat{X}_0)=0$.  Up to multiplication by a non-zero constant, the units of $\Oo(\hat{X}_0)$ are Laurent monomials in the canonical sections $s_{E_i}$, $i=1,...,r$. Hence, the cokernel of the morphism $\Oo(\hat{X}_0)^*\rightarrow \bigoplus_{i,k} \ZZ \hat{E}_i^k$ is a torsion-free abelian group isomorphic to $\pic(\hat{X})$.
\end{proof}

\begin{lem}\label{Lem_LiftGroupActions}
Consider as above the $G$-equivariant diagonalizable $\Gamma_1$-torsor $f_1:X'\rightarrow X$. Suppose that there is a $G$-equivariant torsor $f_0:X\rightarrow Y$ under a finite group $S_0$ such that the actions of $G$ and $S_0$ commute. Then $F_1:=f_0\circ f_1$ is a torsor under a finite group $S_
1$ which is an extension of $S_0$ by $\Gamma_1$. Moreover, $F_1$ is $G$-equivariant and the respective actions of $S_1$ and $G$ on $X'$ commute.
\end{lem}
\begin{proof}
We adapt the arguments of \cite[4.2.4.1]{coxrings}.
Let $N_{\Aut(X')}(\Gamma_1)$ be the normalizer of $\Gamma_1$ viewed as a subgroup of $\Aut(X')$, and $\Phi\in N_{\Aut(X')}(\Gamma_1)$. Then, $f_1\Phi:X'\rightarrow X$ is a $\Gamma_1$-invariant morphism, whence the existence of a morphism $\varphi$ such that following diagram commutes
\begin{center}
\begin{tikzcd}
X' \arrow[r, "\Phi"] \arrow[d, "f_1"] & X' \arrow[d, "f_1"] \\
X \arrow[r, "\varphi"]                & X.                  
\end{tikzcd}
\end{center}
It is readily checked that $\varphi\in\Aut(X)$, and that the map $\beta:N_{\Aut(X')}(\Gamma_1)\rightarrow\Aut(X), \Phi\mapsto\varphi$ is a group morphism. We show that this morphism is surjective. Consider an automorphism $\varphi\in\Aut(X)$, and let $\varphi^*X'$ denote the pullback of $X'$ along $\varphi$. This defines an isomorphism $u:\varphi^*X'\rightarrow X$. Moreover, the structural morphism $\varphi^*X'\rightarrow X$ is a $\Gamma_1$-torsor which has the same type (\ref{Def_TypeDiagTorsor}) as $f_1$ up to an automorphism of $\hat{\Gamma}_1$. Indeed, the pullback by $\varphi$ induces an automorphism of $\pic(X)_{\rm tor}=\hat{\Gamma}_1$. It follows from this last fact and \ref{Prop_FormGen_AlmostPrincII} that there exists an isomorphism $v:X'\rightarrow \varphi^*X'$ over $X$ compatible with the $\Gamma_1$-action (here we used that $\Oo(X)^*\simeq k^*$). Summarizing, we obtain a commutative diagram
\begin{center}
\begin{tikzcd}
X' \arrow[rd] \arrow[r, "v"] & \varphi^*X' \arrow[d] \arrow[r, "u"] & X' \arrow[d, "f_1"] \\
                             & X \arrow[r, "\varphi"]                  & X,    
\end{tikzcd}
\end{center}
where $uv\in N_{\Aut(X')}(\Gamma_1)$ and $\beta(uv)=\varphi$, which proves the surjectivity. The kernel of $\beta$ consists of the elements of $N_{\Aut(X')}(\Gamma_1)$ which are automorphisms of $X'$ viewed as a variety over $X$. Obviously, $\Ker\beta$ contains $\Gamma_1$. Conversely, any element $v\in\Ker\beta$ induces an automorphism of the field of rational functions $k(X')$ viewed as a $k(X)$-algebra. As the extension $k(X)\xhookrightarrow{} k(X')$ is Galois with Galois group $\Gamma_1$, we conclude that $\ker \beta=\Gamma_1$. We have obtained an exact sequence of groups (see also \cite[Proposition 2.1]{GebhardMartin})
\begin{center}
$1\rightarrow\Gamma_1\rightarrow N_{\Aut(X')}(\Gamma_1)\xrightarrow{\beta}\Aut(X)\rightarrow 1$.
\end{center}
Let $S_1$ be the preimage of $S_0\subset\Aut(X)$ by $\beta$. This is a finite group acting on $X'$ which is an extension of $S_0$ by $\Gamma_1$. Also, it is directly checked that the quotient of $X'$ by $S_1$ exists and is a torsor which identifies with the morphism $F_1$. 

It remains to prove that the actions of $G$ and $S_1$ commute. Let $s_1\in S_1(k)$, $g\in G(k)$, $x\in X(k)$. Then we have 
\begin{center}
$f_1s_1gs_1^{-1}g^{-1}(x)=s_0gs_0^{-1}g^{-1}f_1(x)=f_1(x)$,
\end{center}
where $s_0:=\beta(s_1)$. It follows that $s_1gs_1^{-1}g^{-1}\in\Gamma_1(k)$, whence a continuous map $G(k)\rightarrow\Gamma_1(k)$ taking an element to its commutator with $s_1$. Because $G$ is connected, this map is constant. Taking its value at $e_G$, we conclude that the actions of $G$ and $S_1$ commute.
\end{proof}
We finally have to prove the last claim of the theorem. By construction, the $G$-varieties $X^{'(i)}$ are of complexity one. They are moreover almost homogeneous if $X$ is so. The claim follows from this observation and \ref{PropCoxGFinitelyGeneratedC1Rational}, \ref{RemAlmostHomogeneousC1AreRational}.
\end{proof}

\begin{rem}
There exists a normal rational $\GG_m$-variety of complexity one with only constant regular functions, such that the master Cox ring is not finitely generated. Consider indeed the two normal affine rational $\GG_m$-surfaces defined by the $\QQ$-divisors
\begin{center}
$D_1:=\frac{1}{3}[0]+\frac{1}{3}[1]+\frac{1}{3}[\infty]$, and $D_2:=-\frac{1}{3}[0]-\frac{1}{3}[1]-\frac{1}{3}[\infty]$.
\end{center}
on $\PP^1_k$ (see \citep{FlennerZaidenberg}). It is easy to verify that these two surfaces glue to a $\GG_m$-surface $X$ with only constant regular functions. The exponent vectors defined by the rational quotient $X\dashrightarrow \PP^1_k$ by $\GG_m$ are $(3,3)$, $(3,3)$, $(3,3)$. Hence, $\tilde{X}$ is not rational in view of the criterion \cite[5.8]{HausenIteration}. This implies that $\cox(\tilde{X})$ is not finitely generated.
\end{rem}

\begin{rem}
Let $G$ be a connected reductive group, and $X$ be a normal rational $G$-variety of complexity one satisfying the condition
\begin{center}
$(\Cc^G):$ $\Oo(\hat{X}^{G,(i)})^{*G}\simeq k^*$, for $i\geq 0$.
\end{center}
For instance, a normal rational $G$-variety of complexity one satisfies $(\Cc^G)$ if it is complete, or a quasicone, or almost homogeneous. The Theorem \ref{Thm_FiniteIterationCoxRings} has an obvious equivariant analogue in this setting and its proof adapts as well. More precisely, we obtain that $X$ admits finite iteration of $G$-equivariant Cox rings, with well defined $G$-equivariant iterated Cox rings. The proof is in fact easier as the iterated equivariant characteristic spaces are canonically endowed with a $G$-action.
\end{rem}

\nocite{*}
\bibliographystyle{apa}
\bibliography{biblio/biblio}

\end{document}